\documentclass{amsart}

\usepackage[alphabetic,initials,nobysame]{amsrefs}
\usepackage{amssymb,mathtools,mathrsfs,comment,todonotes,color}
\usepackage{hyperref} 
\usepackage[shortlabels]{enumitem}
\usepackage{caption}
\usepackage[percent]{overpic}
\usepackage{pgfplots}
\pgfplotsset{compat=1.17}
\usetikzlibrary{arrows,patterns.meta}

\DeclareFontFamily{OT1}{pzc}{}
\DeclareFontShape{OT1}{pzc}{m}{it}{<-> s * [1.10] pzcmi7t}{}
\DeclareMathAlphabet{\mathpzc}{OT1}{pzc}{m}{it}

\BibSpec{collection.article}{%
	+{}  {\PrintAuthors}				{author}
	+{,} { \textit}                     {title}
	+{.} { }                            {part}
	+{:} { \textit}                     {subtitle}
	+{,} { \PrintContributions}         {contribution}
	+{,} { \PrintConference}            {conference}
	+{}  {\PrintBook}                   {book}
	+{,} { }                            {booktitle}
	+{,} { }							{series}
	+{,} { }							{publisher}
	+{,} { }							{address}
	+{,} { \PrintDateB}                 {date}
	+{,} { pp.~}                        {pages}
	+{,} { }                            {status}
	+{,} { \PrintDOI}                   {doi}
	+{,} { available at \eprint}        {eprint}
	+{}  { \parenthesize}               {language}
	+{}  { \PrintTranslation}           {translation}
	+{;} { \PrintReprint}               {reprint}
	+{.} { }                            {note}
	+{.} {}                             {transition}
	+{}  {\SentenceSpace \PrintReviews} {review}
}

\theoremstyle{plain}
\newtheorem{theorem}{Theorem}
\newtheorem{lemma}[theorem]{Lemma}

\newtheorem{prop}[theorem]{Proposition}
\newtheorem{corollary}[theorem]{Corollary}

\theoremstyle{definition}
\newtheorem{definition}[theorem]{Definition}
\newtheorem{claim}[theorem]{Claim}

\theoremstyle{remark}
\newtheorem{remark}[theorem]{Remark}
\newtheorem{question}[theorem]{Question}

\numberwithin{equation}{section}
\numberwithin{theorem}{section}
\numberwithin{conjecture}{section}

%
\newcommand{\x}{\scalebox{1.2}{$\chi$} } 
\newcommand{\br}{\overline}
\newcommand{\R}{\mathbb R}

\newcommand{\C}{\mathbb C}

\newcommand{\N}{\mathbb N}

\newcommand{\p}{\mathbb P}

\DeclareMathOperator{\dist}{{\mathrm{dist}}}
\DeclareMathOperator{\diam}{{\mathrm{diam}}}

\DeclareMathOperator{\inter}{{\mathrm{int}}}

\DeclareMathOperator{\md}{\mathrm{Mod}}

\DeclareMathOperator{\CNED}{\mathit{CNED}}

\DeclareMathOperator{\clu}{\mathrm{Clu}}

\DeclareMathOperator*{\essinf}{ess\,inf}

\begin{document}
\title[Conformal uniformization by disk packings]{Conformal uniformization of planar packings by disk packings}
\author{Dimitrios Ntalampekos}
\address{Mathematics Department, Stony Brook University, Stony Brook, NY 11794, USA.}
\thanks{The author is partially supported by NSF Grant DMS-2000096.}
\email{dimitrios.ntalampekos@stonybrook.edu}
\date{\today}
\keywords{Sierpi\'nski carpet, Sierpi\'nski packing, packing-conformal, packing-quasiconformal, conformal loop ensemble, uniformization, Koebe's conjecture}
\subjclass[2020]{Primary 30C20, 30C65; Secondary  30C35, 30C62}

\begin{abstract}
A Sierpi\'nski packing in the $2$-sphere is a countable collection of disjoint, non-separating continua with diameters shrinking to zero. We show that any Sierpi\'nski packing by continua whose diameters are square-summable can be uniformized by a disk packing with a packing-conformal map, a notion that generalizes conformality in open sets.  Being special cases of Sierpi\'nski packings, Sierpi\'nski carpets and some domains and can be uniformized by disk packings as well. 
As a corollary of the main result, the conformal loop ensemble (CLE) carpets can be uniformized conformally by disk packings, answering a question of Rohde--Werness.
\end{abstract}

\maketitle

\setcounter{tocdepth}{1}
\tableofcontents

\section{Introduction}

One of the most intriguing open problems in complex analysis is \textit{Koebe's conjecture} \cite{Koebe:Kreisnormierungsproblem}, predicting that every domain in the Riemann sphere is conformally equivalent to a \textit{circle domain}, i.e., a domain whose complementary components are geometric disks or points. This conjecture was established for finitely connected domains by Koebe himself \cite{Koebe:FiniteUniformization} and it took over 70 years until it was established for countably connected domains by He--Schramm \cite{HeSchramm:Uniformization}. This result was proved with a different method by Schramm \cite{Schramm:transboundary} in a seminal work, where the notion of transboundary modulus is introduced. More recently, Rajala \cite{Rajala:koebe} gave another proof of the result, providing a new perspective. Remarkably, in \cite{Schramm:transboundary}, Schramm establishes Koebe's conjecture for all cofat domains, i.e., domains whose complementary components satisfy a uniform geometric condition that we discuss below in Section \ref{section:fat}, independently of connectivity. The general case of Koebe's conjecture seems to be far out of reach.  Koebe's conjecture and uniformization problems for domains in  metric surfaces other than the Riemann sphere has been studied in \cites{MerenkovWildrick:uniformization, RajalaRasimus:koebe, Rehmert:Thesis}.

A topic very closely related to Koebe's conjecture is the uniformization of Sier\-pi\'nski carpets. A \textit{Sierpi\'nski carpet} is a continuum in the sphere that has empty interior and is obtained by removing from the sphere countably many open Jordan regions, called \textit{peripheral disks}, with disjoint closures and diameters shrinking to zero. A fundamental result of Whyburn \cite{Whyburn:theorem} states that all Sierpi\'nski carpets are homeomorphic to each other. Bonk \cite{Bonk:uniformization} proved that if the peripheral disks of a Sierpi\'nski carpet are uniformly relatively separated  uniform quasidisks, then the carpet can be mapped with a quasisymmetric map to a \textit{round carpet}, i.e., a carpet whose peripheral disks are geometric disks. Later, in \cite{Ntalampekos:CarpetsThesis} the author developed a potential theory on Sierpi\'nski carpets of area zero and proved that if the peripheral disks of such a carpet are uniformly fat and uniformly quasiround, then the carpet can be mapped in a natural way to a \textit{square carpet}, defined in the obvious manner, with a map that is \textit{carpet-conformal} in the sense that it preserves a type of modulus. We note that the geometric assumptions in \cite{Ntalampekos:CarpetsThesis} are weaker than in  \cite{Bonk:uniformization}. However, if one strengthens the assumptions to uniformly relatively separated uniform quasidisks, then the carpet-conformal map of \cite{Ntalampekos:CarpetsThesis} is upgraded to a  quasisymmetry. Both mentioned works of Bonk and the author depend crucially on the notion of transboundary modulus of Schramm \cite{Schramm:transboundary}.

In this work we push the results of \cites{Bonk:uniformization,Ntalampekos:CarpetsThesis} to their limit and we \textit{remove entirely the geometric assumptions} at the cost of weakening the topological properties and the regularity of the uniformizing conformal map. Instead, we only impose the square-summability of the diameters of the peripheral disks. Before stating the results we give the required definitions.

Let $\{p_i\}_{i\in \N}$ be a collection of pairwise disjoint and non-separating continua in the Riemann sphere $\widehat{\C}$ such that $\diam(p_i)\to 0$ as $i\to\infty$. The collection $\{p_i\}_{i\in \N}$ is called a \textit{Sierpi\'nski packing} and the set  $X=\widehat{\C}\setminus \bigcup_{i\in \N} p_i$ is its \textit{residual set}. When it does not lead to a confusion, we make no distinction between the terms packing and residual set. The continua $p_i$, $i\in \N$, are  called the \textit{peripheral continua} of $X$. Note that if the peripheral continua of $X$ are closed Jordan regions, then $\br X$ is a Sierpi\'nski carpet, provided that it has empty interior. Thus, Sierpi\'nski packings can be regarded as a generalization of Sierpi\'nski carpets. 

The natural spaces that can be used to parametrize a Sierpi\'nski packing are \textit{round} Sierpi\'nski packings, i.e., packings whose peripheral continua are (possibly degenerate) closed disks. We now state our main theorem.

\begin{theorem}\label{theorem:main}
Let $Y=\widehat{\C}\setminus \bigcup_{i\in \N} q_i$ be a Sierpi\'nski packing whose peripheral continua are closed Jordan regions or points with diameters in $\ell^2(\N)$.  
Then there exist
\begin{enumerate}[label=\upshape(\Alph*)]
	\item\label{theorem:main:set} a collection of disjoint closed disks $\{p_i\}_{i\in \N}$, where $p_i$ is degenerate if and only if $q_i$ is degenerate, a round Sierpi\'nski packing $X=\widehat{\C}\setminus \bigcup_{i\in \N}p_i$, 
	\item\label{theorem:main:map}  a continuous, surjective, and monotone map $H\colon \widehat\C\to \widehat\C$ with the property that $H^{-1}(\inter(q_i))=\inter(p_i)$ for each $i\in \N$, and
	\item\label{theorem:main:derivative} a non-negative Borel function $\rho_H\in L^2(\widehat{\C})$,
\end{enumerate}
with the following properties.
	\begin{itemize}
		\item (Transboundary upper gradient inequality) There exists a curve family $\Gamma_0$ in $\widehat{\C}$ with $\md_2\Gamma_0=0$ such that for all curves $\gamma\colon[a,b]\to \widehat \C$ outside $\Gamma_0$ we have
	\begin{align*}
\sigma( H(\gamma(a)), H(\gamma(b)) )\leq \int_{\gamma}\rho_H\, ds + \sum_{i:p_i\cap |\gamma|\neq \emptyset} \diam(q_i)
\end{align*}
		\item (Conformality) For each Borel set $E\subset \widehat\C$ we have
	$$\int_{H^{-1}(E)} \rho_H^2\, d\Sigma\leq  \Sigma(E\cap Y).$$ 
	\end{itemize}
Moreover, if $Y$ is cofat, then $H$ may be taken to be a homeomorphism of the sphere. 
\end{theorem}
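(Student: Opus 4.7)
The plan is to exhaust $Y$ by finite sub-packings, uniformize each using Koebe's classical theorem for finitely connected domains, and extract a subsequential limit. The $\ell^2$ summability of the diameters $\diam(q_i)$ plays the role of a finite-area hypothesis: together with the spherical isodiametric inequality it yields, for free, finite-area-type bounds for both the derivative $\rho_H$ and the transboundary term, and is what makes the limit argument go through.

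First, for each $n$ I would apply the classical uniformization of finitely connected domains to $\widehat{\C}\setminus \bigcup_{i=1}^n q_i$ to obtain a conformal map from a circle domain $\widehat{\C}\setminus \bigcup_{i=1}^n p_i^n$. Since each $q_i$ is a Jordan region, Carath\'eodory's theorem extends the map to the closure of every complementary component; gluing in a conformal identification of each round disk $p_i^n$ with the corresponding $q_i$ produces a homeomorphism $H_n\colon \widehat{\C}\to\widehat{\C}$. Normalize $H_n$ by postcomposition with a M\"obius transformation so that three prescribed points are fixed. Let $\rho_{H_n}$ denote the spherical conformal derivative of $H_n$ off the closed round disks, extended by zero on each $p_i^n$. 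The conformality of $H_n$ on the exterior gives $\int \rho_{H_n}^2\,d\Sigma = \Sigma(\Omega_n) \leq \Sigma(\widehat{\C})$, and the spherical isodiametric inequality yields $\sum_i \diam(p_i^n)^2 \leq C\sum_i \Sigma(p_i^n) \leq C\Sigma(\widehat{\C})$.

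Next I would extract a subsequential limit. The uniform $L^2$ bound on $\rho_{H_n}$ yields a weak $L^2$ limit $\rho_H$ by Banach--Alaoglu. The uniform $\ell^2$ control on the radii of $p_i^n$, combined with Hausdorff compactness of closed subsets of the sphere and the three-point normalization, produces (after passing to a subsequence) a limit closed disk $p_i = \lim p_i^n$ for each $i$, and the $p_i$ form a round Sierpi\'nski packing $X=\widehat{\C}\setminus \bigcup_i p_i$. An Arzel\`a--Ascoli argument---based on the uniform modulus of continuity of conformal maps with bounded Dirichlet energy---combined with the compactness of the class of monotone maps between spheres (Whyburn theory), extracts a uniformly convergent subsequence $H_n\to H$ with $H$ continuous, surjective, and monotone, satisfying $H^{-1}(\inter(q_i))=\inter(p_i)$. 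The conformality inequality then follows from the weak lower semicontinuity of the $L^2$ norm together with the identity $\int \rho_{H_n}^2\,d\Sigma = \Sigma(\Omega_n)$ restricted to preimages of Borel sets. The transboundary upper gradient inequality is obtained from its trivial pointwise version for the homeomorphism $H_n$ (where the sum degenerates to a length-type expression), and Fuglede's lemma supplies the $\md_2$-null exceptional family $\Gamma_0$ outside of which the inequality persists in the limit.

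The main obstacle is the verification that the limit $H$ behaves well on the packings: that $H^{-1}(\inter(q_i))=\inter(p_i)$, that no two distinct round disks collide in the limit, and that $p_i$ remains non-degenerate exactly when $q_i$ is non-degenerate. Without any geometric regularity of the $q_i$, a priori a round disk can degenerate to a point, two $p_i^n$'s can merge, or a round disk can collapse into the residual set. Ruling these out requires delicate transboundary modulus estimates in the spirit of Schramm and of the author's earlier work on Sierpi\'nski carpets. For the cofat case, the fatness condition directly prevents such collapses---each peripheral continuum of $Y$ occupies a definite proportion of its ambient disk---forcing injectivity of the limit $H$ and hence a global homeomorphism of $\widehat{\C}$.
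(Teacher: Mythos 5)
Your overall strategy (exhaust by finitely connected domains, apply Koebe, extract a limit) is the same as the paper's, but there is a genuine gap at the step where you extract the limit map. You claim that Arzel\`a--Ascoli applies to the homeomorphisms $H_n$ on $\widehat{\C}$ via ``the uniform modulus of continuity of conformal maps with bounded Dirichlet energy.'' No such modulus of continuity exists: a uniform $L^2$ bound on the derivative does not give equicontinuity in two dimensions, and the inverse of a conformal map onto a domain with bad boundary need not be uniformly continuous near the boundary. This is precisely the obstruction the paper is built around. The equicontinuity that actually holds (Lemma \ref{lemma:equicontinuity}) is only for the induced set functions between the \emph{quotient} spheres $\mathcal E(X)\to\mathcal E(Y)$, where all peripheral continua are collapsed to points; the paper explicitly warns that small sets need \emph{not} go to small sets before projecting to $\mathcal E(Y)$. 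Consequently the limit exists a priori only as a packing-conformal map $h\colon\mathcal E(X)\to\mathcal E(Y)$ (Theorem \ref{theorem:main:general}/Corollary \ref{corollary:uniformization_disk}), and producing the map $H$ on $\widehat{\C}$ itself is a separate, substantial lifting step (Sections 5--6, culminating in Theorem \ref{theorem:continuous_extension}) that uses the hypothesis that the $q_i$ are Jordan regions or points, the fatness of the limiting disks, cluster-set arguments, and Moore-type topology. Your proposal skips this entire stage, and the claimed identity $H^{-1}(\inter(q_i))=\inter(p_i)$ cannot be read off from a (nonexistent) uniform limit of the $H_n$; indeed the paper stresses that $H^{-1}(q_i)$ may be strictly larger than $p_i$.

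A second, smaller issue: you correctly identify non-degeneracy of the limit disks and the no-collision property as the main obstacles, but you defer them to ``delicate transboundary modulus estimates in the spirit of Schramm.'' The estimates of Schramm and Bonk alone do not suffice here, because $Y$ has no uniform geometry; the new ingredient is the Non-degeneracy Lemma (Lemma \ref{lemma:l2collision_final}), which exploits the $\ell^2$ hypothesis to obtain a lower modulus bound that is uniform over all large continua $F$ and all finite sub-packings, for a \emph{fixed} continuum $E$. Without some version of this one-sided freezing argument, the degeneration and collision scenarios you list are not actually ruled out. Similarly, in the cofat case the homeomorphism property is not an immediate consequence of fatness ``directly preventing collapses''; it requires the quantitative argument of Theorem \ref{theorem:homeomorphic:packing} combining the transboundary upper gradient inequality, the co-area inequality, and Lemma \ref{lemma:radial}.
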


Here $\sigma$  denotes the spherical distance and $\Sigma$ is the spherical measure. The monotoniciy of $H$ means that the preimage of every point is a continuum and is equivalent to the statement that $H$ is the uniform limit of homeomorphisms; see Section \ref{section:topological}. The map $H$ in the conclusion of the theorem is called a \textit{packing-conformal map}. Our definition of a packing-conformal map is motivated by the transboundary modulus of Schramm and by the so-called analytic definition of quasiconformality for maps between metric spaces \cite{Williams:qc}. Moreover, an analogous definition under the terminology \textit{weakly quasiconformal map} has been used recently by Romney and the author \cites{NtalampekosRomney:length, NtalampekosRomney:nonlength} in the solution of the problem of quasiconformal uniformization of spheres of finite area.

If $U$ is an open subset of $\widehat{\C}$ contained in the packing $Y$, then the map $H$ of Theorem \ref{theorem:main} is a conformal map in $H^{-1}(U)$ in the usual sense. However, not every conformal map between domains satisfies the transboundary upper gradient inequality. Nevertheless, one can show that this is always the case for countably connected domains.

\begin{remark}
We remark that although $H^{-1}(\inter(q_i))=\inter(p_i)$ in Theorem \ref{theorem:main} \ref{theorem:main:map}, the continuum $H^{-1}(q_i)$ might be larger than the disk $p_i$ when the packing $Y$ is not cofat. It is precisely this phenomenon that prevents us from proving Koebe's conjecture with this method. However, a non-trivial consequence of the topological and regularity conditions of Theorem \ref{theorem:main} is that the map $H$ is degenerate on the set $H^{-1}(q_i)\setminus p_i$, in the sense that it maps each continuum $E\subset H^{-1}(q_i)\setminus p_i$ to a point; see Figure \ref{figure:branches}. We prove this fact in Proposition \ref{proposition:constant}. 
\end{remark}

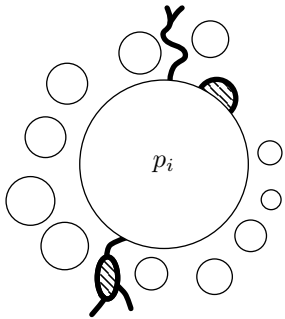
\begin{figure}
\centering
\begin{tikzpicture}[line cap=round,line join=round,>=triangle 45,x=1.0cm,y=1.0cm, scale=.4]
\clip(-9.749387336930544,-3.1520101085991366) rectangle (7.380590123625433,8.705103476538477);

\draw [line width=.5pt] (-0.66,6.46) circle (0.6082762530298222cm);
\draw [line width=.5pt] (-3.,6.) circle (0.6977105417004965cm);
\draw [line width=.5pt] (-5.42,4.98) circle (0.68cm);
\draw [line width=.5pt] (-6.14,3.2) circle (0.6767569726275451cm);
\draw [line width=.5pt] (-6.64,1.08) circle (0.8099382692526632cm);
\draw [line width=.5pt] (-5.5,-0.42) circle (0.7881624198095211cm);
\draw [line width=.5pt] (-2.62,-1.34) circle (0.54cm);
\draw [line width=.5pt] (-0.52,-1.44) circle (0.590592922409336cm);
\draw [line width=.5pt] (0.68,-0.08) circle (0.5249761899362675cm);
\draw [line width=.5pt] (1.32,2.68) circle (0.39849717690342545cm);
\draw [line width=.5pt] (1.36,1.12) circle (0.32984845004941266cm);

\draw[line width=2pt] plot [smooth, tension=1] coordinates {(-2,5) (-1.9,5.5) (-1.5, 6) (-2.2, 6.5) (-2, 7) (-2.1, 7.5) };
\draw[line width=2pt] plot [smooth, tension=1] coordinates { (-2, 7) (-1.8, 7.3) (-1.6, 7.5) };

\draw[line width=2pt] plot [smooth, tension=.5] coordinates {(-3,0) (-4,-.4) (-4.1, -1.2) (-4,-1.5) (-4.3, -2) (-4.2, -2.2) (-4.6, -2.7) };

\draw[line width=2pt] plot [smooth, tension=.5] coordinates {(-4,-1.5) (-3.5, -2) (-3.3, -2.5)};

\draw [line width=2.pt, pattern={Lines[distance=.8mm,angle=135,line width=0.2mm]}] (-0.4499616728935725,4.487547908883034) circle (0.6360159948182594cm);

\draw [fill=white, line width=.5pt] (-2.2,2.3) circle (2.8014282071829006cm);

\draw [fill=white, line width=2.pt] (-4.08,-1.48) ellipse (0.34cm and .7cm);
\draw [ line width=2.pt, pattern={Lines[distance=.8mm,angle=135,line width=0.2mm]}] (-4.08,-1.48) ellipse (0.34cm and .7cm);

\node at (-2.2,2.3) {$p_i$};
\end{tikzpicture}
\caption{The set $H^{-1}(q_i)$ might be larger than the disk $p_i$. In this figure it contains the shaded regions and bold ``branches", which are subsets of $X$. However,  the map $H$ is constant in each of them.}\label{figure:branches}
\end{figure}

In fact, Theorem \ref{theorem:main} is a consequence of a more general  uniformization theorem for Sierpi\'nski packings $Y$ without the topological assumption that the peripheral continua are closed Jordan regions or points. To each Sierpi\'nski packing $Y$ we can associate a topological sphere $\mathcal E(Y)$ by collapsing all peripheral continua to points, in view of Moore's decomposition theorem \cite{Moore:theorem}. 

\begin{theorem}\label{theorem:main:general}
Let $Y=\widehat{\C}\setminus \bigcup_{i\in \N} q_i$ be a Sierpi\'nski packing such that the diameters of the peripheral continua lie in $\ell^2(\N)$. Then there exists a round Sierpi\'nski packing $X$  and a packing-conformal map from $\mathcal E(X)$ onto $\mathcal E(Y)$. 
\end{theorem}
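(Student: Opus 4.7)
The plan is to deduce Theorem~\ref{theorem:main:general} from Theorem~\ref{theorem:main} by an approximation and limiting argument. For each $n\in\N$, I would construct a Sierpi\'nski packing $Y_n=\widehat{\C}\setminus\bigcup_i q_i^{(n)}$ whose peripheral continua $q_i^{(n)}$ are pairwise disjoint closed Jordan regions with $q_i\subseteq q_i^{(n)}$, $q_i^{(n)}\searrow q_i$ in Hausdorff distance, and $\diam(q_i^{(n)})\le\diam(q_i)+\epsilon_{i,n}$, with $\epsilon_{i,n}\to 0$ chosen small enough that $\{\diam(q_i^{(n)})\}_i\in\ell^2(\N)$ with norm bounded uniformly in $n$. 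Such fattenings exist because each non-separating continuum $q_i\subset\widehat{\C}$ admits a neighborhood basis of closed Jordan regions, and the pairwise disjointness together with $\diam(q_i)\to 0$ permits a simultaneous disjoint choice by an inductive construction.

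Theorem~\ref{theorem:main} applied to each $Y_n$ yields a round packing $X_n=\widehat{\C}\setminus\bigcup_i p_i^{(n)}$, a continuous monotone surjection $H_n\colon\widehat{\C}\to\widehat{\C}$ with $H_n^{-1}(\inter q_i^{(n)})=\inter p_i^{(n)}$, and $\rho_{H_n}\in L^2(\widehat{\C})$ satisfying $\|\rho_{H_n}\|_2^2\le \Sigma(\widehat{\C})$ (from the conformality inequality with $E=\widehat{\C}$). After normalizing by a M\"obius change of coordinates pinning down three prescribed disks, I would extract a subsequence along which $\rho_{H_n}\rightharpoonup\rho_H$ weakly in $L^2$, the disks $p_i^{(n)}$ converge in Hausdorff distance to (possibly degenerate) closed disks $p_i$ via a diagonal argument (using $\sum_i\diam(p_i^{(n)})^2\lesssim 1$ and precompactness of closed disks in the sphere), and $H_n$ converges uniformly to a continuous monotone map $H\colon\widehat{\C}\to\widehat{\C}$. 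The equicontinuity underlying this last step is supplied by the transboundary upper gradient inequality together with the uniform $L^2$ bound, in the spirit of the compactness theory for weakly quasiconformal maps in \cites{NtalampekosRomney:length, NtalampekosRomney:nonlength}.

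Setting $X=\widehat{\C}\setminus\bigcup_i p_i$, I would verify the required properties of $(X,H,\rho_H)$ with respect to $Y$: the transboundary upper gradient inequality passes to the limit by a Fuglede-type argument combined with $q_i^{(n)}\to q_i$ in Hausdorff distance, while the conformality inequality follows from lower semicontinuity of the $L^2$ norm under weak convergence together with $\Sigma(Y_n)\to\Sigma(Y)$. From $H_n^{-1}(\inter q_i^{(n)})=\inter p_i^{(n)}$ one obtains in the limit $H(\inter p_i)\subseteq\inter q_i$, hence $H(p_i)\subseteq q_i$ by continuity. Denoting the Moore quotient maps by $\pi_X\colon\widehat{\C}\to\mathcal E(X)$ and $\pi_Y\colon\widehat{\C}\to\mathcal E(Y)$, the map $\pi_Y\circ H$ is constant on every fiber of $\pi_X$ and descends to the desired packing-conformal surjection $\widetilde H\colon\mathcal E(X)\to\mathcal E(Y)$, whose monotonicity and transboundary properties are inherited from those of $H$.

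The main obstacle is the compactness step for the sequence $(H_n)$: the transboundary upper gradient inequality differs from a classical upper gradient inequality in that its right-hand side contains the non-conformal sum $\sum_{i:p_i^{(n)}\cap|\gamma|\neq\emptyset}\diam(q_i^{(n)})$, and to extract equicontinuity one must show that these peripheral contributions cannot concentrate catastrophically along any curve as $n\to\infty$ and as disks potentially degenerate. This should follow from a careful modulus estimate exploiting the uniform $\ell^2$ control on $\{\diam(p_i^{(n)})\}_i$ inherited from the disjoint-disk packing constraint in the sphere. A secondary subtlety is ensuring that the Hausdorff-limit collection $\{p_i\}$ is itself a Sierpi\'nski packing, i.e.\ that the limit disks stay pairwise disjoint with diameters tending to zero, which again reduces to uniform $\ell^2$ control.
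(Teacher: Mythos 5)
Your proposal inverts the paper's logical order and is circular as written: in the paper, Theorem \ref{theorem:main:general} is the primitive result (it is restated as Corollary \ref{corollary:uniformization_disk} and obtained from Theorem \ref{theorem:uniformization_full} by exhausting $Y$ with the \emph{finitely connected} domains $Y_n=\widehat{\C}\setminus\bigcup_{i=1}^n q_i$, uniformizing these by circle domains via Koebe's theorem, and passing to the limit of the inverse conformal maps), while Theorem \ref{theorem:main} is then \emph{deduced} from it via the lifting results of Sections 5--7. So deriving Theorem \ref{theorem:main:general} from Theorem \ref{theorem:main} presupposes the statement you are trying to prove, unless you supply an independent proof of the Jordan case. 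Independently of this, your reduction already breaks at the first step: one cannot in general fatten the $q_i$ to pairwise disjoint closed Jordan regions $q_i^{(n)}\supseteq q_i$, because peripheral continua of a Sierpi\'nski packing may accumulate on one another. For instance, take $q_1=[0,1]\times\{0\}$ and let the remaining $q_j$ be the rational points of a neighborhood of $q_1$: every closed Jordan region containing $q_1$ has nonempty interior meeting that neighborhood and therefore swallows infinitely many $q_j$. Absorbing the swallowed continua changes the combinatorics of the packing and destroys the index-by-index correspondence on which your limiting argument relies.

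The ``main obstacle'' you flag at the end is in fact the mathematical heart of the matter, and it is not overcome by the tools you invoke. To pass to the limit one must show, for each fixed $i$, that the disks $p_i^{(n)}$ neither collapse to a point when $q_i$ is non-degenerate nor collide with one another; the transboundary upper gradient inequality and the uniform $L^2$ bound on $\rho_{H_n}$ only yield \emph{upper} modulus estimates and cannot exclude degeneration. What is required is a lower bound for transboundary modulus that is uniform over the approximating domains --- this is the paper's non-degeneracy lemma, Lemma \ref{lemma:l2collision_final}, whose proof is precisely where the hypothesis $\{\diam(q_i)\}\in\ell^2(\N)$ enters --- played against the Schramm--Bonk upper bound (Lemma \ref{lemma:bonkcollisions}) through conformal invariance of transboundary modulus. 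Note also that equicontinuity cannot hold for $H_n\colon\widehat{\C}\to\widehat{\C}$ itself, since $H_n$ maps the possibly tiny disk $p_i^{(n)}$ onto (essentially) the fixed-size region $q_i^{(n)}$; it only holds after projecting to $\mathcal E(Y)$, which is why the paper formulates the compactness step for set functions into the quotient sphere (Lemmas \ref{lemma:equicontinuity} and \ref{lemma:arzela_ascoli}). Without these ingredients the limit map in your argument does not exist.
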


As we see, the uniformizing packing-conformal map exists only at the level of the topological spheres $\mathcal E(X),\mathcal E(Y)$, and in general does not induce a map between the packings $X,Y$ in the sphere $\widehat{\C}$. For the definition of packing-conformal maps between the associated topological spheres see Section \ref{section:packing_qc}. The  above theorem is restated as Corollary \ref{corollary:uniformization_disk}. The statement is proved via an approximation argument. We consider the finitely connected domains $Y_n=\widehat{\C}\setminus \bigcup_{i=1}^n q_i$ and we uniformize them conformally by finitely connected circle domains $X_n$ using Koebe's theorem. Then our task is to show that the conformal maps from $X_n$ to $Y_n$ converge in a uniform sense to the desired limiting map from $\mathcal E(X)$ onto $\mathcal E(Y)$. 

This proof strategy is also followed by Schramm \cite{Schramm:transboundary} in showing that cofat domains can be uniformized by circle domains and in \cite{Bonk:uniformization} in uniformizing Sierpi\'nski carpets by round carpets. The recent developments in the field of analysis on metric spaces and  our much more thorough understanding of quasiconformal maps between metric spaces allow us to identify the topological and regularity properties of the limiting map in our more fractal setting, where no uniform geometry is imposed, as in the works of Schramm and Bonk.  

We note that in unpublished work, Rohde and Werness \cite{RohdeWerness:CLE} show that the complementary disks of the circle domain $X_n$ converge in the Hausdorff sense after passing to a subsequence to a collection of pairwise disjoint disks. However, they were not able to identify the limit of the conformal maps from $X_n$ to $Y_n$.

Theorem \ref{theorem:main} is proved by showing that the topological assumptions on $Y$ allow one to lift a packing-conformal map between $\mathcal E(X)$ and $\mathcal E(Y)$ as in Theorem \ref{theorem:main:general} to the map $H$ in the sphere $\widehat{\C}$ that has the desired properties. This lifting process is achieved through Theorem \ref{theorem:continuous_extension}, which provides a monotone map $H$. In the case that $Y$ is cofat, the homeomorphism $H$ as in the last part of Theorem \ref{theorem:main} is provided by Theorem \ref{theorem:homeomorphic:packing}. Thus, Theorem \ref{theorem:main} is a consequence of Corollary \ref{corollary:uniformization_disk}, Theorem \ref{theorem:continuous_extension}, and  Theorem \ref{theorem:homeomorphic:packing}. 

Another generalization is that we do not need to restrict to round Sierpi\'nski packings $X$ in order to parametrize a given packing $Y$. Instead of using geometric disks as the peripheral continua of $X$, one can  use homothetic images of any countable collection of uniformly fat and non-separating continua, such as squares.  See Corollary \ref{corollary:uniformization_cofat_general} for the precise statement.

As a corollary of the main theorem we give an answer to a question of Rohde--Werness \cite{RohdeWerness:CLE} regarding the uniformization of the conformal loop ensemble (CLE) carpet. CLE was introduced by Sheffield--Werner \cite{SheffieldWerner:CLE}, as a random collection of Jordan curves in the unit disk that combines conformal invariance and a natural restriction property; see Figure \ref{figure:cle}. Each CLE gives rise to a Sierpi\'nski carpet with non-uniform geometry; hence the current carpet uniformization theory of \cites{Bonk:uniformization,Ntalampekos:CarpetsThesis} is not sufficient to treat them.  However, Rohde--Werness \cite{RohdeWerness:CLE} proved in unpublished work that, with probability $1$, the diameters of the peripheral disks of a CLE carpet are square-summable. Therefore,  we obtain the following corollary of the main theorem.

\begin{figure}
\includegraphics[scale=.1]{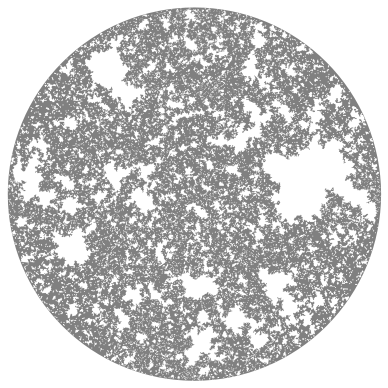}
\caption{A CLE carpet (simulation
by D.B.\ Wilson \cite{Wilson:cle}).}\label{figure:cle}
\end{figure}
\begin{corollary}
If $Y$ is a CLE carpet, almost surely there exists a round Sierpi\'nski packing $X$ and a packing-conformal map that maps $\br X$ onto $Y$. 
\end{corollary}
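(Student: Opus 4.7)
The plan is to deduce the corollary as an almost immediate consequence of Theorem \ref{theorem:main}, the only substantive external input being the unpublished square-summability estimate of Rohde--Werness \cite{RohdeWerness:CLE} cited in the paragraph preceding the corollary. Given these two inputs, the proof reduces to checking hypotheses and unpacking definitions.

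First I would verify that a CLE carpet fits the framework of Theorem \ref{theorem:main}. By construction, a CLE carpet $Y\subset\widehat{\C}$ is a Sierpi\'nski carpet: its complementary components in $\widehat{\C}$ are open Jordan regions $\{U_i\}_{i\in\N}$ with pairwise disjoint closures and diameters shrinking to zero. Setting $q_i:=\br{U_i}$, the collection $\{q_i\}_{i\in\N}$ is a Sierpi\'nski packing whose peripheral continua are closed Jordan regions, matching the topological hypothesis of Theorem \ref{theorem:main}. By the Rohde--Werness estimate, the quantitative hypothesis $(\diam(q_i))_{i\in\N}\in\ell^2(\N)$ also holds almost surely. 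On this almost-sure event, Theorem \ref{theorem:main} produces a round Sierpi\'nski packing $X=\widehat{\C}\setminus\bigcup_i p_i$ and a continuous, surjective, monotone, packing-conformal map $H\colon\widehat{\C}\to\widehat{\C}$ with $H^{-1}(\inter(q_i))=\inter(p_i)$ for every $i$.

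Finally, I would verify $H(\br{X})=Y$. Since the peripheral disks of $Y$ are exactly the interiors $\inter(q_i)$, one has $Y=\widehat{\C}\setminus\bigcup_i\inter(q_i)$. The identity $H^{-1}(\inter(q_i))=\inter(p_i)$ then gives
\begin{align*}
H^{-1}(Y)=\widehat{\C}\setminus\bigcup_i\inter(p_i)=\br{X},
\end{align*}
the last equality being the observation that $\br{X}$ is the round carpet associated to the round Sierpi\'nski packing $X$. Surjectivity of $H$ yields $H(\br{X})=H(H^{-1}(Y))=Y$, as desired. The main obstacles in the present corollary are therefore not in its deduction but in its two external ingredients: Theorem \ref{theorem:main}, whose proof is the content of the paper, and the Rohde--Werness estimate, whose proof is deferred to \cite{RohdeWerness:CLE}.
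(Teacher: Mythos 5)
Your proposal is correct and takes essentially the same route as the paper, which deduces the corollary directly from Theorem \ref{theorem:main} combined with the Rohde--Werness almost-sure square-summability of the peripheral diameters. Your explicit verification that $H(\br X)=Y$ via $H^{-1}(\inter(q_i))=\inter(p_i)$ and the identification $\br X=\widehat{\C}\setminus\bigcup_i\inter(p_i)$ is just the intended unpacking of the statement.
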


It would be interesting to obtain some stronger statements for the uniformization of CLE carpets. We pose several questions for further study.

\begin{question}
Under what conditions is the uniformizing round Sierpi\'nski packing $X$ and  the packing-conformal map $H$ of Theorem \ref{theorem:main} unique (up to M\"obius transformations)? 
\end{question}
If one could at least show the uniqueness of $X$, then this would imply that CLE gives rise to another stochastic process that generates round packings. 

\begin{question}Under what conditions is $\br X$ a carpet whenever $\br Y$ is a carpet?
\end{question}
Theorem \ref{theorem:main} already shows that a sufficient condition is the cofatness of $Y$. What about CLE carpets?

\begin{question}
Can one use the present techniques to prove Koebe's conjecture for domains whose complementary components have diameters in $\ell^2(\N)$?
\end{question}

Another natural question is whether one can obtain alternative proofs of results of \cites{Bonk:uniformization,Ntalampekos:CarpetsThesis} upon strengthening the geometric assumptions on the peripheral continua. 

\begin{question}
If the peripheral continua of a packing $Y$ are uniformly relatively separated uniform quasidisks, is the map $H$ of Theorem \ref{theorem:main} quasisymmetric? 
\end{question}

\begin{question}
If the peripheral continua of a packing $Y$ are uniformly fat and uniformly quasiround, is the map $H$ of Theorem \ref{theorem:main} carpet-conformal in the sense of \cite{Ntalampekos:CarpetsThesis}?
\end{question}

In the subsequent paper \cite{Ntalampekos:rigidity_cned}, we use the notion of packing-conformal maps and the results of the present paper in order to study the problem of \textit{conformal rigidity of circle domains}, a problem that is closely related to the uniqueness in Koebe's conjecture. A circle domain is \textit{conformally rigid} if every conformal map from that domain onto another circle domain is the restriction of a M\"obius transformation. Specifically, we prove that circle domains whose boundary is $\CNED$ (\textit{countably negligible for extremal distance}), as introduced in \cite{Ntalampekos:exceptional}, are conformally rigid. This result unifies and extends all previous works in the subject \cites{HeSchramm:Uniformization, HeSchramm:Rigidity, NtalampekosYounsi:rigidity}. Moreover, it provides strong evidence for a conjecture of He--Schramm, asserting that a circle domain is rigid if and only if its boundary is \textit{conformally removable}.

\subsection*{Acknowledgments}
I would like to thank Dennis Sullivan for many thought-provoking and motivating conversations on the subject of planar uniformization and Koebe's conjecture. I first learned of the problem of the uniformization of CLE from Huy Tran whom I also thank for various discussions.

\section{Preliminaries}

\subsection{Notation and terminology}

Let $(X,d)$ be a metric space. The open ball of radius $r>0$, centered at a point $x\in X$ is denoted by $B_d(x,r)$; the corresponding closed ball is $\br{B_d}(x,r)$. We also denote by $S(x,r)$ the circle $\{y\in X: d(x,y)=r\}$. The diameter of a set $E$ is denoted by $\diam_d(E)$. If the metric is implicitly understood we will often drop the symbol $d$ from the subscript. For the Euclidean metric in the plane we will use the subscript $e$, when necessary. For example, we write $B_e(x,r)$ and  $\diam_e(E)$. Finally, we denote by $\sigma$ the spherical metric and by $\Sigma$ the spherical measure on the Riemann sphere $\widehat{\C}$. We will often  regard $\C$ as a subset of $\widehat{\C}$, identified with its image through stereographic projection from the north pole of $\widehat{\C}$. 

A continuous function $\gamma$ from a compact interval $[a,b]$ into $X$ is called a \textit{compact curve}. A continuous function $\gamma$ from $(a,b)$ into $X$ is called a \textit{non-compact curve}. In this case, if  $\gamma$ extends continuously to a map $\br \gamma \colon [a,b]\to X$ then it is called an \textit{open curve}. The \textit{trace} of a curve $\gamma\colon I\to X$ is the set $\gamma(I)$ and is denoted by $|\gamma|$. A curve $\gamma\colon [a,b]\to X$ is \textit{closed} if $\gamma(a)=\gamma(b)$.

For $s\geq 0$ the \textit{$s$-dimensional Hausdorff measure} $\mathcal H^s(E)$ of a set $E$ in a metric space $X$ is defined by
$$\mathcal{H}^{s}(E)=\lim_{\delta \to 0} \mathcal{H}_\delta^{s}(E)=\sup_{\delta>0} \mathcal{H}_\delta^{s}(E),$$
where
$$
\mathcal{H}_\delta^{s}(E)=\inf \left\{ c(s)\sum_{j=1}^\infty \operatorname{diam}(U_j)^{s}: E \subset \bigcup_j U_j,\, \operatorname{diam}(U_j)<\delta \right\}
$$
for a normalizing constant $c(s)>0$ so that the $n$-dimensional Hausdorff measure agrees with Lebesgue measure in $\R^n$. Note that $c(1)=1$. We will use the notation $\mathcal H^s_d$ for the Hausdorff measure $\mathcal H^s$ if we wish to emphasize that the metric $d$ is used. We now state the co-area inequality for Lipschitz functions \cite[Prop.\ 3.1.5]{AmbrosioTilli:metric}.
\begin{prop}\label{proposition:coarea}
Let $L>0$ and $\psi\colon \widehat{\C}\to \R$ be an $L$-Lipschitz function. Then for each Borel function $\rho\colon \widehat{\C}\to [0,\infty]$ we have 
\begin{align*}
\int_{\R} \int_{\psi^{-1}(t)}\rho\, d\mathcal H^1 dt \leq \frac{4L}{\pi} \int_{\widehat{\C}} \rho \, d\Sigma. 
\end{align*}
\end{prop}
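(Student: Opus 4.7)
The plan is to establish this as a special case of the general metric co-area inequality, following the standard proof specialized to the spherical setting. First, reduce to indicator functions: by approximating an arbitrary Borel $\rho\colon\widehat{\C}\to[0,\infty]$ from below by simple functions and applying monotone convergence, it suffices to prove
$$\int_{\R}\mathcal{H}^1(\psi^{-1}(t)\cap E)\,dt \leq \frac{4L}{\pi}\,\Sigma(E)$$
for Borel sets $E\subset\widehat{\C}$.

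The core estimate is a covering argument. Fix $\delta>0$ and a cover $\{U_j\}$ of $E$ with $\diam(U_j)<\delta$. For each $t\in\R$ the sets $\psi^{-1}(t)\cap U_j$ cover $\psi^{-1}(t)\cap E$, each has diameter at most $\diam(U_j)<\delta$, and with $c(1)=1$ this gives
$$\mathcal{H}^1_\delta(\psi^{-1}(t)\cap E) \leq \sum_{j}\diam(U_j)\,\mathbf{1}_{\psi(U_j)}(t).$$
Integrating in $t$ via Tonelli and using that the $L$-Lipschitz hypothesis forces $\psi(U_j)$ to have length at most $L\diam(U_j)$ yields
$$\int_{\R}\mathcal{H}^1_\delta(\psi^{-1}(t)\cap E)\,dt \leq L\sum_j\diam(U_j)^2.$$
Taking the infimum over admissible covers and recalling $c(2)=\pi/4$ rewrites the right-hand side as $(4L/\pi)\mathcal{H}^2_\delta(E)$. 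Letting $\delta\to 0$ and applying Fatou's lemma on the left upgrades the estimate to the one with $\mathcal{H}^1$ and $\mathcal{H}^2_\sigma$. The conclusion then follows from the identification $\Sigma=\mathcal{H}^2_\sigma$ on the Riemann sphere, i.e., the standard fact that Riemannian surface area agrees with two-dimensional Hausdorff measure under the given normalization.

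The principal subtlety I foresee is the measurability of $t\mapsto\mathcal{H}^1(\psi^{-1}(t)\cap E)$, which need not be Borel in general. I would handle this by working with upper Lebesgue integrals on the left throughout—Tonelli still applies to the nonnegative sum produced by the covering step—or alternatively by first treating compact $E$, where the slice length $t\mapsto\mathcal{H}^1_\delta(\psi^{-1}(t)\cap E)$ is upper semicontinuous, and then extending to Borel $E$ via inner regularity of $\Sigma$. Beyond this bookkeeping the argument is routine; the only numerical sensitivity is the constant $4/\pi$, arising entirely from the ratio $c(1)/c(2)$, which must be tracked carefully under the conventions fixed in the excerpt.
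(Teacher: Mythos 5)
Your argument is correct: it is precisely the standard Eilenberg covering proof of the co-area inequality, with the constant $4/\pi = c(1)/c(2)$ tracked correctly and the measurability issue properly deferred to upper integrals. The paper gives no proof of its own but cites Ambrosio--Tilli, Proposition 3.1.5, which is established by exactly this argument, so your route coincides with the paper's.
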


The cardinality of a set $E$ is denoted by $\#E$. For quantities $A$ and $B$ we write $A\lesssim B$ if there exists a constant $c>0$ such that $A\leq cB$. If the constant $c$ depends on another quantity $H$ that we wish to emphasize, then we write instead $A\leq c(H)B$ or $A\lesssim_H B$. Moreover, we use the notation $A\simeq B$ if $A\lesssim B$ and $B\lesssim A$. As previously, we write $A\simeq_H B$ to emphasize the dependence of the implicit constants on the quantity $H$. All constants in the statements are assumed to be positive even if this is not stated explicitly and the same letter may be used in different statements to denote a different constant.  

Let $E$ be a set in a metric space $X$. For $r>0$ we denote by $N_r(E)$ the open $r$-neighborhood of $E$. The Hausdorff distance of two sets $E,F\subset X$, denoted by $d_H(E,F)$, is defined as the infimum of all $r>0$ such that $E\subset N_r(F)$ and $F\subset N_r(E)$. We say that a sequence of sets $E_n\subset X$, $n\in \N$, converges to a set $E$ in the Hausdorff distance if $d_H(E_n,E)\to 0$ as $n\to\infty$. If the limiting set $E$ is closed, then it consists precisely of all limit points of sequences $x_n\in E_n$, $n\in \N$. 

A \textit{continuum} is a compact and connected set. An elementary property of Hausdorff convergence is that it preserves connectedness; namely, if a sequence of continua $E_n$, $n\in \N$, converges to a compact set $E$, then $E$ is also a continuum. See \cite[Section 7.3.1]{BuragoBuragoIvanov:metric} for more background.

\begin{lemma}\label{lemma:distance_convergence}
Let $X,Y$ be compact metric spaces and  $\pi\colon X\to Y$ be a continuous  map. 
\begin{enumerate}[\upshape(i)]
	\item\label{lemma:distance:i} Let $E\subset Y$ (resp.\ $E\subset X$) be a compact set and $\{E_n\}_{n\in \N}$ be a sequence of compact sets with the property that for each $r>0$ there exists $N\in \N$ such that $E_n\subset N_r(E)$ for all $n>N$. Then for each $r>0$ there exists $N\in \N$ such that 
	\begin{align*}
	\pi^{-1}(E_n)\subset N_r(\pi^{-1}(E))\quad  \textrm{( resp.\  $\pi(E_n)\subset N_r(\pi(E))$ )} 
	\end{align*}
	for all $n>N$.
	\item\label{lemma:distance:ii} If $\{E_n\}_{n\in \N}$, $\{F_n\}_{n\in \N}$ are sequences of compact sets in $Y$ converging in the Hausdorff sense to compact sets $E,F$, respectively, then
\begin{align*}
\dist(\pi^{-1}(E),\pi^{-1}(F))\leq \liminf_{n\to\infty} \dist( \pi^{-1}(E_n),\pi^{-1}(F_n)). 
\end{align*}
\end{enumerate}

\end{lemma}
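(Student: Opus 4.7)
The plan is to prove both parts by compactness and contradiction arguments, exploiting that $\pi$, being continuous on the compact space $X$, is uniformly continuous.

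For part \ref{lemma:distance:i}, the parenthetical statement (image case, $E\subset X$) is the easier direction: by uniform continuity of $\pi$, given $r>0$ I can choose $\delta>0$ so that $d_X(x,y)<\delta$ implies $d_Y(\pi(x),\pi(y))<r$; then if $E_n\subset N_\delta(E)$ the pointwise translation gives $\pi(E_n)\subset N_r(\pi(E))$. For the preimage case I would argue by contradiction: if the conclusion fails for some $r>0$, I extract a subsequence $n_k$ and points $x_k\in \pi^{-1}(E_{n_k})$ with $\dist(x_k,\pi^{-1}(E))\geq r$. By compactness of $X$, after passing to a further subsequence, $x_k\to x_\infty$; continuity gives $\pi(x_k)\to \pi(x_\infty)$. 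Since $\pi(x_k)\in E_{n_k}$ and $E_{n_k}$ eventually lies in every neighborhood of $E$, we get $\pi(x_\infty)\in \br E=E$, i.e.\ $x_\infty\in \pi^{-1}(E)$, contradicting $\dist(x_k,\pi^{-1}(E))\geq r$.

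For part \ref{lemma:distance:ii}, I would pass to a subsequence $n_k$ realizing the $\liminf$ on the right, then use compactness of the factors: for each $k$, since $\pi^{-1}(E_{n_k})$ and $\pi^{-1}(F_{n_k})$ are compact (closed in $X$, hence compact), pick $x_k\in \pi^{-1}(E_{n_k})$ and $y_k\in \pi^{-1}(F_{n_k})$ achieving $d_X(x_k,y_k)=\dist(\pi^{-1}(E_{n_k}),\pi^{-1}(F_{n_k}))$. Passing to a further subsequence, $x_k\to x_\infty$ and $y_k\to y_\infty$ in $X$. By continuity $\pi(x_k)\to \pi(x_\infty)$ and $\pi(y_k)\to \pi(y_\infty)$; since Hausdorff convergence $E_{n_k}\to E$ forces every limit of points from $E_{n_k}$ to lie in $E$ (and similarly for $F$), we conclude $x_\infty\in \pi^{-1}(E)$ and $y_\infty\in \pi^{-1}(F)$. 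Thus
\begin{align*}
\dist(\pi^{-1}(E),\pi^{-1}(F))\leq d_X(x_\infty,y_\infty)=\lim_{k\to\infty} d_X(x_k,y_k)=\liminf_{n\to\infty}\dist(\pi^{-1}(E_n),\pi^{-1}(F_n)),
\end{align*}
as required.

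Neither step should be hard; the only subtlety is that in part \ref{lemma:distance:i} it is essential to work in $X$ and push forward through $\pi$, rather than attempting to pull back a neighborhood in $Y$ (which does not work since $\pi$ is not assumed to be a quotient map with nice fibers). The compactness of $X$ is used crucially to extract convergent subsequences in both parts, and the closedness of $E$ (and $F$) is used to conclude that limit points land inside $E$ (respectively $F$).
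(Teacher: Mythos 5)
Your proof is correct. For part \ref{lemma:distance:i} the paper offers no details (it just says ``compactness and continuity''), and your two arguments — uniform continuity for the image case, and a sequential-compactness contradiction for the preimage case — are exactly the standard way to fill that in. For part \ref{lemma:distance:ii} your route differs from the paper's: the paper derives \ref{lemma:distance:ii} as a corollary of \ref{lemma:distance:i}, writing $\dist(\pi^{-1}(E),\pi^{-1}(F))=\lim_{r\to 0}\dist(N_r(\pi^{-1}(E)),N_r(\pi^{-1}(F)))$ and noting that for each fixed $r$ the right-hand quantity is eventually bounded by $\dist(\pi^{-1}(E_n),\pi^{-1}(F_n))$; you instead argue directly, extracting points $x_k,y_k$ that realize the distances along a subsequence achieving the $\liminf$ and showing their limits land in $\pi^{-1}(E)$ and $\pi^{-1}(F)$. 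Both are sound; the paper's version is shorter because it reuses part \ref{lemma:distance:i}, while yours is self-contained and makes the role of compactness more explicit. The only point worth a passing remark in your version is that the distance between the two compact preimages is attained (which you implicitly use when choosing $x_k,y_k$), and that if one of the preimages is empty along the subsequence the corresponding distance is infinite and the inequality is vacuous — neither affects the argument.
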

\begin{proof}
The first part follows from compactness and continuity. For the second part, note that
\begin{align*}
\dist( \pi^{-1}(E),\pi^{-1}(F))=\lim_{r\to 0} \dist( N_r(\pi^{-1}(E)),N_r(\pi^{-1}(F))). 
\end{align*} 
By the first part, for each $r>0$, we have
$$\dist( N_r(\pi^{-1}(E)),N_r(\pi^{-1}(F))) \leq \dist( \pi^{-1}( E_n),\pi^{-1}(F_n))$$
for all sufficiently large $n\in \N$. This completes the proof.
\end{proof}

\subsection{Fat sets}\label{section:fat}

Let $\tau>0$.  A measurable set $K\subset \widehat{\C}$ is \textit{$\tau$-fat} if for each $x\in K$ and for each ball $B_{\sigma}(x,r)$ that does not contain $K$ we have $\Sigma( B_{\sigma}(x,r)\cap K) \geq \tau r^2$. A set is \textit{fat} if it is $\tau$-fat for some $\tau>0$. Note that points are automatically $\tau$-fat for every $\tau>0$.   A more modern terminology for fatness Ahlfors $2$-regularity, but we prefer to use the original terminology that was used by Schramm \cite{Schramm:transboundary} and Bonk \cite{Bonk:uniformization}. 

\begin{lemma}\label{lemma:fat_invariant}
Let $\tau>0$. If a connected set $K\subset \widehat{\C}$ is $\tau$-fat and $T$ is a M\"obius transformation, then $T(K)$ is $c(\tau)$-fat.
\end{lemma}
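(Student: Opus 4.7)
The strategy is to pull back $B_\sigma(y,s)$ under $T$, exploit the $\tau$-fatness of $K$ on a carefully chosen neighborhood of $x=T^{-1}(y)$, and convert the resulting measure estimate back to $T(K)$ using a pointwise distortion estimate for the Möbius map $T$.

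First, since Möbius transformations of $\widehat{\C}$ permute circles and every spherical disk is the inside of a circle, $T^{-1}(B_\sigma(y,s))=B_\sigma(x',r')$ for some $x'\in\widehat{\C}$ and $r'>0$. Setting $x=T^{-1}(y)\in K$, we have $x\in B_\sigma(x',r')$ and $K\not\subset B_\sigma(x',r')$, because $T(K)\not\subset B_\sigma(y,s)$. Write $\delta=r'-\sigma(x,x')\in(0,r']$. The conformal change of variables gives
\[
\Sigma\bigl(T(K)\cap B_\sigma(y,s)\bigr)=\int_{K\cap B_\sigma(x',r')}|T^{\sharp}|^{2}\,d\Sigma.
\]

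Second, the restriction $T|_{B_\sigma(x',r')}\colon B_\sigma(x',r')\to B_\sigma(y,s)$ is a Möbius isomorphism that sends $x$ to the spherical center $y$ of the image. Such a map is a hyperbolic isometry between the two disks, and the hyperbolic density of a spherical disk $D$ (expressed in the spherical metric) is, up to universal constants, the reciprocal of the spherical distance to $\partial D$; this is the standard Schwarz--Pick estimate, reducing by a Möbius change of variables to the explicit formula $2R/(R^{2}-|z|^{2})$ on $B_e(0,R)$. Combining these two facts yields $|T^{\sharp}(x)|\asymp s/\delta$, and a Schwarz--Pick argument shows that if $c_0\in(0,1/2]$ is a sufficiently small universal constant then $T(B_\sigma(x,c_0\delta))$ is contained in the inner half of $B_\sigma(y,s)$, which in turn gives the uniform lower bound $|T^{\sharp}(z)|\gtrsim s/\delta$ for every $z\in B_\sigma(x,c_0\delta)$.

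Finally, since $B_\sigma(x,c_0\delta)\subset B_\sigma(x',r')$ and $K\not\subset B_\sigma(x',r')$, we have $K\not\subset B_\sigma(x,c_0\delta)$. The $\tau$-fatness of $K$ applied at the center $x\in K$ yields $\Sigma\bigl(K\cap B_\sigma(x,c_0\delta)\bigr)\geq \tau(c_0\delta)^{2}$, and combining this with the distortion bound gives
\[
\Sigma\bigl(T(K)\cap B_\sigma(y,s)\bigr)\geq \int_{K\cap B_\sigma(x,c_0\delta)}|T^{\sharp}|^{2}\,d\Sigma \gtrsim \Bigl(\frac{s}{\delta}\Bigr)^{2}\tau(c_0\delta)^{2}\gtrsim \tau s^{2}.
\]
The main technical hurdle will be the distortion estimate; the cleanest path is to use Möbius invariance to normalize by spherical rotations (which preserve fatness exactly), and when $r'$ or $s$ is comparable to the diameter of $\widehat{\C}$ to pass to the complementary disk via an inversion, reducing in all cases to a bounded computation on a Euclidean disk. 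Note that the argument does not use connectedness of $K$; the hypothesis is likely included for use in the later applications.
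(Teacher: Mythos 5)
Your strategy (pull the ball back under $T$, apply fatness of $K$ at $x=T^{-1}(y)$, and convert via a pointwise lower bound on the spherical derivative) is genuinely different from the paper's, which reduces to Schramm's Euclidean result by showing that spherical and Euclidean fatness are quantitatively equivalent. However, your version has a real gap, located exactly in the case you defer to ``a bounded computation on a Euclidean disk''. The assertion that the hyperbolic density of a spherical cap, measured against the spherical metric, is comparable to the reciprocal of the spherical distance to its boundary is false for caps larger than a hemisphere: at the center of $B_\sigma(x',\pi-\epsilon)$ the density equals $\cot((\pi-\epsilon)/2)\asymp\epsilon$, while the distance to the boundary is $\asymp 1$. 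Hence the lower bound $|T^{\sharp}|\gtrsim s/\delta$ on $B_\sigma(x,c_0\delta)$ fails whenever $T^{-1}(B_\sigma(y,s))$ is a large cap with $x$ deep inside it. Concretely, take $T(z)=\epsilon z$ in the stereographic coordinate, $x=y=0$, $s=\pi/2$: then $T^{-1}(B_\sigma(0,s))=B_e(0,1/\epsilon)$, $\delta\asymp 1$, and $|T^{\sharp}(z)|=\epsilon(1+|z|^2)/(1+\epsilon^2|z|^2)\leq 2\epsilon$ on all of $B_\sigma(0,c_0\delta)\subset B_e(0,1)$, so your final chain yields only $\Sigma(T(K)\cap B_\sigma(y,s))\gtrsim\tau\epsilon^2$ instead of $\gtrsim\tau s^2\asymp\tau$. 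Inverting to the complementary cap does not repair this, because the set $K\cap B_\sigma(x,c_0\delta)$ on which you invoke fatness lives precisely where $T$ is strongly contracting.

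The missing ingredient is the connectedness of $K$, and your closing remark that the argument does not use it is the symptom. In the example above, $K$ contains $0$ and exits $B_e(0,1/\epsilon)$, so by connectedness it must cross the outer annulus $\{1/\sqrt{\epsilon}\lesssim|z|\leq 1/\epsilon\}$, which is the region where $|T^{\sharp}|$ is bounded below by a constant; the correct move is to relocate the base point of the fatness estimate to a point $y'\in K$ in that region and apply fatness in a small ball around $y'$ on which the conformal factor is essentially constant. This relocation is exactly what the paper's proof does with its annulus construction (four concentric annuli in $B_e(x,r)$, one of which avoids the singularity of the conformal factor, and a point of $K$ on its middle circle furnished by connectedness). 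With that step added, your direct approach can be made to work; as written, it does not.
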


\begin{proof}
Schramm \cite[Theorem 2.1]{Schramm:transboundary} established the invariance of fatness under M\"obius transformations. However, Schramm's definition of fatness uses the Euclidean metric rather than the spherical one, requiring that $\mathcal H^2_{e}(B_{e}(x,r)\cap K)\geq \tau r^2$ for every $x\in K\cap \C$ and ball $B_e(x,r)$ that does not contain $K$. Hence, it suffices to show that fatness according to the spherical metric is equivalent to fatness according to the Euclidean metric, quantitatively.   We assume that $\C$ is embedded into $\widehat{\C}$ via stereographic projection.

Suppose that $K$ is $\tau$-fat according to the spherical metric and fix $x\in K\cap \C$ and a ball $B_{e}(x,r)$ that does not contain $K$. Consider the annuli $A_{e}(x; (n-1)r/5, nr/5)=B_e(x,nr/5)\setminus \br{B_e}(x,(n-1)r/5)$, $n\in \{2,\dots,5\}$. One of these annuli, say $A$, has the property that $\dist_{e} (A,0) \geq r/5$. Since $K$ is connected and not contained in $B_{e}(x,r)$, which contains $A$, there exists a point $y\in K$ lying in the circle that is equidistant from the boundary circles of $A$. The ball $B_{e}(y,r/10)$ is contained in $A\subset B_{e}(x,r)$. For all points  $z\in B_{e}(y,r/10)$ we have $|z|\simeq r$. Thus, for all $z,w\in B_{e} (y,r/10)$ we have  $\sigma(z,w) \simeq ({1+r^2})^{-1} |z-w|$. This implies that 
$$\frac{\mathcal H^2_{e}(B_{e}(x,r)\cap K)}{r^2} \gtrsim 
 \frac{\mathcal H^2_{e}(B_{e}(y,r/10)\cap K)}{\mathcal H^2_{e}( B_{e}(y,r/10))} \simeq \frac{\Sigma (B_{e}(y,r/10)\cap K)}{\Sigma( B_{e}(y,r/10))} \simeq_{\tau} 1,$$
since a Euclidean ball is also a spherical ball (of possibly different radius). Therefore, we conclude the fatness of $K$ according the Euclidean metric, quantitatively.

Conversely, suppose that $K$ is fat according to the Euclidean metric and fix $x\in K$ and a ball $B_{\sigma}(x,r)$ that does not contain $K$. We apply an isometry $P$ of $\widehat{\C}$ so that $P(x)=0$. The set $P(K)$ is also fat according to the Euclidean metric by Schramm's result. We have
\begin{align*}
{\Sigma(B_{\sigma}(x,r)\cap K)} \geq   {\Sigma(B_{\sigma}(x,r/4)\cap K)} = {\Sigma(B_{\sigma}(0,r/4)\cap P(K))}.
\end{align*}
Since $B_{\sigma}(0,r/4)$ is contained in the unit disk in the plane, the identity map from $B_{\sigma}(0,r/4)$ into $(\C,|\cdot|)$ is uniformly bi-Lipschitz and $B_{\sigma}(0,r/4)$ corresponds to a Euclidean ball $B_{e}(0,cr)$ for some constant $c\simeq 1$. Thus, 
$$\Sigma(B_{\sigma}(0,r/4)\cap P(K))\simeq \mathcal H^2_{e}(B_{e}(0,cr)\cap P(K))\gtrsim_{\tau} r^2.$$
This completes the proof.   
\end{proof}

\begin{lemma}\label{lemma:fat_hausdorff}
Let $\tau>0$ and $K_n\subset \widehat{\C}$, $n\in \N$, be a sequence of $\tau$-fat compact sets. Then every compact limit of $\{K_n\}_{n\in \N}$ in the Hausdorff sense is $\tau$-fat.
\end{lemma}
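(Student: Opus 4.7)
The plan is to transfer the fatness inequality pointwise from the sequence $K_n$ to the limit $K$ by combining Hausdorff convergence with standard continuity of the spherical measure. Let $K$ be a Hausdorff limit of a $\tau$-fat sequence $\{K_n\}$. Fix $x \in K$ and an open ball $B_\sigma(x,r)$ that does not contain $K$, so there exists some $y \in K$ with $\sigma(x,y)\geq r$. Since $\{K_n\}$ converges to $K$ in the Hausdorff sense, I can choose sequences $x_n,y_n \in K_n$ with $x_n\to x$ and $y_n\to y$.

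Next, I would transfer the fatness to the base point $x$. Fix $\epsilon>0$ small (say $\epsilon<r/2$). For $n$ sufficiently large, $\sigma(x_n,y_n) > r - 2\epsilon$, so the ball $B_\sigma(x_n, r-2\epsilon)$ does not contain $y_n$ and hence does not contain $K_n$. By $\tau$-fatness,
\[
\Sigma\bigl(B_\sigma(x_n, r - 2\epsilon)\cap K_n\bigr) \geq \tau (r - 2\epsilon)^2.
\]
Since also $x_n\to x$, for large $n$ we have $B_\sigma(x_n, r-2\epsilon) \subset B_\sigma(x, r-\epsilon)$, which yields
\[
\Sigma\bigl(B_\sigma(x, r - \epsilon)\cap K_n\bigr) \geq \tau (r - 2\epsilon)^2.
\]

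The remaining task is to pass the estimate to the limit set. For any $\delta>0$, Hausdorff convergence gives $K_n\subset N_\delta(K)$ for $n$ large, so the previous display implies
\[
\Sigma\bigl(B_\sigma(x, r-\epsilon)\cap N_\delta(K)\bigr) \geq \tau (r-2\epsilon)^2
\]
for every $\delta>0$. Since $K$ is closed, the sets $B_\sigma(x, r-\epsilon)\cap N_\delta(K)$ decrease to $B_\sigma(x,r-\epsilon)\cap K$ as $\delta\to 0$, and $\Sigma$ is finite on $\widehat{\C}$, so continuity from above gives the same inequality with $N_\delta(K)$ replaced by $K$. Finally, letting $\epsilon\to 0$ and invoking continuity from below for the increasing sequence of sets $B_\sigma(x,r-\epsilon)\cap K$ yields $\Sigma(B_\sigma(x,r)\cap K) \geq \tau r^2$, as required.

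I do not anticipate a serious obstacle here: the only subtlety is to avoid assuming that Hausdorff convergence preserves the measure of the sets themselves (it does not). The argument above circumvents this by only using the one-sided inclusion $K_n\subset N_\delta(K)$ together with continuity of $\Sigma$, which is the correct direction to retain a lower bound on measure in the limit. Edge cases (where $K$ is a single point or empty) are trivial since points are $\tau$-fat for all $\tau>0$ and the defining condition is vacuous when $K=\emptyset$.
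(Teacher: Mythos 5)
Your proof is correct and follows essentially the same strategy as the paper: transfer the fatness inequality from $K_n$ to $K$ by moving the center to a nearby point $x_n\in K_n$, shrinking the radius slightly, and using that $K_n$ eventually lies in any neighborhood of $K$. The only cosmetic difference is that the paper invokes outer regularity via an open set $U\supset K$ with $\Sigma(B(x,r)\cap U)\leq \Sigma(B(x,r)\cap K)+\varepsilon$, while you use continuity from above of $\Sigma$ along the neighborhoods $N_\delta(K)$ — these are interchangeable.
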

\begin{proof}
Let $K\subset \widehat{\C}$ be a compact set that is the Hausdorff limit of a subsequence of $K_n$, $n\in \N$, which we denote by $K_n$ for the sake of simplicity. If $K$ is a point, then $K$ is trivially $\tau$-fat, so without loss of generality, we assume that $\diam(K)>0$. Let $x\in K$ and $B(x,r)$ be a ball that does not contain $K$. Our goal is to show that $\Sigma(B(x,r)\cap K)\geq \tau r^2$. Let $\varepsilon>0$ and $U\supset K$ be an open set such that $\Sigma(B(x,r)\cap K) \geq \Sigma(B(x,r)\cap U)-\varepsilon$. For all sufficiently large $n\in \N$, we have $K_n\subset U$ by the Hausdorff convergence. Moreover, there exists a sequence $x_n\in K_n$ converging to $x$ such that for each $\delta>0$ we have $B(x_n,r-\delta)\subset B(x,r)$ for all sufficiently large $n\in \N$. Since $K\not\subset B(x,r)$, we have $K_n\not\subset B(x_n,r-\delta)$ for all sufficiently large $n\in \N$. Altogether, for all sufficiently large $n\in \N$ we have
\begin{align*}
\Sigma(B(x,r)\cap K) \geq \Sigma(B(x,r)\cap U)-\varepsilon \geq \Sigma(B(x_n,r-\delta) \cap K_n)-\varepsilon\geq \tau (r-\delta)^2-\varepsilon.
\end{align*}
We let $\delta\to 0$, and then $\varepsilon\to 0$ to obtain the desired conclusion.
\end{proof}

The next statement can be found \cite[Lemma 2.6 (iii)]{MaioNtalampekos:packings} in a slightly altered form.
\begin{lemma}\label{lemma:count}
Let $\tau>0$ and $\{p_i\}_{i\in \N}$ be a collection of disjoint $\tau$-fat continua in $\widehat{\C}$. For each compact set $E\subset \widehat{\C}$ and $a>0$ the set 
$$\{i: p_i\cap E\neq \emptyset  \,\,\, \textrm{and}\,\,\, \diam(p_i)\geq a\diam(E)\}$$
has at most $c(\tau,a)$ elements.
\end{lemma}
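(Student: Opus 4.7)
The approach is to lower-bound the spherical measure of each such $p_i$ near $E$ via the $\tau$-fatness property, and then to use the disjointness of the $p_i$ together with a measure bound on a controlled neighborhood of $E$. The main subtlety is selecting a uniform radius, comparable to $\diam(E)$, that is small enough for the ball around a chosen point of $p_i\cap E$ not to contain $p_i$ (so that the $\tau$-fatness condition actually applies) but large enough for the resulting area lower bound to match the measure of a bounded neighborhood of $E$.

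If $\diam(E)=0$, then $E$ is a single point and at most one of the disjoint $p_i$ can meet it, so the conclusion is trivial. Otherwise, for each index $i$ in the set to be counted, pick $x_i\in p_i\cap E$ and set $r = a\diam(E)/4$, independent of $i$. Since $\diam(B_\sigma(x_i,r))\leq 2r = a\diam(E)/2 < a\diam(E)\leq \diam(p_i)$, the ball $B_\sigma(x_i,r)$ cannot contain $p_i$, so $\tau$-fatness gives
$$\Sigma\bigl(p_i\cap B_\sigma(x_i,r)\bigr) \geq \tau r^2 = \frac{\tau a^2}{16}\,\diam(E)^2.$$

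Fixing any $x_0\in E$, the inclusion $B_\sigma(x_i,r)\subset N_r(E)\subset B_\sigma(x_0,(1+a/4)\diam(E))$ holds for every $i$, and the latter ball has spherical measure at most $\pi(1+a/4)^2\diam(E)^2$ by the standard estimate $\Sigma(B_\sigma(x,R))=2\pi(1-\cos R)\leq \pi R^2$ (which remains valid even when the ball exhausts $\widehat{\C}$). Since the sets $p_i\cap B_\sigma(x_i,r)$ are pairwise disjoint subsets of this one ball, summing the fatness estimate over the $N$ indices in question and cancelling $\diam(E)^2$ yields
$$N \leq \frac{16\pi(1+a/4)^2}{\tau a^2} \eqqcolon c(\tau,a),$$
which is the desired bound.
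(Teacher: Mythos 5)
Your argument is correct and complete: the degenerate case is handled, the radius $r=a\diam(E)/4$ is chosen so that fatness genuinely applies (the ball cannot contain $p_i$ since its diameter is at most $2r<\diam(p_i)$), and the disjointness of the sets $p_i\cap B_\sigma(x_i,r)$ inside a single ball of radius comparable to $\diam(E)$ gives the count. The paper does not prove this lemma but cites it from \cite{MaioNtalampekos:packings}; your volume-packing argument is the standard one for such statements, so there is nothing to flag.
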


We also record an elementary consequence of fatness; see \cite[Property (F2), p.~154]{NtalampekosYounsi:rigidity} for a proof.
\begin{lemma}\label{lemma:radial}
Let $\tau>0$ and $K\subset \widehat{\C}$ be a connected $\tau$-fat set.  Then for each $x\in \widehat{\C}$ and $r>0$ we have
\begin{align*}
\mathcal H^1( \{s\in (0,r) : K\cap S(x,s)\neq \emptyset\})^2 \leq c(\tau) \Sigma(K\cap B(x,r)).  
\end{align*}
\end{lemma}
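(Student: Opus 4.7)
The plan is to exploit the connectedness of $K$ to make the set $E := \{s \in (0, r) : K \cap S(x,s) \neq \emptyset\}$ into an interval, and then invoke fatness at a single well-chosen extreme point.

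First I would set $a = \inf_{y \in K} \sigma(x, y)$ and $b = \sup_{y \in K} \sigma(x, y)$. Since the function $y \mapsto \sigma(x, y)$ is continuous on the connected compact set $K$, its image is exactly $[a, b]$, and therefore $E = [a, b] \cap (0, r)$. This immediately reduces the problem to an elementary analysis of $\ell := \mathcal H^1(E)$ in terms of $a$, $b$, and $r$. If $r \leq a$ there is nothing to prove since $E = \emptyset$; otherwise $\ell = \min(b, r) - \max(a, 0) \leq \min(b, r) - a$, and in particular there exist points $y, y' \in K$ with $\sigma(x, y) = a$ and $\sigma(x, y') \geq a + \ell$, so by the reverse triangle inequality $\sigma(y, y') \geq \ell$.

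Next I would exploit fatness at $y$. The ball $B(y, \ell/2)$ does not contain $y'$, hence does not contain $K$; so $\tau$-fatness of $K$ yields
\[
\Sigma\bigl(K \cap B(y, \ell/2)\bigr) \geq \tau \,(\ell/2)^2 = \frac{\tau}{4} \ell^2.
\]
Finally, I would check that $B(y, \ell/2) \subset B(x, r)$: any point of $B(y, \ell/2)$ has spherical distance at most $a + \ell/2$ from $x$, and
\[
a + \ell/2 \leq a + \tfrac{1}{2}(\min(b, r) - a) = \tfrac{1}{2}(a + \min(b, r)) \leq r,
\]
since $a \leq r$. Combining gives $\Sigma(K \cap B(x, r)) \geq (\tau / 4)\ell^2$, so the conclusion holds with $c(\tau) = 4/\tau$.

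There is no real obstacle here — the one small care-point is handling the edge cases $a = 0$ (when $x$ could lie in $K$) and $b \leq r$ (when $K \subset B(x,r)$) so that the ball $B(y, \ell/2)$ is simultaneously contained in $B(x, r)$ and guaranteed not to contain $K$. The computation above is written precisely so that the same choice $\rho = \ell/2$ works uniformly across the cases, which is why the argument stays so compact.
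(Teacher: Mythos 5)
Your proof is correct and is exactly the standard fatness argument that the paper outsources to the cited reference (Property (F2) of Ntalampekos--Younsi): the radii meeting $K$ form an interval of length $\ell \leq \min(b,r)-a$, and one application of $\tau$-fatness at a nearly closest point of $K$ to $x$, with radius $\ell/2$, gives the bound. The only caveat is that the lemma assumes $K$ is a connected $\tau$-fat \emph{set}, not necessarily compact, so the infimum and supremum of $\sigma(x,\cdot)$ on $K$ need not be attained; this is harmless, since replacing $y$ and $y'$ by approximate extremizers and letting the error tend to $0$ yields the same estimate (and in every application in the paper $K$ is a continuum anyway).
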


A metric measure space $(X,d,\mu)$ is \textit{doubling} if every ball in $X$ has positive and finite measure and there exists a constant $L>0$ such that 
$$\mu(B(x,2r))\leq L \mu(B(x,r))$$
for each $x\in X$ and $r>0$. In this case, we say that $X$ is $L$-doubling. 

\begin{lemma}\label{lemma:bojarski}
Let $(X,d,\mu)$ be an $L$-doubling metric measure space for some $L>0$. Let $p\geq 1$, $a\geq 1$, and $\{b_i\}_{i\in I}$ be a collection of non-negative numbers. Suppose that $\{D_i\}_{i\in I}$ is a family of measurable sets and $\{B_i=B(x_i,r_i)\}_{i\in I}$ is a family of balls in $X$ with the property that $D_i\subset B_i$ and $\mu(B_i)\leq a \mu(D_i)$ for each $i\in I$. Then
\begin{align*}
\left \| \sum_{i\in I} b_i\x_{B_i}\right \|_{L^p(X)} \leq c(L,p,a)  \left \| \sum_{i\in I} b_i\x_{D_i} \right \|_{L^p(X)}
\end{align*}
\end{lemma}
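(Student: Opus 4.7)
My plan is to argue by duality and reduce the inequality to the $L^{p'}$--boundedness of the Hardy--Littlewood maximal operator on the doubling space $(X,d,\mu)$. Since $X$ is doubling, $\mu$ is $\sigma$-finite, and for $p\in[1,\infty)$ we have
\begin{align*}
\left\|\sum_{i\in I} b_i\x_{B_i}\right\|_{L^p(X)} = \sup \left\{ \sum_{i\in I} b_i \int_{B_i} g\,d\mu : g\geq 0,\ \|g\|_{L^{p'}(X)}\leq 1\right\},
\end{align*}
where $p'$ is the conjugate exponent. Thus it suffices to fix a non-negative $g\in L^{p'}(X)$ with $\|g\|_{L^{p'}(X)}\leq 1$ and to bound $\sum_i b_i\int_{B_i} g\,d\mu$ by a constant depending on $L,p,a$ times $\|\sum_i b_i \x_{D_i}\|_{L^p(X)}$.

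The key pointwise step is the estimate
\begin{align*}
\int_{B_i} g\, d\mu \leq a L^2 \int_{D_i} Mg \, d\mu \quad \text{for each } i\in I \text{ with } \mu(D_i)>0,
\end{align*}
where $M$ is the uncentered Hardy--Littlewood maximal operator in $(X,d,\mu)$. To see this, fix $x\in D_i$. Since $D_i\subset B_i=B(x_i,r_i)$ and $x\in B_i$, we have $B_i\subset B(x,2r_i)\subset B(x_i,3r_i)$, and the doubling condition gives $\mu(B(x,2r_i))\leq L^2\mu(B_i)$. Hence
\begin{align*}
\frac{1}{\mu(B_i)}\int_{B_i} g\, d\mu \leq \frac{\mu(B(x,2r_i))}{\mu(B_i)}\cdot \frac{1}{\mu(B(x,2r_i))}\int_{B(x,2r_i)} g\, d\mu \leq L^2 Mg(x).
\end{align*}
Multiplying by $\mu(B_i)\leq a\mu(D_i)$ and then averaging over $x\in D_i$ produces the desired estimate. (If $\mu(D_i)=0$, then $\mu(B_i)=0$ as well, and the term contributes nothing.)

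Summing over $i$ and interchanging sum and integral by monotone convergence yields
\begin{align*}
\sum_{i\in I} b_i \int_{B_i} g\, d\mu \leq aL^2 \int_X Mg \cdot \left(\sum_{i\in I} b_i \x_{D_i}\right) d\mu,
\end{align*}
so H\"older's inequality gives
\begin{align*}
\sum_{i\in I} b_i \int_{B_i} g\, d\mu \leq aL^2 \, \|Mg\|_{L^{p'}(X)} \left\|\sum_{i\in I} b_i \x_{D_i}\right\|_{L^p(X)}.
\end{align*}
Finally, the Hardy--Littlewood maximal operator is bounded on $L^{p'}(X)$ for $p'\in(1,\infty]$, with constant depending only on $p'$ and the doubling constant $L$ (see, e.g., Heinonen's text on analysis on metric spaces). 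The endpoint case $p=1$, where $p'=\infty$, is immediate since $\|Mg\|_{L^\infty}\leq\|g\|_{L^\infty}$. Taking the supremum over admissible $g$ completes the proof.

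The only mild subtlety, and the step that I expect to require the most care, is ensuring that the constant in the $L^{p'}$--boundedness of $M$ is uniform in $p$ near the endpoint $p=1$; but since we are merely asserting existence of $c(L,p,a)$, dependence on $p$ is allowed, so this is not a real obstruction.
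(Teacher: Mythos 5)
Your proposal is correct and follows essentially the same route as the paper: a pointwise comparison $\mu(B_i)^{-1}\int_{B_i}g\,d\mu\lesssim Mg(x)$ for $x\in D_i$, followed by duality, H\"older, and the Hardy--Littlewood maximal inequality on the doubling space. The only cosmetic differences are your use of the uncentered maximal operator (giving $L^2$ instead of $L$) and your handling of $p=1$ through $p'=\infty$ rather than directly.
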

\begin{proof}Note that for $p=1$ the proof of the inequality is straightforward. Suppose that $p>1$.  For a non-negative measurable function $\phi$ on $X$ consider the centered maximal function $M\phi$. For each $i\in I$ and $x\in D_i\subset B_i$ we have
\begin{align*}
\frac{1}{\mu(B_i)}\int_{B_i}\phi \leq \frac{1}{\mu(B_i)} \int_{B(x,2r_i)} \phi \leq L  M\phi(x).
\end{align*}
Thus, 
\begin{align*}
\int_{D_i} M\phi \geq L^{-1 }\mu(D_i)  \frac{1}{\mu(B_i)}\int_{B_i}\phi \geq L^{-1}a^{-1} \int_{B_i} \phi.
\end{align*}

Now, let $f=\sum_{i\in I} b_i\x_{B_i}$ and  $\phi$ be an arbitrary non-negative function with $\|\phi\|_{L^q(X)}=1$, where $1/p+1/q=1$. Then, by the Hardy--Littlewood maximal inequality for doubling metric measure spaces \cite[Theorem 3.5.6, p.~92]{HeinonenKoskelaShanmugalingamTyson:Sobolev} we have
\begin{align*}
\int f \phi &=\sum_{i\in I} b_i \int_{B_i}\phi\leq c(L,a) \sum_{i\in I} b_i \int_{D_i} M\phi = c(L,a) \int \left(\sum_{i\in I} b_i\x_{D_i} \right)M\phi \\
&\leq   c(L,a)	\left \| \sum_{i\in I} b_i\x_{D_i} \right \|_{L^p(X)} \|M\phi\|_{L^q(X)}\leq c(L,p,a)\left \| \sum_{i\in I} b_i\x_{D_i} \right \|_{L^p(X)}.
\end{align*}
The duality between $L^p$ and $L^q$ shows the desired inequality. 
\end{proof}

\subsection{Topological preliminaries}\label{section:topological}

Let $\nu\colon X\to Y$ be a continuous map between topological spaces. The map $\nu$ is \textit{proper} if the preimage of each compact set is compact. The map $\nu$ is \textit{monotone} if the preimage of each point is a continuum.  Moreover, $\nu$ is \textit{cell-like} if the preimage of each point is a continuum that is contractible in all of its open neighborhoods. In $2$-manifolds without boundary cell-like continua coincide with sets that have a simply connected neighborhood that they do not separate. In the $2$-sphere, this condition is simply equivalent to the condition that the continuum is non-separating.

If $\nu$ is a continuous map from the $2$-sphere onto itself, then it is  monotone if and only if it is cell-like if and only if it is the uniform limit of homeomorphisms; see \cite{Youngs:monotone} and \cite[Theorem 6.3]{NtalampekosRomney:nonlength}. In analogy, for non-compact manifolds without boundary, we have the Armentrout--Quinn--Siebenmann approximation theorem \cite[Corollary 25.1A, p.~189]{Daverman:decompositions}. We formulate the theorem according to \cite{Siebenmann:approximation}.
\begin{theorem}\label{theorem:approximation}
A continuous, proper, and cell-like map $\nu\colon X\to Y$ between $2$-manifolds without boundary can be uniformly approximated by homeomorphisms in the following strong sense. For each continuous function $\epsilon\colon X\to (0,\infty)$ and for each metric $d$ on $Y$ compatible with the topology, there exists a homeomorphism $\widetilde \nu\colon X\to Y$ such that $$d( \nu(x),\widetilde \nu(x)) <\epsilon(x)$$ 
for each $x\in X$. 
\end{theorem}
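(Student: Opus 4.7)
The plan is to deduce this from \emph{Bing's shrinking criterion} together with two-dimensional topology (Jordan--Schoenflies). The idea is classical in decomposition theory: first realize $Y$ as the quotient of $X$ by the decomposition into point-preimages, and then show that this quotient map is shrinkable.

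First I would verify that the point-preimages $\mathcal G = \{\nu^{-1}(y) : y\in Y\}$ form an upper semi-continuous decomposition of $X$ into cell-like continua. Upper semi-continuity follows from the continuity and properness of $\nu$: if $\nu^{-1}(y_n)\to K$ in the Hausdorff sense and $y_n\to y$ (which we may extract by properness), then $K\subset \nu^{-1}(y)$. Since $\nu$ is proper and surjective with connected fibers, it factors through a continuous bijection $\bar\nu\colon X/\mathcal G\to Y$ that is a homeomorphism. Thus the problem becomes one of approximating the quotient map $X\to X/\mathcal G$ by homeomorphisms, in the sense of Theorem \ref{theorem:approximation}.

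Next, I would apply Bing's shrinking criterion: it suffices to produce, for any prescribed continuous tolerances, a homeomorphism $h\colon X\to X$ such that (a) $d(\nu(h(x)),\nu(x))$ is uniformly small on compacta in $Y$, and (b) for every $y\in Y$ the set $h(\nu^{-1}(y))$ has diameter less than a prescribed function of $y$. Granted such shrinking, one iterates to obtain a Cauchy sequence of homeomorphisms in the sup metric weighted by $\epsilon$ whose limit $\widetilde\nu$ is the required approximation; the limit is injective precisely because the shrinking condition guarantees that no two points of distinct fibers get glued. The shrinking step itself is where dimension two enters. Each fiber $K=\nu^{-1}(y)$ is a cell-like, hence non-separating, continuum inside a disk neighborhood $V_y\subset X$. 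Using properness, only finitely many fibers have preimages of diameter at least any given $\eta>0$ meeting a fixed compact set. For each such fiber I would enclose $K$ in a Jordan domain $U_K\subset V_y$ of small diameter on which $\nu$ has small oscillation (possible by upper semi-continuity), and use the Schoenflies theorem to construct an ambient homeomorphism of $X$, supported in a slightly larger disk, that radially collapses $U_K$ to a set of diameter less than the prescribed tolerance. Composing these finitely many disjointly supported homeomorphisms yields $h$; disjointness of the supports is arranged by shrinking the enclosing disks sufficiently, again using upper semi-continuity and Lemma \ref{lemma:count}-type finiteness.

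The main obstacle is precisely the shrinking step: one must simultaneously shrink infinitely many fibers in a way that is uniformly controlled with respect to the (non-uniform) tolerance $\epsilon$, while keeping all supports disjoint and ensuring that the composition remains a homeomorphism in the non-compact setting. Controlling the $\nu$-displacement when the shrinking moves points through many fibers at once is delicate and is the reason this theorem is nontrivial; it is the two-dimensional special case of a general phenomenon that fails in high dimensions without additional cell-likeness hypotheses. In dimension two the argument ultimately reduces to planar Schoenflies and a careful bookkeeping via an exhaustion of $X$ by compact sets on which the iteration is controlled.
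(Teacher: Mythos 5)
First, a point of context: the paper does not prove this statement at all --- it is quoted verbatim as the Armentrout--Quinn--Siebenmann approximation theorem, with a citation to \cite[Corollary 25.1A]{Daverman:decompositions} and to Siebenmann. So you are attempting to reprove a deep black-box result, and your general framework (pass to the upper semicontinuous decomposition into point-preimages, then verify Bing's shrinking criterion) is indeed the standard one in the literature.

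However, the shrinking step as you describe it contains a genuine error. You claim that ``only finitely many fibers have preimages of diameter at least any given $\eta>0$ meeting a fixed compact set,'' and your construction of the shrinking homeomorphism $h$ as a composition of finitely many disjointly supported Schoenflies collapses depends entirely on this. The claim is false for general proper cell-like maps between $2$-manifolds: take the decomposition of $\R^2$ into the vertical arcs $\{c\}\times[0,1]$ for $c$ in a Cantor set and singletons otherwise; this is upper semicontinuous with all nondegenerate elements cell-like, so (by Moore's theorem itself) it is realized by a proper cell-like map $\nu$, and uncountably many fibers of diameter $1$ meet $[0,1]^2$. Properness and upper semicontinuity only prevent large fibers from accumulating outside the decomposition; they do not force the nondegenerate fibers to form a null sequence. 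Worse, the strategy cannot be patched by allowing more collapses: a separable surface contains no uncountable family of pairwise disjoint open sets, so one cannot enclose each large fiber in its own Jordan domain with all domains disjoint. The finiteness you invoke \emph{is} available for the decompositions actually arising in this paper (countably many peripheral continua with $\diam(p_i)\to 0$, i.e.\ null-sequence decompositions, where Lemma \ref{lemma:count} applies), and for those your argument is essentially sound; but the theorem as stated concerns arbitrary cell-like maps, and there the shrinking must group uncountably many nearby large fibers and move them coherently --- this is the substantive content of Moore's theorem and of \cite[Chapter 25]{Daverman:decompositions}, and it is exactly the step your proposal does not supply.
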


\begin{lemma}\label{lemma:cell_like}
Let $\nu\colon X\to Y$ be as in Theorem \ref{theorem:approximation}. 
\begin{enumerate}[\upshape(i)]
	\item\label{lemma:cell_like:surjective} The map $\nu$ is surjective.
	\item\label{lemma:cell_like:open} For each open set $U\subset Y$ the set $\nu^{-1}(U)$ is homeomorphic to $U$.
	\item\label{lemma:cell_like:continuum} A compact set $E\subset Y$ is connected if and only if $\nu^{-1}(E)$ is connected.
	\item\label{lemma:cell_like:set} A compact set $E\subset Y$ is cell-like if and only if $\nu^{-1}(E)$ is cell-like. 
	\item\label{lemma:cell_like:boundary} Let $U\subset Y$ be an open set such that $\nu$ is injective on $\nu^{-1}(\partial U)$. Then $\nu^{-1}(\partial U)=\partial \nu^{-1}(U)$.
\end{enumerate}
\end{lemma}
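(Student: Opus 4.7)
The plan is to verify the five parts in turn, using Theorem \ref{theorem:approximation} as the main tool for (i) and (ii), standard properness arguments for (iii) and (v), and a combination of these for (iv).

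For (i), apply Theorem \ref{theorem:approximation} to $\nu$ with $\epsilon$ equal to a small positive constant, obtaining a homeomorphism $\tilde\nu\colon X\to Y$ with $d(\nu(x),\tilde\nu(x))<\epsilon$ for every $x$. Since $\tilde\nu$ is surjective, each $y\in Y$ lies within distance $\epsilon$ of $\nu(X)$. Properness makes $\nu$ a closed map, so $\nu(X)$ is closed, and letting $\epsilon\to 0$ forces $\nu(X)=Y$. For (ii), observe that the restriction $\nu\vert_{\nu^{-1}(U)}\colon \nu^{-1}(U)\to U$ is again a continuous, proper, cell-like map between $2$-manifolds without boundary (fibers are unchanged, and both $\nu^{-1}(U)$ and $U$ are open in $2$-manifolds). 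A second invocation of Theorem \ref{theorem:approximation} to this restriction produces a homeomorphism.

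For (iii), the backward direction is trivial since $E=\nu(\nu^{-1}(E))$ is a continuous image. Forward: suppose $\nu^{-1}(E)=A\sqcup B$ splits into nonempty disjoint relatively closed subsets (which are closed in $X$ since $\nu^{-1}(E)$ is). Each fiber $\nu^{-1}(y)$, $y\in E$, is a continuum and hence lies entirely in $A$ or in $B$, so $\nu(A)\cap \nu(B)=\emptyset$. By properness $\nu$ is closed, so $\nu(A)$ and $\nu(B)$ are disjoint closed sets covering $E$, contradicting connectedness.

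Part (iv) is the main difficulty. The forward direction is immediate from (ii) and (iii): if $V$ is a simply connected open neighborhood of $E$ with $V\setminus E$ connected, then $\nu^{-1}(V)$ and $\nu^{-1}(V\setminus E)=\nu^{-1}(V)\setminus\nu^{-1}(E)$ are homeomorphic to $V$ and $V\setminus E$ respectively by (ii), hence simply connected and connected; and $\nu^{-1}(E)$ is a continuum by (iii), so $\nu^{-1}(E)$ is cell-like. For the reverse direction, the natural plan is to start with a simply connected open neighborhood $W$ of $\nu^{-1}(E)$ having $W\setminus\nu^{-1}(E)$ connected, saturate by setting $V=Y\setminus\nu(X\setminus W)$ (open since $\nu$ is closed), and let $V_0$ be the component of $V$ containing $E$; by (ii) the component $\nu^{-1}(V_0)\subset W$ is homeomorphic to $V_0$, so it suffices to establish simple connectivity of $V_0$ and connectedness of $V_0\setminus E$. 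The genuine obstacle is that $\nu^{-1}(V_0)$ is only a connected open subset of the disk $W$ and need not be simply connected. To overcome this I would refine the choice of $W$ using that $\nu^{-1}(E)$ admits arbitrarily small closed-disk neighborhoods, and then appeal to the fact that cell-like proper maps between $2$-manifolds preserve cellularity in both directions, either by invoking that cell-like maps are shape equivalences on preimages (so $\nu^{-1}(E)$ and $E$ have the same shape, and being cell-like is equivalent to having the shape of a point), or by a more hands-on argument ruling out compact components of the complement of $\nu^{-1}(V_0)$ in $W$ using the connectedness of $W\setminus \nu^{-1}(E)$ and the connectedness of fibers.

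For (v), the inclusion $\partial\nu^{-1}(U)\subset\nu^{-1}(\partial U)$ is standard for continuous maps. For the reverse, note that fibers over $\partial U$ are automatically contained in $\nu^{-1}(\partial U)$, and the injectivity hypothesis forces each such fiber to be a singleton. Now let $x\in\nu^{-1}(\partial U)$, so $\nu^{-1}(\nu(x))=\{x\}$ and $x\notin\nu^{-1}(U)$. Choose $y_n\in U$ with $y_n\to\nu(x)$ and preimages $x_n\in\nu^{-1}(y_n)\subset\nu^{-1}(U)$. Properness applied to the compact set $\{y_n\}\cup\{\nu(x)\}$ confines $\{x_n\}$ in a compact set, so a subsequence converges to some $x'$ with $\nu(x')=\nu(x)$; the singleton fiber forces $x'=x$, hence $x_n\to x$, so $x\in\overline{\nu^{-1}(U)}\setminus\nu^{-1}(U)=\partial\nu^{-1}(U)$.
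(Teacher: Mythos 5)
Parts (i), (ii), and (v) of your argument are essentially the paper's: surjectivity via the approximation theorem plus closedness of proper maps, (ii) by restricting and re-applying Theorem \ref{theorem:approximation}, and (v) by the properness/singleton-fiber argument. For (iii) you give a self-contained separation argument where the paper simply cites Whyburn's fact that monotone maps of compacta pull back connected sets to connected sets; your proof is correct and a bit more explicit. The forward direction of (iv) also matches the paper (pull back a simply connected neighborhood $U$ with $U\setminus E$ connected through (ii) and (iii)).

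The one real divergence is the converse of (iv), and there your write-up is not yet a proof. The saturation plan ($V=Y\setminus\nu(X\setminus W)$, pass to the component $V_0\ni E$) does break down exactly where you say it does: $\nu^{-1}(V_0)$ is only an open connected subset of the disk $W$, so neither simple connectivity of $V_0$ nor connectedness of $V_0\setminus E$ follows. Of your proposed fixes, the first --- ``appeal to the fact that cell-like proper maps between $2$-manifolds preserve cellularity in both directions'' --- is circular: that is precisely the assertion being proved. The second fix is the right one and is, in effect, what the paper's own one-sentence argument relies on. The paper works with the contractibility definition: for any neighborhood $U$ of $E$, the set $\nu^{-1}(E)$ is contractible in $\nu^{-1}(U)$, and then one concludes that $E$ is contractible in $U$. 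The hidden justification is that $\nu|_{\nu^{-1}(E)}\colon \nu^{-1}(E)\to E$ is a cell-like surjection of compacta, hence a shape equivalence, and shape equivalences induce bijections on homotopy classes of maps into an ANR such as $U$; so null-homotopy of $\nu|_{\nu^{-1}(E)}$ in $U$ forces null-homotopy of the inclusion $E\hookrightarrow U$. If you commit to that route and cite the shape-equivalence theorem, your proof closes. It is also worth recording that in the only situation where the paper actually uses this converse, namely $X=Y=\widehat{\C}$ where cell-like means non-separating continuum, there is a two-line elementary argument: a separation $Y\setminus E=U_1\sqcup U_2$ pulls back to $X\setminus \nu^{-1}(E)=\nu^{-1}(U_1)\sqcup\nu^{-1}(U_2)$ with both pieces open and nonempty by surjectivity, so $\nu^{-1}(E)$ separates $X$.
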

\begin{proof}
For \ref{lemma:cell_like:surjective}, suppose that $y_0\in Y\setminus \nu(X)$. The properness of $\nu$ implies that there exists a ball $B(y_0,r)$ that is disjoint from $\nu(X)$. This contradicts the conclusion of Theorem \ref{theorem:approximation}. For \ref{lemma:cell_like:open}, let $U\subset Y$ be an open set. The map $\nu$ is a continuous, proper, and cell-like map from $\nu^{-1}(U)$ onto $U$. Theorem \ref{theorem:approximation} implies that $\nu$ is the uniform limit of homeomorphisms from $\nu^{-1}(U)$ onto $U$. This proves \ref{lemma:cell_like:open}. The non-trivial direction in \ref{lemma:cell_like:continuum} follows from the fact that $\nu|_{\nu^{-1}(E)}$ is a monotone map from the compact set $\nu^{-1}(E)$ onto $E$; by \cite[(2.2), Chap.\ VIII, p.~138]{Whyburn:topology}, in this setting, the preimage of a connected set is connected. For part \ref{lemma:cell_like:set}, if $E$ is cell-like, then there exists a simply connected neighborhood $U\supset E$ such that $U\setminus E$ is connected. Then by \ref{lemma:cell_like:open} $\nu^{-1}(U)$ is a simply connected neighborhood of $\nu^{-1}(E)$ that is not separated by $\nu^{-1}(E)$; hence $E$ is cell-like. Conversely, if $\nu^{-1}(E)$ is cell-like and $U$ is a neighborhood of $E$, then $\nu^{-1}(E)$ is contractible in $\nu^{-1}(U)$, so $E$ is contractible in $U$.  

For \ref{lemma:cell_like:boundary}, by continuity we have $\partial \nu^{-1}(U)\subset \nu^{-1}(\partial U)$. Moreover, for each $y\in \br U$ the set $\nu^{-1}(y)$ intersects $\br{\nu^{-1}(U)}$; indeed, if $y_n\in U$ and $y_n\to y$, then by properness there exists a sequence $x_n\in \nu^{-1}(y_n)\subset \nu^{-1}(U)$ converging to a point $x\in \nu^{-1}(y)$. The injectivity of $\nu$ on $\nu^{-1}(\partial U)$ implies that if $x\in \nu^{-1}(\partial U)$, then $x=\nu^{-1}(\nu(x))$. By the previous this point lies lies in $\br{\nu^{-1}(U)}$, and thus in $\partial \nu^{-1}(U)$. 
\end{proof}

We will also use Moore's theorem \cite{Moore:theorem}, which facilitates the study of planar domains, Sierpi\'nski packings, and Sierpi\'nski carpets.  Let $G$ be a partition of $\widehat{\C}$ into disjoint continua. We call $G$ a \textit{decomposition} of $\widehat{\C}$. We say that the decomposition $G$ is \textit{upper semicontinuous} if for each $g\in G$ and each open set $U\subset \widehat{\C}$ containing $g$, there exists an open set $V\subset\widehat{\C}$ containing $g$ such that if $g'\in G$ and $g'\cap V\neq \emptyset$, then $g'\subset U$. Equivalently, if $g_n\in G$, $n\in \N$, is a sequence that converges in the Hausdorff sense to a compact set $A$, then there exists $g\in G$ such that $A\subset G$. We now state Moore's theorem; see \cite[Theorem 25.1]{Daverman:decompositions} for a modern proof.

\begin{theorem}\label{theorem:moore:original}
If $G$ is an upper decomposition of $\widehat{\C}$ into non-separating continua, then $\widehat{\C}/G$  is homeomorphic to $\widehat{\C}$.  
\end{theorem}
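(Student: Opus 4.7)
The plan is to verify R.L.\ Moore's topological characterization of the $2$-sphere for the quotient $Z := \widehat{\C}/G$ endowed with the quotient topology: a non-degenerate Peano continuum is homeomorphic to $S^2$ if and only if every simple closed curve in it separates the space into exactly two components, each having the curve as its frontier.

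First I would set up the point-set topology. Upper semicontinuity of $G$ together with compactness of each element is equivalent to the quotient map $\pi\colon \widehat{\C}\to Z$ being a closed continuous surjection, and standard arguments then give that $Z$ is compact Hausdorff and second countable, hence metrizable by Urysohn's theorem. Since $\pi$ is a closed monotone surjection from a connected, locally connected space, $Z$ is a Peano continuum (\cite[Chap.\ VIII]{Whyburn:topology}). Moreover, for any compact set $E\subset Z$, the set $E$ is connected if and only if $\pi^{-1}(E)$ is connected, by the same argument as recalled in Lemma \ref{lemma:cell_like}\ref{lemma:cell_like:continuum}.

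The main step, and the principal obstacle, is the verification of the simple closed curve separation property. Given a simple closed curve $J\subset Z$, its preimage $K := \pi^{-1}(J)$ is a compact connected subset of $\widehat{\C}$, namely $J$ ``decorated'' by the non-separating elements of $G$ that meet it. I would coarsen the decomposition: let $G_J := \{g\in G : g\cap K\neq \emptyset\}$ and let $G'$ be the decomposition consisting of the members of $G_J$ together with singletons $\{x\}$ for $x\notin K$. This $G'$ is again upper semicontinuous with non-separating elements, and by construction the image of $K$ in the intermediate quotient $\widehat{\C}/G'$ is an embedded Jordan curve. The key claim is that $\widehat{\C}/G'\cong \widehat{\C}$, after which the classical Jordan curve theorem yields a separation of $\widehat{\C}/G'$ into two disks; this pulls back through the further quotient $\widehat{\C}/G'\to Z$ to give the desired separation of $Z$ by $J$. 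The frontier property follows from the non-separation of each $g\in G_J$ combined with the local connectedness of $Z$.

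Once simple closed curve separation is verified, Moore's sphere characterization yields a homeomorphism $Z\cong S^2\cong \widehat{\C}$. An alternative, and perhaps more direct, route compatible with the tools of Section \ref{section:topological} is to construct $\pi$ explicitly as a uniform limit of self-homeomorphisms of $\widehat{\C}$: one lists the non-trivial elements of $G$ (which form an at-most-countable collection by upper semicontinuity and the fact that only finitely many can have diameter exceeding any fixed threshold), collapses them one at a time using the Sch\"onflies theorem (each single non-separating continuum in $\widehat{\C}$ can be shrunk by an isotopy supported in an arbitrarily small neighborhood), and passes to the limit using the equicontinuity estimate coming from the semicontinuity of $G$. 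In either route, the genuinely subtle point is keeping track of how the non-separating elements of $G$ are attached along a given curve in $Z$, which is precisely what forces the coarsening/approximation argument above.
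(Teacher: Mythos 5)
First, a point of comparison: the paper does not prove this statement at all — it is R.~L.~Moore's classical decomposition theorem, quoted verbatim with a citation to Daverman for a modern proof. So the only question is whether your blind argument stands on its own, and as written it does not.

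Your first route is circular at its key step. After coarsening to $G'$ (the elements of $G$ meeting $K=\pi^{-1}(J)$, together with singletons elsewhere), you need $\widehat{\C}/G'\cong\widehat{\C}$; but $G'$ is again an arbitrary upper semicontinuous decomposition of $\widehat{\C}$ into non-separating continua, possibly with uncountably many nondegenerate elements, so this "key claim'' is not a simpler special case — it is the theorem being proved. Your second route rests on a false premise: upper semicontinuity does \emph{not} imply that only finitely many elements exceed a fixed diameter, so the nondegenerate elements of $G$ need not form a countable collection. For instance, decompose a closed square in $\widehat{\C}$ into its vertical segments and the rest of the sphere into singletons: this is upper semicontinuous, every element is a non-separating continuum, and uncountably many elements have the same positive diameter. (The shrinking-diameters property is an extra hypothesis imposed on Sierpi\'nski packings in this paper, not a consequence of upper semicontinuity.) Hence the "collapse one at a time and pass to the limit'' scheme does not get started; and even in the countable case the limit step is precisely where the content of the theorem lies — one needs the Bing shrinking criterion, i.e.\ self-homeomorphisms of $\widehat{\C}$ making \emph{all} elements of $G$ simultaneously small while moving points only slightly as measured in the quotient, and no "equicontinuity estimate coming from semicontinuity'' supplies this. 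A secondary issue: the sphere characterization you quote is insufficient as stated (a dendrite satisfies it vacuously, and a sphere with an arc attached at a point satisfies it as well); Zippin's form additionally requires that the space contain a simple closed curve and that no arc separate it, and these too would have to be verified for the quotient.
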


\subsection{Sierpi\'nski packings}

We recall some definitions from the introduction. Let $\{p_i\}_{i\in \N}$ be a collection of pairwise disjoint, non-separating continua in $\widehat{\C}$ such that $\diam(p_i)\to 0$ as $i\to\infty$. The collection $\{p_i\}_{i\in \N}$ is called a Sierpi\'nski packing and the set  $X=\widehat{\C}\setminus \bigcup_{i\in \N} p_i$ is its {residual set}. When there is no confusion, we call $X$ a Sierpi\'nski packing and the underlying collection $\{p_i\}_{i\in \N}$ is implicitly understood. The continua $p_i$, $i\in \N$, are  called the {peripheral continua} of $X$. A Sierpi\'nski packing (resp.\ domain) is \textit{cofat} if there exists $\tau>0$ such that each of its peripheral continua (resp.\ complementary components) is $\tau$-fat.

Let $X=\widehat{\C}\setminus \bigcup_{i\in I} p_i$ be a Sierpi\'nski packing or a domain, where in the latter case the collection $\{p_i\}_{i\in I}$ is assumed to comprise the complementary components. We  consider the quotient space $\mathcal E(X)=\widehat{\C}/\{p_i\}_{i\in I}$, together with the natural projection map $\pi_X\colon \widehat{\C} \to \mathcal E(X)$. For a set $A\subset \widehat{\C}$ we denote $\pi_X(A)$ by $\widehat{A}$.  For a set $E\subset \widehat{\C}$, let $I_E=\{i\in I:p_i\cap E\neq \emptyset\}$.  We define $\mathcal E(X;E)=\widehat{\C}/\{p_i\}_{i\in I\setminus I_E}$; that is, the sets $p_i$ that intersect $E$ are not collapsed to points. 

If $X$ is a Sierpi\'nski packing or a domain, we note that the decomposition of $\widehat{\C}$ into the singleton points of $X$ and the continua $p_i$, $i\in I$, is always upper semicontinuous. In the case that $X$ is a Sierpi\'nski packing, the fact that $\diam(p_i)\to 0$ as $i\to\infty$ implies that for each set $E\subset \widehat \C$ the decomposition of $\widehat{\C}$ into the continua $p_i$, $i\in I\setminus I_E$, and the remaining singleton points is upper semicontinuous. Therefore, a consequence of Moore's theorem (Theorem \ref{theorem:moore:original}) is the following statement.

\begin{theorem}\label{theorem:moore_original}
Let $X$ be a Sierpi\'nski packing or a domain. Then $\mathcal E(X)$ is homeomorphic to $\widehat{\C}$. Moreover, if $X$ is a Sierpi\'nski packing and $E\subset \widehat{\C}$, then $\mathcal E(X;E)$ is homeomorphic to $\widehat{\C}$. 
\end{theorem}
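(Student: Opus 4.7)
The plan is to apply Moore's theorem (Theorem \ref{theorem:moore:original}) in both cases, which requires verifying two hypotheses: that every element of the proposed decomposition is a non-separating continuum, and that the decomposition is upper semicontinuous. For the first assertion I would take $G$ to be the partition of $\widehat{\C}$ consisting of the peripheral continua $\{p_i\}_{i\in I}$ together with the singletons $\{x\}$ for every $x\in \widehat{\C}\setminus\bigcup_i p_i$, so that by construction $\widehat{\C}/G = \mathcal{E}(X)$. Singletons are trivially non-separating continua; in the Sierpi\'nski packing case the non-separation of each $p_i$ is built into the definition, and in the domain case it is the classical fact that every complementary component of a connected open set in $\widehat{\C}$ is non-separating.

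For the non-separation in the domain case, the quick argument I have in mind is: for any other complementary component $q\neq p_i$, observe that $q\cap\partial D\neq\emptyset$, since otherwise $q\subset\operatorname{int}(\widehat{\C}\setminus D)$ would force $q$ to be clopen in $\widehat{\C}$ (it is already closed in $\widehat{\C}$ as a component of a closed set, and under the extra assumption it would also be open), contradicting connectedness of $\widehat{\C}$. Thus $q\cap\overline{D}\neq\emptyset$, so $q\cup D$ is a connected subset of $\widehat{\C}\setminus p_i$ linking $q$ to $D$; since every point of $\widehat{\C}\setminus p_i$ lies in $D$ or in some such $q$, the set $\widehat{\C}\setminus p_i$ is connected. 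For upper semicontinuity, I would take a sequence $g_n\in G$ converging in the Hausdorff sense to a compact set $A$; then $A$ is a continuum as a Hausdorff limit of continua. If infinitely many $g_n$ are distinct peripheral continua with indices tending to infinity, then in the packing case $\operatorname{diam}(p_{i_n})\to 0$ forces $A$ to be a singleton, which is automatically contained in an element of $G$. In the domain case $A$ is a connected subset of the closed set $\widehat{\C}\setminus D$, hence contained in a single complementary component. The remaining subsequential cases (only finitely many distinct $p_i$ appear, or $g_n$ are singletons) are handled directly.

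For the second statement, the same strategy applies with the decomposition $G_E$ consisting of $\{p_i\}_{i\in I\setminus I_E}$ together with the singletons of the remaining points: the non-separation property is inherited from $G$, and since the diameters of the retained $p_i$ still tend to zero, the upper semicontinuity argument from the packing case carries over without change. Moore's theorem then gives $\widehat{\C}/G_E \cong \widehat{\C}$, which is exactly $\mathcal{E}(X;E)$. The main subtlety in the whole argument is the domain case: both the non-separation property and the upper semicontinuity must be handled without the $\operatorname{diam}(p_i)\to 0$ hypothesis, so I expect the non-separation step to be the only real point where something has to be checked by hand, and the clopen argument above appears to be the cleanest route.
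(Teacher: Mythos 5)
Your proposal is correct and takes essentially the same route as the paper: the paper simply observes that the decomposition into the peripheral continua (resp.\ the $p_i$ with $i\in I\setminus I_E$) and the remaining singletons is an upper semicontinuous decomposition into non-separating continua and then invokes Theorem \ref{theorem:moore:original}. Your verifications of non-separation in the domain case and of upper semicontinuity just supply details the paper leaves implicit, and they are sound.
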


A consequence of Lemma \ref{lemma:cell_like} that we will often use is that the preimages of continua under the projection maps $\pi_X$ and $\pi_{X;E}$ are continua.

\subsection{Transboundary modulus}
First, we give the definition of $2$-modulus on the sphere. Let $\Gamma$ be a family of curves in $\widehat{\C}$. We say that a Borel function $\rho\colon \widehat{\C}\to[0,\infty]$ is \textit{admissible} for the curve family $\Gamma$ if
$$\int_{\gamma}\rho\, ds\geq 1$$
for each locally rectifiable curve $\gamma\in \Gamma$. We then define the \textit{$2$-modulus}, or else \textit{conformal modulus}, of $\Gamma$ as
$$\md_2\Gamma=\inf_{\rho}\int \rho^2 \, d\Sigma,$$
the infimum taken over all admissible functions $\rho$. The next lemma is simple consequence of the co-area inequality; see \cite[Lemma 2.4.3]{Ntalampekos:CarpetsThesis} for an argument.

\begin{lemma}\label{lemma:lipschitz}
Let $f\colon \widehat{\C}\to \R$ be a Lipschitz function and $\Gamma_0$ be a family of curves in $\widehat\C$ with $\md_2\Gamma_0=0$. Then for a.e.\ $t\in \R$ every simple curve $\gamma$ whose trace is contained in $f^{-1}(t)$ lies outside $\Gamma_0$. 
\end{lemma}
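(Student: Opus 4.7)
The plan is to build a single $L^{2}$ function $\rho$ that blows up on every locally rectifiable curve in $\Gamma_{0}$, and then use the co-area inequality along $f$ to show that this $\rho$ integrates finitely over $f^{-1}(t)$ for almost every height $t$.

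Since $\md_{2}\Gamma_{0}=0$, for each $n\in\N$ pick a Borel function $\rho_{n}\colon\widehat{\C}\to[0,\infty]$ admissible for $\Gamma_{0}$ with $\|\rho_{n}\|_{L^{2}(\widehat{\C})}\leq 2^{-n}$, and set $\rho:=\sum_{n\in\N}\rho_{n}$. By Minkowski's inequality $\rho\in L^{2}(\widehat{\C})$, and since $\Sigma(\widehat{\C})<\infty$, Cauchy--Schwarz gives $\rho\in L^{1}(\widehat{\C})$. The key observation is that for every locally rectifiable curve $\gamma\in\Gamma_{0}$, monotone convergence combined with admissibility of each $\rho_{n}$ yields
$$\int_{\gamma}\rho\,ds=\sum_{n\in\N}\int_{\gamma}\rho_{n}\,ds=+\infty.$$

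If $L$ is a Lipschitz constant of $f$, Proposition \ref{proposition:coarea} applied to $\rho$ gives
$$\int_{\R}\int_{f^{-1}(t)}\rho\,d\mathcal{H}^{1}\,dt\leq\frac{4L}{\pi}\int_{\widehat{\C}}\rho\,d\Sigma<\infty,$$
so there is a set $T\subset\R$ of full Lebesgue measure such that $\int_{f^{-1}(t)}\rho\,d\mathcal{H}^{1}<\infty$ for every $t\in T$. Now fix $t\in T$ and let $\gamma$ be a simple locally rectifiable curve with $|\gamma|\subset f^{-1}(t)$. Since $\gamma$ is simple, its arclength reparametrization is an injection onto $|\gamma|$, so the area formula for Lipschitz maps gives $\int_{\gamma}\rho\,ds=\int_{|\gamma|}\rho\,d\mathcal{H}^{1}\leq \int_{f^{-1}(t)}\rho\,d\mathcal{H}^{1}<\infty$. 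Comparing with the previous display forces $\gamma\notin\Gamma_{0}$, as desired. Non-locally-rectifiable simple curves in $\Gamma_{0}$, which place no constraint on admissibility, may be incorporated into $\Gamma_{0}$ by convention and are handled trivially.

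The argument is essentially mechanical once the witness $\rho$ is in hand; the only mild technicality is the identification of the line integral of $\rho$ along a simple curve with its $\mathcal{H}^{1}$-integral over the trace, which is standard via the injectivity of the arclength parametrization.
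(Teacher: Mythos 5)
Your construction of the witness function $\rho=\sum_n\rho_n$ and the subsequent co-area argument is exactly the intended route: the paper does not spell out a proof (it calls the lemma a ``simple consequence of the co-area inequality'' and points to an external reference), and the standard argument is the one you give. The weak link is your last step. The lemma quantifies over \emph{all} simple curves with trace in $f^{-1}(t)$, and your closing sentence about non-locally-rectifiable simple curves is not an argument: such curves can perfectly well belong to $\Gamma_0$ (admissibility places no constraint on them, so adding them to $\Gamma_0$ does not change its modulus), and your chain of inequalities only rules out the rectifiable ones. As written, you have not excluded the possibility that for a positive-measure set of $t$ the level set $f^{-1}(t)$ carries a non-rectifiable simple curve belonging to $\Gamma_0$, in which case the stated conclusion would fail.

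The repair is short and stays inside your framework: apply the co-area inequality to $1+\rho$ instead of $\rho$ (or add the constant function $1$ to your sum). Then for a.e.\ $t$ you get both $\mathcal H^1(f^{-1}(t))<\infty$ and $\int_{f^{-1}(t)}\rho\,d\mathcal H^1<\infty$. Since a simple curve satisfies $\length(\gamma)=\mathcal H^1(|\gamma|)$ (injectivity is essential here, and you already invoke it for the line-integral identification), every simple curve with trace in such a level set is automatically rectifiable, so the non-rectifiable case simply does not arise for a.e.\ $t$, and your main computation finishes the proof. With that one-line addition the argument is complete.
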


Next, we define transboundary modulus, as introduced by Schramm \cite{Schramm:transboundary}. Let $X=\widehat{\C}\setminus \bigcup_{i\in I}p_i$ be a domain. Let $\rho\colon \mathcal E(X) \to [0,\infty]$ be a Borel function and $\gamma\colon [a,b]\to \mathcal E(X)$ be a curve. Then there exist countably many curves $\gamma_j$, $j\in J$, such that for each $j\in J$ we have $|\gamma_j|\subset \widehat{X}$  and $\gamma_j=\pi_X\circ \alpha_j$ for some possibly non-compact curve $\alpha_j$ in the domain $X$.  We define
\begin{align*}
\int_{\gamma} \rho \, ds= \sum_{j\in J}\int_{\alpha_j} \rho\circ \pi_X \, ds,
\end{align*}
where this is understood to be infinite if one of the curves $\alpha_j$ is not locally rectifiable. Let $\Gamma$ be a family of curves in $\mathcal E(X)$.  We say that a Borel function $\rho\colon \mathcal E(X)\to [0,\infty]$ is \textit{admissible} for $\Gamma$ if 
\begin{align*}
\int_{\gamma} \rho \, ds + \sum_{i:|\gamma|\cap \widehat p_i\neq \emptyset} \rho( \widehat p_i)\geq 1
\end{align*}
for each $\gamma\in \Gamma$. The \textit{transboundary modulus} of $\Gamma$ with respect to the domain $X$ is defined to be
\begin{align*}
\md_{X}\Gamma=  \inf_{\rho}\left\{ \int_{X}(\rho\circ \pi_X)^2 \, d\Sigma + \sum_{i\in \N}\rho(\widehat{p}_i)^2\right\},
\end{align*}
where the infimum is taken over all admissible functions $\rho$.

Let $X,Y$ be domains in $\widehat{\C}$ and $f\colon X\to Y$ be a conformal map. Then $f$ induces a homeomorphism $\widehat f\colon \mathcal E(X)\to \mathcal E(Y)$ such that $\widehat f=\pi_X \circ f\circ \pi_X^{-1}$ on $\widehat X$; see \cite[Section 3]{NtalampekosYounsi:rigidity} for a detailed discussion. It was observed by Schramm \cite{Schramm:transboundary} that transboundary modulus is invariant under conformal maps.

\begin{lemma}\label{lemma:transboundary_invariance}
Let $X,Y$ be domains in $\widehat{\C}$ and $f\colon X\to Y$ be a conformal map. Then for each curve family $\Gamma$ in $\mathcal E(X)$ we have
$$\md_X\Gamma=\md_Y \widehat f(\Gamma). $$
\end{lemma}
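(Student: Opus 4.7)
The plan is to establish the inequality $\md_X\Gamma\le \md_Y\widehat f(\Gamma)$ by pulling back any admissible density for $\widehat f(\Gamma)$ via $\widehat f$ and the spherical derivative of $f$, and then to invoke symmetry via $f^{-1}$ for the reverse inequality. Since both sides involve spherical arclength and spherical measure, the standard conformal change of variables with respect to the spherical metric will do all the heavy lifting.

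More precisely, suppose $\rho_Y\colon \mathcal E(Y)\to[0,\infty]$ is admissible for $\widehat f(\Gamma)$. Let $f^\#\colon X\to [0,\infty)$ denote the spherical derivative of $f$, so that for a locally rectifiable curve $\alpha$ in $X$, $\ell_\sigma(f\circ\alpha)=\int_\alpha f^\#\,ds_\sigma$, and for any Borel $E\subset X$, $\Sigma(f(E))=\int_E(f^\#)^2\,d\Sigma$ (valid since $f$ is conformal, hence a diffeomorphism onto its image $Y$). Define $\rho_X\colon \mathcal E(X)\to[0,\infty]$ by setting $\rho_X(\widehat p_i)=\rho_Y(\widehat f(\widehat p_i))$ on the peripheral points, and, for $\hat x=\pi_X(x)\in\widehat X$, $\rho_X(\hat x)=\rho_Y(\widehat f(\hat x))\cdot f^\#(x)$. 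Then I would check admissibility: given $\gamma\in\Gamma$, decompose it as in the definition of the transboundary integral via curves $\gamma_j=\pi_X\circ\alpha_j$ with $\alpha_j$ in $X$, so that $\widehat f\circ\gamma\in\widehat f(\Gamma)$ decomposes via $f\circ\alpha_j$ in $Y$. The definition of $f^\#$ gives
\begin{align*}
\int_\gamma \rho_X\,ds &= \sum_j\int_{\alpha_j}(\rho_Y\circ\pi_Y\circ f)\,f^\#\,ds_\sigma = \sum_j\int_{f\circ\alpha_j}\rho_Y\circ\pi_Y\,ds_\sigma=\int_{\widehat f\circ\gamma}\rho_Y\,ds,
\end{align*}
while the peripheral sum $\sum_{i:|\gamma|\cap\widehat p_i\neq\emptyset}\rho_X(\widehat p_i)$ matches $\sum_{i:|\widehat f\circ\gamma|\cap\widehat q_i\neq\emptyset}\rho_Y(\widehat q_i)$ termwise, since $\widehat f$ maps $\widehat p_i$ to the corresponding $\widehat q_i$. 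Admissibility of $\rho_Y$ for $\widehat f\circ\gamma$ therefore transfers to admissibility of $\rho_X$ for $\gamma$.

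For the mass, the conformal change of variables on the open set $X$ gives
\begin{align*}
\int_X (\rho_X\circ\pi_X)^2\,d\Sigma = \int_X (\rho_Y\circ\pi_Y\circ f)^2 (f^\#)^2\,d\Sigma = \int_Y (\rho_Y\circ\pi_Y)^2\,d\Sigma,
\end{align*}
and the peripheral contributions $\sum_i\rho_X(\widehat p_i)^2 = \sum_i \rho_Y(\widehat q_i)^2$ agree by construction. Taking the infimum over admissible $\rho_Y$ yields $\md_X\Gamma\le\md_Y\widehat f(\Gamma)$. Replacing $f$ by $f^{-1}$ and $\Gamma$ by $\widehat f(\Gamma)$ gives the reverse inequality, completing the proof.

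The argument is essentially bookkeeping; the only genuine subtlety is ensuring that the decomposition of a curve in $\mathcal E(X)$ into pieces in $\widehat X$ behaves well under $\widehat f$, and that the spherical arclength/area formulae apply to $f$ acting between the open domains $X$ and $Y$. Both are immediate from $f$ being a conformal diffeomorphism on $X$, and from the fact that the transboundary integrals are defined purely through lifts to the open parts, so no measure-theoretic issues arise at the boundaries $\widehat{\C}\setminus X$ or $\widehat{\C}\setminus Y$.
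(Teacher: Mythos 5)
Your proof is correct and is the standard pullback argument; the paper itself does not prove this lemma but simply cites Schramm, and the argument intended there is exactly the one you give (transfer an admissible density through $\widehat f$ weighted by the spherical derivative, match the peripheral terms via the bijection $\widehat p_i\mapsto \widehat q_i$, and use the conformal change of variables for both arclength and area). The only point worth making explicit is that a lift $\alpha_j$ is locally rectifiable if and only if $f\circ\alpha_j$ is, so the "infinite integral" convention in the definition of the transboundary line integral is respected on both sides; this holds because $f$ is a diffeomorphism, locally bi-Lipschitz on compact subsets of $X$.
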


We also introduce the set function $f^*= \pi_{Y}^{-1}\circ \widehat{f}\circ \pi_X$ from the powerset of $\widehat{\C}$ into itself. In particular, $f^*=f$ on subsets of $X$ and if $A$ is contained in a boundary component of $X$, then $f^*(A)$ is the corresponding boundary component of $Y$.   Observe that if $g=f^{-1}$, then 
\begin{align}\label{inclusion:fgA}
g_n^*( f_n^*(A))\supset A
\end{align}
for each set $A\subset \widehat{\C}$ with equality if $A\subset X$. The next lemma is an implication of Carath\'eodory's kernel convergence theorem for multiply connected domains \cite[Theorem V.5.1, p.~228]{Goluzin:complex}.
\begin{lemma}\label{lemma:caratheodory_convergence}
Let $\Omega\subset \widehat{\C}$ be a domain and $f_n$, $n\in \N$, be a sequence of conformal maps in $\Omega$ that converges locally uniformly in $\Omega$ to a conformal map $f$. Then for each compact  $E\subset \widehat{\C}$ and for each compact limit $E^*$ of $\{f_n^*(E)\}_{n\in \N}$ in the Hausdorff sense we have
$$f^*(E) \supset E^*.$$
\end{lemma}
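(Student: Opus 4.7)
Take $w \in E^*$; after passing to a subsequence, $w_n \to w$ with $w_n \in f_n^*(E)$. For each $n$ choose $z_n \in E$ with $w_n \in f_n^*(\{z_n\})$, and by compactness of $E$ extract $z_n \to z \in E$. It suffices to show $w \in f^*(\{z\}) \subset f^*(E)$. If $z \in \Omega$, openness forces $z_n \in \Omega$ for all large $n$, so $w_n = f_n(z_n) \to f(z) = f^*(\{z\})$ by local uniform convergence, which closes this case. The substantial case is $z \in p_i$ for some complementary component $p_i$ of $\Omega$.

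For this case, since $\mathcal E(\Omega)$ is a topological sphere (Theorem \ref{theorem:moore_original}), I select a decreasing sequence of Jordan domains $\widehat V_k \subset \mathcal E(\Omega)$ around $P_i := \pi_\Omega(p_i)$ with $\bigcap_k \overline{\widehat V_k} = \{P_i\}$, and arrange the bounding Jordan curves $\widehat \beta_k$ to avoid the countable set $\{\pi_\Omega(p_j)\}_j$. Then $\beta_k := \pi_\Omega^{-1}(\widehat \beta_k) \subset \Omega$ is a Jordan curve and $V_k := \pi_\Omega^{-1}(\widehat V_k)$ is an open neighborhood of $p_i$ in $\widehat{\C}$ bounded by $\beta_k$. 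Fix $k$; for all large $n$, $z_n \in V_k$, hence $w_n \in f_n^*(V_k)$. With any reference point $x_0 \in V_k \cap \Omega$, unfolding the definition of $f_n^*$ identifies $f_n^*(V_k)$ with the Jordan component of $\widehat{\C} \setminus f_n(\beta_k)$ containing both $f_n(x_0)$ and $q_i^{(n)} := f_n^*(p_i)$; likewise $f^*(V_k)$ is the analogous component for $f$. Local uniform convergence on the compact set $\beta_k \cup \{x_0\}$ gives $f_n(\beta_k) \to f(\beta_k)$ uniformly and $f_n(x_0) \to f(x_0) \in f^*(V_k)$, so the standard stability of Jordan insides (any limit of points in the closed interiors of a uniformly convergent sequence of Jordan curves, with a matching reference point specifying the side, lies in the closed interior of the limit curve) yields $w \in \overline{f^*(V_k)}$ for every $k$.

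Letting $k \to \infty$, the nesting $\bigcap_k \overline{\widehat V_k} = \{P_i\}$ gives $\overline{\widehat V_k} \to \{P_i\}$ in $\mathcal E(\Omega)$ in the Hausdorff sense, so via the homeomorphism $\widehat f$, $\widehat f(\overline{\widehat V_k}) \to \{\pi_{\Omega_f}(q_i)\}$ in $\mathcal E(\Omega_f)$. The inclusion $\overline{f^*(V_k)} \subset \pi_{\Omega_f}^{-1}(\widehat f(\overline{\widehat V_k}))$ (continuity of $\pi_{\Omega_f}$ together with $\widehat f$ being a homeomorphism), combined with Lemma \ref{lemma:distance_convergence}(i) applied to $\pi_{\Omega_f}$, forces the Hausdorff upper limit of $\overline{f^*(V_k)}$ to lie in $\pi_{\Omega_f}^{-1}(\{\pi_{\Omega_f}(q_i)\}) = q_i$. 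Since $w \in \overline{f^*(V_k)}$ for all $k$, we conclude $w \in q_i = f^*(\{z\}) \subset f^*(E)$. The main technical point is ensuring that $f_n^*(V_k)$ and $f^*(V_k)$ refer to the same side of the limiting Jordan curve $f(\beta_k)$; this is handled by the reference-point argument with $x_0 \in V_k \cap \Omega$, using $f_n(x_0) \to f(x_0)$. Once the side-identification is secured, the remainder of the argument is a clean double limit ($n \to \infty$, then $k \to \infty$) organized through the quotient projections $\pi_\Omega$ and $\pi_{\Omega_f}$.
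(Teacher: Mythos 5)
Your argument is correct in substance but takes a genuinely different route from the paper. The paper first reduces to the case where $E$ is a single complementary component of $\Omega$ and then invokes the Carath\'eodory kernel convergence theorem for multiply connected domains (Goluzin): locally uniform convergence of $f_n$ to $f$ forces every Hausdorff limit of $\widehat{\C}\setminus f_n(\Omega)$ to lie in $\widehat{\C}\setminus f(\Omega)$, so the connected limit $E^*$ sits inside one complementary component of $f(\Omega)$, which is then identified as $f^*(E)$. You instead argue pointwise ($w_n\in f_n^*(\{z_n\})$, $z_n\to z$), pass to the quotient sphere $\mathcal E(\Omega)$, surround $\pi_\Omega(p_i)$ by shrinking Jordan curves $\widehat\beta_k$ lying over $\Omega$, and combine stability of the two sides of a uniformly convergent sequence of Jordan curves with Lemma \ref{lemma:distance_convergence}. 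This avoids the kernel theorem entirely and has the virtue of making explicit the identification step that the paper compresses into ``one can now see that this component has to be $f^*(E)$.''

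Two caveats. First, a general domain can have uncountably many complementary components (even uncountably many non-degenerate ones), so ``avoid the countable set $\{\pi_\Omega(p_j)\}_j$'' is not justified at the stated level of generality. It is harmless for the finitely connected domains to which the lemma is actually applied, and in general the construction survives because $\pi_\Omega(\widehat{\C}\setminus\Omega)$ is a compact, zero-dimensional subset of the sphere $\mathcal E(\Omega)$ and such sets can be surrounded by arbitrarily small Jordan curves in their complement; but as written the sentence is wrong. Second, the ``standard stability of Jordan insides'' genuinely requires uniform convergence of the parametrized curves: Hausdorff convergence of the traces alone does not preserve separation (a Jordan curve can shadow the limit curve without going around, hence without separating the two reference points). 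You do have uniform convergence, since $f_n\to f$ uniformly on the compact set $\beta_k$, and the fact then follows from a homotopy/degree argument in $\widehat{\C}\setminus\{f(x_0),w\}$; since this step is the crux of the whole lemma, it deserves that one-line justification rather than only the label ``standard.''
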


\begin{proof}
Suppose first that $E$ is a complementary component of $\Omega$. Since $f_n$ converges to $f$ locally uniformly, $f_n(\Omega)$ converges in the Carath\'eodory topology to $f(\Omega)$ (with respect to a point of $f(\Omega)$). This implies that each compact Hausdorff limit of $\widehat \C\setminus f_n(\Omega)$ is contained in $\widehat \C\setminus f(\Omega)$. Thus, if $E^*$ is a compact limit of  $f_n^*(E)$, which is a component of $\widehat{\C}\setminus f_n(\Omega)$, then $E^*$ is contained in a component of $\widehat{\C}\setminus f(\Omega)$. One can now see that this component has to be $f^*(E)$. 

For the general case, let $E$ be an arbitrary compact set in $\widehat{\C}$. By the definition of $f^*$, the set $f^*(E)$ is  the union of $f(\Omega\cap E)$ with the complementary components $f^*(B)$ of $f(\Omega)$, where $B$ is a complementary component of $\Omega$ such that $B\cap E\neq\emptyset$. The set $E^*$ consists of $f(\Omega\cap E)$ and Hausdorff limits of complementary components $f^*(B)$, where $B\cap E\neq \emptyset$. The previous case completes the proof.  
\end{proof}

\subsection{Packing-quasiconformal maps}\label{section:packing_qc}
For two Sierpi\'nski packings or domains $X,Y$, we introduce the notion of a packing-quasi\-conformal map between the associated topological spheres $\mathcal E(X),\mathcal E(Y)$. 

\begin{definition}\label{definition:packing_quasiconformal}
Let   $X=\widehat \C\setminus \bigcup_{i\in I} p_i$ and $Y=\widehat \C \setminus \bigcup_{i\in I} q_i$ be Sierpi\'nski packings or domains. Let $h\colon \mathcal E(X)\to\mathcal E(Y)$ a continuous, surjective, and monotone map such that $h(\widehat{p}_i)=\widehat q_i$ for each $i\in I$. We say that $h$ is \textit{packing-quasiconformal} if there exists $K\geq 1$ and a non-negative Borel function $\rho_h\in  L^2(\widehat{\C})$ with the following properties.
	\begin{itemize}
		\item \textit{(Transboundary upper gradient inequality)} There exists a curve family $\Gamma_0$ in $\widehat{\C}$ with $\md_2\Gamma_0=0$ such that for all curves $\gamma\colon [a,b]\to \widehat \C$ outside $\Gamma_0$ we have
	\begin{align*}
\dist( \pi_Y^{-1} \circ h \circ \pi_X(\gamma(a)), \pi_Y^{-1}\circ h\circ \pi_X(\gamma(b)))\leq \int_{\gamma}\rho_h\, ds + \sum_{i:p_i\cap |\gamma|\neq \emptyset} \diam(q_i)
\end{align*}
		\item \textit{(Quasiconformality)} For each Borel set $E\subset \widehat \C$ we have
	$$\int_{\pi_X^{-1}(h^{-1}(\pi_Y(E)))} \rho_h^2\, d\Sigma \leq K\Sigma  (E\cap Y).$$ 
\end{itemize}
In this case, we say that $h$ is \textit{packing-$K$-quasiconformal}. If $K=1$, then $h$ is called \textit{packing-conformal}.
\end{definition}

A Borel function $\rho_h$ satisfying the transboundary upper gradient inequality as above is called a \textit{transboundary weak upper gradient of $h$}. If the transboundary upper gradient inequality holds for all locally rectifiable curves in $\widehat{\C}$, without the need to exclude a family of conformal modulus zero, then we say that $\rho_h$ is a \textit{transboundary upper gradient} of $h$.

\begin{remark}\label{remark:packing_conformal}
Note that we are not requiring that $h^{-1}(\widehat q_i)=\widehat p_i$ and  $h^{-1}(\widehat q_i)$ could be much larger than $\widehat p_i$. The quasiconformality condition implies that if $E=q_i$, then $\rho_h=0$ a.e.\ on $\pi_X^{-1}(h^{-1}(\widehat q_i))$, $i\in \N$. Thus, $\rho_h$ is supported in the set $\pi_X^{-1}(h^{-1}(\widehat Y))$. In fact, we can set $\rho_h$ equal to $0$ \textit{everywhere} on $\pi_X^{-1}(h^{-1}(\widehat q_i))$, $i\in \N$, rather than almost everywhere. Indeed, line integrals are not affected by this change for $\md_2$-a.e.\ curve \cite[Lemma 5.2.16, p.~133]{HeinonenKoskelaShanmugalingamTyson:Sobolev}. Hence, by enlarging the exceptional curve family $\Gamma_0$, we may have that the transboundary upper gradient inequality also holds for the modified function $\rho_h$. 
\end{remark}

The following lemma is straightforward for finitely connected domains; it is also true for countably connected domains but we will not need that generality.

\begin{lemma}\label{lemma:packing_conformal}
Let $X,Y\subset\widehat{\C}$ be finitely connected domains and $f\colon X\to Y$ be a conformal map. Then the induced map $\widehat f \colon \mathcal E(X)\to \mathcal E(Y)$ is packing-conformal and the derivative of $f$ in the spherical metric is a transboundary upper gradient of $\widehat f$.
\end{lemma}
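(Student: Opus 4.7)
The plan is to take $\rho_{\widehat f}:=|f'|_\sigma$, the spherical derivative of $f$, on $X$, extended by zero on $\widehat{\C}\setminus X$. The conformal map $f$ induces a homeomorphism $\widehat f\colon \mathcal E(X)\to\mathcal E(Y)$ with $\widehat f(\widehat p_i)=\widehat q_i$ (by Carath\'eodory/Schwarz-reflection in the finitely connected setting), and I would verify both the quasiconformality condition with $K=1$ and the transboundary upper gradient inequality for $\widehat f$ with gradient $\rho_{\widehat f}$, without needing to exclude any exceptional curve family.

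For the quasiconformality condition, for Borel $E\subset\widehat{\C}$ an unraveling of the two projections gives
\[
\pi_X^{-1}\bigl(\widehat f^{-1}(\pi_Y(E))\bigr)=f^{-1}(E\cap Y)\cup\bigcup_{i:\,q_i\cap E\neq\emptyset} p_i.
\]
Since $\rho_{\widehat f}$ vanishes on each $p_i$, the integral on the left-hand side of the quasiconformality inequality reduces to $\int_{f^{-1}(E\cap Y)}|f'|_\sigma^2\,d\Sigma$, which by the standard change-of-variables formula for conformal maps in the spherical metric equals $\Sigma(E\cap Y)$, giving equality with constant $K=1$.

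For the transboundary upper gradient inequality, let $\gamma\colon[a,b]\to\widehat{\C}$ be locally rectifiable and set $I_\gamma=\{i:p_i\cap|\gamma|\neq\emptyset\}$, which is \emph{finite} since $Y$ is finitely connected. I would partition $[a,b]$ as follows: for each $i\in I_\gamma$ let $[\alpha_i,\beta_i]$ be the convex hull of $\gamma^{-1}(p_i)$, and merge overlapping hulls to produce finitely many pairwise disjoint closed intervals $J_1=[c_1,d_1],\dots,J_l=[c_l,d_l]$, with $I_\gamma=I_1\sqcup\cdots\sqcup I_l$ according to which $J_j$ contains each hull. On $[a,b]\setminus\bigcup_j J_j$ the curve $\gamma$ takes values in $X$, so on each such ``bridge'' interval the spherical length of $f\circ\gamma$ equals the corresponding piece of $\int_\gamma\rho_{\widehat f}\,ds$. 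Continuity of $\widehat f$ forces $\mathrm{dist}(f(\gamma(t)),q_i)\to 0$ whenever $\gamma(t)$ approaches a point of $p_i$; I would pick cluster points $\xi_j^\pm$ of $f\circ\gamma$ lying in the appropriate $q_i$ at each endpoint of $J_j$, together with representatives $x_a\in A=\pi_Y^{-1}(\widehat f(\pi_X(\gamma(a))))$ and $x_b\in B$, and chain them by the triangle inequality. The bridge contributions sum to at most $\int_\gamma\rho_{\widehat f}\,ds$, and the within-$J_j$ contribution I would bound by $\sum_{i\in I_j}\diam(q_i)+\int_{J_j}\rho_{\widehat f}(\gamma)|\dot\gamma|\,dt$ by induction on $|I_j|$: the base case is immediate since both cluster points lie in the same $q_i$, and the inductive step uses that the overlap graph on $I_j$ is connected (this is why the hulls merged into a single $J_j$) to split at a time when $\gamma$ visits a peripheral continuum whose removal disconnects the graph.

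The main obstacle is the within-$J_j$ estimate: because $\gamma$ may re-enter the same $p_i$ several times, one must avoid charging $\diam(q_i)$ more than once in the triangle-inequality chain. The merged-hull structure is designed precisely to prevent this, since any repeated visit to an already-used $p_i$ either lies in the same $J_j$ (where the inductive argument bundles it into a single $\diam(q_i)$) or is separated by a bridge whose spherical length in $Y$ is only additional, non-negative contribution to $\int_\gamma \rho_{\widehat f}\,ds$. Summing the bridge and $J_j$ estimates via triangle inequality, together with $\bigsqcup_j I_j=I_\gamma$, yields $\sigma(x_a,x_b)\leq\int_\gamma\rho_{\widehat f}\,ds+\sum_{i\in I_\gamma}\diam(q_i)$, and hence the transboundary inequality $\dist(A,B)\leq\sigma(x_a,x_b)$ as required.
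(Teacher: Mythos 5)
The paper states this lemma without proof (it is introduced as ``straightforward for finitely connected domains''), so there is nothing of the author's to compare against; your proposal supplies the missing argument and is correct in its overall structure. Taking $\rho_{\widehat f}=|Df|$ extended by zero, the identification $\pi_X^{-1}(\widehat f^{-1}(\pi_Y(E)))=f^{-1}(E\cap Y)\cup\bigcup_{i:q_i\cap E\neq\emptyset}p_i$, and the change-of-variables computation giving the conformality condition with $K=1$ are all right, as is reducing the upper gradient inequality to a finite combinatorial decomposition of $[a,b]$ (finiteness of $I_\gamma$ is exactly where finite connectivity enters) in which each $\diam(q_i)$ is charged once.

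The one step that does not work as written is the inductive step inside a merged interval $J_j$: you propose to ``split at a time when $\gamma$ visits a peripheral continuum whose removal disconnects the graph,'' but a connected interval-overlap graph need not have a cut vertex (three pairwise overlapping hulls give a triangle), so such a continuum may not exist. The repair is easy and stays within your framework. Work throughout with the set-valued function $F(t)=\pi_Y^{-1}(\widehat f(\pi_X(\gamma(t))))$ and prove, by induction on $\#\{i:p_i\cap\gamma((s,t))\neq\emptyset\}$, that $\dist(F(s),F(t))\le\int_{\gamma|_{[s,t]}}\rho_{\widehat f}\,ds+\sum_{i:p_i\cap\gamma((s,t))\neq\emptyset}\diam(q_i)$. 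In the inductive step let $p_{i_0}$ be the \emph{first} peripheral continuum entered on $(s,t)$, let $s'$ be its first entry time and $t'$ its last exit time; then $\gamma((s,s'))\subset X$, $F(s')=F(t')=q_{i_0}$, and the open arc $\gamma((t',t))$ meets strictly fewer continua and avoids $p_{i_0}$, so the triangle inequality $\dist(F(s),F(t))\le\dist(F(s),F(s'))+\diam(q_{i_0})+\dist(F(t'),F(t))$ closes the induction with no double charging. Applying the claim with $s=a$, $t=b$ gives the transboundary inequality directly for the sets $A=F(a)$, $B=F(b)$, which also avoids the endpoint bookkeeping with the representatives $x_a,x_b$ and the cluster points $\xi_j^{\pm}$ (as written, the diameters of the continua containing $\gamma(c_j)$ and $\gamma(d_j)$ risk being charged both as connectors and inside the within-$J_j$ sum). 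Alternatively, one can note that $\overline{f(|\gamma|\cap X)}\cup\bigcup_{i\in I_\gamma}q_i$ is a connected union of connected sets joining $A$ to $B$, whose diameter is bounded by the sum of the diameters of its constituents.
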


Here, the derivative of $f$ in the spherical metric at a point $z\in \widehat{\C}$  can be given by the following precise formula when $z,f(z)\in \C$.
\begin{align*}
|Df|(z)= \frac{1+|z|^2}{1+|f(z)|^2}|f'(z)|.
\end{align*}

\section{Transboundary modulus estimates}
Let $X=\widehat{\C}\setminus \bigcup_{i\in I} p_i$ be a Sierpi\'nski packing or a domain. Let $\widehat E,\widehat F\subset \mathcal E(X)$ be arbitrary sets  and $\Omega\subset \mathcal E(X)$ be an open set. We denote by $\Gamma(\widehat{E},\widehat{F};\Omega)$ the family of open paths in $\Omega$ joining $\widehat{E}$ and $\widehat{F}$. That is, $\Gamma(\widehat{E},\widehat{F};\Omega)$ contains precisely the open paths $\gamma\colon (a,b)\to \Omega$ such that $\br\gamma\colon[a,b]\to \br \Omega$ intersects both $\widehat{E}$ and $\widehat{F}$.  If $\Omega=\mathcal E(X)$, then we simply write $\Gamma(\widehat{E},\widehat{F})$.

Recall that for $E\subset \widehat{\C}$ we denote $I_E=\{i\in I: p_i\cap E\neq \emptyset\}$ and $\mathcal E(X;E)= \widehat{\C} / \{p_i\}_{i\in I\setminus I_E}$. Consider the natural projection $\pi_{X;E}\colon 
\widehat{\C} \to \mathcal E(X;E)$, which is injective on $X\cup E$. Recall that if $X$ is a Sierpi\'nski packing, and in particular $\diam(p_i)\to 0$ as $i\to \infty$, then the space $\mathcal E(X;E)$ is homeomorphic to the sphere $\widehat{\C}$ by Theorem \ref{theorem:moore_original}. 

The next lemma is one of the main technical ingredients of the proof of Theorem \ref{theorem:main}. 

\begin{lemma}[Non-degeneracy lemma]\label{lemma:l2collision_final}
Let $X=\widehat{\C}\setminus \bigcup_{i\in \N} p_i$ be a Sierpi\'nski packing such that the diameters of the peripheral continua lie in $\ell^2(\N)$. Let $\widehat{E}\subsetneq \mathcal E(X)$ be a continuum such that $E=\pi_X^{-1}(\widehat{E})$ is non-degenerate. For $\delta>0$ and a metric $d$ on $\mathcal E(X;E)$ that induces the quotient topology, let $F\subset \widehat{\C}\setminus E$ be a continuum such that $\diam_d( \pi_{X;E}(F))\geq \delta$.

\vspace{1em}

\noindent
For a finite set $J\subset \N$ consider the domain $Y=\widehat{\C}\setminus \bigcup_{i\in J} p_i$. Then for each $N\in \N\cup \{0\}$ and each set $J_0\subset J$ with $\#J_0\leq N$ we have  
\begin{align*}
\md_{Y} \Gamma\left(\pi_Y({E}),\pi_Y({F \setminus \bigcup_{i\in J_0} p_i}); \mathcal E(Y)\setminus \bigcup_{i\in J_0} \pi_{Y}(p_i) \right)\geq C( X , E,d,\delta, N ).
\end{align*}  
In particular, the lower bound does not depend on $F,Y$, and $J_0$. 
\end{lemma}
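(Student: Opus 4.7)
The plan is to prove the lemma by contradiction and compactness, converting the transboundary modulus estimate in $Y$ into a $2$-modulus estimate on the sphere $\widehat{\C}$. Suppose the conclusion fails: there exist sequences of finite sets $J^n\subset\N$, subsets $J_0^n\subset J^n$ with $\#J_0^n\leq N$, continua $F_n\subset\widehat{\C}\setminus E$ with $\diam_d(\pi_{X;E}(F_n))\geq \delta$, and admissible Borel functions $\rho_n\colon \mathcal E(Y_n)\to[0,\infty]$ for the associated transboundary curve families, with transboundary energies tending to zero. By Hausdorff pre-compactness I would extract a subsequence so that $F_n\to F_\infty$ in $(\widehat{\C},\sigma)$ and $\pi_{X;E}(F_n)\to \widehat F_\infty$ in $(\mathcal E(X;E),d)$; Lemma \ref{lemma:distance_convergence} and continuity of $\pi_{X;E}$ give $\widehat F_\infty=\pi_{X;E}(F_\infty)$. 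Since the $d$-diameter is preserved under Hausdorff limits, $\widehat F_\infty$ has $d$-diameter at least $\delta$, which forces $F_\infty$ to be a non-degenerate continuum not contained in a single peripheral continuum. By further extraction I would stabilize $J_0^n$, so that each fixed index is either eventually in $J_0^n$ or never in it, and any unbounded indices correspond to peripheral continua with $\diam(p_i)\to 0$.

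Next, for each $n$ and each $i\in J^n\setminus J_0^n$ I would fix an annular region $R_i\subset \widehat{\C}\setminus p_i$ of spherical area $\simeq \diam(p_i)^2$ chosen so that every curve entering $p_i$ from the complement of its $2\diam(p_i)$-neighborhood has $\mathcal H^1$-length at least $\diam(p_i)$ inside $R_i$. I would then define the sphere density
\[
\hat\rho_n := \rho_n\circ \pi_{Y_n}\cdot \x_{Y_n} + \sum_{i\in J^n\setminus J_0^n}\frac{\rho_n(\widehat p_i)}{\diam(p_i)}\x_{R_i}.
\]
Transboundary admissibility of $\rho_n$ for $\Gamma_n$ in $\mathcal E(Y_n)$ translates, by construction of the $R_i$, into ordinary $2$-admissibility of $\hat\rho_n$ for the sphere family $\Gamma_n^{*}:=\Gamma\!\left(E, F_n\setminus \bigcup_{i\in J_0^n}p_i; \widehat{\C}\setminus \bigcup_{i\in J_0^n}p_i\right)$, outside an exceptional curve family of vanishing $2$-modulus (handled via Lemma \ref{lemma:lipschitz}).

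The closing step is to establish
\[
\|\hat\rho_n\|_{L^2(\widehat{\C})}^2\ \leq\ C(X)\biggl(\int_{Y_n}\rho_n^2\,d\Sigma+\sum_{i\in J^n}\rho_n(\widehat p_i)^2\biggr)\ \longrightarrow\ 0.
\]
Granting this bound, I pass to the limit: the families $\Gamma_n^{*}$ converge to a family $\Gamma^{*}$ of curves in $\widehat{\C}$ from $E$ to $F_\infty$ avoiding the finitely many peripheral continua with indices in the stabilized set $J_0^\infty$. Since $E$ and $F_\infty$ are disjoint non-degenerate continua in the sphere separated topologically from this finite obstacle, $\md_2\Gamma^{*}>0$ by a standard $2$-modulus lower bound on the sphere. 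Lower semicontinuity of modulus under the Hausdorff limit of curve families, combined with the $2$-admissibility of $\hat\rho_n$, then yields $0<\md_2\Gamma^{*}\leq \liminf_n\|\hat\rho_n\|_{L^2}^2=0$, the desired contradiction.

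The main obstacle is the $L^2$-bound on $\hat\rho_n$. Without a fatness assumption on the peripheral continua, the annular regions $R_i$ can overlap with unbounded multiplicity, so the expansion
\[
\biggl\|\sum_i b_i \x_{R_i}\biggr\|_{L^2}^2 = \sum_i b_i^2\, \Sigma(R_i) + \sum_{i\neq j} b_i b_j\, \Sigma(R_i\cap R_j),\qquad b_i=\frac{\rho_n(\widehat p_i)}{\diam(p_i)},
\]
does not obviously collapse to $\sum_i\rho_n(\widehat p_i)^2$. The delicate point is to combine the $\ell^2$-summability of $\diam(p_i)$ with Bojarski's Lemma \ref{lemma:bojarski} and the doubling property of the spherical measure so as to estimate this sum by the transboundary energy with a constant depending only on the fixed packing $X$ (and in particular on $\sum_i\diam(p_i)^2$), and not on the variable domain $Y_n$, the set $F_n$, or $J_0^n$.
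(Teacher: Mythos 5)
Your proposal takes a genuinely different route (contradiction, compactness, and conversion of transboundary admissibility into classical $2$-admissibility by smearing the weights $\rho_n(\widehat p_i)$ over annular shells $R_i$), but it has a gap at its core that I do not believe can be closed with the tools you name. The estimate $\|\hat\rho_n\|_{L^2}^2\lesssim \int_{Y_n}\rho_n^2\,d\Sigma+\sum_i\rho_n(\widehat p_i)^2$ is exactly the step that requires fatness of the $p_i$, and in this lemma the peripheral continua of $X$ are arbitrary non-separating continua: they may have zero area (arcs, for instance), so there are no disjoint comparison sets $D_i$ with $\Sigma(D_i)\gtrsim\diam(p_i)^2$ to feed into Lemma \ref{lemma:bojarski}, and the shells $R_i$ can overlap with unbounded multiplicity. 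The $\ell^2$ hypothesis on the diameters controls $\sum_i\diam(p_i)^2$, not the cross terms $\sum_{i\neq j}b_ib_j\Sigma(R_i\cap R_j)$, so the "delicate point" you flag is not a technicality but the place where the argument breaks. The paper never spreads the boundary weights over regions at all: it integrates the admissibility inequality over the level sets of the $1$-Lipschitz function $\psi(x)=\dist(x,|\beta_k|)$ via the co-area inequality, which produces the linear quantity $\sum_i\rho(\widehat p_i)\diam(p_i)$, and then a single Cauchy--Schwarz against $\bigl(\Sigma(\widehat\C)+\sum_i\diam(p_i)^2\bigr)^{1/2}$ finishes the estimate. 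That is the mechanism by which square-summability of the diameters substitutes for fatness, and it is the idea missing from your write-up.

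A second gap: your closing step asserts a "standard $2$-modulus lower bound" for curves from $E$ to $F_\infty$ in $\widehat{\C}\setminus\bigcup_{i\in J_0}p_i$. This is not standard. A single non-separating continuum disjoint from $E$ and $F$ can nearly enclose $E$ and make that modulus as small as you like, and since $J_0$ varies with $n$ (and $F\setminus\bigcup_{i\in J_0}p_i$ need not even be a continuum), you need a lower bound uniform over all admissible choices. This is precisely what the paper's grid construction plus pigeonhole provides: it manufactures $N+1$ pairwise disjoint tubes $U(\beta_1),\dots,U(\beta_{N+1})$ joining $E$ to $F$, each avoiding all peripheral continua of $\mathcal E(X;E)$ that its curve meets, so that at least one tube misses the at most $N$ continua indexed by $J_0$; the modulus lower bound is then read off from that tube. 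Your compactness framework defers exactly this combinatorial content to an unproved claim, so even granting the $L^2$ bound the argument would not be complete.
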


Surprisingly, this lemma gives uniform lower modulus bounds, although the packing $X$ does not have uniform geometry.  The reason is that we freeze a continuum $E$ and we consider curve families connecting relatively large continua $F$ to $E$. If one varies the continuum $E$ as well, then it is impossible to obtain uniform modulus bounds without some strong uniform geometric assumptions, as in \cite[Section 8]{Bonk:uniformization}. 

\begin{proof}
Note that $E\neq \widehat{\C}$, since $\widehat{E} \neq \mathcal E(X)$. The set $E$ is a continuum by Lemma \ref{lemma:cell_like} \ref{lemma:cell_like:continuum}.  The space  $Z=\mathcal E(X;E)$ is homeomorphic to $\widehat{\C}$. The projection $\pi=\pi_{X;E}\colon \widehat{\C}\to Z$ is injective on the set $E$, thus, $\pi(E)$ is a non-degenerate continuum in $Z$. We endow $Z$ with a metric $d$ inducing its topology, as in the statement of the lemma. We also fix $\delta>0$.

The set $Z\setminus \pi\left(E\cup \bigcup_{i\in \N}p_i\right)$ is  non-empty by Baire's theorem. Fix a point $x_{\infty}$ in that set and a ball $B_d(x_{\infty},r)\subset Z\setminus \pi(E)$, where $r<\delta/4$. Consider a  homeomorphism $\varphi$ from $Z\setminus \{x_{\infty}\}$ onto the plane $\C$ with the property that it maps the complement of the ball $B_d(x_{\infty},r)$ into the unit ball $B_e(0,1)$. Let $E'=\varphi( \pi(E))$, which is a non-degenerate continuum in $B_e(0,1)$. By uniform continuity, there exists a constant $c_0>0$ such that if $\widehat{F}\subset Z$ is a continuum with $\diam_d(\widehat{F})\geq \delta$, then $\varphi(\widehat{F}\setminus B_d(x_{\infty},r))$ contains a continuum $F'\subset B_e(0,1)$ with $\diam_e(F')\geq c_0$. We fix such a continuum $F'$.

Now, for $\varepsilon>0$ consider a grid of squares of side length $\varepsilon$ in the plane with sides parallel to the coordinate axes such that the $1$-skeleton avoids the countably many points $p_i'=\varphi (\pi(p_i))$, $i\in \N\setminus I_E$. Let $\mathcal G$ be the intersection of the $1$-skeleton with the square $[-2,2]^2$. Let $\mathcal B'$ be the collection of simple paths that are contained in $\mathcal G$ and connect all pairs of junction points (i.e., points where four edges meet) of $\mathcal G$. Note that $\mathcal B'$ contains a bounded number of paths, depending only on $\varepsilon$. Also, if two paths of $\mathcal B'$ do not intersect, then their distance is at least $\varepsilon$. For each path $\beta\in \mathcal B'$, consider a Jordan region  $U(\beta)$ such that $|\beta|\subset U(\beta)\subset N_{\varepsilon/2}(|\beta|)$ and $\partial U(\beta)$ avoids $\bigcup_{i\in \N\setminus I_E} p_i'$. In particular, observe that if $|\beta_1|\cap |\beta_2|=\emptyset$, then $U(\beta_1)\cap U(\beta_2)=\emptyset$.

Let $N\in \N\cup \{0\}$, as in the statement of the lemma. We claim that if the mesh of the grid is sufficiently small, i.e., $\varepsilon$ is sufficiently small, depending only on $N$, $E'$, and $c_0$, but not on $F'$, then $\mathcal B'$ contains $N+1$ disjoint paths $\beta_1,\dots,\beta_{N+1}$ that connect $E'$ and $F'$ with the additional feature that $\partial U(\beta_1),\dots,\partial U(\beta_{N+1})$ also connect $E'$ and $F'$. To see this, we consider separate cases. 

\begin{figure}
	\begin{overpic}[scale=.27]{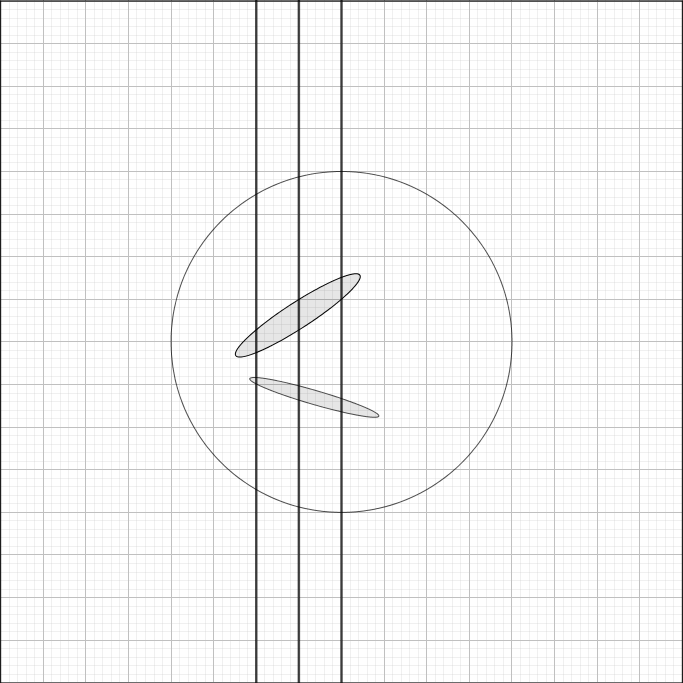}
		\put (55,60) {$E'$}
		\put (55,40) {$F'$}
	\end{overpic}
\hspace{.5em}	
	\begin{overpic}[scale=.25]{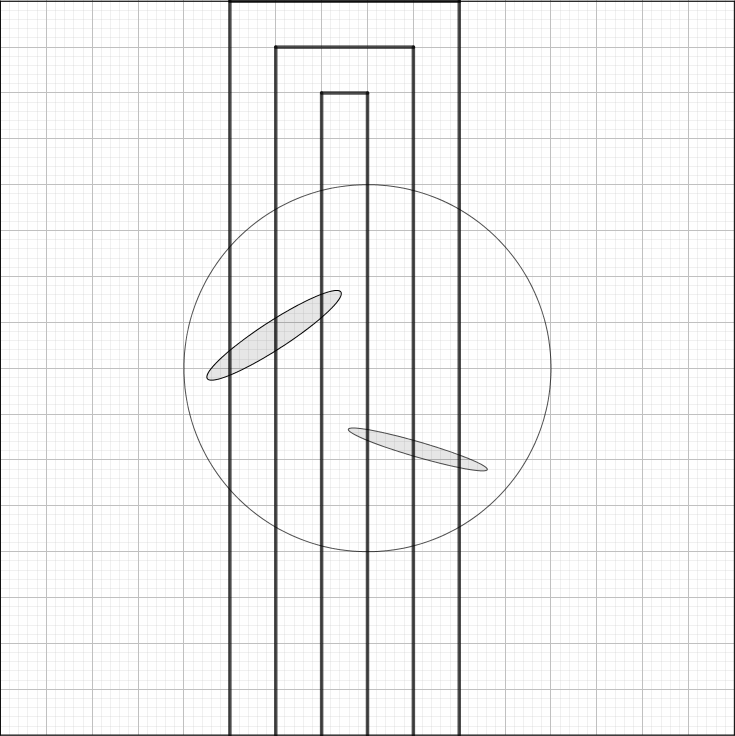}
		\put (18,50) {$E''$}
		\put (68,35) {$F''$}
	\end{overpic}
\hspace{.5em}
	\begin{overpic}[scale=.27]{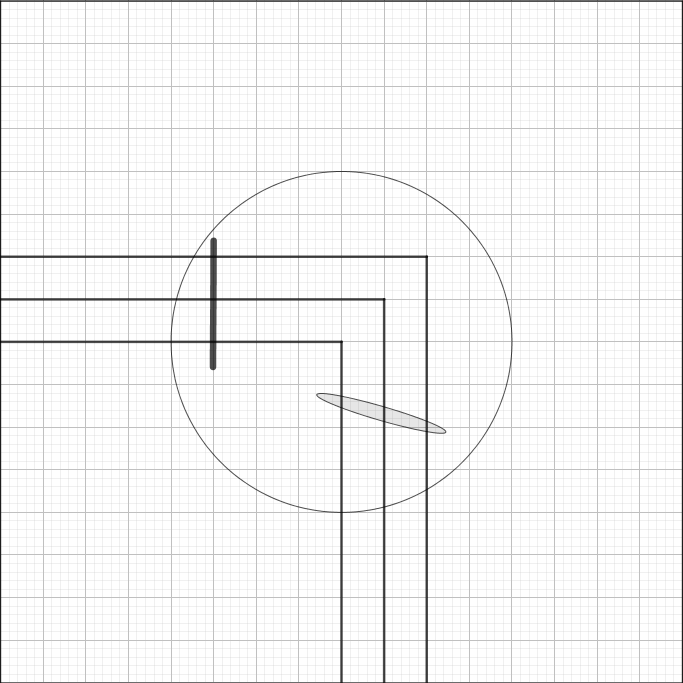}
		\put (30,68) {$E''$}
		\put (67,35) {$F''$}
	\end{overpic}
	\caption{Left: Case 1a. Middle: Case 1b. Right: Case 2.}\label{figure:grid}
\end{figure}

\smallskip
\noindent
\textit{Case 1.} Suppose that the projection of $E'$ to the $x$-axis has positive diameter equal to $c_1$ and the projection of $F'$ to the $x$-axis has diameter larger than $c_0/2$. We set $M=\min\{c_1,c_0/2\}$. 

\smallskip
\noindent
\textit{Case 1a.} Suppose that the overlap of the projections is larger than or equal to $M/2$. Then by choosing $\varepsilon< 2^{-1}M (N+3)^{-1}$ we may find $N+1$ disjoint vertical paths in $\mathcal B'$ intersecting both $E'$ and $F'$ so that their parallel translates by $\varepsilon$ to either side also have the same property. See Figure \ref{figure:grid}.

\smallskip
\noindent
\textit{Case 1b.} Suppose that the overlap of the projections is smaller than $M/2$. Then there exist subcontinua $E''$ and $F''$ of $E'$ and $F'$, respectively, such that the projections of $E''$ and $F''$ to the $x$-axis are disjoint and both have diameter $M/2$.  If we choose $\varepsilon< 2^{-1}M (N+3)^{-1}$, then there exist $N+1$ disjoint vertical paths in $\mathcal B'$ intersecting $F''$ but not $E''$ and $N+1$ disjoint vertical paths intersecting $E''$ but not $F''$ so that their parallel translates by $\varepsilon$ to either side also have the same property. We truncate these paths appropriately outside $B_{e}(0,1)$ and then connect them with disjoint horizontal segments in $\mathcal B'$ to obtain $\Pi$-shaped paths; see Figure \ref{figure:grid}. The resulting collection of paths has the desired properties. 

\smallskip
\noindent
\textit{Case 2.} Suppose that the projection of $E'$ to the $x$-axis has diameter  zero and the projection of $F'$ to the $x$-axis has diameter larger than $c_0/2$. Thus, $E'$ is a vertical line segment that projects to a segment of diameter $c_1$ in the $y$-axis. Our goal is to find appropriate subcontinua $E''$ and $F''$ of $E'$ and $F'$, respectively, whose diameters are bounded below depending on $c_0$ and $c_1$ and whose projections to both axes are disjoint. First, we consider a subcontinuum $F''$ of $F'$ with diameter $c_0/8$ whose projection to the $x$-axis is disjoint from the projection of $E'$. If the projection of $F''$ to the $y$-axis has diameter larger than or equal to $\min \{c_1/2,c_0/16\}$, then we can argue as in Case 1, replacing the $x$-axis with the $y$-axis and finding a value of $\varepsilon$ that depends on $c_0,c_1$, and $N$. Otherwise, we consider a subsegment $E''$ of $E'$ of length $c_1/4$ so that the projections of $E''$ and $F''$ to both axes are disjoint. Then by choosing a small enough $\varepsilon$, we may find $\Gamma$-shaped curves from the collection $\mathcal B'$ that join $E''$ and $F''$ and have the desired properties; see Figure \ref{figure:grid}. The remaining cases are symmetric to the ones we treated.

\smallskip

Note that $(\varphi\circ \pi)^{-1}$ is injective on the square grid $\mathcal G$ and on the boundaries $\partial U(\beta)$, $\beta\in \mathcal B'$, since these sets avoid the set $\bigcup_{i\in \N\setminus I_E}p_i'$. We pull back the collection $\mathcal B'$ and the regions $U(\beta)$, $\beta\in \mathcal B'$, to $\widehat{\C}$ under the proper and cell-like map $\varphi\circ \pi\colon \widehat{\C}\setminus \pi^{-1}(x_{\infty})\to \C$.  Using Lemma \ref{lemma:cell_like} \ref{lemma:cell_like:boundary} we obtain a collection $\mathcal B$ of simple curves and Jordan regions $U(\beta)$, $\beta\in \mathcal B$, in $\widehat{\C}$ with the following properties.
\begin{enumerate}[\upshape(i)]
	\item\label{lemma:nondegeneracy:beta} For each $\beta\in \mathcal B$ the Jordan region $U(\beta)$ contains $|\beta|$ and if $\beta_1,\beta_2\in \mathcal B$ are disjoint curves, then $U(\beta_1)$ and $U(\beta_2)$ are disjoint and intersect disjoint collections of sets $p_i$, $i\in \N\setminus I_E$. 
	\item\label{lemma:nondegeneracy:F} Whenever $F\subset \widehat{\C}\setminus E$ is a continuum with $\widehat{F}=\pi_{X;E}(F)$ and  $\diam_d(\widehat{F})\geq \delta$, there exist disjoint paths $\beta_1,\dots,\beta_{N+1}\in \mathcal B$ such that $\beta_i$ and $\partial U(\beta_i)$ intersect $E$ and $F$ for each $i\in \{1,\dots,N+1\}$.
\end{enumerate}
We let $\eta=\min\{\dist( |\beta|,\partial U(\beta)): \beta\in \mathcal B\}$, which is positive since $\mathcal B$ is a finite collection. 

Fix a continuum  $F\subset \widehat{\C}\setminus E$  and paths $\beta_1,\dots,\beta_{N+1}\in \mathcal B$ as in \ref{lemma:nondegeneracy:F}. If $J_0\subset J$ is a set with $\#J_0\leq N$ as in the statement of the lemma, then by \ref{lemma:nondegeneracy:beta} and  the pigeonhole principle there exists $k\in \{1,\dots,N+1\}$ such that $U(\beta_k)$ does not intersect $\bigcup_{i\in J_0\setminus I_E} p_i$. {Note that $F\cap U(\beta_k)$ does not intersect $\bigcup_{i\in J_0}p_i$, but this is not necessarily true for $E$, since it intersects $p_i$, whenever $i\in I_E\cap J_0$.}  Let $\psi(x)=\dist(x,|\beta_k|)$, which is a $1$-Lipschitz function on $\widehat{\C}$. For a.e.\ $t\in (0,\eta)$ the components of $\psi^{-1}(t)$ are points, Jordan curves, or Jordan arcs; see \cite[Theorem 1.5]{Ntalampekos:monotone} for a general statement in metric surfaces or \cite{Brown:distancesets} for a planar version. Hence, for a.e.\ $t\in (0,\eta)$, the set $\psi^{-1}(t)$ contains a Jordan curve separating $|\beta_k|$ from $\partial U(\beta_k)$. We fix such a $t\in (0,\eta)$. Since $E$ and $F\setminus \bigcup_{i\in J_0}p_i$ connect $\beta_k$ and $\partial U(\beta_k)$, we conclude they intersect this Jordan curve. Thus, $\psi^{-1}(t)$ contains an open curve connecting $E$ and $F\setminus \bigcup_{i\in J_0}p_i$ and avoiding $\bigcup_{i\in J_0}p_i$. It follows that $\pi_Y(\psi^{-1}(t))$ contains a curve in $\Gamma=\Gamma\left(\pi_Y({E}),\pi_Y({F} \setminus \bigcup_{i\in J_0} p_i); \mathcal E(Y)\setminus \bigcup_{i\in J_0} \pi_{Y}(p_i) \right)$. Thus, if $\rho\colon \mathcal E(Y)\to [0,\infty]$ is admissible for $\Gamma$, then
\begin{align*}
\int_{\psi^{-1}(t)\cap Y} \rho\circ \pi_Y  \, d\mathcal H^1 + \sum_{\substack{i: \psi^{-1}(t) \cap p_i\neq \emptyset \\ i\in J}} \rho(\widehat p_i) \geq 1
\end{align*} 
for a.e.\ $t\in (0,\eta)$. Integrating over $t\in (0,\eta)$, and using the fact that $\psi$ is $1$-Lipschitz, by the co-area inequality (Proposition \ref{proposition:coarea}) we obtain
\begin{align*}
C\eta&\leq  \int_{N_\eta(|\beta_k|)\cap Y} \rho \circ \pi_Y \,d\Sigma+ \sum_{i\in J} \rho(\widehat p_i)\diam(p_i)\\
&\leq    \left(\int_{Y}(\rho\circ \pi_Y)^2 \, d\Sigma + \sum_{i\in J}\rho(\widehat{p}_i)^2 \right)^{1/2}  \left( \Sigma(\widehat{\C})+ \sum_{i\in \N} \diam(p_i)^2 \right)^{1/2}.
\end{align*}
Infimizing over $\rho$ gives
\begin{align*}
\md_{Y}\Gamma \geq C'\eta^2 \left( 1+ \sum_{i\in \N} \diam(p_i)^2 \right)^{-1}.
\end{align*}
This completes the proof.
\end{proof}

The next lemma is established in \cite{Bonk:uniformization}. The \textit{relative distance} of two non-degenerate subsets $E,F$ of a metric space is defined as
$$\Delta(E,F)= \frac{\dist(E,F)}{\min\{\diam(E),\diam(F)\}}.$$

\begin{lemma}[{\cite[Proposition 8.7]{Bonk:uniformization}}]\label{lemma:bonkcollisions}
For each $\tau>0$ there exists a number $N_0>0$ and  a function $\psi\colon (0,\infty)\to (0,\infty)$ with $\lim_{t\to\infty}\psi(t)=0$ such that the following is true. Let $X=\widehat{\C}\setminus \bigcup_{i\in I} p_i$ be a $\tau$-cofat finitely connected domain.  Let $\widehat{E},\widehat F\subset \mathcal E(X)$ be disjoint sets and let $E=\pi_X^{-1}(\widehat E)$, $F=\pi_X^{-1}(\widehat F)$. If $\Delta(E,F)\geq 12$, then there exists a set $I_0\subset I$ with $\#I_0\leq N_0$ such that 
	\begin{align*}
	\md_X \Gamma\left(\widehat{E},\widehat{F}; \mathcal E(X)\setminus \bigcup_{i\in I_0} \widehat{p_i} \right) \leq \psi(\Delta(E,F)). 
	\end{align*}
\end{lemma}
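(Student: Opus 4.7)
This is a classical-style logarithmic annulus modulus estimate adapted to the transboundary setting, and the fatness hypothesis enters in exactly two places: to bound the number of ``large'' peripheral continua crossing a given annulus, and to bound a discrete sum by a continuous integral.

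\emph{Setup.} By applying a M\"obius transformation $T$ and using Lemma \ref{lemma:fat_invariant} to preserve fatness quantitatively, I will reduce to a configuration in which the spherical metric and the Euclidean metric are comparable on the relevant region, and the set of smaller diameter (say $E$, after possibly swapping $E$ and $F$) is contained in a bounded region in $\C$. Fix $x_0\in E$, put $r=\diam(E)$ and $d=\dist(E,F)$, so $\delta:=\Delta(E,F)=d/r\geq 12$, and set
$$A:=\{z\in \widehat{\C}: r\leq |z-x_0|\leq d\}.$$
Every curve in $\widehat \C$ from $E$ to $F$ must cross $A$ from its inner to its outer boundary.

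\emph{Isolating the large peripheral continua.} Choose a small constant $c_0=c_0(\tau)>0$ (to be pinned down below) and set
$$I_0:=\{i\in I: p_i\cap \overline{A}\neq \emptyset \text{ and }\diam(p_i)\geq c_0 d\}.$$
By Lemma \ref{lemma:count} applied to the set $\overline{B}(x_0,2d)$ (which has diameter comparable to $d$) and to the collection of $\tau$-fat peripheral continua, one has $\#I_0\leq N_0(\tau)$. These are exactly the continua that we will allow ourselves to forbid in $\Gamma$.

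\emph{The test function.} Let
$$\rho(\widehat z):=\frac{1}{(\log \delta)\,|z-x_0|}\quad \text{for }z\in X\cap A,$$
extended by $0$ elsewhere on $\widehat X$, and for $i\notin I_0$ with $p_i\cap A\neq \emptyset$ set
$$\rho(\widehat p_i):=\frac{C\,\diam(p_i)}{(\log \delta)\,\dist(p_i,x_0)},$$
with $\rho(\widehat p_i)=0$ otherwise. Because $c_0$ is small, for each such $p_i$ the ratio $\diam(p_i)/\dist(p_i,x_0)$ is bounded, so $\rho(\widehat p_i)$ accurately captures the ``logarithmic radial width'' of $p_i$ as seen from $x_0$. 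Admissibility for any $\gamma\in \Gamma(\widehat E,\widehat F;\mathcal E(X)\setminus \bigcup_{i\in I_0}\widehat{p_i})$ will follow by a telescoping estimate on $\log|z-x_0|$: the continuous contribution yields $\int_\gamma \rho\, ds\geq (\log \delta)^{-1}\log(d/r)=1$ modulo the total logarithmic length eaten by the small peripheral continua that $\gamma$ crosses, and that deficit is exactly what $\sum_i \rho(\widehat p_i)$ (with the correct choice of $C$) covers. Here one must verify that peripheral continua with $i\notin I_0$ have $\diam(p_i)<c_0 d$ and hence cannot ``tunnel'' across too much radial room — this is where $c_0$ needs to be small enough.

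\emph{Controlling $\md_X\Gamma$.} For the continuous part,
$$\int_{\widehat X} \rho^2\, d\Sigma\lesssim (\log \delta)^{-2}\int_{r\leq |z-x_0|\leq d}\frac{d\Sigma(z)}{|z-x_0|^2}\simeq (\log \delta)^{-1}.$$
For the discrete part, by fatness of $p_i$ one has $\Sigma(p_i)\gtrsim_\tau \diam(p_i)^2$, so
$$\sum_{i\notin I_0}\rho(\widehat p_i)^2\lesssim_\tau \frac{1}{(\log \delta)^2}\sum_{i\notin I_0}\frac{\Sigma(p_i)}{\dist(p_i,x_0)^2}\lesssim \frac{1}{(\log \delta)^2}\int_{A}\frac{d\Sigma(z)}{|z-x_0|^2}\simeq \frac{1}{\log \delta},$$
where the middle step bounds each $1/\dist(p_i,x_0)^2$ by a corresponding integral average over $p_i$ (or, alternatively, applies Lemma \ref{lemma:bojarski} to the doubling space $(\widehat{\C},\sigma,\Sigma)$, writing $\rho(\widehat p_i)$ as a weighted indicator of a ball comparable in measure to $p_i$ by fatness). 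Hence $\md_X\Gamma\lesssim_\tau 1/\log \delta$, and we may take $\psi(t)\simeq_\tau 1/\log t$.

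\emph{Main obstacle.} The delicate point is the admissibility verification: one must ensure that after removing only the $N_0$ ``big'' peripheral continua in $I_0$, no remaining curve can by-pass the logarithmic barrier by snaking through a sequence of moderately large peripheral continua whose aggregate logarithmic radial widths exceed $\log \delta$. The choice of threshold $c_0$ and the $C$ in $\rho(\widehat p_i)$ must be coordinated: $c_0$ small ensures each surviving $p_i$ occupies a controlled fraction of the logarithmic scale, and $C$ large enough ensures its discrete contribution fully covers the ``gap'' produced when $\gamma$ passes through $p_i$. The fatness of the peripheral continua (and the resulting Lemma \ref{lemma:count}) is what makes $N_0$ independent of the particular domain $X$.
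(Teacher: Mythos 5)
The paper does not actually prove this lemma: it imports it verbatim from Bonk's Proposition 8.7 and only explains why the cosmetic changes (arbitrary saturated sets instead of continua, path families in $\mathcal E(X)$ instead of $\widehat\C$) do not affect the argument. So your proposal should be judged as a reconstruction of Bonk's proof, and its overall strategy (logarithmic test function on the annulus $A=\{r\le |z-x_0|\le d\}$, weights $\asymp \diam(p_i)/\dist(p_i,x_0)$ on the peripheral continua, Lemma \ref{lemma:count} to bound $\#I_0$, fatness to convert the discrete sum into an integral) is indeed the right one.

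However, there is a genuine gap in the mass estimate for the discrete part, stemming from the false assertion that ``because $c_0$ is small, for each such $p_i$ the ratio $\diam(p_i)/\dist(p_i,x_0)$ is bounded.'' Excluding only the continua with $\diam(p_i)\ge c_0 d$ puts no lower bound on $\dist(p_i,x_0)$: a $\tau$-fat continuum $p_{i^*}$ with $\diam(p_{i^*})=c_0d/2$ whose nearest point to $x_0$ lies at distance $\approx r=\diam(E)$ survives into the sum, has $\diam/\dist\asymp c_0\delta$, and contributes $\rho(\widehat p_{i^*})^2\asymp \delta^2/(\log\delta)^2\to\infty$; correspondingly, the chain $\sum_i \diam(p_i)^2/\dist(p_i,x_0)^2\lesssim_\tau\sum_i\Sigma(p_i)/\dist(p_i,x_0)^2\lesssim\int_A|z-x_0|^{-2}\,d\Sigma$ fails at the second inequality precisely for such $p_i$ (and the hypotheses of Lemma \ref{lemma:bojarski} are likewise not met, since $b_i\chi_{B_i}$ is not dominated by $|z-x_0|^{-1}$ on $B_i$ when $p_i$ reaches much closer to $x_0$ than its diameter). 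This is not a removable technicality: such a continuum lets a curve teleport from radius $\approx r$ to radius $\approx c_0 d$, shortcutting all but a bounded amount of the logarithmic width of $A$, so either its weight is $\asymp 1$ (and the mass does not tend to $0$) or admissibility fails. The exclusion set $I_0$ must therefore be chosen relative to the position of the continua (those whose radial extent with respect to $x_0$ spans a definite proportion of $[\,\log r,\log d\,]$ must be removed, and one must then control the remaining continua of intermediate radial extent, e.g.\ via a dyadic decomposition of $A$ together with a serial-rule argument, or via Lemma \ref{lemma:radial}, which bounds the radial extent of a fat continuum by its area). Making these counts come out to a $\delta$-independent $N_0$ while driving the modulus to $0$ is exactly the non-trivial content of Bonk's proof, and it is missing here.
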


The statement here is slightly different from \cite{Bonk:uniformization}, but the proof remains unchanged. We point out the main differences. First, Bonk uses continua $E,F$, while we use arbitrary sets. Note that upper modulus bounds are not affected by this generalization; it is instead that lower modulus bounds require continua so that the curve family that connects them is rich enough. Second, in \cite{Bonk:uniformization} the continua $E,F$ are chosen in $\br X$, but we choose sets $\widehat E,\widehat F$ in $\mathcal E(X)$ and then take their preimages in $\widehat{\C}$. Third, we use path families in $\mathcal E(X)$ rather than in $\widehat{\C}$. Each path in $\mathcal E(X)$ corresponds to countably many possibly non-compact paths in the domain $X$ and thus the considerations in \cite{Bonk:uniformization}, which use line integrals in $\widehat{\C}$, are applicable here as well.

\section{Uniformization of Sierpi\'nski packings}
Our goal in this section is to prove the following theorem at the heart of the paper.

\begin{theorem}\label{theorem:uniformization_full}
Let $Y=\widehat{\C}\setminus \bigcup_{i\in \N} q_i$ be a Sierpi\'nski packing such that $\{\diam(q_i)\}_{i\in \N}$ lies in $\ell^2(\N)$. Let $\tau>0$ and $\zeta_\infty,\zeta_0,\zeta_1\in Y$, and for each $n\in \N$, let $f_n$ be a conformal map from the domain $Y_n= \widehat{\C} \setminus \bigcup_{i=1}^n q_i$ onto a $\tau$-cofat domain $X_n=\widehat{\C}\setminus \bigcup_{i=1}^n p_{i,n}$  such that $f_n(\zeta_\infty)=\infty$, $f_n(\zeta_0)=0$, $|f_n(\zeta_1)|=1$, and $f_n^*(q_i)=p_{i,n}$ for each $i\in \{1,\dots,n\}$. Suppose, in addition, that for each $i\in \N$ all Hausdorff limits of the sequence $\{p_{i,n}\}_{n\geq i}$ are non-separating. Then there exists a sequence $\{k_n\}_{n\in \N}$ increasing to $\infty$ with the following properties.
\begin{enumerate}[\upshape(i)]
	\item\label{item:fatness} For each $i\in \N$, $p_{i,k_n}$ converges as $n\to\infty$  to a $\tau$-fat non-separating continuum $p_{i}$ in the Hausdorff sense.
	\item\label{item:packing} The set $X=\widehat{\C}\setminus \bigcup_{i\in \N} p_i$ is a $\tau$-cofat Sierpi\'nski packing.
	\item\label{item:nondegenerate} For each $i\in \N$, $p_i$ is non-degenerate if and only if  $q_i$ is non-degenerate.
	\item\label{item:convergence} The sequence $g_{k_n}=f_{k_n}^{-1}\colon X_{k_n}\to Y_{k_n}$ gives rise to a sequence of set functions $h_n= \pi_Y \circ g_{k_n}^* \circ \pi_X^{-1}$ from subsets of $\mathcal E(X)$ to subsets of $\mathcal E(Y)$ that converges uniformly to a packing-conformal map $h\colon \mathcal E(X)\to \mathcal E(Y)$.
\end{enumerate}
\end{theorem}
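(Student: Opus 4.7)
My plan is to proceed in four stages. Stage 1 extracts a subsequence: by compactness of the Hausdorff metric on compact subsets of $\widehat{\C}$ together with a diagonal argument, I pick $\{k_n\}$ so that $p_{i,k_n}$ converges Hausdorff to a compact set $p_i$ for every $i\in\N$. Lemma \ref{lemma:fat_hausdorff} then gives $\tau$-fatness of each $p_i$, and the hypothesis on Hausdorff limits gives non-separation, establishing \ref{item:fatness}. Stage 2 extracts a conformal limit of the uniformizing maps: the normalization of $f_n$ at the three points $\zeta_0,\zeta_\infty,\zeta_1$ of the countably connected domain $Y$ makes $\{f_{k_n}|_Y\}$ a normal family, so after a further subsequence $f_{k_n}\to f$ locally uniformly on $Y$ with $f$ conformal onto $\Omega:=f(Y)$. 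Applying Lemma \ref{lemma:caratheodory_convergence} yields $p_i\subset f^{*}(q_i)$; since the $f^{*}(q_i)$ are pairwise disjoint complementary components of $\Omega$, the continua $p_i$ are pairwise disjoint. Combined with $\tau$-fatness, this gives $\sum_i \diam(p_i)^2 \lesssim_\tau \sum_i \Sigma(p_i) \leq \Sigma(\widehat{\C})$, so the diameters shrink, proving \ref{item:packing}.

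For \ref{item:nondegenerate}, if $q_i$ is a point then it is an isolated boundary point of $Y_n$ for $n\geq i$, and a removable-singularity argument together with bijectivity of $f_n$ forces $p_{i,n}$ to be a point, hence $p_i$ is a point. Conversely, suppose $q_i$ is non-degenerate and $p_i$ were degenerate. Choose $N_0=N_0(\tau)$ from Bonk's Lemma \ref{lemma:bonkcollisions} and apply the non-degeneracy Lemma \ref{lemma:l2collision_final} with $E=q_i$ and $F=F_0$ a fixed arc in $Y$ joining $\zeta_0$ and $\zeta_1$, obtaining a uniform lower bound $C>0$ on the transboundary modulus in $Y_{k_n}$ of curves from $q_i$ to $F_0$, valid for any choice of at most $N_0$ peripheral continua to be removed. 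Conformal invariance (Lemma \ref{lemma:transboundary_invariance}) transports this bound to the corresponding modulus in $X_{k_n}$ between $p_{i,k_n}$ and $f_{k_n}(F_0)$. Since $|f_{k_n}(\zeta_0)-f_{k_n}(\zeta_1)|=1$, the arc $f_{k_n}(F_0)$ has diameter at least $1$, while $\diam(p_{i,k_n})\to 0$ and $\dist(p_{i,k_n},f_{k_n}(F_0))$ stays bounded below (since $p_i\subset f^{*}(q_i)$ and $f(F_0)\subset\Omega$ are disjoint closed sets). Hence $\Delta(p_{i,k_n},f_{k_n}(F_0))\to\infty$, and Bonk's collision estimate yields an upper bound on the same modulus that tends to zero, contradicting $C>0$.

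For \ref{item:convergence}, I define $h\colon \mathcal E(X)\to\mathcal E(Y)$ by the relation $h\circ\pi_X=\pi_Y\circ F$, where $F=f^{-1}$ on $\Omega$ and $F$ collapses each ``extra branch'' $f^{*}(q_i)\setminus\Omega$ to a chosen point of $q_i$. Continuity, surjectivity, and monotonicity of $h$ (with $h(\widehat p_i)=\widehat q_i$) follow from this description together with Lemma \ref{lemma:cell_like}. The transboundary weak upper gradient $\rho_h$ is obtained as a weak $L^2(\widehat{\C})$-limit (or, after Mazur, a strong convex-combination limit) of the spherical derivatives $|Dg_{k_n}|$ extended by zero off $X_{k_n}$; these are uniformly $L^2$-bounded since $\int|Dg_{k_n}|^2\,d\Sigma=\Sigma(Y_{k_n})\leq\Sigma(\widehat{\C})$ by conformality. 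Fuglede's lemma upgrades strong convergence to $\md_2$-a.e.\ curve, allowing the finite-level upper gradient inequality for $g_{k_n}$ to pass to the limit; the collision contribution $\sum_{i:p_i\cap|\gamma|\neq\emptyset}\diam(q_i)$ is controlled via the Hausdorff convergence $p_{i,k_n}\to p_i$ together with the hypothesis $\sum\diam(q_i)^2<\infty$ and Cauchy--Schwarz. The conformality bound $\int\rho_h^2\leq \Sigma(E\cap Y)$ follows from the sharp area identity at finite level and weak lower semicontinuity of the $L^2$ norm. Uniform convergence of $h_n$ to $h$ is an equicontinuity statement derived from the collision/non-degeneracy modulus estimates of Section~3 combined with conformal invariance, in the spirit of the arguments of Schramm and Bonk.

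The hardest step is Stage~4: producing a single $L^2$-weak limit $\rho_h$ that simultaneously serves as a transboundary weak upper gradient for the limit map $h$ and satisfies the sharp conformality inequality. The subtlety lies in coordinating Mazur's strong convex-combination convergence with Fuglede's lemma along curves, handling the discrete collision terms $\diam(q_i)$ separately from the Lebesgue integrals (Mazur acts only on the interior derivatives), and upgrading subsequential pointwise convergence of the set functions $h_n$ to genuine uniform convergence on $\mathcal E(X)$ through modulus-based equicontinuity, while verifying that the exceptional curve family constructed along the way has $\md_2$-measure zero.
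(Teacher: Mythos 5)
There is a genuine gap, and it sits at the heart of your argument: Stage~2 does not work. The residual set $Y=\widehat{\C}\setminus\bigcup_{i\in\N}q_i$ is not an open set --- for a Sierpi\'nski carpet it has empty interior --- so there is no domain on which all the maps $f_{k_n}$ are simultaneously defined, no normal family $\{f_{k_n}|_Y\}$, no locally uniform limit $f$, and no image domain $\Omega=f(Y)$. (This is precisely the obstruction the paper flags in the introduction: Rohde--Werness could extract Hausdorff limits of the disks but ``were not able to identify the limit of the conformal maps.'') Everything downstream of this collapses: the application of Lemma \ref{lemma:caratheodory_convergence} to get $p_i\subset f^*(q_i)$, hence your proof that the $p_i$ are pairwise disjoint; the lower bound on $\dist(p_{i,k_n},f_{k_n}(F_0))$ in Stage~3, which you justify by disjointness of $f^*(q_i)$ and $f(F_0)$; and above all the definition of $h$ in Stage~4 via ``$F=f^{-1}$ on $\Omega$.'' The paper only uses a locally uniform limit in the special case where finitely many $q_i$ are non-degenerate (Case~1 of Lemma \ref{lemma:nocollisions}), where the relevant domain really is open after removing isolated punctures.

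What replaces the missing conformal limit in the paper is purely modulus-theoretic and set-functional: disjointness of the $p_i$ is the No Collisions Lemma \ref{lemma:nocollisions}, proved by playing the uniform lower bound of Lemma \ref{lemma:l2collision_final} against Bonk's upper bound (Lemma \ref{lemma:bonkcollisions}) via conformal invariance (with a no-clustering lemma supplying the auxiliary continuum $E=q_{i_0}$); the limit map $h$ is obtained as a uniform limit of the \emph{set functions} $h_n=\pi_Y\circ g_{k_n}^*\circ\pi_X^{-1}$ via an Arzel\`a--Ascoli theorem for set functions, whose equicontinuity input is again the no-collisions estimate; and the degeneration bound in Lemma \ref{lemma:nondegeneracy} gets its distance lower bound not from a limit map but from the observation that a degenerating $E_n'$ converges to a point which one of the three normalized great-circle arcs avoids. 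Your Stages~1 and~3 (modulo the broken distance bound) and the $L^2$/Mazur/Fuglede outline of the regularity step are consistent with the paper, but without repairing Stage~2 you have no limit packing with disjoint pieces and no candidate map $h$, so the proof as proposed does not go through. One further caution on Stage~4: passing the collision sums $\sum_{i:p_{i,k_n}\cap|\gamma|\neq\emptyset}\diam(q_i)$ to the limit is not a routine Cauchy--Schwarz estimate; it requires the fatness of the $p_{i,n}$ and $p_i$, a maximal-function argument, and a second application of Fuglede's lemma (Lemma \ref{lemma:limit_sum}).
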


The proof of the theorem is given in Sections \ref{section:limiting_packing}--\ref{section:limiting_mapping_regularity}. We now record two immediate corollaries. If the sets $p_{i,n}$ are chosen to be geometric disks and points, and the maps $f_n$ are given by Koebe's uniformization theorem for finitely connected domains \cite{Koebe:FiniteUniformization}, then we obtain the following corollary.

\begin{corollary}\label{corollary:uniformization_disk}
Let $Y=\widehat{\C}\setminus \bigcup_{i\in \N} q_i$ be a Sierpi\'nski packing such that $\{\diam(q_i)\}_{i\in \N}$ lies in $\ell^2(\N)$. Then there exists a round Sierpi\'nski packing $X=\widehat{\C}\setminus \bigcup_{i\in \N} p_i$ and a packing-conformal map $h\colon \mathcal E(X)\to \mathcal E(Y)$. Moreover, for each $i\in \N$, the disk $p_i$ is  non-degenerate if and only if $q_i$ is non-degenerate.
\end{corollary}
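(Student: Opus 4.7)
The plan is to derive this corollary as a direct instance of Theorem \ref{theorem:uniformization_full}, with the approximating domains $X_n$ chosen to be circle domains via Koebe's uniformization theorem for finitely connected planar domains. First I would fix three distinct points $\zeta_\infty,\zeta_0,\zeta_1 \in Y$ (these exist since $Y$ must be uncountable under the square-summability hypothesis: one has $\sum_i \Sigma(q_i)\lesssim \sum_i \diam(q_i)^2 <\infty$, and even in degenerate cases a Baire-category argument on the $G_\delta$ set $Y$ gives uncountability). For each $n\in\N$, Koebe's classical uniformization theorem \cite{Koebe:FiniteUniformization} provides a conformal map $f_n$ from the $n$-connected domain $Y_n=\widehat{\C}\setminus\bigcup_{i=1}^n q_i$ onto a circle domain $X_n = \widehat{\C}\setminus \bigcup_{i=1}^n p_{i,n}$, whose complementary components $p_{i,n}$ are (possibly degenerate) closed disks. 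Using the three-parameter freedom of Möbius transformations of the sphere, I normalize $f_n$ so that $f_n(\zeta_\infty)=\infty$, $f_n(\zeta_0)=0$, $|f_n(\zeta_1)|=1$, and I label the disks so that $f_n^*(q_i)=p_{i,n}$ for $i\in\{1,\dots,n\}$.

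Next, I would verify the two hypotheses of Theorem \ref{theorem:uniformization_full}. The $\tau$-cofatness of each $X_n$ for a universal $\tau>0$ is immediate, since closed spherical disks are uniformly $\tau$-fat by elementary geometry. To show that for each fixed $i$ all Hausdorff limits of $\{p_{i,n}\}_{n\geq i}$ are non-separating, I would run a normal-family argument: the three-point normalization combined with Koebe distortion estimates forces $\{f_n\}$ to be normal on any compact subset of $Y$. Since $q_i$ lies at positive spherical distance from $\zeta_\infty$, this normality (together with the correspondence $f_n^*(q_i)=p_{i,n}$ and the Carathéodory-type statement of Lemma \ref{lemma:caratheodory_convergence}) confines the sequence $\{p_{i,n}\}_{n\geq i}$ to a fixed compact subset of $\C\setminus\{\infty\}$. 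Consequently, every Hausdorff limit of $\{p_{i,n}\}_n$ is a bounded closed Euclidean disk (possibly degenerate to a point), and any such set is non-separating.

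With the hypotheses verified, Theorem \ref{theorem:uniformization_full} yields a subsequence $\{k_n\}$ along which $p_{i,k_n}$ converges in the Hausdorff sense to a $\tau$-fat non-separating continuum $p_i$, forming a $\tau$-cofat Sierpiński packing $X = \widehat{\C}\setminus\bigcup_i p_i$, together with a packing-conformal map $h\colon \mathcal E(X)\to \mathcal E(Y)$. Because each $p_i$ is a Hausdorff limit of closed disks constrained to a fixed compact region of $\C$, it is itself a closed disk or a point (using also Lemma \ref{lemma:fat_hausdorff}); hence $X$ is a round Sierpiński packing as required. The non-degeneracy bijection $p_i$ non-degenerate $\iff$ $q_i$ non-degenerate is exactly item \eqref{item:nondegenerate} of Theorem \ref{theorem:uniformization_full}. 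The principal obstacle, once Theorem \ref{theorem:uniformization_full} is treated as a black box, is verifying the non-separating-limits hypothesis, which reduces to showing that each family $\{p_{i,n}\}_n$ remains bounded in $\C$; this boundedness is handled by the normal family argument made possible by the three-point normalization.
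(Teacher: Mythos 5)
Your overall route is exactly the paper's: the corollary is obtained by instantiating Theorem \ref{theorem:uniformization_full} with the circle domains $X_n$ produced by Koebe's theorem for finitely connected domains, and items \ref{item:fatness}--\ref{item:convergence} of that theorem deliver everything, including the non-degeneracy equivalence. The uniform fatness of spherical disks is indeed immediate.

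The one step that does not work as written is your verification of the non-separating-limits hypothesis via a normal-family argument. You speak of normality of $\{f_n\}$ ``on any compact subset of $Y$'', but $Y$ is the residual set of the packing and may well have empty interior (e.g.\ for a carpet), so this is not a meaningful domain for normal families; moreover there is no fixed open set on which all $f_n$, $n\geq m$, are simultaneously defined other than subsets of $\bigcap_{n} Y_n=Y$ itself, since $Y_n\subset Y_m$ for $n\geq m$. (The paper only runs such locally uniform convergence arguments on domains of the form $\widehat{\C}\setminus(F_1\cup F_2\cup\bigcup_{i\in J}q_i)$ with $J$ finite, as in Case 1 of Lemma \ref{lemma:nocollisions}.) Fortunately the hypothesis you are after is much cheaper: a Hausdorff limit of closed spherical disks is a closed spherical disk, a point, or all of $\widehat{\C}$; the last case is excluded because the complementary open disk of $p_{i,n}$ contains $0=f_n(\zeta_0)$ and $\infty=f_n(\zeta_\infty)$, so it cannot shrink to a point. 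Hence every Hausdorff limit of $\{p_{i,n}\}_{n\geq i}$ is a (possibly degenerate) closed spherical disk and in particular non-separating, and the limiting packing is automatically round --- no confinement to a compact subset of $\C$ is needed. With that replacement your argument is complete and coincides with the paper's.
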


More generally, one can consider a collection $\{P_i\}_{i\in \N}$ of non-degenerate $\tau$-fat sets in the plane $\C$, which is regarded as a subset of $\widehat{\C}$.  According to the Brandt--Harrington unformization theorem \cites{ Brandt:conformal, Harrington:conformal}  there exists a conformal map $f_n$ from the finitely connected domain $Y_n=\widehat{\C} \setminus \bigcup_{i=1}^n q_i$ onto a domain $X_n=\widehat{\C} \setminus \bigcup_{i=1}^n p_{i,n}$, where $p_{i,n}$ is either a point or is homothetic to $P_i$; that is, $p_{i,n}$ is the image of $P_i$ under a transformation $z\mapsto az+b$, $a>0$, $b\in \C$. By postcomposing $f_n$ with a homothetic transformation, we may assume that it satisfies the normalizations of Theorem \ref{theorem:uniformization_full}. By Lemma \ref{lemma:fat_invariant}, the sets $p_{i,n}$ are $c(\tau)$-fat. Also, note that if $p_{i,n}$ is homothetic to $P_i$, then each non-degenerate Hausdoff limit of $p_{i,n}$ as $n\to\infty$ is also homothetic to $P_i$, provided that it is not the entire sphere. Therefore, Theorem \ref{theorem:uniformization_full} gives the following corollary.

\begin{corollary}\label{corollary:uniformization_cofat_general}
Let $Y=\widehat{\C}\setminus \bigcup_{i\in \N} q_i$ be a Sierpi\'nski packing such that $\{\diam(q_i)\}_{i\in \N}$ lies in $\ell^2(\N)$. For $\tau>0$ consider a collection $\{P_i\}_{i\in \N}$ of non-degenerate $\tau$-fat sets in the plane $\C$. Then there exists a Sierpi\'nski packing $X=\widehat{\C}\setminus \bigcup_{i\in \N} p_i$, where $p_i$ is a point when $q_i$ is a point and $p_i$ is homothetic to $P_i$ when $q_i$ is non-degenerate, and there exists a packing-conformal map $h\colon \mathcal E(X)\to \mathcal E(Y)$. 
\end{corollary}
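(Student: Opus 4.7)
The plan is to reduce the corollary to Theorem \ref{theorem:uniformization_full} by producing the approximating conformal maps via the Brandt--Harrington uniformization theorem. Fix the normalization points $\zeta_\infty,\zeta_0,\zeta_1\in Y$ once and for all, and consider the finitely connected approximating domains $Y_n=\widehat{\C}\setminus \bigcup_{i=1}^n q_i$.

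First, I would apply the Brandt--Harrington theorem to $Y_n$ to obtain a conformal map $f_n\colon Y_n\to X_n=\widehat{\C}\setminus\bigcup_{i=1}^n p_{i,n}$ in which each $p_{i,n}$ is either a point (precisely when $q_i$ is a point) or the image of $P_i$ under a complex affine map $z\mapsto a_{i,n}z+b_{i,n}$ with $a_{i,n}>0$, $b_{i,n}\in \C$. The map $f_n$ is determined by its prescribed shapes only up to post-composition by complex affine maps of $\widehat{\C}$; since such post-compositions preserve the homothety type of each peripheral continuum, I would use this three-real-parameter freedom to arrange $f_n(\zeta_\infty)=\infty$, $f_n(\zeta_0)=0$, and $|f_n(\zeta_1)|=1$, matching the normalizations required in Theorem \ref{theorem:uniformization_full}. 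The labelling $f_n^*(q_i)=p_{i,n}$ is built into the Brandt--Harrington construction.

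Second, I would verify the remaining hypotheses of Theorem \ref{theorem:uniformization_full}. Complex affine maps are M\"obius transformations, so Lemma \ref{lemma:fat_invariant} ensures that each non-degenerate $p_{i,n}$ is $c(\tau)$-fat; degenerate $p_{i,n}$ are points and thus automatically fat. Hence each $X_n$ is $c(\tau)$-cofat, which is the fatness input of Theorem \ref{theorem:uniformization_full}. For the non-separating Hausdorff-limit hypothesis, I would fix $i$ and examine any Hausdorff subsequential limit of $\{p_{i,n}\}_{n\ge i}$: if the affine parameters $(a_{i,n},b_{i,n})$ stay bounded and bounded away from degeneration, the limit is a homothetic image of $P_i$, hence non-separating in $\widehat{\C}$ because $P_i\subset\C$ is non-separating; if $a_{i,n}\to 0$ the limit is a point; and otherwise the limit can only be the full sphere $\widehat{\C}$, which is vacuously non-separating. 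All cases yield non-separating limits.

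Finally, applying Theorem \ref{theorem:uniformization_full} produces a subsequence $\{k_n\}$, the limiting continua $p_i$, the packing $X=\widehat{\C}\setminus\bigcup_{i\in\N} p_i$, and the packing-conformal map $h\colon \mathcal E(X)\to \mathcal E(Y)$ as the uniform limit of $h_n=\pi_Y\circ g_{k_n}^*\circ\pi_X^{-1}$. Conclusion \ref{item:nondegenerate} of that theorem gives that $p_i$ is a point exactly when $q_i$ is a point, while conclusion \ref{item:packing} asserts that $X$ is a Sierpi\'nski packing, forcing $\diam(p_i)\to 0$; in particular no $p_i$ is the whole sphere. Combined with the dichotomy from the previous paragraph this means each non-degenerate $p_i$ must be homothetic to $P_i$, completing the proof. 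The only genuinely delicate step is the normalization: one must post-compose by complex affine rather than general M\"obius transformations to preserve the prescribed shapes, and the rest of the argument is a direct unpacking of Theorem \ref{theorem:uniformization_full}.
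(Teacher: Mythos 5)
Your proposal follows the same route as the paper's (one-paragraph) proof: Brandt--Harrington uniformization of the finitely connected domains $Y_n$, normalization by post-composition, fatness of the $p_{i,n}$ via Lemma \ref{lemma:fat_invariant}, and an application of Theorem \ref{theorem:uniformization_full}. One small correction: a genuinely complex affine map $z\mapsto az+b$ with $a\notin(0,\infty)$ rotates the peripheral continua and therefore does \emph{not} preserve the property of being homothetic to $P_i$ in the paper's sense, where the dilation factor must be positive real. This is harmless only because the normalization $f_n(\zeta_\infty)=\infty$, $f_n(\zeta_0)=0$, $|f_n(\zeta_1)|=1$ consists of three real conditions and is deliberately chosen so that a genuine homothety (positive dilation plus translation) suffices; that is what the paper uses.

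The more substantive issue is your classification of the Hausdorff limits of $p_{i,n}=a_{i,n}P_i+b_{i,n}$. If $a_{i,n}\to\infty$, the limit is an unbounded blow-up containing $\infty$; it need not be the entire sphere, need not be homothetic to $P_i$, and for a general fat non-separating continuum $P_i$ it can actually separate $\widehat{\C}$ (take $P_i$ to be a union of nested round annuli accumulating at a point, joined by radial bridges, each annulus missing a small gate whose angular width tends to zero along the sequence; the blow-up at the accumulation point produces full annuli). So the step ``otherwise the limit can only be the full sphere'' fails, and with it the verification of the non-separating-limits hypothesis of Theorem \ref{theorem:uniformization_full}; your closing argument is also circular in this regime, since it uses conclusions \ref{item:packing} and \ref{item:nondegenerate} of a theorem whose hypotheses have not yet been checked. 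To be fair, the paper's own justification (``each non-degenerate Hausdorff limit is homothetic to $P_i$, provided that it is not the entire sphere'') is equally terse on this point. The clean way to close the gap is to note that Lemma \ref{lemma:nocollisions}, whose proof does not use the non-separating-limits hypothesis, yields $\liminf_n \dist(f_n^*(F), p_{i,n})>0$ for a continuum $F$ containing $\zeta_\infty$ and disjoint from $q_i$; since $\infty=f_n(\zeta_\infty)\in f_n^*(F)$, the sets $p_{i,n}$ stay in a fixed Euclidean ball, so $a_{i,n}$ and $b_{i,n}$ are bounded and the only possible limits are points and honest homothetic copies of $P_i$, which are non-separating.
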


\subsection{Existence of limiting Sierpi\'nski packing}\label{section:limiting_packing}
We now initiate the proof of Theorem \ref{theorem:uniformization_full}.  Recall that the conformal map $f_n\colon Y_n\to X_n$ gives rise to a set function $f_n^*= \pi_{X_n}^{-1}\circ \widehat{f}_n \circ \pi_{Y_n}$ from the powerset of $\widehat \C$ into itself. Namely, $f_n^*=f_n$ in $Y_n$ and $f_n^*(A)=p_{i,n}$ whenever $A\subset q_{i}$. By Lemma \ref{lemma:cell_like} \ref{lemma:cell_like:continuum}, $f_n^*$ maps continua to continua. 

\begin{lemma}[Non-degeneracy]\label{lemma:nondegeneracy}
Let $\widehat{E}\subset \mathcal E(Y)$ be a continuum such that $E=\pi_{Y}^{-1}(\widehat{E})$ is non-degenerate. Then  
$$\liminf_{n\to\infty}\diam(f_n^*(E))>0.$$
\end{lemma}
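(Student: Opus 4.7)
The plan is to argue by contradiction, combining three modulus estimates: the non-degeneracy lemma (Lemma \ref{lemma:l2collision_final}), supplying a \emph{lower} bound on transboundary modulus in $Y_n$; conformal invariance of transboundary modulus (Lemma \ref{lemma:transboundary_invariance}), transporting that bound to $X_n$; and Bonk's collision lemma (Lemma \ref{lemma:bonkcollisions}), providing a matching \emph{upper} bound that vanishes as the relative distance blows up. Assume $\diam(f_n^*(E))\to 0$ along a subsequence; extract further so that $f_n^*(E)$ converges in the Hausdorff sense to a point $y_\infty\in\widehat{\C}$ and $f_n(\zeta_1)\to z_1\in\{|z|=1\}$.

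I would first identify two probe points. At most one of $\zeta_\infty,\zeta_0,\zeta_1$ can lie in $E$: if two did, then two of the targets $\infty,0,f_n(\zeta_1)$, which have pairwise spherical distance bounded below by a universal constant, would sit inside $f_n^*(E)$ simultaneously, contradicting $\diam\to 0$. Hence one may pick distinct $\zeta_a,\zeta_b\in\{\zeta_\infty,\zeta_0,\zeta_1\}\cap(Y\setminus E)$ with $\sigma(f_n(\zeta_a),y_\infty),\sigma(f_n(\zeta_b),y_\infty)\geq c>0$ for $n$ large.

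The main obstacle I anticipate is producing a test continuum $F$ for which the relative distance $\Delta(f_n^*(E),f_n^*(F))$ genuinely blows up: a naive choice of $F\subset\widehat{\C}$ risks having $f_n^*(F)$ contain peripheral images $p_{i,n}$ (for $i$ with $q_i\cap F\neq\emptyset$) that cluster at $y_\infty$. I would sidestep this by \emph{pulling the test set back from} $X_n$. Let $K:=\widehat{\C}\setminus B_\sigma(y_\infty,c/2)$, a closed topological disk that, for large $n$, is disjoint from $f_n^*(E)$ and contains $f_n(\zeta_a),f_n(\zeta_b)$. Setting $F_n:=g_n^*(K)=\pi_{Y_n}^{-1}(\widehat{g}_n(\pi_{X_n}(K)))$, Lemma \ref{lemma:cell_like}\ref{lemma:cell_like:continuum} makes $F_n$ a continuum in $\widehat{\C}$ containing $\zeta_a,\zeta_b$; because both $F_n$ and $E$ are $\pi_{Y_n}$-saturated and $\pi_{X_n}(K)\cap\pi_{X_n}(f_n^*(E))=\emptyset$ in $\mathcal{E}(X_n)$ for large $n$, one also obtains $F_n\cap E=\emptyset$. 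For a metric $d$ on $\mathcal{E}(Y;E)$ compatible with its topology, injectivity of $\pi_{Y;E}$ on $Y$ supplies the uniform lower bound $\delta:=d(\pi_{Y;E}(\zeta_a),\pi_{Y;E}(\zeta_b))>0$ on $\diam_d\pi_{Y;E}(F_n)$.

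The three modulus bounds then close the argument. Let $N_0=N_0(\tau)$ and $\psi$ come from Bonk's lemma. Lemma \ref{lemma:l2collision_final} with $N=N_0$ yields $C=C(Y,E,d,\delta,N_0)>0$ such that, for every $n$ and every $J_0\subset\{1,\dots,n\}$ with $\#J_0\leq N_0$,
\[
\md_{Y_n}\Gamma\bigl(\pi_{Y_n}(E),\,\pi_{Y_n}(F_n\setminus\textstyle\bigcup_{i\in J_0}q_i);\,\mathcal{E}(Y_n)\setminus\textstyle\bigcup_{i\in J_0}\pi_{Y_n}(q_i)\bigr)\geq C.
\]
Lemma \ref{lemma:transboundary_invariance} transfers this to the same lower bound in $\mathcal{E}(X_n)$ for the pushed family, with endpoints $\pi_{X_n}(f_n^*(E))$ and $\pi_{X_n}(K\setminus\bigcup_{i\in J_0}p_{i,n})$ in $\mathcal{E}(X_n)\setminus\bigcup_{i\in J_0}\pi_{X_n}(p_{i,n})$. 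Meanwhile Bonk's lemma, applied with $\widehat{E}'=\pi_{X_n}(f_n^*(E))$ and $\widehat{F}'=\pi_{X_n}(K)$, produces $I_0\subset\{1,\dots,n\}$ of size $\leq N_0$ and modulus $\leq\psi(\Delta_n)$ for the Bonk family, provided
\[
\Delta_n:=\Delta(f_n^*(E),K)\geq\frac{c/2-\diam(f_n^*(E))}{\diam(f_n^*(E))}\geq 12,
\]
which holds for large $n$. Choosing $J_0=I_0$, the pushed $Y_n$-family embeds into the Bonk family (same open set, and a subset of the $F$-side endpoints), so $C\leq\psi(\Delta_n)\to 0$, a contradiction.
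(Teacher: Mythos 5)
Your overall strategy is exactly the paper's: assume $\diam(f_n^*(E))\to 0$ along a subsequence, manufacture a test continuum $F_n$ whose projection to $\mathcal E(Y;E)$ has diameter uniformly bounded below, and play the uniform lower bound of Lemma \ref{lemma:l2collision_final} against the upper bound of Lemma \ref{lemma:bonkcollisions} via conformal invariance (Lemma \ref{lemma:transboundary_invariance}). Your setup is sound: the selection of two probe points among $\zeta_\infty,\zeta_0,\zeta_1$ works (if $\zeta_j\in E$ then $f_n(\zeta_j)\to y_\infty$, so the point excluded by membership in $E$ is the same one excluded by proximity of its image to $y_\infty$), the pulled-back set $F_n=g_n^*(K)$ is a continuum disjoint from $E$ containing $\zeta_a,\zeta_b$, and the lower modulus bound with budget $N$ is correctly invoked. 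Your test set ($K=\widehat{\C}\setminus B_\sigma(y_\infty,c/2)$) differs from the paper's (a saturated sub-arc of the great circle through $0$, $f_n(\zeta_1)$, $\infty$), but that difference is harmless.

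The gap is in the application of Lemma \ref{lemma:bonkcollisions}. Its hypothesis is $\Delta(E,F)\geq 12$ where $E=\pi_{X_n}^{-1}(\widehat E')$ and $F=\pi_{X_n}^{-1}(\widehat F')$ are the \emph{saturations} of the chosen sets. With $\widehat F'=\pi_{X_n}(K)$ you get $\pi_{X_n}^{-1}(\widehat F')=K\cup\bigcup\{p_{i,n}: p_{i,n}\cap K\neq\emptyset\}$, and a peripheral continuum meeting $K$ may protrude far into $B_\sigma(y_\infty,c/2)$ and come within distance comparable to $\diam(f_n^*(E))$ of $f_n^*(E)$. Hence the quantity you estimate, $\Delta(f_n^*(E),K)$, is not the one the lemma requires, and nothing forces $\Delta(f_n^*(E),\pi_{X_n}^{-1}(\pi_{X_n}(K)))$ to exceed $12$, let alone blow up. In other words, pulling the test set back from $X_n$ does \emph{not} sidestep the clustering problem you identified; it reappears as the saturation in $X_n$. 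The repair is the paper's pruning step: by $\tau$-cofatness and Lemma \ref{lemma:count}, at most $N_1(\tau,c)$ of the $p_{i,n}$ meeting $K$ have diameter $\geq c/4$; collecting their indices into $J_n$, the set $\pi_{X_n}^{-1}(\pi_{X_n}(K))\setminus\bigcup_{i\in J_n}p_{i,n}$ lies in $\widehat{\C}\setminus B_\sigma(y_\infty,c/4)$, so its relative distance to $f_n^*(E)$ does tend to infinity. One then applies Bonk's lemma to this reduced set to obtain $I_0$, sets $J_0=J_n\cup I_0$, and runs Lemma \ref{lemma:l2collision_final} with budget $N=N_0+N_1$ rather than $N_0$. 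With that insertion your argument closes.
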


\begin{proof}
Without loss of generality $\widehat{E}\neq \mathcal E(Y)$, since in that case we have $f_n^*(E)=\widehat{\C}$ for all $n\in \N$. The set $E$ is a continuum and has the property that it contains all peripheral continua of $Y$ that intersect $E$. We set $E_n'= f_n^*(E)$, which is a non-degenerate continuum by the conformality of $f_n$, and suppose that $\diam(E_n')\to 0$ along a subsequence. We wish to derive a contradiction. After passing to a further subsequence, we assume that $E_n'$ converges to a point $x_0\in \widehat{\C}$ in the Hausdorff sense. Consider the great circle through the points $0=f_n(\zeta_0)$, $f_n(\zeta_1)$, and $\infty=f_n(\zeta_{\infty})$. 
The point $x_0$ does not lie on at least one of the arcs (of the great circle) from $0$ to $f_n(\zeta_1)$, $f_n(\zeta_1)$ to $\infty$, or $\infty$ to $0$. Without loss of generality, after passing to a subsequence, suppose that $E_n'$ stays away from the arc $\alpha_n$ joining $0$ to $f_n(\zeta_1)$. That is, there exists $c>0$ such that $\dist(E_n', |\alpha_n|)\geq c$ for all $n\in \N$. Let $F_n'= \pi_{X_n}^{-1} (\pi_{X_n}(|\alpha_n|))$, i.e., the union of the arc $|\alpha_n|$, together with all the sets $p_{j,n}$, $j\in \{1,\dots,n\}$, that intersect $|\alpha_n|$. Note that $F_n'$ might be very close to $E_n'$, although they are disjoint, because they both contain the complementary components of $X_n$ that they intersect. 

The $\tau$-cofatness of $X_n$ and Lemma \ref{lemma:count} imply that there exists a constant $N_0\in \N$, depending only on $\tau$ and $c$, such that at most $N_0$ of the sets $p_{i,n}$, $i\in \{1,\dots,n\}$, with diameter larger than $c/2$  intersect the arc $|\alpha_n|$. Thus, there exists a set $J_n\subset \{1,\dots,n\}$ with $\# J_n\leq N_0$ such that 
$$\dist(E_n', F_n'\setminus \bigcup_{i\in J_n} p_{i,n} ) \geq c/2$$
for each $n\in \N$. Since $\diam(F_n' \setminus \bigcup_{i\in J_n} p_{i,n})\geq \diam(\{0,f_n(\zeta_1)\})$, we have
$$\Delta (E_n', F_n'\setminus \bigcup_{i\in J_n} p_{i,n} ) \to \infty$$
as $n\to \infty$.  Finally, we observe that the sets $E_n'$ and $F_n'\setminus  \bigcup_{i\in J_n} p_{i,n}$ are invariant under the set function $\pi_{X_n}^{-1}\circ \pi_{X_n}$ because they contain all complementary components of $X_n$ that they intersect.  By Lemma \ref{lemma:bonkcollisions} there exists a constant $N_0'\in \N$ that depends only on $\tau$ and a set $J_n'\subset \{1,\dots,n\}$ with $\#J_n'\leq N_0'$ such that
\begin{align*}
\md_{X_n} \Gamma \left( \pi_{X_n}(E_n'), \pi_{X_n} (F_n'\setminus \bigcup_{i\in J_n}p_{i,n} ) ; \mathcal E(X_n)\setminus \bigcup_{i\in J_n'} \pi_{X_n}(p_{i,n})  \right) \to 0
\end{align*}
as $n\to\infty$. Note that by the monotonicity of modulus, if we set $J_n''=J_n\cup J_n'$, then we also obtain
\begin{align*}
\md_{X_n} \Gamma \left( \pi_{X_n}(E_n'), \pi_{X_n} (F_n'\setminus \bigcup_{i\in J_n''}p_{i,n} ) ; \mathcal E(X_n)\setminus \bigcup_{i\in J_n''} \pi_{X_n}(p_{i,n})  \right) \to 0
\end{align*}
as $n\to \infty$. 

Consider the set ${F_n}=(f_n^{-1})^*(F_n')$; that is, $F_n$ is the union of $f_n^{-1}(|\alpha_n|\cap X_n)$ together with the sets $q_j$, $j\in \{1,\dots,n\}$, that intersect its closure. Note that in the space $\mathcal E(Y;E)$ (endowed with any fixed metric that induces its topology) the projection of $F_n$ has diameter uniformly bounded from below, since it connects the projections of the points $\zeta_0$ and $\zeta_1$, which do not lie on $E$. Also, note that $\#J_n''\leq N_0+N_0'$. Thus, by Lemma \ref{lemma:l2collision_final}, we have
\begin{align*}
\md_{Y_n} \Gamma \left( \pi_{Y_n}(E), \pi_{Y_n} (F_n\setminus \bigcup_{i\in J_n''}q_i ) ; \mathcal E(Y_n)\setminus \bigcup_{i\in J_n''} \pi_{Y_n}(q_i)  \right) \geq C
\end{align*}
for all $n\in \N$. The conformal invariance of transboundary modulus (Lemma \ref{lemma:transboundary_invariance}) leads to a contradiction. 
\end{proof}

\begin{corollary}\label{cor:nondegeneracy}
For each $i\in \N$, the following statements are true.
\begin{enumerate}[\upshape(i)]
	\item If $q_i$ is non-degenerate, then the sequence of sets $\{p_{i,n}\}_{n\geq i}$ does not degenerate to a point as $n\to\infty$. 
	\item The set $q_i$ is a point if and only if $p_{i,n}$ is a point for all $n\geq i$.
\end{enumerate} 
\end{corollary}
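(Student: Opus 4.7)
The plan is to deduce both parts of the corollary almost immediately from the Non-degeneracy Lemma (Lemma \ref{lemma:nondegeneracy}), combined, for one implication of (ii), with the Riemann removable singularity theorem.

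For part (i), I will apply Lemma \ref{lemma:nondegeneracy} to the singleton continuum $\widehat E = \{\pi_Y(q_i)\} \subsetneq \mathcal E(Y)$, whose preimage under $\pi_Y$ is exactly $q_i$. Since $q_i$ is non-degenerate by hypothesis, the lemma's assumption is satisfied, and it yields
\begin{align*}
\liminf_{n\to\infty}\diam(f_n^*(q_i)) = \liminf_{n\to\infty}\diam(p_{i,n}) > 0,
\end{align*}
which rules out Hausdorff degeneration of $\{p_{i,n}\}_{n\geq i}$ to a point.

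For part (ii), the backward implication is simply the contrapositive of (i): if $p_{i,n}$ is a point for every $n\geq i$, then $\diam(p_{i,n})=0$, and so $q_i$ cannot be non-degenerate. For the forward implication, suppose $q_i=\{z_i\}$. Then $z_i$ is an isolated point of $\widehat{\C}\setminus Y_n$, hence an isolated singularity of the conformal map $f_n$. I will invoke the Riemann removable singularity theorem: in a local chart where $f_n$ is bounded near $z_i$ it extends holomorphically to $z_i$; otherwise, passing to $1/(f_n-w)$ for a suitable $w$ produces a bounded, removable singularity and so a meromorphic extension of $f_n$ at $z_i$. Either way the extension assigns $z_i$ a single value in $\widehat{\C}$, which by the open mapping theorem must lie in $\widehat{\C}\setminus X_n$. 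Because $f_n^*(q_i)=p_{i,n}$, that value coincides with $p_{i,n}$, which is therefore a point.

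The genuine work is concentrated in Lemma \ref{lemma:nondegeneracy}, which has already been established; the remaining ingredient is classical complex analysis, so I do not anticipate any real obstacle in carrying out this proof.
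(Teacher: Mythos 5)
Your proof is correct and follows the same route as the paper: part (i) is exactly Lemma \ref{lemma:nondegeneracy} applied to the singleton continuum $\widehat E=\pi_Y(q_i)$ with $E=q_i$, and for part (ii) the paper simply says it ``follows from the fact that $f_n$ is a conformal map,'' which is the isolated-singularity argument you spell out. Your use of the contrapositive of (i) for the backward direction of (ii) is a harmless variant of the symmetric argument (applying the removable-singularity reasoning to $f_n^{-1}$), and both are valid.
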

The second part of the lemma follows from the fact that $f_n$ is a conformal map. After taking a diagonal sequence, assume that for each $i\in \N$ the sequence $p_{i,n}$ converges in the Hausdorff sense as $n\to\infty$ to a compact set $p_i$ that is connected, does not separate the sphere by the assumption in Theorem \ref{theorem:uniformization_full}, and is $\tau$-fat by Lemma \ref{lemma:fat_hausdorff}. We have already established parts \ref{item:fatness} and \ref{item:nondegenerate} of Theorem \ref{theorem:uniformization_full}.

Next, we wish to show that the sets $p_i$, $i\in \N$, are pairwise disjoint. First we establish a preliminary lemma.

\begin{lemma}[No clustering]\label{lemma:noclustering}
There exists $N(\tau)>0$ such that for each $x\in \widehat{\C}$ there are at most $N(\tau)$ non-degenerate sets $p_i$, $i\in \N$, containing $x$. 
\end{lemma}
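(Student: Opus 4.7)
The plan is to argue by a packing/area count at the common point. Since the limit continua $p_i$ may overlap, one cannot apply the area lower bound from fatness directly to the limits and sum them up. Instead, the argument must be carried out at the level of the prelimit circle domains $X_n$, where the continua $p_{1,n},\dots,p_{n,n}$ are pairwise disjoint as complementary components of $X_n$, and then be transferred to the limit via Hausdorff convergence and the $\tau$-fatness that is preserved both in passing to the limit (Lemma \ref{lemma:fat_hausdorff}) and in the prelimit.

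Concretely, suppose toward bounding $K$ that there are distinct non-degenerate continua $p_{i_1},\dots,p_{i_K}$ from the collection all containing a fixed point $x\in \widehat{\C}$. Set $d=\min_{1\leq j\leq K}\diam(p_{i_j})>0$ and fix a small radius $r<d/8$. For each $\varepsilon>0$, Hausdorff convergence of $p_{i_j,n}$ to $p_{i_j}\ni x$ (combined with the non-degeneracy statement in Corollary \ref{cor:nondegeneracy}, which ensures $\diam(p_{i_j,n})\to\diam(p_{i_j})\geq d$) furnishes, for all sufficiently large $n$, points $x_{j,n}\in p_{i_j,n}$ with $\sigma(x_{j,n},x)<\varepsilon$, while simultaneously $\diam(p_{i_j,n})>d/2$. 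Since $d/2>4r$, the ball $B_\sigma(x_{j,n},r)$ cannot contain $p_{i_j,n}$, so the $\tau$-fatness of $p_{i_j,n}$ yields
$$
\Sigma(B_\sigma(x_{j,n},r)\cap p_{i_j,n})\geq \tau r^2
$$
for each $j$. Because $B_\sigma(x_{j,n},r)\subset B_\sigma(x,r+\varepsilon)$ and the prelimit sets $p_{i_1,n},\dots,p_{i_K,n}$ are pairwise disjoint, summing over $j$ gives
$$
K\tau r^2\leq \sum_{j=1}^K \Sigma(B_\sigma(x,r+\varepsilon)\cap p_{i_j,n})\leq \Sigma(B_\sigma(x,r+\varepsilon))\leq \pi(r+\varepsilon)^2.
$$
Letting $\varepsilon\to 0$ with $r$ fixed (achievable by choosing $n$ suitably large) forces $K\leq \pi/\tau$, so one may take $N(\tau)=\lfloor \pi/\tau\rfloor$.

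The only step that requires care is the passage from the limit information $x\in p_{i_j}$ to usable fatness at $x$ itself: since $x$ need not belong to $p_{i_j,n}$, the fatness of $p_{i_j,n}$ must be invoked at the nearby approximating point $x_{j,n}$, and one has to verify that the ball $B_\sigma(x_{j,n},r)$ is not so large as to contain all of $p_{i_j,n}$, which is precisely where the uniform lower diameter bound $d/2$ enters. Once this technical point is arranged, the rest is a routine area-packing computation on the sphere, and no modulus estimates are needed.
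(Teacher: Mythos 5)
Your proof is correct and follows essentially the same route as the paper: pass to the prelimit sets $p_{i_j,n}$, which are pairwise disjoint, apply $\tau$-fatness at approximating points $x_{j,n}\to x$ in balls small enough not to contain $p_{i_j,n}$, and sum the resulting area lower bounds inside a slightly enlarged ball around $x$. The only differences are cosmetic (your $r$, $r+\varepsilon$ versus the paper's $r-\delta$, $r$), and your remark that the care lies in invoking fatness at the nearby point $x_{j,n}$ rather than at $x$ is exactly the point the paper's proof also handles.
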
 
\begin{proof}
Let $N\in \N$ and suppose that the sets $p_{i_1},\dots,p_{i_N}$ are non-degenerate and contain $x$. Fix $r>0$ be such that $p_{i_l}$ is not contained in $B(x,r)$ for each $l\in \{1,\dots,N\}$ and consider points $x_{i_l,n}\in p_{i_l,n}$ converging to $x$ as $n\to\infty$. For fixed $\delta\in (0,r)$ note that the sets $B(x_{i_l,n},r-\delta)\cap p_{i_l,n}$, $l\in \{1,\dots,N\}$, are pairwise disjoint and are contained in $B(x,r)$ for all sufficiently large $n\in \N$. Moreover, $p_{i_l,n}$ is not contained in $B(x_{i_l,n},r-\delta)$ for all sufficiently large $n\in \N$. By the $\tau$-fatness of $p_{i_l,n}$, for large $n\in \N$ we have 
$$\Sigma(B(x,r)) \geq  \sum_{l=1}^N \Sigma(B(x_{i_l,n},r-\delta)\cap p_{i_l,n}) \geq  N\tau (r-\delta)^2.$$
Letting $\delta\to 0$, gives $N\leq \tau^{-1} \Sigma(B(x,r)) r^{-2} \leq C\tau^{-1}$. 
\end{proof}

\begin{lemma}[No collisions]\label{lemma:nocollisions}
Let $\widehat{F}_1,\widehat{F}_2\subset  \mathcal E(Y)$ be disjoint continua and let $F_1=\pi_{Y}^{-1}(\widehat{F}_1)$, $F_2=\pi_{Y}^{-1}(\widehat{F}_2)$. Then 
\begin{align*}
\liminf_{n\to\infty} \dist( f_n^*(F_1),f_n^*(F_2))>0.
\end{align*} 
\end{lemma}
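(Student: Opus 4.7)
I would argue by contradiction, mirroring the structure of Lemma~\ref{lemma:nondegeneracy}. Suppose $\dist(f_n^*(F_1), f_n^*(F_2)) \to 0$ along a subsequence, and write $A_n := f_n^*(F_1)$, $B_n := f_n^*(F_2)$. My first move is to \emph{reduce to the case that both $\widehat F_1, \widehat F_2$ are non-degenerate continua in $\mathcal E(Y)$}: if, say, $\widehat F_j$ is a singleton (either $\widehat q_i$ or a point of $\widehat Y$), I attach a short continuum $\beta_j \subset \widehat\C\setminus F_{3-j}$ so that $\pi_Y(F_j\cup\beta_j)$ has more than one point, keeping $F_1'\cap F_2'=\emptyset$. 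Because $f_n^*$ is monotone with respect to set inclusion, $\dist(f_n^*(F_1'),f_n^*(F_2'))\leq \dist(f_n^*(F_1),f_n^*(F_2))\to 0$, so the hypothesis persists. Under this reduction Lemma~\ref{lemma:nondegeneracy} gives $\diam(A_n), \diam(B_n)\geq c>0$.

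Next I would obtain a \emph{uniform lower bound} via Lemma~\ref{lemma:l2collision_final} applied to $Y$ with $E:=F_1$ and $F:=F_2$. Since $\widehat F_2$ is a continuum of more than one point in $\mathcal E(Y)\cong \widehat\C$ and $\{\widehat q_i\}$ is countable, $\widehat F_2$ contains uncountably many points of $\widehat Y$; as $\pi_{Y;F_1}$ is injective on $Y$, the projection $\pi_{Y;F_1}(F_2)$ is a non-degenerate continuum of some fixed diameter $\delta>0$ in any metric on $\mathcal E(Y;F_1)$. With $J=\{1,\dots,n\}$, $J_0=\emptyset$, $N=0$, Lemma~\ref{lemma:l2collision_final} yields
\[
\md_{Y_n}\Gamma\bigl(\pi_{Y_n}(F_1),\pi_{Y_n}(F_2);\mathcal E(Y_n)\bigr)\geq C>0,
\]
and the conformal invariance of transboundary modulus (Lemma~\ref{lemma:transboundary_invariance}) transfers this bound to
\[
\md_{X_n}\Gamma\bigl(\pi_{X_n}(A_n),\pi_{X_n}(B_n);\mathcal E(X_n)\bigr)\geq C.
\]

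The contradiction should come from showing that the latter modulus in fact tends to zero as $\dist(A_n,B_n)\to 0$. After passing to a subsequence, $A_n,B_n$ Hausdorff-converge to $\tau$-fat non-separating continua $A,B$ meeting at a point. The idea is to construct an explicit admissible transboundary function $\rho$ on $\mathcal E(X_n)$ supported in a shrinking tubular neighborhood of a shortest curve from $A_n$ to $B_n$, weighting each peripheral $p_{j,n}$ that meets the tube by $\rho(\widehat p_{j,n})\simeq \diam(p_{j,n})/\dist(A_n,B_n)$. Using Lemma~\ref{lemma:count} to control the number of large peripheral continua obstructing the tube, the $\tau$-fatness of $X_n$, and the $\ell^2$-summability of $\{\diam(p_{i,n})\}$, one estimates the transboundary norm by a quantity $\to 0$—morally the classical $(1/2\pi)\log(r_2/r_1)\to 0$ for a degenerating doubly connected domain.

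\textbf{Main obstacle.} The crux is precisely this upper bound. Unlike Lemma~\ref{lemma:nondegeneracy}, where one set shrinks to a point so that the relative distance $\Delta$ against a reference arc blows up and Lemma~\ref{lemma:bonkcollisions} applies directly, here both $A_n$ and $B_n$ retain $\diam\geq c>0$ while $\Delta(A_n,B_n)\to 0$, rendering Lemma~\ref{lemma:bonkcollisions} unusable. One must therefore construct the thin-tube admissible function by hand and carefully balance its support against the peripheral weights, ensuring admissibility for \emph{all} connecting curves—including those that teleport through multiple peripheral continua. Verifying the decay in this transboundary setting, while controlling obstructions via cofatness, is the technical heart of the proof.
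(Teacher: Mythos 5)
Your reduction to non-degenerate $\widehat F_1,\widehat F_2$ and the lower bound in your step 2 are fine (the paper instead enlarges to \emph{non-separating} continua, but both reductions are harmless). The fatal problem is step 3: the quantity you propose to send to zero, $\md_{X_n}\Gamma(\pi_{X_n}(A_n),\pi_{X_n}(B_n))$, does not tend to zero --- your own step 2 already shows, via conformal invariance, that it is bounded below by $C>0$, and more fundamentally two continua of diameter $\geq c$ whose mutual distance tends to $0$ have connecting modulus bounded \emph{below} (for the classical $2$-modulus it tends to $+\infty$ by the Loewner property of the sphere). Your thin-tube function is not admissible: a curve joining a far portion of $A_n$ to a far portion of $B_n$ need never enter a shrinking neighborhood of a shortest arc between them, so it picks up neither line integral nor peripheral weights. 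The formula $(1/2\pi)\log(r_2/r_1)\to 0$ you invoke is the modulus of the ring domain, i.e., of the \emph{separating} family; the \emph{connecting} family of a degenerating annulus has modulus $2\pi/\log(r_2/r_1)\to\infty$. You have the asymptotics reversed, and Lemma \ref{lemma:bonkcollisions} requires $\Delta\geq 12$ (large relative distance) for exactly this reason.

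The paper's resolution is to run the modulus comparison on a different pair of sets, introducing a third auxiliary continuum. Fix a collision point $x$ in the intersection of the Hausdorff limits of $A_n$ and $B_n$. In the main case (infinitely many non-degenerate $q_i$), the no-clustering lemma (Lemma \ref{lemma:noclustering}) produces $i_0$ with $q_{i_0}$ non-degenerate and $x\notin p_{i_0}$; set $E_n'=f_n^*(q_{i_0})=p_{i_0,n}$, which has diameter bounded below and stays away from $x$. Take a tiny arc $\alpha_n$ joining $A_n$ to $B_n$ with $|\alpha_n|\to\{x\}$ and let $G_n'$ be $|\alpha_n|$ together with the peripheral continua it meets. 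After discarding the boundedly many large peripheral continua meeting $\alpha_n$ (Lemma \ref{lemma:count}), the relative distance $\Delta(E_n',G_n'\setminus\cdots)\to\infty$, so Lemma \ref{lemma:bonkcollisions} gives the vanishing upper bound. The uniform lower bound comes from Lemma \ref{lemma:l2collision_final} applied with frozen continuum $E=q_{i_0}$ and $F=G_n=(f_n^{-1})^*(G_n')$, which joins $F_1$ to $F_2$ and hence has definite diameter in $\mathcal E(Y;q_{i_0})$. The case of finitely many non-degenerate $q_i$ (where such an $i_0$ may not exist) is handled separately by locally uniform convergence of $f_n$ and Carath\'eodory kernel convergence (Lemma \ref{lemma:caratheodory_convergence}). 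Without some such third set, your strategy cannot close.
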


\begin{proof}
We may assume that $\widehat{F}_1$ and $\widehat{F}_2$ are, in addition, non-separating continua, after replacing them if necessary with larger continua that are disjoint and non-separating. Suppose that $\dist( f_n^*(F_1),f_n^*(F_2)) \to 0$ along a subsequence. Let $F_i^*$ be a Hausdorff limit of $f_n^*(F_i)$, $i=1,2$. Note that the sets $F_i$, $i=1,2$, contain all of the peripheral continua of $Y$ that they intersect. We will consider two cases.

\smallskip

\noindent
\textit{Case 1.} The set $q_i$ is non-degenerate for finitely many $i\in  \N$. Let $J\subset \N$ be the set of those indices.  The conformal map $f_n\colon Y_n\to X_n$ extends conformally to the isolated point $q_i$ whenever $i\in \{1,\dots,n\}\setminus J$. Since $F_1$ and $F_2$ are disjoint, non-separating, and contain all peripheral continua that they intersect, the open set $\Omega=\widehat{\C}\setminus (F_1\cup F_2 \cup  \bigcup_{i\in J} q_i)$ is connected and the sets $F_1,F_2$ are complementary components of $\Omega$. The three-point normalization of $f_n$ implies that, after passing to a subsequence, $f_n$ converges locally uniformly in $\widehat{\C}\setminus (F_1\cup F_2 \cup  \bigcup_{i\in J} q_i)$ to a map $f$ that is either constant or conformal. By Lemma \ref{lemma:nondegeneracy}, $f$ is non-constant, so it is conformal.  Lemma \ref{lemma:caratheodory_convergence} implies that $f^*(F_i) \supset  F_i^*$ for $i=1,2$. This leads to a contradiction, since the sets $f^*(F_1),f^*(F_2)$ are disjoint, but the sets $F_1^*, F_2^*$ are not, by assumption.

\smallskip

\noindent
\textit{Case 2.} There are infinitely many non-degenerate sets $q_i$, $i\in \N$. By Corollary \ref{cor:nondegeneracy}, $p_i$ is non-degenerate if and only if $q_i$ has this property.  Let $x\in F_1^*\cap F_2^*$. By Lemma \ref{lemma:noclustering}, there exists $N(\tau)>0$ such that at most $N(\tau)$ non-degenerate sets $p_i$, $i\in \N$, contain the point $x$. Thus, there exists $i_0\in \N$ such that $E=q_{i_0}$ is non-degenerate and $x\notin p_{i_0}$. The sets $E_n'=f_n^*(E)$ converge to $p_{i_0}$ so they have diameters uniformly bounded away from $0$ and they stay away from the point $x$. Let $\alpha_n$ be a curve in $\widehat{\C}$ joining $f_n^*(F_1)$ and $f_n^*(F_2)$ such that $|\alpha_n|$ converges to $x$ in the Hausdorff sense. Let $G_n'$ be the union of $|\alpha_n|$ together with the sets $p_{j,n}$, $j\in \{1,\dots,n\}$, that intersect $|\alpha_n|$. Note that the diameter of $G_n'$ might be large. 

We now argue as in the proof of Lemma \ref{lemma:nondegeneracy}. The cofatness of $X_n$ and Lemma \ref{lemma:count} imply that if $J_n$ is the set of indices $j$ such that $p_{j,n}\cap |\alpha_n|\neq \emptyset$ and $\diam(p_{j,n})\geq \diam(|\alpha_n|)$, then $\# J_n$ is uniformly bounded, depending only on $\tau$.  We conclude that $\Delta(E_n', G_n' \setminus \bigcup_{i\in J_n} p_{i,n} )\to \infty$ as $n\to\infty$. Lemma \ref{lemma:bonkcollisions} implies that there exists a set of natural numbers $J_n''\supset J_n$ with uniformly bounded cardinality such that
\begin{align*}
\md_{X_n} \Gamma \left( \pi_{X_n}(E_n'), \pi_{X_n} (G_n'\setminus \bigcup_{i\in J_n''}p_{i,n} ) ; \mathcal E(X_n)\setminus \bigcup_{i\in J_n''} \pi_{X_n}(p_{i,n})  \right) \to 0
\end{align*}
as $n\to \infty$. 

On the other hand, we consider the sets $E$ and $G_n=(f_n^{-1})^*(G_n')$. We note that the continuum $G_n$ joins $F_1$ and $F_2$, so its projection to $\mathcal E(Y;E)$ has diameter uniformly bounded below away from $0$, in any given metric. By Lemma \ref{lemma:l2collision_final} we have
\begin{align*}
\md_{Y_n} \Gamma \left( \pi_{Y_n}(E), \pi_{Y_n} (G_n\setminus \bigcup_{i\in J_n''}q_i ) ; \mathcal E(Y_n)\setminus \bigcup_{i\in J_n''} \pi_{Y_n}(q_i)  \right) \geq C
\end{align*}
for each $n\in \N$. This contradicts the conformal invariance of transboundary modulus. 
\end{proof}

\begin{corollary}\label{cor:nocollisions}
The sets $p_i$, $i\in \N$, are pairwise disjoint. Moreover, the set $X=\widehat{\C} \setminus \bigcup_{i=1}^\infty p_i$ is a Sierpi\'nski packing.
\end{corollary}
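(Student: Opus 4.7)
The plan is to reduce both statements to results already in hand, so only a short argument is needed.

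For pairwise disjointness, fix distinct indices $i,j\in\N$. I would apply Lemma \ref{lemma:nocollisions} to the (degenerate, hence still allowed) continua $\widehat F_1=\{\widehat q_i\}$ and $\widehat F_2=\{\widehat q_j\}$ in $\mathcal E(Y)$, whose preimages under $\pi_Y$ are precisely the disjoint peripheral continua $q_i$ and $q_j$. Since the normalizing hypothesis gives $f_n^*(q_\ell)=p_{\ell,n}$ for $\ell\in\{i,j\}$ and $n\geq\max\{i,j\}$, the lemma yields
\[
\liminf_{n\to\infty}\dist(p_{i,n},p_{j,n})>0.
\]
Along the diagonal subsequence $\{k_n\}$ on which $p_{i,k_n}\to p_i$ and $p_{j,k_n}\to p_j$ in the Hausdorff sense, continuity of distance between compact subsets of $\widehat\C$ under Hausdorff convergence passes this bound to the limits, giving $\dist(p_i,p_j)>0$ and hence $p_i\cap p_j=\emptyset$.

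To conclude that $X=\widehat\C\setminus\bigcup_{i\in\N}p_i$ is a Sierpi\'nski packing, I still have to verify that each $p_i$ is a non-separating continuum and that $\diam(p_i)\to 0$ as $i\to\infty$. The first property is precisely assertion \ref{item:fatness} of Theorem \ref{theorem:uniformization_full}, which was already established (via Lemma \ref{lemma:fat_hausdorff} together with the non-separation assumption on Hausdorff limits of the $p_{i,n}$). For the diameter decay, I would invoke Lemma \ref{lemma:count}: since $\{p_i\}_{i\in\N}$ is now known to be a disjoint collection of $\tau$-fat continua in the compact set $\widehat\C$, taking $E=\widehat\C$ and any fixed threshold $a>0$ shows that only finitely many indices $i$ can satisfy $\diam(p_i)\geq a\diam(\widehat\C)$; equivalently, $\diam(p_i)\to 0$.

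All the technical work is absorbed into Lemma \ref{lemma:nocollisions} and the fatness machinery, so no real obstacle arises here. The only point worth a brief check is that Lemma \ref{lemma:nocollisions} genuinely applies with the degenerate continua $\{\widehat q_i\}$ and $\{\widehat q_j\}$, which it does, since its hypotheses require only that $\widehat F_1,\widehat F_2\subset\mathcal E(Y)$ be disjoint continua, with no non-degeneracy assumption imposed.
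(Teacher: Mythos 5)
Your proof is correct and follows the same route as the paper: disjointness of the limits via Lemma \ref{lemma:nocollisions} applied to the singletons $\{\widehat q_i\},\{\widehat q_j\}$ (whose $\pi_Y$-preimages are $q_i,q_j$), non-separation and $\tau$-fatness from the already-established part \ref{item:fatness}, and $\diam(p_i)\to 0$ from Lemma \ref{lemma:count}. You merely make explicit the choice of continua in Lemma \ref{lemma:nocollisions} and the passage of the positive distance bound through Hausdorff convergence, both of which the paper leaves implicit.
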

\begin{proof}
The sets $p_i$, $i\in \N$, are pairwise disjoint by Lemma \ref{lemma:nocollisions}. As we have discussed, each of the sets $p_i$, $i\in \N$, is a non-separating continuum and is $\tau$-fat. A trivial consequence of Lemma \ref{lemma:count} is that $\diam(p_i)\to 0$ as $i\to \infty$. These facts imply that $X$ is a Sierpi\'nski packing. 
\end{proof}
Thus, we have established part \ref{item:packing} of Theorem \ref{theorem:uniformization_full}. In the following sections we prove the existence of the limiting map of part \ref{item:convergence} and its regularity. 

\subsection{Existence of limiting map}\label{section:limiting_mapping}

Fix metrics on the spaces $\mathcal E(X),\mathcal E(Y)$ that induce their topology. The particular metrics are not of importance. We set $g_n=f_n^{-1}$ and consider the sequence of set functions 
$$h_n=\pi_Y \circ g_n^*\circ \pi_{X}^{-1} =\pi_Y \circ \pi_{Y_n}^{-1} \circ \widehat g_n \circ \pi_{X_n}  \circ \pi_{X}^{-1} ,\,\, n\in \N$$
from subsets of $\mathcal E(X)$ to subsets of $\mathcal E(Y)$.

\begin{lemma}[Equicontinuity]\label{lemma:equicontinuity}
For each $\varepsilon>0$ there exists $\delta>0$ and $N\in \N$ such that if $\widehat E'$ is a set in $\mathcal E(X)$ with $\diam(\widehat E')<\delta$,  then $\diam(h_n(\widehat E'))<\varepsilon$ for all $n>N$.
\end{lemma}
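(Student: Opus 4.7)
The plan is to argue by contradiction using the No Collisions Lemma (Lemma \ref{lemma:nocollisions}). Suppose the lemma fails: then along a subsequence which I relabel, there are $\varepsilon_0 > 0$ and sets $\widehat E_n \subset \mathcal E(X)$ with $\diam(\widehat E_n) \to 0$ but $\diam(h_n(\widehat E_n)) \ge \varepsilon_0$. I would choose $\widehat u_n, \widehat v_n \in h_n(\widehat E_n)$ with $d_{\mathcal E(Y)}(\widehat u_n, \widehat v_n) \ge \varepsilon_0/2$. Unwinding $h_n = \pi_Y \circ g_n^* \circ \pi_X^{-1}$, I pick representatives $u_n, v_n \in g_n^*(E_n) \subset \widehat\C$, where $E_n = \pi_X^{-1}(\widehat E_n)$, and then $\tilde u_n, \tilde v_n \in E_n$ with $\tilde u_n \in f_n^*(\{u_n\})$ and $\tilde v_n \in f_n^*(\{v_n\})$ (which is immediate from the definition of $g_n^*$). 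Passing to a further subsequence, $\tilde u_n \to \tilde u$, $\tilde v_n \to \tilde v$, $u_n \to u$, $v_n \to v$ in $\widehat\C$. Continuity of $\pi_X$ together with $\diam(\widehat E_n) \to 0$ forces $\pi_X(\tilde u) = \pi_X(\tilde v)$, while $\pi_Y(u) \neq \pi_Y(v)$.

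The argument then splits into two cases. If $\tilde u = \tilde v$, I would choose disjoint closed-disk neighborhoods $\widehat F_1, \widehat F_2 \subset \mathcal E(Y)$ of $\pi_Y(u)$ and $\pi_Y(v)$; writing $F_i = \pi_Y^{-1}(\widehat F_i)$, for large $n$ one has $u_n \in F_1$, $v_n \in F_2$, hence $\tilde u_n \in f_n^*(F_1)$, $\tilde v_n \in f_n^*(F_2)$ by monotonicity of $f_n^*$, so $\dist(f_n^*(F_1), f_n^*(F_2)) \le \sigma(\tilde u_n, \tilde v_n) \to 0$, contradicting the positive lower bound from Lemma \ref{lemma:nocollisions}. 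Otherwise $\tilde u \neq \tilde v$ but both lie in the same (necessarily non-degenerate) peripheral continuum $p_{i_0}$, so by Corollary \ref{cor:nondegeneracy} the set $q_{i_0}$ is also non-degenerate. Since $\pi_Y(u) \neq \pi_Y(v)$, at most one of them equals $\widehat q_{i_0}$; without loss of generality $\pi_Y(u) \neq \widehat q_{i_0}$. I would then pick disjoint closed-disk neighborhoods $\widehat F_1 \ni \pi_Y(u)$ and $\widehat F_3 \ni \widehat q_{i_0}$ in $\mathcal E(Y)$. Since $q_{i_0} \subset F_3$, we have $p_{i_0, n} = f_n^*(q_{i_0}) \subset f_n^*(F_3)$ once $n \ge i_0$, while $\tilde u_n \in f_n^*(F_1)$ for large $n$, so Lemma \ref{lemma:nocollisions} gives $\dist(\tilde u_n, p_{i_0, n}) \ge c > 0$. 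But $\tilde u_n \to \tilde u \in p_{i_0}$ and $p_{i_0, n} \to p_{i_0}$ in the Hausdorff sense, so picking $w_n \in p_{i_0, n}$ with $w_n \to \tilde u$ yields $\dist(\tilde u_n, p_{i_0, n}) \to 0$, a contradiction.

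The main obstacle I anticipate is this second case: when $\tilde u$ and $\tilde v$ are distinct points of the same peripheral continuum $p_{i_0}$, the spherical distance $\sigma(\tilde u_n, \tilde v_n)$ need not tend to $0$, so the naive choice of disjoint neighborhoods of $\pi_Y(u)$ and $\pi_Y(v)$ is insufficient to force a collision on the $X$-side via Lemma \ref{lemma:nocollisions}. The key insight is to anchor the second continuum at $\widehat q_{i_0}$ instead of $\pi_Y(v)$ and exploit the Hausdorff convergence $p_{i_0, n} \to p_{i_0}$ from Theorem \ref{theorem:uniformization_full}\ref{item:fatness} to produce the needed collision.
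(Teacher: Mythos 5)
Your proof is correct and runs on the same engine as the paper's: a compactness argument combined with the no-collision lemma (Lemma \ref{lemma:nocollisions}) applied to neighborhoods pulled back by $f_n^*$, with your Case 2 collision (a neighborhood of $\pi_Y(u)$ against $p_{i_0,n}=f_n^*(q_{i_0})$, using the Hausdorff convergence $p_{i_0,n}\to p_{i_0}$) matching the paper's final contradiction. The only organizational difference is that the paper first reduces to continua, takes Hausdorff limits of the image sets, and chooses the two target points off the countable set $\bigcup_{i}\pi_Y(q_i)$, whereas you work directly with points and absorb the possibility that $\pi_Y(u)$ or $\pi_Y(v)$ equals some $\widehat q_i$ into your two-case analysis; both routes are valid and yours avoids the Hausdorff-limit machinery.
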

Roughly speaking, this lemma says that if a set has small diameter in $\mathcal E(X)$, then its image under $g_n^*$ projects to a set that has small diameter in $\mathcal E(Y)$. We note that the diameter need not be small if we do not project to $\mathcal E(Y)$. As an analogy, consider a conformal map $f$ from a simply connected domain $\mathcal Y$ whose boundary is not locally connected onto the unit disk $\mathcal X$. Then small sets near $\partial \mathcal X$ need not be mapped to small sets in $\mathcal Y$ under $g=f^{-1}$.

\begin{proof}
By the local connectivity of $\mathcal E(X)\simeq \widehat{\C}$, it suffices to show the statement for continua $\widehat{E}'$, rather than arbitrary sets. We argue by contradiction, assuming that there exists $\varepsilon_0>0$ and a sequence of positive integers $\{k_n\}_{n\in \N}$ with $k_n\to \infty$ such that there exists a sequence of continua $\widehat{E}_n'\subset \mathcal E(X)$ converging to a point $\widehat x \in \mathcal E(X)$ as $n\to\infty$, but $\diam( \pi_{Y}(E_n))\geq \varepsilon_0$ for all $n\in \N$, where $E_n=g_{k_n}^*( \pi_{X}^{-1}( \widehat{E}'_n))$. After passing to a subsequence, we assume that $E_n$ converges to a  continuum  $E \subset \widehat{\C}$, and by continuity $\pi_Y(E_n)$ converges to a continuum $\pi_Y(E)$ in $\mathcal E(Y)$ with $\diam(\pi_Y(E))\geq \varepsilon_0$. 

Consider distinct points $\widehat y,\widehat z\in \pi_Y(E) \setminus \bigcup_{i\in \N}\pi_Y(q_i)$ and let $y =\pi_Y^{-1}(\widehat y)$,  $z=\pi_Y^{-1}(z)\in E\cap Y$. Consider disjoint closed Jordan regions $U_y$, $U_z$ containing $y,z$ in their interior, whose boundaries lie in $Y$; these regions can be obtained as preimages under $\pi_Y$ of appropriate Jordan regions in $\mathcal E(Y)$. By Lemma \ref{lemma:nocollisions}, the distance of the images $f_{k_n}^*(U_y)$,  $f_{k_n}^*(U_z)$ is uniformly bounded away from zero.  By the Hausdorff convergence of $E_n$ to $E$, for all sufficiently large $n\in \N$ the set  $E_n$ intersects both $U_y$ and $U_z$. Since $y,z\in Y$, there exist points $y_n,z_n\in E_n\cap Y$ converging to $y,z$, respectively. The points $y_n'=f_{k_n}(y_n), z_n'=f_{k_n}(z_n)$ lie in  $\pi_{X}^{-1}( \widehat E_n')$ and stay away from each other as $n\to\infty$. Since the points $\pi_X(y_n')$ and $\pi_X(z_n')$ converge to the point $\widehat{x}$, but the points $y_n'$ and $z_n'$ stay away from each other, by Lemma \ref{lemma:distance_convergence} we must have $\widehat{x}=\pi_X(p_{i_0})$ for some $i_0\in \N$. We conclude that $y_n',z_n'$ accumulate at $p_{i_0}$ as $n\to\infty$. 

Fix a closed Jordan region $U$ containing $y$ in its interior such that $U$ is disjoint from $q_{i_0}$ and $\partial U\subset Y$; this is possible because $y\in Y$. We have $y_n\in U$ for all sufficiently large $n\in \N$. On the other hand, the sets $f_{k_n}^*(U)$ and $f_{k_n}^*(q_{i_0})=p_{i_0,k_n}$ come arbitrarily close to each other. This contradicts Lemma \ref{lemma:nocollisions}.
\end{proof}

The following statement is a version of the Arzel\`a--Ascoli theorem for equicontinuous families of set functions. The proof is a straightforward adaptation of the classical argument and the experienced reader may safely skip it.

\begin{lemma}[Compactness]\label{lemma:arzela_ascoli}
Let $\mathcal X, \mathcal Y$ be compact metric spaces and $ \mathpzc{h}_n\colon \mathcal P(\mathcal X)\to \mathcal P(\mathcal Y)$, $n\in \N$, be a sequence of set functions with the following properties:
\begin{enumerate}[\upshape(i)]
	\item Surjectivity: for each $n\in \N$, $\mathpzc h_n( \mathcal X)= \mathcal Y$.
	\item Setwise monotonicity: for each $n\in \N$, if $A,B\subset \mathcal X$ and $A\subset B$, then $\mathpzc h_n(A)\subset \mathpzc h_n(B)$. 
	\item Inverse image property: for each $n\in \N$, if $A\subset \mathcal X$ and $y\in \mathpzc h_n(A)$, then there exists $x\in A$ such that $y\in \mathpzc h_n(x)$.  
	\item  Equicontinuity: for each $\varepsilon>0$ there exists $\delta>0$ and $N\in \N$ such that for each set $E\subset \mathcal X$ with $\diam(E)<\delta$ we have $\diam(\mathpzc h_n(E))<\varepsilon$ for all $n>N$.
\end{enumerate}
Then there exists a subsequence $\{\mathpzc h_{k_n}\}_{n\in \N}$ of $\{\mathpzc h_n\}_{n\in \N}$ that converges uniformly to a continuous and surjective map $\mathpzc h\colon \mathcal X\to \mathcal Y$ in the following sense: for each $\varepsilon>0$ there exists $N\in \N$ such that for each set $E\subset \mathcal X$ we have
$$d_H(\mathpzc h_{k_n}(E), \mathpzc h(E) )<\varepsilon$$
for all $n>N$.
\end{lemma}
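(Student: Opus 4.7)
The plan is to mimic the classical Arzel\`a--Ascoli argument, but adapted to set functions using Hausdorff convergence on closed subsets of $\mathcal Y$. The first observation is that the equicontinuity hypothesis applied to a singleton $E=\{x\}$, for which $\diam(E)=0<\delta$ trivially, yields that for every $x\in \mathcal X$ and every $\varepsilon>0$ there exists $N$ with $\diam(\mathpzc h_n(x))<\varepsilon$ for $n>N$; in particular $\diam(\mathpzc h_n(x))\to 0$. Next, fix a countable dense subset $\{x_k\}_{k\in \N}$ of $\mathcal X$. Since the space of closed subsets of the compact metric space $\mathcal Y$ is itself compact in the Hausdorff topology (Blaschke selection), a standard diagonal argument produces a subsequence, still denoted $\{\mathpzc h_{k_n}\}$, such that $\overline{\mathpzc h_{k_n}(x_k)}$ converges in the Hausdorff sense for every $k\in \N$. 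Because the diameters tend to $0$, each limit is a single point, which we label $\mathpzc h(x_k)\in \mathcal Y$.

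Equicontinuity then shows that the assignment $x_k\mapsto \mathpzc h(x_k)$ is uniformly continuous on $\{x_k\}_{k\in \N}$: if $d(x_j,x_k)<\delta$ and $n$ is large, then $\mathpzc h_{k_n}(\{x_j,x_k\})$ has diameter less than $\varepsilon$ by setwise monotonicity, so passing to the limit gives $d(\mathpzc h(x_j),\mathpzc h(x_k))\leq \varepsilon$. Hence $\mathpzc h$ extends uniquely to a continuous map $\mathpzc h\colon \mathcal X\to \mathcal Y$, and the same estimate shows that for each $\varepsilon>0$ there is $\delta>0$ with $\diam(\mathpzc h(E))\leq \varepsilon$ whenever $\diam(E)<\delta$. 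Surjectivity of $\mathpzc h$ follows from the surjectivity of each $\mathpzc h_{k_n}$ together with the inverse image property and compactness of $\mathcal X$: given $y\in \mathcal Y$, pick $x_n\in \mathcal X$ with $y\in \mathpzc h_{k_n}(x_n)$; extracting a convergent subsequence $x_n\to x$ and combining equicontinuity with pointwise convergence at nearby dense points yields $y=\mathpzc h(x)$.

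For the uniform Hausdorff convergence on arbitrary subsets $E\subset \mathcal X$, fix $\varepsilon>0$. Choose $\delta>0$ and $N_0$ from the equicontinuity hypothesis for $\varepsilon/3$, and also small enough that $\diam(\mathpzc h(E'))<\varepsilon/3$ whenever $\diam(E')<\delta$. Cover $\mathcal X$ by finitely many open balls $B_1,\ldots,B_M$ of radius $\delta/2$ centered at points $x_{i_1},\ldots,x_{i_M}$ from the dense set, and choose $N\geq N_0$ so that $d(\mathpzc h_{k_n}(x_{i_j}),\mathpzc h(x_{i_j}))<\varepsilon/3$ for all $j$ and all $n>N$ (possible because $\overline{\mathpzc h_{k_n}(x_{i_j})}$ Hausdorff-converges to the point $\mathpzc h(x_{i_j})$). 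For any $E\subset \mathcal X$, writing $J(E)=\{j:B_j\cap E\neq \emptyset\}$, setwise monotonicity gives
\begin{align*}
\mathpzc h_{k_n}(E)\subset \bigcup_{j\in J(E)} \mathpzc h_{k_n}(B_j) \quad\text{and}\quad \mathpzc h(E)\subset \bigcup_{j\in J(E)} \mathpzc h(B_j),
\end{align*}
and each $\mathpzc h_{k_n}(B_j)$ and $\mathpzc h(B_j)$ sits within distance $\varepsilon/3$ of $\mathpzc h(x_{i_j})$. The inverse image property applied to $E$ ensures that every point of $\mathpzc h_{k_n}(E)$ comes from some $x\in E$, which lies in some $B_j$ with $j\in J(E)$, and similarly for $\mathpzc h$; combining yields $d_H(\mathpzc h_{k_n}(E),\mathpzc h(E))<\varepsilon$ for all $n>N$, uniformly in $E$.

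The only subtle point I expect is the last step: one must use the inverse image property carefully to guarantee that no ``phantom'' points appear in $\mathpzc h_{k_n}(E)$ that are not traceable to a point of $E$, and one must also verify that $\mathpzc h$ inherits an analogous property so that the Hausdorff comparison over the covering balls actually produces $\varepsilon$-close sets rather than just sets contained in $\varepsilon$-neighborhoods of a common union. The setwise monotonicity together with the equicontinuity extended to $\mathpzc h$ is precisely what makes this bookkeeping go through.
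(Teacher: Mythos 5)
Your proposal is correct and follows essentially the same route as the paper: a diagonal argument over a countable dense set using compactness of the hyperspace in the Hausdorff metric, the observation that equicontinuity forces the limits to be singletons, and then the setwise monotonicity and inverse image properties to upgrade pointwise convergence to uniform Hausdorff convergence on arbitrary sets. The only cosmetic difference is that the paper establishes a uniform Cauchy estimate for $\mathpzc h_n(x)$ at every $x$ directly and then proves the two Hausdorff inclusions for $\mathpzc h_n(E)$ and $\mathpzc h(E)$ pointwise via the inverse image property, whereas you extend $\mathpzc h$ from the dense set by continuity and route the final comparison through a finite cover by small balls; both versions rest on the same three structural hypotheses and neither has a gap the other avoids.
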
 
\begin{proof}
Note that the space of compact subsets of a compact metric space is compact in the Hausdorff metric \cite[Theorem 7.3.8, p~253]{BuragoBuragoIvanov:metric}. Consider a countable dense set $\{x_l\}_{l\in \N}$ in $\mathcal X$ and a diagonal subsequence $\mathpzc h_{k_n}$ such that for each $l\in \N$ the sequence of sets ${\mathpzc h_{k_n}(x_l)}$, $n\in \N$, converges, and thus is a  Cauchy sequence in the Hausdorff metric. For simplicity, we denote $\mathpzc h_{k_n}$ by $\mathpzc h_n$. 

We fix $\varepsilon>0$. For each $l\in \N$ and $\varepsilon>0$ there exists $N(\varepsilon,l)>0$ such that for all $n,m> N(\varepsilon,l)$ we have
$$ d_H(\mathpzc h_n(y_l), \mathpzc h_m(y_l)) <\varepsilon/3.$$
By equicontinuity, there exists $\delta>0$ and $N\in \N$ such that if $E\subset \mathcal X$ is a set with $\diam(E)<\delta$, then  $\diam(\mathpzc h_n(E))<\varepsilon/3$ for $n>N$. By the compactness of $\mathcal X$, we may find $M\in \N$ such that every point $x\in \mathcal X$ is within distance $\delta$ from the set $\{x_1,\dots,x_M\}$. We define $N_0=\max\{N, N(\varepsilon,1),\dots,N(\varepsilon,M)\}$.

Let $x\in \mathcal X$ be arbitrary and consider $l\in \{1,\dots,M\}$ such that $d(x,x_{l})<\delta$. Let $E=\{x,x_l\}$, so $\diam(\mathpzc h_n(E))<\varepsilon/3$ for all $n>N$.   For $n,m>N_0$  we now have
\begin{align*}
d_H(\mathpzc h_n(x),\mathpzc h_m(x)) &\leq d_H( \mathpzc h_n(x),\mathpzc h_n(x_l))+ d_H(\mathpzc h_n(x_l),\mathpzc h_m( x_l)) +d_H(\mathpzc h_m(x_l),\mathpzc h_m(x))\\
&\leq \diam(\mathpzc h_n(E)) + \varepsilon/3 +\diam(\mathpzc h_m(E))<\varepsilon.
\end{align*}
Here we used the monotonicity property of $\mathpzc h_n$, which implies that $\mathpzc h_n(x)\cup \mathpzc h_n(x_l)\subset \mathpzc h_n(E)$. Since the space of compact subsets of $\mathcal X$ is complete with the Hausdorff metric, we conclude that $\mathpzc h_n(x)$ (as well as, its closure) converges in the Hausdorff sense to a compact subset of $\mathcal X$. By the equicontinuity condition, we have $\diam(\mathpzc h_n(x))\to 0$, so the limit has to be a point $\mathpzc h(x)$. 

Summarizing, we have shown that for each $\varepsilon>0$ there exists  $N\in \N$ such that for each $x\in \mathcal X$ we have $d_H(\mathpzc h_n(x),\mathpzc h(x))<\varepsilon$ for all $n>N$.  The equicontinuity of $\mathpzc h_n$ also implies that $\mathpzc h$ is continuous.  

For the surjectivity, we use the surjectivity of $\mathpzc h_n$ and the inverse image property. Note that for each $y\in \mathcal Y=\mathpzc h_n(\mathcal X)$ there exists $\widetilde x_n\in \mathcal X$ such that $y\in \mathpzc h_n(\widetilde x_n)$. By the uniform convergence, we have $d_H (\mathpzc h_n(\widetilde x_n), \mathpzc h(\widetilde x_n))\to 0$ as $n\to\infty$. After passing to a subsequence, we assume that $\widetilde x_n\to x\in \mathcal X$. Thus, $d_H (\mathpzc h_n(\widetilde x_n), \mathpzc h(x))\to 0$. Since $y\in \mathpzc h_n(\widetilde x_n)$, we have $\mathpzc h(x)=y$. 

Finally, we prove the uniform convergence for images of sets as in the end of the statement of the lemma. Let $\varepsilon>0$ and $N\in \N$ be such that for each $x\in \mathcal X$ we have $d_H(\mathpzc h_n(x),\mathpzc h(x))<\varepsilon$ for all $n>N$. Let $E\subset \mathcal X$ be any set and fix $n>N$. We will show that $d_H(\mathpzc h_n(E),\mathpzc h(E))<\varepsilon$. First, we show that $\mathpzc h_{n}(E)\subset N_{\varepsilon}(\mathpzc h(E))$. Let $y\in \mathpzc h_{n}(E)$, so by the inverse image property there exists $\widetilde x_n\in E$ such that $y\in \mathpzc h_n(\widetilde x_n) \subset B( \mathpzc h(\widetilde x_n),\varepsilon) \subset N_{\varepsilon}(\mathpzc h(E))$. Conversely, we will show that $\mathpzc h(E)\subset N_{\varepsilon}(\mathpzc h_n(E))$. Let $y\in \mathpzc h(E)$. By the surjectivity of $\mathpzc h$, there exists $x\in E$ such that $y=\mathpzc h(x)\subset N_{\varepsilon}(\mathpzc h_n(x)) \subset N_{\varepsilon}(\mathpzc h_n(E))$; here we also used the setwise monotonicity.   
\end{proof}

\begin{lemma}\label{lemma:set_properties}
The sequence of set functions $h_n=\pi_Y\circ g_n^*\circ \pi_X^{-1}$, $n\in \N$,  satisfies the assumptions of Lemma \ref{lemma:arzela_ascoli}. In particular, after passing to a subsequence, $h_n$ converges uniformly as $n\to\infty$ to a continuous and surjective map $h\colon \mathcal E(X)\to \mathcal E(Y)$. 
\end{lemma}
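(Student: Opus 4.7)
The plan is to check, one by one, that the set functions $h_n = \pi_Y \circ g_n^* \circ \pi_X^{-1}$ satisfy the four hypotheses of the abstract compactness result (Lemma \ref{lemma:arzela_ascoli}) with $\mathcal X = \mathcal E(X)$ and $\mathcal Y = \mathcal E(Y)$, and then invoke that lemma directly. Three of the four conditions are purely formal unpackings of how $h_n$ is built out of projections and the extended conformal map, and only the equicontinuity will require any real content; that content, however, has already been supplied by Lemma \ref{lemma:equicontinuity}.

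For \emph{surjectivity}, I would note that $g_n^* = \pi_{Y_n}^{-1} \circ \widehat g_n \circ \pi_{X_n}$ is a composition of surjective set maps, so $g_n^*(\widehat{\C}) = \widehat{\C}$; then $\pi_Y(\widehat{\C}) = \mathcal E(Y)$ and $\pi_X^{-1}(\mathcal E(X)) = \widehat{\C}$, giving $h_n(\mathcal E(X)) = \mathcal E(Y)$. For \emph{setwise monotonicity}, each of $\pi_X^{-1}$, $g_n^*$, and $\pi_Y$ preserves inclusions of sets, so their composition does too. For the \emph{inverse image property}, suppose $y \in h_n(A)$. Then there is $z \in g_n^*(\pi_X^{-1}(A))$ with $\pi_Y(z) = y$, and by the pointwise definition of $g_n^*$ via $\pi_{Y_n}^{-1} \circ \widehat g_n \circ \pi_{X_n}$ there is some $w \in \pi_X^{-1}(A)$ with $z \in g_n^*(\{w\})$. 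Setting $x = \pi_X(w) \in A$ and using setwise monotonicity of $g_n^*$ applied to $\{w\} \subset \pi_X^{-1}(\{x\})$, we obtain $z \in g_n^*(\pi_X^{-1}(\{x\}))$, whence $y \in h_n(\{x\})$. Finally, \emph{equicontinuity} is precisely the content of Lemma \ref{lemma:equicontinuity}.

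With the four hypotheses verified, Lemma \ref{lemma:arzela_ascoli} yields a subsequence $h_{k_n}$ and a continuous, surjective map $h \colon \mathcal E(X) \to \mathcal E(Y)$ such that $d_H(h_{k_n}(E), h(E)) \to 0$ uniformly in $E \subset \mathcal E(X)$. No step should be a genuine obstacle: the only conceptual input is the equicontinuity of $\{h_n\}$, which is already in hand, and the rest is checking that the set-function formalism behaves as expected under compositions of projections and the induced homeomorphisms $\widehat g_n$ of the quotient spheres. The mild subtlety is the inverse image property, where one must remember to use setwise monotonicity of $g_n^*$ when passing from the singleton $\{w\}$ to the full fibre $\pi_X^{-1}(\{x\})$; this is the only place where the definition of $h_n$ as a composition of \emph{set} functions (rather than single-valued maps) is genuinely used.
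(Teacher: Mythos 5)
Your proposal is correct and follows essentially the same route as the paper: surjectivity and setwise monotonicity are formal, the inverse image property is checked by tracking the composition of set functions (the paper phrases this as the general fact that set functions induced by functions, and compositions of set functions with these properties, retain monotonicity and the inverse image property), and equicontinuity is delegated to Lemma \ref{lemma:equicontinuity} before invoking Lemma \ref{lemma:arzela_ascoli}. Your explicit unpacking of the inverse image step via $\{w\}\subset \pi_X^{-1}(\{x\})$ is exactly the content the paper compresses into its abstract remark.
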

\begin{proof}
Recall that $h_n= \pi_Y\circ \pi_{Y_n}^{-1} \circ \widehat{g}_n \circ \pi_{X_n}\circ \pi_X^{-1}$. The surjectivity is immediate. The equicontinuity follows from Lemma \ref{lemma:equicontinuity}. In general, if $\phi$ is a \textit{function} between any sets, then the induced \textit{set functions} $\phi$ and $\phi^{-1}$ have the monotonicity and inverse image properties. Also, note that if $\phi$ and $\psi$ are \textit{set functions} that have the monotonicity and inverse image properties, then $\phi\circ \psi$ also does so. This shows that $h_n$ has these properties.
\end{proof}

\begin{lemma}[Topological properties]\label{lemma:topological_properties}
The map $h\colon \mathcal E(X)\to \mathcal E(Y)$ is monotone and $h(\widehat{p}_i)=\widehat q_i$ for each $i\in \N$. 
\end{lemma}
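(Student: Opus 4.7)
The plan is to handle the two assertions $h(\widehat p_i) = \widehat q_i$ and monotonicity of $h$ separately, leveraging the uniform convergence $h_n \to h$ from Lemma~\ref{lemma:set_properties} together with the cell-like properties of the projection maps encoded in Lemma~\ref{lemma:cell_like}.

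For the identification $h(\widehat p_i) = \widehat q_i$, I would consider the sets $A_n := \pi_X(p_{i,n})$ for $n \geq i$. Since $p_{i,n} \to p_i$ in the Hausdorff metric of $\widehat{\C}$ by Theorem~\ref{theorem:uniformization_full}\ref{item:fatness} and $\pi_X$ is continuous, $A_n$ shrinks to $\{\widehat p_i\}$ in $\mathcal E(X)$. Because $g_n^*(p_{i,n}) = q_i$ (the defining normalization $f_n^*(q_i) = p_{i,n}$), for every $\widehat x \in A_n$ the set $h_n(\widehat x) = \pi_Y(g_n^*(\pi_X^{-1}(\widehat x)))$ contains $\widehat q_i$, so $\widehat q_i \in h_n(A_n)$. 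Combining the uniform convergence $d_H(h_n(A_n), h(A_n)) \to 0$ from Lemma~\ref{lemma:arzela_ascoli} with the continuity of $h$ and $A_n \to \{\widehat p_i\}$ forces $h_n(A_n) \to \{h(\widehat p_i)\}$ in the Hausdorff sense, and since $\widehat q_i$ belongs to each $h_n(A_n)$, it must coincide with $h(\widehat p_i)$.

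For the monotonicity I would argue by contradiction, supposing $h^{-1}(\widehat y) = A \sqcup B$ for disjoint nonempty closed sets $A, B \subset \mathcal E(X)$. Choose disjoint open neighborhoods $U_A \supset A$ and $U_B \supset B$; since $h(\mathcal E(X) \setminus (U_A \cup U_B))$ is compact and avoids $\widehat y$, there exists $\eta' > 0$ with $h^{-1}(B(\widehat y, \eta')) \subset U_A \cup U_B$. Setting $\eta = 2\eta'/3$, I would introduce the set function $T_n(E) := \pi_X(f_n^*(\pi_Y^{-1}(E)))$ from $\mathcal E(Y)$ to $\mathcal E(X)$; unwinding $f_n^*$ as the weak preimage of $g_n^*$ identifies $T_n(E)$ with $\{\widehat x : h_n(\widehat x) \cap E \neq \emptyset\}$. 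The key observation is that $V_n := T_n(B(\widehat y, \eta))$ is connected: $\pi_Y^{-1}(B(\widehat y, \eta))$ is homeomorphic to the ball by Lemma~\ref{lemma:cell_like}\ref{lemma:cell_like:open} and is $\pi_{Y_n}$-saturated, so $\pi_{Y_n}$ carries it to an open connected subset of $\mathcal E(Y_n)$; applying successively the homeomorphism $\widehat f_n$, then $\pi_{X_n}^{-1}$ (again via Lemma~\ref{lemma:cell_like}\ref{lemma:cell_like:open}), and finally the continuous $\pi_X$ preserves connectedness at every step.

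To conclude I would verify the two inclusions $A \cup B \subset V_n \subset U_A \cup U_B$ for all sufficiently large $n$. The first follows from Lemma~\ref{lemma:arzela_ascoli}: since $h(A \cup B) = \{\widehat y\}$, the uniform convergence places $h_n(A \cup B)$ inside $B(\widehat y, \eta/2) \subset B(\widehat y, \eta)$. The second uses the pointwise bound $d_H(h_n(\widehat x), h(\widehat x)) < \eta/2$ and the triangle inequality: for $\widehat x \in V_n$, picking $\widehat u \in h_n(\widehat x) \cap B(\widehat y, \eta)$ gives $d(h(\widehat x), \widehat y) < \eta + \eta/2 = \eta'$, so $\widehat x \in h^{-1}(B(\widehat y, \eta')) \subset U_A \cup U_B$. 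A connected set contained in a disjoint union of open sets lies in one piece, contradicting $V_n \cap A \neq \emptyset \neq V_n \cap B$. The main technical obstacle will be the connectedness of $V_n$, which requires carefully tracking how an open $\pi_Y$-saturated set passes through the four projection maps, in particular invoking Lemma~\ref{lemma:cell_like}\ref{lemma:cell_like:open} at the two places where the preimage under a cell-like projection is upgraded to a homeomorphism.
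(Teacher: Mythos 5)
Your proof of $h(\widehat p_i)=\widehat q_i$ is essentially the paper's own argument: both track $\pi_X(p_{i,n})$ shrinking to $\widehat p_i$, note that $\widehat q_i$ sits in every $h_n(\pi_X(p_{i,n}))$, and squeeze with the uniform convergence. For monotonicity, however, you take a genuinely different route. The paper separates two components of $h^{-1}(\widehat y)$ by a continuum $\widehat E'$, transports everything back to $\widehat{\C}$ via $f_n^*$, and derives a contradiction with the no-collisions Lemma \ref{lemma:nocollisions}, since the image of a small neighborhood of $\widehat y$ must cross the image of $\widehat E'$ (one separates, the other connects). You instead stay in the quotient spheres and exploit that the ``inverse'' set function $T_n(E)=\pi_X(f_n^*(\pi_Y^{-1}(E)))=\{\widehat x: h_n(\widehat x)\cap E\neq\emptyset\}$ carries connected open $\pi_Y$-saturated sets to connected sets, because $\widehat f_n$ is a homeomorphism and the Moore projections satisfy Lemma \ref{lemma:cell_like} \ref{lemma:cell_like:open}; the contradiction is then the purely topological one of a connected set meeting both halves of a disjoint open cover. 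This is a clean alternative that does not invoke Lemma \ref{lemma:nocollisions} a second time; the identification of $T_n(E)$ with $\{\widehat x: h_n(\widehat x)\cap E\neq\emptyset\}$ is correct, and since both directions of it are used you should verify it explicitly.

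One step fails as written: you need $\pi_Y^{-1}(B(\widehat y,\eta))$, and hence $V_n$, to be connected, but the metric on $\mathcal E(Y)$ is an arbitrary metric inducing the topology, so the ball $B(\widehat y,\eta)$ need not be connected, and Lemma \ref{lemma:cell_like} \ref{lemma:cell_like:open} only gives that its preimage is homeomorphic to the (possibly disconnected) ball. The repair is routine: since $\mathcal E(Y)$ is homeomorphic to $\widehat{\C}$ it is locally connected, so choose a connected open set $W$ with $\widehat y\in W\subset B(\widehat y,\eta)$ and then $\varepsilon>0$ with $B(\widehat y,\varepsilon)\subset W$; run your two inclusions with $V_n=T_n(W)$, using $h_n(A\cup B)\subset B(\widehat y,\varepsilon)\subset W$ for the first and $W\subset B(\widehat y,\eta)$ for the second. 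With this modification the argument is complete.
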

\begin{proof}
For the monotonicity we argue by contradiction. Let $\widehat{y}\in \mathcal E(Y)$ and suppose that $h^{-1}(\widehat{y})$ is disconnected. Let $\widehat{x},\widehat{z}$ be points lying on distinct components of $h^{-1}(\widehat{y})$ and let $\widehat E'$ be a continuum in $\mathcal E(X)\setminus h^{-1}(\widehat{y})$ that separates them. Consider the continuum $E'=\pi_X^{-1}( \widehat{E}')$, which separates the sets $x=\pi_X^{-1}(\widehat{x})$ and $z=\pi_X^{-1}(\widehat{z})$. 

In $\mathcal E(Y)$ consider the continuum $\widehat{E}=h(\widehat{E}')$, which is disjoint from $\widehat{y}$. By the uniform convergence, the sets $h_n(\widehat{E}')$ converge to $\widehat E$ in the Hausdorff sense, so there exists a closed Jordan region $\widehat{U}$, disjoint from $\widehat{y}$, containing $h_n(\widehat{E}')$ for all sufficiently large $n\in \N$. Since $h_n(\widehat{x})$ and $h_n(\widehat{z})$ converge to $\widehat{y}$, there exists a closed Jordan region $\widehat{V}$, disjoint from $\widehat{U}$, containing these sequences for all large $n\in \N$. 

Let $U,V$ be the preimages of $\widehat{U}, \widehat{V}$ under $\pi_Y$, respectively. So, $U,V$ are disjoint continua in $\widehat{\C}$. Using \eqref{inclusion:fgA}, we have
$$f_n^*(U)\supset f_n^*( \pi_Y^{-1}( h_n(\widehat E') ))= f_n^*( \pi_Y^{-1}( \pi_Y (g_n^*( E'))))\supset f_n^* (g_n^*(E')) \supset E'$$
and similarly $f_n^*(V)$ contains $x,z$ for all large $n\in \N$. Since $E'$ separates $x,z$ and $f_n^*(V)$ joins them, we conclude that $f_n^*(U)\cap f_n^*(V)\neq \emptyset$ for all large $n\in \N$.   However, by Lemma \ref{lemma:nocollisions}, $f_n^*(U)$ and $f_n^*(V)$ have distance uniformly bounded below away from $0$ as $n\to\infty$. This is a contradiction. Therefore $h^{-1}(\widehat{y})$ is connected.

Next we show that $h(\widehat{p}_i)=\widehat q_i$ for each $i\in \N$. Recall that $p_{i,n}\to p_i$ as $n\to\infty$. This implies that the sequence of sets $E_n=\pi_X(p_{i,n})$ converges to the point $\pi_{X}(p_i)=\widehat{p}_i$. By uniform convergence, $h(\widehat{p}_i)$ is precisely the limit of $h_n(E_n)$. Note that $\pi_{X}^{-1}(E_n)$ contains $p_{i,n}$. Thus, $g_n^* (\pi_X^{-1}(E_n))$ contains $q_i$ and $h_n(E_n)$ contains $\widehat{q}_i$ for all sufficiently large $n\in \N$. It follows that the limit of $h_n(E_n)$, which is the point $h(\widehat{p}_i)$, is precisely equal to $\widehat q_i$.  
\end{proof}

This completes the proof of the existence and of the topological properties of the limiting map $h\colon \mathcal E(X)\to \mathcal E(Y)$.  In the next section we establish the analytic properties of $h$ as required in the definition of a packing-conformal map, completing the proof of Theorem \ref{theorem:uniformization_full}.

\subsection{Regularity of limiting map}\label{section:limiting_mapping_regularity}

For each conformal map $g_n\colon X_n\to Y_n$ consider the derivative (with respect to the spherical metrics) $|Dg_n|$, which satisfies the following relations by Lemma \ref{lemma:packing_conformal}. First, we have the transboundary upper gradient inequality:
\begin{align}\label{regularity:upper_gradinet_gn}
\dist(g_n^*(\gamma(a)),g_n^*(\gamma(b))) \leq \int_{\gamma}|Dg_n|\, ds+ \sum_{i:p_{i,n}\cap |\gamma|\neq \emptyset} \diam(q_i)
\end{align}
for all locally rectifiable paths $\gamma\colon [a,b] \to \widehat{\C}$. Second, for all Borel sets $E\subset Y_n$ we have
\begin{align}\label{regularity:conformality_gn}
\int_{g_n^{-1}(E)} |Dg_n|^2 \,  d\Sigma = \Sigma (E).
\end{align}
We extend $|Dg_n|$ to $\widehat{\C}$ by setting it to be zero in $\bigcup_{i=1}^n p_{i,n}$. We first establish a preliminary result. 

\begin{lemma}\label{lemma:limit_sum}
Let $\{\lambda_i\}_{i\in \N}$ be a non-negative sequence in $\ell^2(\N)$. Then
\begin{align*}
\limsup_{n\to\infty} \sum_{i:p_{i,n}\cap |\gamma|\neq \emptyset} \lambda_i \leq \sum_{i:p_i\cap |\gamma|\neq \emptyset} \lambda_i.
\end{align*}
for all compact curves $\gamma$ in $\widehat \C$ outside a curve family $\Gamma_0$ with $\md_2\Gamma_0=0$. 
\end{lemma}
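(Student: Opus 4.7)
The plan is to decompose the sum at a threshold $N$ and control the tail $i>N$ via a Bojarski-type modulus estimate, exploiting that $T_N := \sum_{i>N}\lambda_i^2\to 0$. The head is easy: for each fixed $i$ with $p_i\cap|\gamma|=\emptyset$, the Hausdorff convergence $p_{i,n}\to p_i$ (part \ref{item:fatness} of Theorem \ref{theorem:uniformization_full}) implies $p_{i,n}\cap|\gamma|=\emptyset$ for all large $n$. Hence for any finite $N$ and $n$ sufficiently large (depending on $\gamma$ and $N$), $\{i\leq N:p_{i,n}\cap|\gamma|\neq\emptyset\}\subset \{i\leq N:p_i\cap|\gamma|\neq\emptyset\}$, which bounds the head by $\sum_{i:p_i\cap|\gamma|\neq\emptyset}\lambda_i$. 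The core problem is to handle the tail $S_n^{(N)}(\gamma):=\sum_{i>N,\,p_{i,n}\cap|\gamma|\neq\emptyset}\lambda_i$ uniformly in $n$, making it small as $N\to\infty$ outside a $\md_2$-null family of curves.

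For each $n\geq N$ and $N<i\leq n$ with $r_{i,n}:=\diam(p_{i,n})>0$, I pick $y_{i,n}\in p_{i,n}$ with $p_{i,n}\not\subset B(y_{i,n},r_{i,n}/2)$ (possible since $\diam p_{i,n}=r_{i,n}$) and set
\[
\rho_n^{(N)}(x) = \sum_{N<i\leq n,\,r_{i,n}>0}\frac{\lambda_i}{r_{i,n}}\chi_{B(y_{i,n},2r_{i,n})}(x).
\]
The disjoint sets $D_{i,n}:=p_{i,n}\cap B(y_{i,n},r_{i,n}/2)$ satisfy $\Sigma(D_{i,n})\gtrsim_\tau r_{i,n}^2\simeq \Sigma(B(y_{i,n},2r_{i,n}))$ by $\tau$-fatness, and Lemma \ref{lemma:bojarski} gives $\|\rho_n^{(N)}\|_{L^2}^2\lesssim_\tau T_N$ uniformly in $n$. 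If $\gamma\cap p_{i,n}\neq\emptyset$ and $\diam|\gamma|>4r_{i,n}$, then $|\gamma|$ must exit $B(y_{i,n},2r_{i,n})$, whence $\mathcal{H}^1(|\gamma|\cap B(y_{i,n},2r_{i,n}))\geq r_{i,n}$ and $\int_\gamma \rho_n^{(N)}\,ds\geq \lambda_i$ for that index. The ``large'' indices $\{i\in B_n: r_{i,n}\geq \diam|\gamma|/4\}$ number at most $C(\tau,\diam|\gamma|)$ by cofatness (Lemma \ref{lemma:count}), so Cauchy--Schwarz on the residual $\ell^2$ tail gives
\[
S_n^{(N)}(\gamma) \leq \int_\gamma \rho_n^{(N)}\,ds + C(\tau,\diam|\gamma|)\,T_N^{1/2}.
\]

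It remains to control $\limsup_n \int_\gamma \rho_n^{(N)}\,ds$ outside a $\md_2$-null family, and this is the central obstacle: naive $\md_2$-subadditivity over $n$ of the families $\{\gamma:\int_\gamma\rho_n^{(N)}\geq \varepsilon\}$ diverges because $\sum_n \|\rho_n^{(N)}\|_{L^2}^2 = \infty$. My plan is to invoke a Banach--Alaoglu plus Mazur argument followed by Fuglede's lemma. Passing to a subsequence diagonally in $N$, $\rho_n^{(N)}\rightharpoonup\rho_\infty^{(N)}$ weakly in $L^2$ with $\|\rho_\infty^{(N)}\|_{L^2}^2\leq T_N\to 0$. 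Mazur's lemma provides convex combinations $\tilde\rho_j^{(N)}$ of $\{\rho_m^{(N)}\}_{m\geq j}$ converging to $\rho_\infty^{(N)}$ strongly in $L^2$, and Fuglede's lemma---applied first to $\tilde\rho_j^{(N)}\to \rho_\infty^{(N)}$ and then to $\rho_\infty^{(N_k)}\to 0$ along a further diagonal---yields a single $\md_2$-null family $\Gamma_0$ outside of which $\int_\gamma\rho_\infty^{(N)}\,ds\to 0$ as $N\to\infty$, with the convex-combination structure dominating the relevant limsups along $n$. Choosing $N$ large depending on $\gamma$ and $\varepsilon$, combining with the previous paragraphs yields $\limsup_n \sum_{i:p_{i,n}\cap|\gamma|\neq\emptyset}\lambda_i \leq \sum_{i:p_i\cap|\gamma|\neq\emptyset}\lambda_i + \varepsilon$ for $\gamma\notin \Gamma_0$, and letting $\varepsilon\to 0$ along a countable sequence finishes the proof. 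The main hurdle is precisely this last step: the Mazur--Fuglede bookkeeping transferring uniform $L^2$-bounds into a genuine modulus estimate on $\limsup_n\int_\gamma\rho_n^{(N)}$.
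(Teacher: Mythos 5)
Your head argument, the construction of the tail test functions $\rho_n^{(N)}$, the Bojarski/fatness estimate $\|\rho_n^{(N)}\|_{L^2}^2\lesssim_\tau T_N$, and the treatment of the finitely many ``large'' indices via Lemma \ref{lemma:count} all match the paper's proof. The gap is exactly where you flag ``the main hurdle'': the Mazur--Fuglede step does not do what you need. Fuglede's lemma applied to the strongly convergent convex combinations $\tilde\rho_j^{(N)}\to\rho_\infty^{(N)}$ only gives $\int_\gamma\tilde\rho_j^{(N)}\,ds\to\int_\gamma\rho_\infty^{(N)}\,ds$ for $\md_2$-a.e.\ $\gamma$, i.e.\ control of \emph{averages} $\sum_m\lambda_{m,j}\int_\gamma\rho_m^{(N)}\,ds$. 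This says nothing about $\limsup_n\int_\gamma\rho_n^{(N)}\,ds$: a sequence can have convex combinations converging to a small limit while the individual terms oscillate with large limsup. (The convex-combination device works in Lemma \ref{lemma:upper_gradient} of the paper only because there one needs a \emph{lower} bound on the liminf of the left-hand side, which convexification preserves; here you need an \emph{upper} bound on a limsup, which it does not.) Nor can you fall back on the uniform bound $\|\rho_n^{(N)}\|_{L^2}^2\lesssim T_N$ alone: the families $\{\gamma:\int_\gamma\rho_n^{(N)}\,ds\geq\delta\}$ each have modulus $\lesssim T_N/\delta^2$, but their union over $n$ need not be small, since the supports of the $\rho_n^{(N)}$ move with $n$.

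The paper's fix is a different decomposition that restores \emph{strong} $L^2$ convergence of the actual sequence, so that Fuglede applies directly to it. Instead of $\rho_n^{(N)}$, one forms the difference functions
\begin{align*}
\psi_{i,n}=\left|\frac{\lambda_i\x_{\{1,\dots,n\}}(i)}{\diam(p_{i,n})}\x_{2B_{i,n}}-\frac{\lambda_i}{\diam(p_i)}\x_{2B_i}\right|,\qquad \phi_n=\sum_{i\in J}\psi_{i,n},
\end{align*}
where $B_i$ is the bump for the limit continuum $p_i$. Each $\psi_{i,n}\to0$ pointwise a.e.\ (Hausdorff convergence of $p_{i,n}$ to $p_i$), so the head $\sum_{i\leq M}\psi_{i,n}\to0$ in $L^2$ by dominated convergence, while the tail $\sum_{i>M}\psi_{i,n}$ is uniformly small in $L^2$ by the Bojarski estimate applied to \emph{both} the $p_{i,n}$ and the $p_i$. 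Hence $\phi_n\to0$ strongly in $L^2$, Fuglede gives $\int_\gamma\phi_n\,ds\to0$ for $\md_2$-a.e.\ $\gamma$, and then the time-$n$ tail sum is bounded by $\int_\gamma\phi_n\,ds$ plus the line integral of the \emph{fixed} limiting tail $\sum_{i>M}\frac{\lambda_i}{\diam(p_i)}\x_{2B_i}$, which tends to $0$ as $M\to\infty$ for a.e.\ $\gamma$ by dominated convergence. You should replace your weak-compactness step by this comparison with the limiting bump functions; the rest of your argument then goes through.
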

The proof relies on the fact that for each $n\in \N$ the sets $p_{i,n}$, $i\in \{1,\dots,n\}$, are pairwise disjoint and $\tau$-fat, and they converge in the Hausdorff sense as $n\to \infty$ to the sets $p_i$, $i\in \N$, which are also pairwise disjoint and $\tau$-fat.

\begin{proof}
Suppose that $\gamma$ is a compact path so $|\gamma|$ is compact. For each $i\in \N$, if $p_i\cap |\gamma|=\emptyset$, then $p_i$ has a positive distance from $|\gamma|$ by compactness. Thus, $p_{i,n}\cap |\gamma|=\emptyset$ for all sufficiently large $n\in \N$. It follows that for each $M\in \N$ we have
\begin{align}\label{lemma:limit_sum_M}
\limsup_{n\to\infty} \sum_{\substack{i\in \{1,\dots,M\} \\p_{i,n}\cap |\gamma|\neq \emptyset}} \lambda_i \leq \sum_{i:p_i\cap |\gamma|\neq \emptyset} \lambda_i.
\end{align}
We will show that there exists a curve family $\Gamma_0$ with $\md_2\Gamma_0=0$ such that if $\gamma$ is path outside $\Gamma_0$, then 
\begin{align}\label{lemma:limit_sum:claim}
\lim_{M,n\to\infty}  \sum_{\substack{i>M \\p_{i,n}\cap |\gamma|\neq \emptyset}} \lambda_i =0.
\end{align}
Combined with \eqref{lemma:limit_sum_M}, this gives the desired conclusion.

Let $J$ be the set of indices $i\in \N$ such that $\diam(p_i)>0$. By Corollary \ref{cor:nondegeneracy}, if $i\in J$ then $p_{i,n}$ is non-degenerate for all $n\geq i$, and if $i\notin J$ then $p_{i,n}$ is a point for all $n\geq i$. For each $n\in \N$ and $i\in \{1,\dots,n\}\cap J$ consider points $x_{i,n}\in p_{i,n}$ and $x_i\in p_i$ such that $x_{i,n}\to x_i$ as $n\to\infty$. We let $B_{i,n}=B(x_{i,n},\diam(p_{i,n}))$ and $B_{i}=B(x_i, \diam(p_i))$. Note that $\Sigma(2B_{i,n}) \leq c(\tau) \Sigma(p_{i,n})$  and  $\Sigma(2B_i)\leq c(\tau)\Sigma(p_i)$ by the $\tau$-fatness, and observe that for each $i\in J$ the characteristic functions $\x_{2B_{i,n}}$ converge as $n\to\infty $ pointwise a.e.\ to $\x_{2B_i}$.  We define the functions
\begin{align*}
\phi(x)&= \sum_{i\in J}  \frac{\lambda_i}{\diam(p_{i})} \x_{2B_{i}}(x),\\
\psi_{i,n}(x)&= \left|\frac{\lambda_i\x_{\{1,\dots,n\}}(i)}{\diam(p_{i,n})} \x_{2B_{i,n}}(x) -\frac{\lambda_i}{\diam(p_{i})} \x_{2B_{i}}(x) \right|,\,\,\, i\in J,\,\, n\in \N,\\
\phi_{n}(x)&= \sum_{i\in J} \psi_{i,n}(x),\,\,\,  n\in \N.
\end{align*}
Using Lemma \ref{lemma:bojarski} and the fact that the sets $p_i$, $i\in  J$, are pairwise disjoint and $\tau$-fat, we obtain
\begin{align*}
\int  \phi^2\leq c(\tau) \int  \sum_{i\in J} \frac{\lambda_i^2}{\diam(p_{i})^2} \x_{p_i}\leq c'(\tau) \sum_{i=1}^\infty \lambda_i^2
\end{align*}
and similarly, for each $n,M\in \N$ we have
\begin{align}\label{lemma:limit_sum_psi}
\int \left (\sum_{i>M, i\in J} \psi_{i,n}(x)\right)^2 \leq c(\tau) \sum_{i>M} \lambda_i^2. 
\end{align}
Note that for each $i\in J$ the sequence of functions $\{\psi_{i,n}\}_{n\in \N}$ is uniformly bounded and converges to $0$ pointwise a.e. The dominated convergence theorem implies that for each fixed $M\in \N$, the sequence $\{\sum_{i\leq M, i\in J}\psi_{i,n}\}_{n\in \N}$ converges to $0$ in $L^2(\widehat \C)$. This fact, combined with \eqref{lemma:limit_sum_psi} gives that $\{\phi_n\}_{n\in \N}$ converges to $0$ in $L^2(\widehat \C)$. 

By Fuglede's lemma \cite[p.~131]{HeinonenKoskelaShanmugalingamTyson:Sobolev}, there exists a curve family $\Gamma_1$ with $\md_2\Gamma_1=0$ such that for all curves $\gamma$ outside $\Gamma_1$ we have  
\begin{align*}
\lim_{n\to\infty }\int_{\gamma} \phi_n \, ds=0.
\end{align*}
Moreover, since $\phi\in L^2(\widehat{\C})$, there exists a curve family $\Gamma_2$ with $\md_2\Gamma_2=0$ such that $\int_{\gamma}\phi\, ds<\infty $ for each $\gamma\notin \Gamma_2$. Finally, there exists a family $\Gamma_3$ with $\md_2\Gamma_3=0$ that contains all non-constant curves intersecting the countable collection of points $p_i$, $i\notin J$, and $p_{i,n}$, $i\notin J$, $n\geq i$; see \cite[\S 7.9, p.~23]{Vaisala:quasiconformal}.  We define $\Gamma_0=\Gamma_1\cup \Gamma_2\cup \Gamma_3$, which satisfies $\md_2\Gamma_0=0$, and fix a curve $\gamma\notin \Gamma_0$. Note that if $\gamma$ is a constant curve, then the claim \eqref{lemma:limit_sum:claim} holds trivially, since the sum contains at most one term and $\lambda_i\to 0$ as $i\to\infty$. Thus, we assume that $\gamma$ is non-constant.

Since $\gamma\notin \Gamma_1$, for each $\varepsilon>0$ there exists $N_0\in \N$ such that
\begin{align}\label{lemma:limit_sum_epsilon}
 \int_{\gamma }\sum_{i\in J}\left|\frac{\lambda_i\x_{\{1,\dots,n\}}(i)}{\diam(p_{i,n})} \x_{2B_{i,n}}(x) -\frac{\lambda_i}{\diam(p_{i})} \x_{2B_{i}} \right|\, ds <\varepsilon
\end{align}
for all $n>N_0$. By Lemma \ref{lemma:count}, there exists a number $N_1\in \N$, depending only on $\tau$, such that for each $n\in \N$ there exists a set $I_n\subset \N$ with $\# I_n\leq N_1$ that contains precisely the indices $i\in \N$ with the property that $p_{i,n}\cap |\gamma|\neq \emptyset $ and $\diam(2B_{i,n})\geq \diam(|\gamma|)$. Hence, $\diam(|\gamma|) >\diam(2B_{i,n})$ for each $i\in \{1,\dots,n\}\cap J\setminus I_n$ with $p_{i,n}\cap |\gamma|\neq \emptyset$.  By the properties of the set $I_n$, if $p_{i,n}\cap |\gamma|\neq \emptyset$ and $i\in \{1,\dots,n\}\cap J\setminus I_n$, then 
$$\lambda_i \leq \int_{\gamma} \frac{\lambda_i}{\diam(p_{i,n})} \x_{2B_{i,n}} \, ds.$$
Thus, for each $M\in \N$ and $n>N_0$, using \eqref{lemma:limit_sum_epsilon}, we have
\begin{align*}
\sum_{\substack{i>M,\,\, i\in J\setminus I_n \\ p_{i,n}\cap |\gamma|\neq \emptyset}} \lambda_i &\leq  \int_{\gamma}\sum_{\substack{i>M,\,\, i\in J\setminus  I_n \\ p_{i,n}\cap |\gamma|\neq \emptyset}}\frac{\lambda_i\x_{\{1,\dots,n\}}(i)}{\diam(p_{i,n})} \x_{2B_{i,n}} \, ds   \\
&< \varepsilon+ \int_{\gamma}\sum_{i>M, i\in J} \frac{\lambda_i}{\diam(p_{i})} \x_{2B_{i}} \, ds.
\end{align*}  
Since $\gamma\notin \Gamma_2$, we have $\int_{\gamma}\phi\, ds<\infty$; thus, by the dominated convergence theorem, for all sufficiently large $M\in \N$ the latter line integral term in the above inequalities is less than $\varepsilon$. Therefore, for all sufficiently large $M\in \N$ and for $n>N_0$ we have
\begin{align*}
\sum_{\substack{i>M,\,\, i\in J\setminus I_n \\ p_{i,n}\cap |\gamma|\neq \emptyset}} \lambda_i < 2\varepsilon.
\end{align*} 
Since $\lambda_i\to 0$ as $i\to \infty$, we also have
\begin{align*}
\sum_{i>M,\,\, i\in I_n} \lambda_i\leq N_1\cdot \max \{ \lambda_i:i>M\}< \varepsilon
\end{align*}
for all sufficiently large $M\in \N$. Altogether,
\begin{align*}
\sum_{\substack{i>M, i\in J \\ p_{i,n}\cap |\gamma|\neq \emptyset}} \lambda_i < 3\varepsilon
\end{align*} 
for all sufficiently large $M\in \N$ and $n>N_0$. Finally, since $\gamma\notin \Gamma_3$, we have $i\in J$ whenever $p_{i,n}\cap |\gamma|\neq \emptyset$. This implies that we may remove the restriction $i\in J$ in the summation range of the latter sum. This completes the proof of \eqref{lemma:limit_sum:claim}.  
\end{proof}

\begin{lemma}[Upper gradient]\label{lemma:upper_gradient}
The sequence $\{|Dg_n|\}_{n\in \N}$ has a subsequence that converges weakly in $L^2(\widehat{\C})$ to a function $\rho_h$ with the property that 
\begin{align*}
\dist( \pi_Y^{-1} \circ h \circ \pi_X(\gamma(a)), \pi_Y^{-1}\circ h\circ \pi_X(\gamma(b)))\leq \int_{\gamma}\rho_h\, ds + \sum_{i:p_i\cap |\gamma|\neq \emptyset} \diam(q_i)
\end{align*}
for all curves $\gamma\colon [a,b]\to \widehat{\C}$ outside a curve family $\Gamma_0$ with $\md_2\Gamma_0=0$. 
\end{lemma}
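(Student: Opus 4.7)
The strategy is to exploit the conformality identity \eqref{regularity:conformality_gn} to obtain a uniform $L^2$-bound on the derivatives $|Dg_n|$, pass to a weak $L^2$-limit $\rho_h$, upgrade weak $L^2$-convergence to convergence of line integrals along $\md_2$-a.e.\ curve via Mazur's lemma and Fuglede's lemma, and finally transfer the prequel transboundary upper gradient inequality \eqref{regularity:upper_gradinet_gn} to the limit with the help of Lemma \ref{lemma:limit_sum} and Lemma \ref{lemma:distance_convergence}.

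First, the identity $\int|Dg_n|^2\,d\Sigma=\Sigma(Y_n)\leq \Sigma(\widehat\C)$ gives a uniform $L^2$-bound, so by weak compactness I extract a subsequence $\{|Dg_{k_n}|\}$ converging weakly in $L^2(\widehat\C)$ to some $\rho_h\geq 0$. By Mazur's lemma, I then form finite convex combinations $\widetilde\rho_N=\sum_{n\geq N}\mu_{N,n}|Dg_{k_n}|$ converging strongly in $L^2(\widehat\C)$ to $\rho_h$; Fuglede's lemma (after extracting a further subsequence in $N$ if needed) yields a curve family $\Gamma_1$ with $\md_2\Gamma_1=0$ such that $\int_\gamma \widetilde\rho_N\,ds\to\int_\gamma \rho_h\,ds$ for every locally rectifiable $\gamma\notin\Gamma_1$. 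I set $\Gamma_0$ to be the union of $\Gamma_1$ with the $\md_2$-null family provided by Lemma \ref{lemma:limit_sum} applied to the square-summable sequence $\{\diam(q_i)\}$.

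Given $\gamma\notin\Gamma_0$, I apply \eqref{regularity:upper_gradinet_gn} for each $g_{k_n}$ and take the convex combination with weights $\mu_{N,n}$:
\begin{align*}
\sum_{n\geq N}\mu_{N,n}\dist\bigl(g_{k_n}^*(\gamma(a)),g_{k_n}^*(\gamma(b))\bigr)\leq \int_\gamma\widetilde\rho_N\,ds+\sum_{n\geq N}\mu_{N,n}\sum_{i:p_{i,k_n}\cap|\gamma|\neq\emptyset}\diam(q_i).
\end{align*}
The right-hand side is handled directly: the first term tends to $\int_\gamma\rho_h\,ds$ by construction, while Lemma \ref{lemma:limit_sum} provides, for $\varepsilon>0$, an integer $N_\varepsilon$ such that for $n\geq N_\varepsilon$ the inner peripheral sum is bounded by $\sum_{i:p_i\cap|\gamma|\neq\emptyset}\diam(q_i)+\varepsilon$, and hence so is its convex average once $N\geq N_\varepsilon$. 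For the left-hand side, the identity $h_{k_n}=\pi_Y\circ g_{k_n}^*\circ \pi_X^{-1}$ combined with the setwise monotonicity of $\pi_Y^{-1}$ yields the inclusion $g_{k_n}^*(\gamma(t))\subset \pi_Y^{-1}(h_{k_n}(\pi_X(\gamma(t))))$, so
\begin{align*}
\dist\bigl(g_{k_n}^*(\gamma(a)),g_{k_n}^*(\gamma(b))\bigr)\geq \dist\bigl(\pi_Y^{-1}(h_{k_n}(\pi_X(\gamma(a)))),\pi_Y^{-1}(h_{k_n}(\pi_X(\gamma(b))))\bigr).
\end{align*}
The uniform Hausdorff convergence $h_{k_n}\to h$ supplied by Lemma \ref{lemma:set_properties} together with Lemma \ref{lemma:distance_convergence}\upshape(ii) applied to $\pi_Y$ then produces the lower bound $\dist(\pi_Y^{-1}\circ h\circ \pi_X(\gamma(a)),\pi_Y^{-1}\circ h\circ\pi_X(\gamma(b)))$ for $\liminf_{n}\dist(g_{k_n}^*(\gamma(a)),g_{k_n}^*(\gamma(b)))$, and this lower bound persists under convex combinations whose support lies in $\{n\geq N\}$. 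Letting first $N\to\infty$ and then $\varepsilon\to 0$ closes the argument.

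The main obstacle is the passage from weak $L^2$-convergence of the derivatives to a pathwise assertion; the Mazur--Fuglede mechanism handles it and is precisely the reason why the upper gradient inequality is stated modulo a curve family of conformal modulus zero rather than everywhere. A minor but necessary bookkeeping point is that $g_{k_n}^*(\gamma(t))$ and $\pi_Y^{-1}\circ h_{k_n}\circ \pi_X(\gamma(t))$ need not coincide, yet the set-inclusion between them goes in exactly the right direction to transfer the inequality to the limit.
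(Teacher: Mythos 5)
Your proposal is correct and follows essentially the same route as the paper's proof: the uniform $L^2$ bound from conformality, weak compactness plus Mazur and Fuglede to control the line integrals along $\md_2$-a.e.\ curve, Lemma \ref{lemma:limit_sum} for the peripheral sums, and the inclusion $g_n^*(A)\subset \pi_Y^{-1}\circ h_n\circ\pi_X(A)$ together with Lemma \ref{lemma:distance_convergence}\upshape(ii) to pass the left-hand side to the limit. The only cosmetic difference is that you lower-bound the left-hand side termwise inside the convex combination, whereas the paper first rewrites the inequality with $\dist(\pi_Y^{-1}\circ h_n(\alpha),\pi_Y^{-1}\circ h_n(\beta))$ and then averages; the two are equivalent.
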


\begin{proof}
Recall that $h_n=\pi_Y\circ g_n^*\circ \pi_X^{-1}$, so for each set $A\subset \widehat{\C}$ we have
\begin{align}\label{lemma:upper_gradient_inclusion}
\pi_Y^{-1}\circ h_n\circ \pi_X(A) \supset g_n^*(A). 
\end{align}
Let $\gamma\colon[a,b]\to \widehat{\C}$ be an arbitrary locally rectifiable path. If we set $\alpha=\pi_X(\gamma(a))$ and $\beta=\pi_X(\gamma(b))$, the transboundary upper gradient inequality of $g_n$, as stated in \eqref{regularity:upper_gradinet_gn}, and \eqref{lemma:upper_gradient_inclusion} imply that 
\begin{align}\label{lemma:upper_gradient_ineq}
\dist( \pi_Y^{-1}\circ h_n (\alpha),\pi_Y^{-1}\circ h_n (\beta) ) \leq \int_{\gamma}|Dg_n|\, ds+ \sum_{i:p_{i,n}\cap |\gamma|\neq \emptyset} \diam(q_i).
\end{align}
Our goal is to show that we can take limits in this expression and derive the claimed upper gradient inequality of $h$ for all curves $\gamma$ outside an exceptional family $\Gamma_0$ with $\md_2\Gamma_0=0$.  

First, we treat the line integral terms. By \eqref{regularity:conformality_gn}, the sequence $|Dg_n|$ is uniformly bounded in $L^2(\widehat{\C})$. Consider a weak limit $\rho_h\in L^2(\widehat{\C})$ of $|Dg_n|$, given by the Banach--Alaoglu theorem \cite[Theorem 2.4.1]{HeinonenKoskelaShanmugalingamTyson:Sobolev}. By Mazur's lemma \cite[p.~19]{HeinonenKoskelaShanmugalingamTyson:Sobolev}, there exist convex combinations of $|Dg_n|$ that converge strongly in $L^2(\widehat \C)$ to $\rho_h$. Specifically, these convex combinations have the form
\begin{align*}
\rho_n= \sum_{i=n}^{M_n} \lambda_{i,n} |Dg_i|
\end{align*}
for some  $M_n>n$ and $0\leq \lambda_{i,n}\leq 1$, $i\in \{n,\dots,M_n\}$, where $\sum_{i=n}^{M_n}\lambda_{i,n}=1$. By Fuglede's lemma \cite[p.~131]{HeinonenKoskelaShanmugalingamTyson:Sobolev}, there exists a curve family $\Gamma_1$ with $\md_2\Gamma_1=0$ such that for all curves $\gamma$ outside $\Gamma_1$ we have
\begin{align}\label{lemma:upper_gradient_integral}
\lim_{n\to\infty}\int_{\gamma}\rho_n\,ds=\int_{\gamma}\rho_h\, ds.
\end{align}

Next, we treat the left-hand side of \eqref{lemma:upper_gradient_ineq}. Observe that as $n\to\infty$, the sets $h_n(\alpha)$ and $h_n(\beta)$ converge to the points $h(\alpha)$ and $h(\beta)$ respectively; recall Lemma \ref{lemma:set_properties}. By Lemma \ref{lemma:distance_convergence} \ref{lemma:distance:ii}, this implies that
\begin{align*}
\dist( \pi_Y^{-1}\circ h(\alpha) , \pi_Y^{-1}\circ h(\beta) ) \leq \liminf_{n\to\infty} \dist( \pi_Y^{-1}\circ h_n (\alpha),\pi_Y^{-1}\circ h_n (\beta) ).
\end{align*}
Note that if we set $d_n=\dist( \pi_Y^{-1}\circ h_n (\alpha),\pi_Y^{-1}\circ h_n (\beta) )$ and consider the convex combinations
$$\widetilde d_n= \sum_{i=n}^{M_n} \lambda_{i,n}d_i $$
then we obtain immediately
\begin{align}\label{lemma:upper_gradient_dist}
\dist( \pi_Y^{-1}\circ h(\alpha) , \pi_Y^{-1}\circ h(\beta) )\leq \liminf_{n\to\infty}\widetilde d_n. 
\end{align}

Finally, we treat the summation term in \eqref{lemma:upper_gradient_ineq}. We apply Lemma \ref{lemma:limit_sum} to conclude that there exists a curve family $\Gamma_2$ with $\md_2\Gamma_2=0$ such that for all compact curves $\gamma$ outside $\Gamma_2$ we have
\begin{align*}
\limsup_{n\to\infty} \sum_{i:p_{i,n}\cap |\gamma|\neq \emptyset} \diam(q_i) \leq \sum_{i:p_i\cap |\gamma|\neq \emptyset} \diam(q_i).
\end{align*}
We set 
\begin{align*}
s_n= \sum_{i:p_{i,n}\cap |\gamma|\neq \emptyset} \diam(q_i) \quad \textrm{and} \quad \widetilde s_n= \sum_{i=n}^{M_n}\lambda_{i,n}s_i 
\end{align*}
and observe that
\begin{align}\label{lemma:upper_gradient_sums}
\limsup_{n\to\infty}\widetilde s_n \leq \sum_{i:p_i\cap |\gamma|\neq \emptyset} \diam(q_i).
\end{align}

We now define $\Gamma_0=\Gamma_1\cup \Gamma_2$ and let $\gamma\notin \Gamma_0$ be a compact curve. Taking convex combinations in \eqref{lemma:upper_gradient_ineq}, we obtain
\begin{align*}
\widetilde d_n \leq \int_{\gamma} \rho_n\, ds +\widetilde s_n.
\end{align*}
By \eqref{lemma:upper_gradient_integral}, \eqref{lemma:upper_gradient_dist}, and \eqref{lemma:upper_gradient_sums}, we can take limits to obtain  
\begin{align*}
\dist( \pi_Y^{-1}\circ h(\alpha) , \pi_Y^{-1}\circ h(\beta) )\leq \int_{\gamma}\rho_h\, ds+ \sum_{i:p_i\cap |\gamma|\neq \emptyset} \diam(q_i).
\end{align*}
This completes the proof.
\end{proof}

\begin{lemma}[Conformality]\label{lemma:conformality}
For all Borel sets $E\subset  Y$ we have
$$\int_{\pi_X^{-1}(h^{-1}(\pi_Y(E)))} \rho_h^2\, d\Sigma\leq \Sigma(E).$$
\end{lemma}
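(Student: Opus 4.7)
The plan is to prove the inequality first for $K$ compact in $Y$, and then extend to arbitrary Borel $E\subset Y$ by regularity. The functional $\mu(F):=\int_{\pi_X^{-1}(h^{-1}(\pi_Y(F)))} \rho_h^2\, d\Sigma$ defines a finite Borel measure on $Y$: countable additivity on disjoint Borel subsets of $Y$ is immediate from the injectivity of $\pi_Y$ on $Y$, and finiteness follows from $\rho_h\in L^2(\widehat{\C})$. Since $Y$ is a $G_\delta$ subset of $\widehat{\C}$ and hence Polish in the subspace topology, $\mu$ is Radon. Inner regularity therefore reduces the problem to showing $\mu(K)\leq \Sigma(K)$ for every compact $K\subset Y$.

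Fix such a $K$ and set $A_K=\pi_X^{-1}(h^{-1}(\pi_Y(K)))$. Because $K\subset Y$, the image $\pi_Y(K)\subset \pi_Y(Y)$ contains no collapsed point $\widehat q_i$; since $h(\widehat p_i)=\widehat q_i$ by Lemma~\ref{lemma:topological_properties}, it follows that $A_K\subset X$. The key assertion to establish is the following uniform inclusion: for every compact $L\subset A_K$ and every $\delta>0$,
\begin{align*}
g_n(L\cap X_n)\subset N_\delta(K)\quad\text{for all sufficiently large } n.
\end{align*}
If this failed, one could extract $\delta_0>0$ and points $x_m\in L\cap X_{n_m}$ with $d(g_{n_m}(x_m),K)\geq \delta_0$; passing to subsequences so that $x_m\to x\in L$ and $g_{n_m}(x_m)\to z$ with $d(z,K)\geq \delta_0$, the uniform convergence $h_n\to h$ (Lemma~\ref{lemma:set_properties}) combined with the continuity of $h$ yields $\pi_Y(g_{n_m}(x_m))=h_{n_m}(\pi_X(x_m))\to h(\pi_X(x))\in \pi_Y(K)$. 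Hence $\pi_Y(z)\in \pi_Y(K)\subset \pi_Y(Y)$; this forces $z\in Y$ (since no $\widehat q_i$ lies in $\pi_Y(Y)$), and the injectivity of $\pi_Y$ on $Y$ then gives $z\in K$, a contradiction.

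Granted the uniform inclusion, the conformality relation~\eqref{regularity:conformality_gn} together with $|Dg_n|=0$ outside $X_n$ gives, for large $n$,
\begin{align*}
\int_L |Dg_n|^2\, d\Sigma = \int_{L\cap X_n} |Dg_n|^2\, d\Sigma \leq \int_{g_n^{-1}(N_\delta(K))} |Dg_n|^2\, d\Sigma = \Sigma(N_\delta(K)\cap Y_n) \leq \Sigma(N_\delta(K)).
\end{align*}
The weak convergence $|Dg_n|\rightharpoonup \rho_h$ in $L^2(\widehat{\C})$ from Lemma~\ref{lemma:upper_gradient} (which is preserved under multiplication by $\chi_L\in L^\infty$), combined with the lower semicontinuity of the $L^2$-norm, produces $\int_L \rho_h^2\, d\Sigma\leq \Sigma(N_\delta(K))$. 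Letting $\delta\to 0$ and using outer regularity of $\Sigma$ in $\widehat{\C}$ yields $\int_L \rho_h^2\, d\Sigma\leq \Sigma(K)$, and taking the supremum over compact $L\subset A_K$ delivers $\mu(K)\leq \Sigma(K)$, completing the argument by the initial reduction.

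The principal obstacle is the uniform convergence step. One would naturally hope to show $L\subset X_n$ for all large $n$, but this can fail when $L$ meets accumulation points of $\bigcup_i p_i$: a ``wandering'' sequence of continua $p_{i_n,n}$ with $i_n\to \infty$ can swallow points of $L$. The remedy is to abandon the attempt to locate $x$ inside $X_n$ and instead control the image $g_n(x)\in\widehat{\C}$ directly; the hypothesis $K\subset Y$, which guarantees that $\pi_Y$ is injective on $K$ and that $\pi_Y(K)$ avoids every collapsed point $\widehat q_i$, is precisely what allows one to transfer convergence in $\mathcal{E}(Y)$ back to honest convergence in $\widehat{\C}$.
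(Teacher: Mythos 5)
Your argument is correct, and its engine is the same as the paper's: the exact conformality identity \eqref{regularity:conformality_gn} for the approximating maps $g_n$, a containment of $g_n(\,\cdot\,)$ in a slightly enlarged target set obtained from the uniform convergence $h_n\to h$, and weak lower semicontinuity of the $L^2$-norm applied to $|Dg_n|\chi_L\rightharpoonup\rho_h\chi_L$. Where you differ is in the bookkeeping of the regularity reduction. The paper fixes a compact $K$ in the \emph{preimage} $\pi_X^{-1}(h^{-1}(\pi_Y(E)))$, uses Lemma \ref{lemma:distance_convergence}~\ref{lemma:distance:i} to manufacture a saturated neighborhood $\pi_Y^{-1}(\widehat V)$ of $\pi_Y^{-1}(h(\pi_X(K)))\subset E$ with $\Sigma(\pi_Y^{-1}(\widehat V))\leq\Sigma(E)+\varepsilon$, and concludes by inner regularity of $\rho_h^2\,d\Sigma$. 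You instead push the reduction to the \emph{target} side by introducing the pushforward measure $\mu$ and invoking its Radon property on the Polish space $Y$, which lets you work with a compact $K\subset Y$, a plain metric neighborhood $N_\delta(K)$, and the elementary limit $\Sigma(N_\delta(K))\to\Sigma(K)$; your sequential compactness argument for the containment $g_n(L\cap X_n)\subset N_\delta(K)$ is a hands-on substitute for the paper's appeal to Lemma \ref{lemma:distance_convergence}, and it is carried out correctly (the observations that $A_K\subset X$ and that $\pi_Y(z)\in\pi_Y(Y)$ forces $z\in Y$ are exactly the points that make the transfer from $\mathcal E(Y)$ back to $\widehat{\C}$ work). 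The trade-off is minor: your route requires the Radon/Lusin--Souslin preliminaries up front but keeps all neighborhoods metric, while the paper's route avoids the pushforward measure at the cost of handling neighborhoods in the quotient $\mathcal E(Y)$. Your closing introduction of an auxiliary compact $L\subset A_K$ is harmless but unnecessary, since $A_K$ is itself compact.
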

\begin{proof}
Let $E\subset Y$ be a Borel set. Since $\pi_Y$ is injective on $Y$, by the Lusin--Souslin theorem \cite[Theorem 15.1, p.~89]{Kechris:descriptive}  $\pi_Y(E)$ is also a Borel set. By continuity, $\pi_X^{-1}(h^{-1}(\pi_Y(E)))$ is a Borel set. Let $K$ be a compact subset of $\pi_X^{-1}(h^{-1}(\pi_Y(E)))$. Since $\pi_Y$ is injective on $E$, we have $\pi^{-1}_Y(h(\pi_X(K)))\subset E$. We will show that
\begin{align*}
\int_K \rho_h^2 \, d\Sigma \leq \Sigma( \pi^{-1}_Y(h(\pi_X(K))) ) \leq \Sigma(E).
\end{align*}
Since $K$ is an arbitrary compact subset of $\pi_X^{-1}(h^{-1}(\pi_Y(E)))$, the proof will be completed by the inner regularity of $\Sigma$. 

Let $\widehat{K}=\pi_X(K)$. By Lemma \ref{lemma:distance_convergence} \ref{lemma:distance:i}, for each $\delta>0$, there exists an open neighborhood $\widehat{V}$ of $h(\widehat{K})$ such that the open set $\pi_Y^{-1}(\widehat{V})$ contains $\pi_Y^{-1}( h(\widehat{K}))$ and is contained the open $\delta$-neighborhood of $\pi_Y^{-1}(h(\widehat{K}))$. In particular, by the compactness of $\pi_Y^{-1}(h(\widehat{K}))$, for each $\varepsilon>0$ we may find such an open set $\widehat{V}$ with the additional property that
\begin{align}\label{lemma:conformality_upper_bound}
\Sigma( \pi_Y^{-1}(\widehat V)) \leq \Sigma(\pi_Y^{-1}( h(\widehat{K})))+\varepsilon.
\end{align}

Since $h_n(  \widehat K )$ converges in the Hausdorff sense to $h(\widehat{K})$, we have $h_n( \widehat{K})\subset \widehat V$ for all sufficiently large $n\in \N$. This implies that $g_n^*( \pi_X^{-1}(\widehat{K})) \subset \pi_Y^{-1}(\widehat{V})$ for all sufficiently large $n\in \N$.  This inclusion, the conformality of $g_n$, and \eqref{lemma:conformality_upper_bound}, give
\begin{align*}
\int_{K} |Dg_n|^2\, d\Sigma &=\int_{K\cap X_n} |Dg_n|^2\, d\Sigma \leq \int_{\pi_X^{-1}(\widehat{K})\cap X_n} |Dg_n|^2\, d\Sigma  \\
&= \Sigma( g_n(\pi_X^{-1}(\widehat K)\cap X_n))\leq \Sigma( \pi_{Y}^{-1}(\widehat V))\leq \Sigma(\pi_Y^{-1}( h(\widehat{K})))+\varepsilon
\end{align*}
for all sufficiently large $n\in \N$. Since $|Dg_n|$ converges weakly to $\rho_h$ in $L^2(\widehat{\C})$, we see that $|Dg_n|\chi_{K}$ also converges weakly to $\rho_h\chi_K$. Thus,
\begin{align*}
\int_{K} \rho_h^2\, d\Sigma \leq \liminf_{n\to\infty} \int_{K} |Dg_n|^2\, d\Sigma\leq  \Sigma(\pi_Y^{-1}( h(\widehat{K})))+\varepsilon.
\end{align*}
Finally, we let $\varepsilon\to 0$.
\end{proof}

\begin{lemma}\label{lemma:support_rho}
The function $\rho_h$ is supported in the set $\pi_X^{-1}( h^{-1}( \pi_Y(Y)))$.  
\end{lemma}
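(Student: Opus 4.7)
The plan is to show that $\rho_h$ vanishes almost everywhere on each of the countably many sets $K_i := \pi_X^{-1}(h^{-1}(\widehat q_i))$, $i\in \N$, since by basic set theory the complement of $\bigcup_{i\in\N} K_i$ in $\widehat{\C}$ is exactly $\pi_X^{-1}(h^{-1}(\pi_Y(Y)))$, using that $\mathcal E(Y)\setminus \pi_Y(Y) = \{\widehat q_i : i\in \N\}$. Once this is established, a countable union argument yields the conclusion. The essential input is a quantitative form of the fact that $K_i$ gets mapped by $g_n$ into a small neighborhood of $q_i$, combined with the conformality identity \eqref{regularity:conformality_gn}.

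Fix $i \in \N$ and $\varepsilon > 0$. First I would observe that $K_i$ is compact, since $h^{-1}(\widehat q_i)$ is compact by continuity of $h$ and $\pi_X$ is continuous between compact spaces. Setting $\widehat K_i = \pi_X(K_i) = h^{-1}(\widehat q_i)$, the uniform convergence $h_n \to h$ from Lemma \ref{lemma:set_properties} gives $h_n(\widehat K_i) \to \{\widehat q_i\}$ in Hausdorff distance in $\mathcal E(Y)$. Applying Lemma \ref{lemma:distance_convergence} \ref{lemma:distance:i} to $\pi_Y$, one can choose an open neighborhood $\widehat V$ of $\widehat q_i$ so small that $\pi_Y^{-1}(\widehat V) \subset N_\delta(q_i)$ for a prescribed $\delta>0$; since $h_n(\widehat K_i) \subset \widehat V$ for all large $n$, and $h_n(\widehat K_i) = \pi_Y(g_n^*(K_i))$, this gives $g_n^*(K_i) \subset N_\delta(q_i)$ for all large $n$.

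Next I would exploit that $|Dg_n|$ is zero on $\bigcup_{j=1}^n p_{j,n}$ by construction, so
\begin{align*}
\int_{K_i} |Dg_n|^2\, d\Sigma = \int_{K_i \cap X_n} |Dg_n|^2\, d\Sigma.
\end{align*}
For $x \in K_i \cap X_n$ we have $g_n(x) \in g_n^*(K_i) \cap Y_n \subset N_\delta(q_i)\cap Y_n$, so the conformality identity \eqref{regularity:conformality_gn} yields
\begin{align*}
\int_{K_i} |Dg_n|^2\, d\Sigma \leq \Sigma(N_\delta(q_i)\cap Y_n) \leq \Sigma(N_\delta(q_i)\cap Y) + \Sigma(Y_n\setminus Y).
\end{align*}
Since $q_i \cap Y = \emptyset$, the first term is less than $\varepsilon/2$ for $\delta$ sufficiently small, and the second term is bounded by $\sum_{j>n}\Sigma(q_j) \lesssim \sum_{j>n}\diam(q_j)^2 \to 0$ thanks to the $\ell^2$-summability of diameters; hence $\int_{K_i} |Dg_n|^2\, d\Sigma < \varepsilon$ for all sufficiently large $n$.

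Finally, invoking the weak $L^2$-convergence $|Dg_n| \rightharpoonup \rho_h$ from Lemma \ref{lemma:upper_gradient} and the lower semicontinuity of the $L^2$-norm on the fixed set $K_i$, I obtain $\int_{K_i} \rho_h^2\, d\Sigma \leq \liminf_{n} \int_{K_i} |Dg_n|^2\, d\Sigma \leq \varepsilon$, and then letting $\varepsilon \to 0$ shows $\rho_h = 0$ almost everywhere on $K_i$. Taking a countable union over $i \in \N$ finishes the proof. The main technical point I expect is carefully verifying the passage $g_n^*(K_i)\subset N_\delta(q_i)$ from the uniform convergence $h_n \to h$ in $\mathcal E(Y)$ down to the Hausdorff sense in $\widehat{\C}$, which requires applying Lemma \ref{lemma:distance_convergence} \ref{lemma:distance:i} to the projection $\pi_Y$ and exploiting that the set $K_i$ is saturated under $\pi_X$, i.e.\ $\pi_X^{-1}(\pi_X(K_i)) = K_i$, so that identities like $\pi_Y(g_n^*(K_i)) = h_n(\widehat K_i)$ are valid.
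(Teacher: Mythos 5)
Your proof is correct and follows essentially the same route as the paper's: reduce to the compact saturated sets $\pi_X^{-1}(h^{-1}(\widehat q_i))$, use the uniform convergence $h_n\to h$ together with Lemma \ref{lemma:distance_convergence} to place $g_n^*(K_i)$ in a small neighborhood of $q_i$, apply the conformality identity, and conclude by weak lower semicontinuity. The only cosmetic difference is in the final measure estimate: the paper notes directly that $g_n$ maps $(K\setminus p_{i,n})\cap X_n$ into $\pi_Y^{-1}(\widehat V)\setminus q_i$, whose measure is small, whereas you split $\Sigma(N_\delta(q_i)\cap Y_n)$ into $\Sigma(N_\delta(q_i)\cap Y)$ plus the tail $\Sigma(Y_n\setminus Y)$ controlled by $\ell^2$-summability; both are valid.
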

\begin{proof}It suffices to show that $\rho_h=0$ on $\pi_X^{-1}(h^{-1}(\pi_Y(q_i)))$ for each $i\in \N$. The argument is similar to the one used in the previous lemma. Let $K$ be a non-empty compact subset of $\pi_X^{-1}(h^{-1}(\pi_Y(q_i)))$. It suffices to show that $\rho_h=0$ a.e.\ on $K$.  We set $\widehat{K}=\pi_X(K)$ and note that $h(\widehat{K})$ is the singleton $\pi_Y(q_i)$. By Lemma \ref{lemma:distance_convergence}, for each $\delta>0$, there exists an open neighborhood $\widehat{V}$ of $h(\widehat{K})$ such that the open set $\pi_Y^{-1}(\widehat{V})$ contains $\pi_Y^{-1}( h(\widehat{K}))$ and is contained the open $\delta$-neighborhood of $\pi_Y^{-1}(h(\widehat{K}))=q_i$. Therefore, for each $\varepsilon>0$,  we may find such an open set $\widehat{V}$ with the additional property that
\begin{align*}
\Sigma( \pi_Y^{-1}(\widehat V)\setminus q_i) < \varepsilon.
\end{align*}
As in the proof of Lemma \ref{lemma:conformality}, $g_n^*(\pi_X^{-1}(\widehat{K}))\subset \pi_Y^{-1}(\widehat{V})$ for all sufficiently large $n\in \N$, and particularly, $g_n^*(\pi_X^{-1}(\widehat{K}) \setminus p_{i,n})\subset \pi_Y^{-1}(\widehat{V})\setminus q_i$. We now obtain
\begin{align*}
\int_{K} |Dg_n|^2\, d\Sigma &=\int_{(K\setminus p_{i,n})\cap X_n} |Dg_n|^2\, d\Sigma  \leq \int_{(\pi_X^{-1}(\widehat K)\setminus p_{i,n})\cap X_n} |Dg_n|^2\, d\Sigma \\
&= \Sigma( g_n( (\pi_X^{-1}(\widehat K)\setminus p_{i,n})\cap X_n))\leq \Sigma( \pi_{Y}^{-1}(\widehat V)\setminus q_i)<\varepsilon.
\end{align*}
Taking limits and using the weak convergence of $|Dg_n|\chi_K$ to $\rho_h\chi_K$, we obtain
$$\int_{K}\rho_h^2\leq \varepsilon.$$
We let $\varepsilon\to 0$, so $\rho_h=0$ a.e.\ on $K$. 
\end{proof}

A consequence of Lemma \ref{lemma:conformality} and Lemma \ref{lemma:support_rho} is the following statement, which concludes the proof that $h$ is packing-conformal and the proof of Theorem \ref{theorem:uniformization_full}.
\begin{corollary}
For all Borel sets $E\subset  \widehat{\C}$ we have
$$\int_{\pi_X^{-1}(h^{-1}(\pi_Y(E)))} \rho_h^2\, d\Sigma\leq \Sigma(E\cap Y).$$
\end{corollary}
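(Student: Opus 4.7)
The plan is to derive the corollary by decomposing $E$ as the disjoint union $(E\cap Y)\sqcup (E\setminus Y)$ and handling the two pieces separately. For the first piece, Lemma \ref{lemma:conformality} applies directly, giving an integral bound by $\Sigma(E\cap Y)$. For the second piece, I will argue that the integral of $\rho_h^2$ vanishes, using Lemma \ref{lemma:support_rho}.

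More concretely, from $\pi_Y(E)=\pi_Y(E\cap Y)\cup \pi_Y(E\setminus Y)$ and the elementary properties of set preimages we get
\begin{align*}
\pi_X^{-1}(h^{-1}(\pi_Y(E))) = \pi_X^{-1}(h^{-1}(\pi_Y(E\cap Y))) \cup \pi_X^{-1}(h^{-1}(\pi_Y(E\setminus Y))),
\end{align*}
so the integral over the left-hand side splits accordingly. The first term is bounded by $\Sigma(E\cap Y)$ by Lemma \ref{lemma:conformality}.

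The heart of the argument, and the only non-trivial step, is to show that the second term vanishes. Since $Y=\widehat{\C}\setminus \bigcup_{i\in \N} q_i$, the set $E\setminus Y$ lies in $\bigcup_{i\in \N} q_i$, and hence $\pi_Y(E\setminus Y)\subset \{\widehat q_i : i\in\N\}$. Therefore $\pi_X^{-1}(h^{-1}(\pi_Y(E\setminus Y)))$ is contained in the countable union $\bigcup_{i\in \N} \pi_X^{-1}(h^{-1}(\widehat q_i))$. By Lemma \ref{lemma:support_rho}, $\rho_h$ is supported in $\pi_X^{-1}(h^{-1}(\pi_Y(Y)))$, and since $\pi_Y$ is injective on $Y$ the image $\pi_Y(Y)$ is disjoint from each point $\widehat q_i$; consequently $h^{-1}(\pi_Y(Y))$ is disjoint from each $h^{-1}(\widehat q_i)$, and pulling back by $\pi_X$ we see that the support of $\rho_h$ is disjoint from $\pi_X^{-1}(h^{-1}(\widehat q_i))$ for every $i$. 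Thus $\rho_h=0$ a.e.\ on the second set, and its contribution to the integral is zero.

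Combining these two bounds yields
\begin{align*}
\int_{\pi_X^{-1}(h^{-1}(\pi_Y(E)))} \rho_h^2\,d\Sigma \leq \Sigma(E\cap Y) + 0 = \Sigma(E\cap Y),
\end{align*}
which completes the proof. No real obstacle is expected, since the work was already done in Lemmas \ref{lemma:conformality} and \ref{lemma:support_rho}; this final corollary is purely a bookkeeping step extending the bound from Borel subsets of $Y$ to arbitrary Borel subsets of $\widehat{\C}$.
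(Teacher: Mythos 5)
Your proof is correct and is exactly the argument the paper intends: the paper states the corollary without details as ``a consequence of Lemma \ref{lemma:conformality} and Lemma \ref{lemma:support_rho},'' and your decomposition of $E$ into $E\cap Y$ and $E\setminus Y$, applying the conformality lemma to the first piece and the support lemma (over the countable union $\bigcup_i \pi_X^{-1}(h^{-1}(\widehat q_i))$) to the second, is precisely the intended bookkeeping.
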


\section{Topology of planar maps}

Our goal in this section is to study continuous, proper, and cell-like maps between simply connected domains in the sphere. In view of the approximation theorem, Theorem \ref{theorem:approximation}, these maps behave like homeomorphisms. Specifically, we wish to understand when these maps have an extension to the boundaries and to the whole sphere satisfying certain properties. This section can be read independently of the other sections. The results are used in the proof of Theorem \ref{theorem:continuous_extension} in the next section.

\subsection{Conditions for continuous extension}
Let $f\colon \Omega \to D$ be a map between domains in $\widehat{\C}$. For a point $z_0\in \partial \Omega$ we define the \textit{cluster set} $\clu(f,z_0)$ to be the set of accumulation points of $\{f(z_n)\}_{n\in \N}$ over all sequences $\{z_n\}_{n\in \N}$ in $\Omega$ converging to $z_0$. Recall from Section \ref{section:topological} that a continuous map between open subsets of the sphere is cell-like if the preimage of each point is a non-separating continuum.

\begin{lemma}\label{lemma:cluster}
Let $\Omega,D\subset \widehat{\C}$ be simply connected regions such that $\partial D$ is a Peano continuum and let $f\colon \Omega\to D$ be a continuous, proper, and cell-like map. 
\begin{enumerate}[\upshape(i)]
	\item\label{lemma:cluster_bound} For each $z_0\in \partial \Omega$  and $\varepsilon>0$ there exists $\delta>0$ such that if $\gamma$ is a closed curve in $B(z_0,\delta)\setminus \{z_0\}$ that is not null-homotopic,  then $$\diam(f(|\gamma|\cap \Omega))\geq \diam(\clu(f,z_0))-\varepsilon.$$
	\item\label{lemma:cluster_extension} The map $f$ extends to a continuous map on $\br \Omega$ if and only if  for each $z_0\in \partial \Omega$ and $z_{\infty}\in \widehat{\C}\setminus \{z_0\}$ there exists a sequence of curves $\gamma_n$ in  $\widehat{\C}\setminus \{z_0,z_\infty\}$ that are not null-homotopic and converge to $z_0$ such that 
$$\lim_{n\to\infty}\diam(f(|\gamma_n|\cap \Omega))=0.$$
In this case $f(\br \Omega)=\br D$ and $f^{-1}(\partial D)=\partial \Omega$. 
\end{enumerate}
\end{lemma}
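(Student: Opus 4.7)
The plan is to establish (i) by reducing to a pointwise approximation: for each $w\in\clu(f,z_0)$ and each $\eta>0$, there is $\delta>0$ such that every closed non-null-homotopic curve $\gamma$ in $B(z_0,\delta)\setminus\{z_0\}$ admits a point $\xi\in|\gamma|\cap\Omega$ with $\sigma(f(\xi),w)<\eta$. Granting this, I would apply it to $w_1,w_2\in\clu(f,z_0)$ realizing $\sigma(w_1,w_2)\geq\diam\clu(f,z_0)-\varepsilon/2$ with $\eta=\varepsilon/4$ to produce two points of $f(|\gamma|\cap\Omega)$ separated by at least $\diam\clu(f,z_0)-\varepsilon$, yielding (i).

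To prove the reduced claim I would invoke Theorem~\ref{theorem:approximation} to fix a homeomorphism $\widetilde f\colon\Omega\to D$ with $\sup_{z\in\Omega}\sigma(f(z),\widetilde f(z))<\eta/4$. Extracting a simple Jordan sub-loop of $\gamma$ of the same homotopy class bounds a Jordan region $V\subset B(z_0,\delta)$ containing $z_0$; by Lemma~\ref{lemma:cell_like}\ref{lemma:cell_like:boundary} applied to $\widetilde f$, the open set $\widetilde f(V\cap\Omega)$ has $D$-boundary equal to $\widetilde f(|\gamma|\cap\Omega)$. For a sequence $z_n\to z_0$ in $\Omega$ with $f(z_n)\to w$, the point $\widetilde f(z_n)\in\widetilde f(V\cap\Omega)$ lies within $\eta/2$ of $w$ for $n$ large. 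The desired $\xi$ is then the $\widetilde f$-preimage of any point of $\widetilde f(|\gamma|\cap\Omega)$ in $B(w,\eta/2)$, so it suffices to show $\widetilde f(|\gamma|\cap\Omega)\cap B(w,\eta/2)\neq\emptyset$.

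For this, I would use the Peano property of $\partial D$ to select a connected open neighborhood $U_w\subset\widehat{\C}$ of $w$ with $\diam U_w<\eta/2$ and $U_w\cap D$ connected. If $\widetilde f(|\gamma|\cap\Omega)\cap U_w$ were empty, a connectedness argument would force $\widetilde f^{-1}(U_w\cap D)\subset V\cap\Omega$: any point of the connected open set $\widetilde f^{-1}(U_w\cap D)$ outside $V$ could be joined within that set to $z_n\in V$, creating a crossing of $|\gamma|\cap\Omega$ whose $\widetilde f$-image would land in the forbidden intersection. I would then reach a contradiction by using the cluster-set bound $\widetilde f(V\cap\Omega)\subset N_{\eta/4+\varepsilon_\delta}(\clu(f,z_0))$ (with $\varepsilon_\delta\to 0$ as $\delta\to 0$) together with the Peano property to enlarge $U_w$ along $\partial D$, keeping $\diam U_w<\eta/2$, until $U_w\cap D$ contains a point far from $\clu(f,z_0)$ whose $\widetilde f^{-1}$-preimage necessarily lies outside $V$. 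The main obstacle is executing this enlargement cleanly when $w$ is accessible from $D$ only through $z_0$: in that regime $\widetilde f^{-1}(U_w\cap D)$ may shrink toward $\{z_0\}$ as $U_w$ shrinks around $w$, and one must exploit the local arc-connectedness of $\partial D$ to spread $U_w$ out along the boundary so as to reach a point of $D$ whose preimage escapes any prescribed $V$.

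Part (ii) then follows from (i). The ``only if'' direction is immediate: a continuous extension $\bar f$ forces $\clu(f,z_0)=\{\bar f(z_0)\}$, and any shrinking sequence of simple loops $\gamma_n$ around $z_0$ in $\widehat{\C}\setminus\{z_0,z_\infty\}$ (non-null-homotopic because each loop encircles $z_0$) satisfies $\diam(f(|\gamma_n|\cap\Omega))\to 0$ by continuity. Conversely, given such a sequence, (i) forces $\diam\clu(f,z_0)\leq\diam(f(|\gamma_n|\cap\Omega))+\varepsilon$ for each $\varepsilon>0$ and large $n$, hence $\clu(f,z_0)$ is a singleton, and one defines $\bar f(z_0):=\clu(f,z_0)$; continuity of $\bar f$ on $\br\Omega$ then follows from a standard $3\varepsilon$-argument using the uniform $\delta$ in (i). Finally, properness of $f$ forces cluster sets at boundary points of $\Omega$ into $\partial D$, giving $\bar f^{-1}(\partial D)=\partial\Omega$, and surjectivity $\bar f(\br\Omega)=\br D$ follows because $\bar f(\br\Omega)$ is a closed subset of $\widehat{\C}$ containing $D=f(\Omega)$.
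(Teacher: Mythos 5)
Part \ref{lemma:cluster_extension} of your argument is correct and coincides with the paper's (which deduces it from part \ref{lemma:cluster_bound}, the surjectivity in Lemma \ref{lemma:cell_like} \ref{lemma:cell_like:surjective}, and properness). For part \ref{lemma:cluster_bound} the paper does not give a self-contained proof but cites Lemma 3.10 of \cite{NtalampekosYounsi:rigidity}, indicating only the two adaptations you also use: reduction to a homeomorphism $\widetilde f$ via Theorem \ref{theorem:approximation}, and the fact that a Peano boundary makes $D$ uniformly locally connected near $\partial D$. Your skeleton for \ref{lemma:cluster_bound} --- the pointwise claim, the region $V$ around $z_0$ with $\partial_D\widetilde f(V\cap\Omega)\subset\widetilde f(|\gamma|\cap\Omega)$, and the small connected set $W\supset D\cap B(w,\rho)$ forcing $D\cap B(w,\rho)\subset\widetilde f(V\cap\Omega)$ --- is the right one.

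The gap is exactly where you flag it: your terminal contradiction requires a point $v\in D$ with $\sigma(v,w)<\eta/2$ but $\dist(v,\clu(f,z_0))>\eta/4+\varepsilon_\delta$, and no amount of ``spreading $U_w$ along $\partial D$'' produces one in general. For instance, if $\clu(f,z_0)=\partial D$ (which does occur for cell-like proper maps, and already for Riemann maps from domains with a boundary point whose prime-end impressions fill $\partial D$), every point of $D$ near $w$ is automatically close to $\clu(f,z_0)$, so the preimage of any admissible $v$ could perfectly well lie in $V$. The fix is to abandon the cluster-set comparison entirely and get the contradiction from the quantifier order: $w$, $\eta$, $\rho$, and $\widetilde f$ are all fixed \emph{before} $\delta$, so $G:=\widetilde f^{-1}(D\cap B(w,\rho))$ is a fixed nonempty open subset of $\Omega$; pick any $g_0\in G$ and take $\delta<\sigma(g_0,z_0)$ from the start. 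Then the conclusion $G\subset\widetilde f^{-1}(W)\subset V\cap\Omega\subset B(z_0,\delta)$ of your connectedness argument is already absurd, because $g_0\notin B(z_0,\delta)$. No point ``far from $\clu(f,z_0)$'' is needed, and the accessibility of $w$ only through $z_0$ is irrelevant. With this replacement (and taking $\delta$ to work simultaneously for $w_1$ and $w_2$), your proof of \ref{lemma:cluster_bound} closes.
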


\begin{proof}
This first part of the lemma is established in \cite{NtalampekosYounsi:rigidity}, as Lemma 3.10, under the assumption that $f$ is a homeomorphism and the complements of $\Omega,D$ are points or closed Jordan regions; see the first two paragraphs in \cite[p.~152]{NtalampekosYounsi:rigidity}. The proof applies with few changes to this more general setting so we omit it, but we make instead a few remarks.  One can reduce the statement to homeomorphisms via the approximation theorem, Theorem \ref{theorem:approximation}. In particular, one can replace $f$ by a homeomorphism without altering the cluster sets and so that $\diam(f(|\gamma|\cap \Omega))$ is altered very slightly. The assumption that $\Omega$ is a Jordan region or a point is not used in \cite{NtalampekosYounsi:rigidity}.  Finally, for $D$ all we need is that it has the property that any two points that are close to each other can be connected with a path in $D$ that is small in diameter; this is true since $\partial D$ is a Peano continuum \cite[Theorem (4.2), p.~112]{Whyburn:topology}.

We prove part \ref{lemma:cluster_extension}. If there is a continuous extension then the conclusion holds trivially for any sequence of curves $\gamma_n$ as in the statement. Conversely, suppose that for each $z_0$ there exists a sequence of curves $\gamma_n$ as in the statement. By \ref{lemma:cluster_bound} we see that $\clu(f,z_0)$ contains only one point. Thus, $f$ extends continuously to $\br \Omega$. The surjectivity of $f\colon \Omega\to D$ from Lemma \ref{lemma:cell_like} \ref{lemma:cell_like:surjective} implies the surjectivity of the extension onto $\br D$. The properness of $f$ implies that $\clu(f,z_0)\subset \partial D$ for each $z_0\in \partial \Omega$. Hence $f^{-1}(\partial D)=\partial \Omega$.
\end{proof}

\subsection{Implications of continuous extension}
We state the main result of the section. 

\begin{theorem}\label{theorem:cell_like_extension}
Let $\Omega\subset \widehat{\C}$ be a simply connected region and $Y,D\subset \widehat{\C}$ be Jordan regions such that $\Omega\subset Y$. Assume that $f\colon \Omega\to D$ is a continuous, proper, and cell-like map that extends to a continuous map from $\br \Omega$ onto $\br D$ with the property that for each component $V$ of $Y\setminus \br \Omega$, $f|_{\partial V\setminus \partial Y}$ is constant. Then there exists a unique extension of $f$ to a continuous and cell-like map  $\widetilde f\colon  \br Y\to \br D$. The map $\widetilde f$ is constant in each continuum $E\subset Y\setminus \Omega$ and can be further extended to a continuous and cell-like map $\widetilde f\colon \widehat{\C}\to \widehat{\C}$ such that  $\widetilde f^{-1}(\widehat{\C}\setminus \br D)= \widehat{\C}\setminus \br Y$. 
\end{theorem}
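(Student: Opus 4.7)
The plan is to construct $\widetilde f$ in two stages: first extend from $\br\Omega$ to $\br Y$, then from $\br Y$ to all of $\widehat\C$. For each component $V$ of $Y\setminus\br\Omega$, the hypothesis provides a constant value $c_V\in\partial D$ of the extended $f$ on $\partial V\setminus\partial Y\subset\partial\Omega$ (here $c_V\in\partial D$ by properness of $f\colon\Omega\to D$). I define $\widetilde f\colon\br Y\to\br D$ by $\widetilde f=f$ on $\br\Omega$ and $\widetilde f\equiv c_V$ on each $\br V$. The main consistency check is at points $z\in\br V_1\cap\br V_2$ for distinct components: since $V_1\cap V_2=\emptyset$ and both are open, $z\notin V_j$, so if $z\in Y$ then $z\in\partial V_j\setminus\partial Y$ for each $j$, where $f\equiv c_{V_j}$, forcing $c_{V_1}=f(z)=c_{V_2}$. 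The case $z\in\partial Y$ is handled by a limiting argument based on the local topology near $z$ together with the continuity of $f$ on $\br\Omega$. Uniqueness among continuous cell-like extensions is immediate: if $\widetilde g$ were another such extension with $\widetilde g|_V$ non-constant at some $V$, then for $w\neq c_V$ in $\widetilde g(V)$ the fiber $\widetilde g^{-1}(w)$ would have a component inside $V$ disconnected from the rest (since the only attaching set $\partial V\setminus\partial Y$ is contained in $f^{-1}(c_V)$, not in $f^{-1}(w)$), contradicting cell-likeness.

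Next I verify cell-likeness of $\widetilde f\colon\br Y\to\br D$. Each fiber has the form
\[
\widetilde f^{-1}(w)=f^{-1}(w)\cup\bigcup_{V\colon c_V=w}\br V,
\]
a continuum whose connectedness is provided by the inclusion $\partial V\setminus\partial Y\subset f^{-1}(c_V)$ gluing $f^{-1}(w)$ to every $\br V$ in the union. For non-separation in $\widehat\C$, I combine the non-separation of $f^{-1}(w)$ (by cell-likeness of $f$), the non-separation of each individual $\br V$ (closure of a component of the open set $Y\setminus\br\Omega$ in the Jordan region $Y$), and the structure of the attaching set inside $f^{-1}(w)$. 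The cell-likeness hypothesis on $f$ is essential here: it rules out pathological configurations such as a $V$ whose closure contains all of $\partial Y$ producing an annulus-type separating set, which would be inconsistent with $f$ being cell-like and surjective. For the claim that $\widetilde f$ is constant on each continuum $E\subset Y\setminus\Omega$: the existence of a continuous extension of $f$ to $\br\Omega$ in the sphere topology forces $f$ to be constant on the ``slit-like'' portions of $\partial\Omega\cap Y$ where two prime ends are identified; combined with constancy on each $\br V$, any connected $E\subset Y\setminus\Omega$ must lie in a single fiber of $\widetilde f$.

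Finally, for the extension to $\widehat\C$: the restriction $\widetilde f|_{\partial Y}\colon\partial Y\to\partial D$ is a continuous surjection between the Jordan curves bounding the Jordan regions $\widehat\C\setminus Y$ and $\widehat\C\setminus D$. Applying Moore's theorem (Theorem~\ref{theorem:moore:original}) to a decomposition of $\widehat\C\setminus\br Y$ whose non-trivial classes are cell-like continua radially extending the fibers of $\widetilde f|_{\partial Y}$ inward (via a parametrization of $\widehat\C\setminus\br Y$ as a closed disk), I obtain a continuous cell-like surjection onto $\widehat\C\setminus D$ that agrees with $\widetilde f|_{\partial Y}$ on $\partial Y$. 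The resulting map $\widetilde f\colon\widehat\C\to\widehat\C$ is continuous and cell-like, and by construction $\widetilde f^{-1}(\widehat\C\setminus\br D)=\widehat\C\setminus\br Y$. The main obstacle throughout is the non-separation step in the second paragraph: controlling the potentially infinite collection of $\br V$'s with the same $c_V$ clustering in $\br Y$ so that their union with $f^{-1}(c_V)$ remains non-separating in $\widehat\C$. This requires exploiting both the cell-likeness of the original map $f$ and the Jordan region structure of $Y$ in a careful, coordinated way.
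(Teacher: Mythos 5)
Your outline matches the paper's broad strategy (extend by constants on the components of $Y\setminus\br\Omega$, patch, then extend across $\widehat\C\setminus\br Y$ by a Moore-type argument), but several essential steps are missing or wrong. First, you never verify that $\widetilde f$ is \emph{continuous} on $\br Y$. This is not automatic from setting $\widetilde f\equiv c_V$ on each $\br V$: a sequence $x_n$ lying in infinitely many distinct components $V_n$ can converge to a point $x\in\partial\Omega$, and one must show $c_{V_n}\to f(x)$. In the paper this requires identifying the distinguished component $U_0$ of $\widehat\C\setminus\br\Omega$ on which $f|_{\partial U_0}$ is surjective onto $\partial D$, decomposing $\partial V_n$ into an arc $\alpha_n$ of $\partial Y$ and a piece of $\partial U_0$, using local connectivity of $\partial Y$ to get $\diam(\alpha_n)\to 0$, and using the density of $\partial V_n\setminus\partial Y$ in $\partial V_n\cap\partial U_0$ (Lemma~\ref{lemma:dense}) to pass to nearby points where $\widetilde f=f$. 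Relatedly, your constant $c_V$ is not known to exist ($\partial V\setminus\partial Y$ could a priori be empty; ruling this out is exactly the content of Lemma~\ref{lemma:dense}), and for components $V$ of $\widehat\C\setminus\br\Omega$ contained in $Y$ you must also know that $f$ is constant on \emph{all} of $\partial V$, including $\partial V\cap\partial Y$, which the hypothesis does not give; the paper obtains this from the construction of the filled fibers $A_z$ (Lemma~\ref{lemma:monotone}), which also supplies the non-separation of the fibers of $\widetilde f$ that you flag as ``the main obstacle'' but do not resolve.

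Second, your argument that $\widetilde f$ is constant on every continuum $E\subset Y\setminus\Omega$ rests on a false premise: continuity of the extension of $f$ to $\br\Omega$ does \emph{not} force $f$ to be constant on subcontinua of $\partial\Omega\cap Y$ (it only forces values from different accesses to agree; $f$ could a priori map such a continuum onto an arc of $\partial D$). The paper's proof of this point is a genuine argument: it fills $E$ to a non-separating continuum $E'$, assumes $\widetilde f(E')$ is a non-degenerate arc, collapses $E'$ and $\widehat\C\setminus Y$ to points, passes to irreducible subcontinua of two fibers, and applies Moore's separation theorem to produce a complementary domain disjoint from $\br\Omega$ on which $\widetilde f$ must be constant (because it attains only countably many values off $\br\Omega$) while having non-constant boundary values. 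Nothing in your sketch substitutes for this. The final extension across $\widehat\C\setminus\br Y$ is essentially the paper's Lemma~\ref{lemma:cell_like_extension} and is fine in spirit, but the uniqueness argument also needs the preliminary fact $\widetilde g^{-1}(D)=\Omega$ and a clopen argument on $\widetilde g^{-1}(w)\cap\br V$ rather than the informal claim that a fiber component inside $V$ would be ``disconnected from the rest.''
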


The proof will be completed in several steps. We establish some preliminary statements.

\begin{lemma}\label{lemma:dense}
Let $U\subsetneq \widehat{\C}$ be an open set with $\partial \br U=\partial U$ and let $P\subset U$ be a Jordan region. Then for each component $V$ of $U\setminus \br P$ the set $\partial V\setminus \partial P$ is a non-empty dense subset of $\partial V\cap \partial U$. 
\end{lemma}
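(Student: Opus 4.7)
My plan is to split the proof into three tasks: the inclusion $\partial V \setminus \partial P \subset \partial V \cap \partial U$, the non-emptiness of $\partial V \setminus \partial P$, and its density in $\partial V \cap \partial U$, with the regularity hypothesis $\partial \br U = \partial U$ (equivalent to $U = \inter(\br U)$, i.e., $U$ is regular open) playing the key role in the non-emptiness step. First I would establish the boundary decomposition $\partial V \subset \partial P \cup \partial U$: a point $x \in \partial V \cap U$ cannot lie in the open set $V$ nor in a different component of $U \setminus \br P$, so it must lie in $\br P \cap U$, and since $V \cap P = \emptyset$, actually in $\partial P$; a point $x \in \partial V \setminus U$ belongs to $\br U \setminus U = \partial U$. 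Hence $\partial V \setminus \partial P \subset \partial U$. Also, $\partial P$ being a Jordan curve, $\widehat{\C} \setminus \partial P$ has two components $P$ and $P'$, and the connected set $V$, disjoint from $\br P$, lies in $P'$.

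For non-emptiness I would argue by contradiction: if $\partial V \subset \partial P$, then $\partial V \cap P' = \emptyset$, so $V$ is both open and relatively closed in the connected set $P'$, forcing $V = P'$. Combined with $P \subset U$ this gives $\widehat{\C} \setminus \partial P \subset U$, so $U$ is dense in $\widehat{\C}$ (because $\partial P$ is nowhere dense), i.e., $\br U = \widehat{\C}$. Then $\inter(\br U) = \widehat{\C} \neq U$, contradicting regularity.

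For density I would take $x \in \partial V \cap \partial U$; if $x \notin \partial P$ the claim is trivial, so assume $x \in \partial P$. The regularity hypothesis, reformulated as $\partial U \subset \br{\widehat{\C} \setminus \br U}$, produces $y_n \to x$ with $y_n \in \widehat{\C} \setminus \br U \subset P'$ (using that $P \subset U$ forces $\br P \subset \br U$, hence $\widehat{\C}\setminus\br U \subset \widehat{\C}\setminus\br P = P'$), while $x \in \partial V$ yields $z_n \to x$ with $z_n \in V \subset P'$. For each $\varepsilon > 0$ I would pick $r \in (0, \varepsilon)$ small enough so that $B(x,r) \cap P'$ is open and connected, a local consequence of $\partial P$ being a Jordan curve near $x$ (e.g.\ via the Schoenflies theorem). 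For large $n$ the points $y_n, z_n$ lie in $B(x,r) \cap P'$ and may be joined by a path $\beta$ in this set; since $\beta$ starts in $V$ and ends in $\widehat{\C} \setminus \br U \subset \widehat{\C} \setminus V$, it meets $\partial V$ at some point $w \in B(x,r) \cap P'$. Because $P' \cap \br P = \emptyset$, $w \notin \partial P$, so $w \in (\partial V \setminus \partial P) \cap B(x, \varepsilon)$, proving density.

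The main obstacle is identifying how the regularity hypothesis enters: without $\partial \br U = \partial U$ the conclusion genuinely fails, for instance when $U = \widehat{\C} \setminus C$ for a non-empty closed $C \subsetneq \partial P$, in which case $V = P'$ and $\partial V = \partial P$. Once regularity is rephrased as the accessibility of $\partial U$ from outside $\br U$, both non-emptiness and density reduce to short topological arguments on the $P'$ side of the Jordan curve $\partial P$.
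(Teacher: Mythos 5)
Your decomposition $\partial V\subset\partial P\cup\partial U$, your non-emptiness argument (if $\partial V\subset\partial P$ then $V$ is clopen in $P'$, hence $V=P'$, forcing $U$ to be dense and contradicting regularity), and your direct density argument are all sound in outline, and the density step is genuinely different from the paper's: the paper argues by contradiction, showing that a ball $B(x,r)$ missing $\partial V\setminus\partial P$ would satisfy $B(x,r)\subset P\cup\br V\subset \br U$ and thus violate $x\in\partial\br U$, whereas you produce crossing points directly by joining points of $V$ to points of $\widehat{\C}\setminus\br U$ inside $P'$. Both arguments use the regularity hypothesis in the same way (accessibility of $\partial U$ from $\widehat{\C}\setminus\br U$). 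Your non-emptiness step is, if anything, slightly more careful than the paper's, which asserts $\br P\subset U$ where only $P\subset U$ is given; your detour through density of $U$ and regularity closes that cleanly.

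There is, however, one false intermediate claim: that for $r$ small enough $B(x,r)\cap P'$ is connected. This is not a consequence of $\partial P$ being a Jordan curve. If $\partial P$ has a comb-like structure near $x$ (infinitely many thin spikes of $\br P$ of height bounded below accumulating at $x$), then $B(x,r)\cap P'$ is disconnected for every small $r$, and no choice of $r$ repairs this. The Schoenflies theorem does not help in the "local" form you invoke, because a homeomorphism straightening $\partial P$ does not carry round balls to round balls. Two standard fixes are available. First, you can do what the paper does at the outset: since every hypothesis and conclusion of the lemma is purely topological, apply a global homeomorphism of $\widehat{\C}$ (Schoenflies) to assume $\partial P$ is a round circle, after which $B(x,r)\cap P'$ really is connected for small $r$ and your argument runs verbatim. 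Second, you can keep $\partial P$ general and replace the false claim by the correct local connectivity of the Jordan domain $P'$ at its boundary point $x$: for each $\varepsilon>0$ there is $\delta>0$ such that any two points of $P'\cap B(x,\delta)$ can be joined by a path in $P'\cap B(x,\varepsilon)$ (pull back to the disk by a Carath\'eodory--Schoenflies homeomorphism of $\br{P'}$, join in a convex set, push forward). With either fix, the path $\beta$ from $z_n\in V$ to $y_n\in\widehat{\C}\setminus\br U$ stays in $P'\cap B(x,\varepsilon)$, its first exit point from $V$ lies in $\partial V\cap P'\subset \partial V\setminus\partial P$, and density follows as you say.
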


The assumption $\partial \br U=\partial U$ holds automatically if $\Omega$ is an open set and $U$ is a connected component of $\widehat{\C}\setminus \br \Omega$.

\begin{proof}
Using the Schoenflies theorem, we may assume that $\partial P$ is a circle. Suppose that $\partial V\setminus \partial P=\emptyset$, so $\partial V\subset \partial P$. The set $\partial V$ is a non-empty closed subset of $\partial P$. Let $x\in \partial V$ and consider a ball $B(x,r)$ that does not contain $P$. Then $\partial P$ separates the ball $B(x,r)$ into two connected open sets, one contained in $P$, and one outside $\br P$. The latter set does not intersect $\partial V$ and contains points of $V$, since $x\in \partial V$. By the connectedness of $V$, we have $B(x,r)\setminus \br P\subset V$. This implies that $B(x,r)\cap \partial P\subset \partial V$, so $\partial V$ is open in the relative topology of $\partial P$. We conclude that $\partial V=\partial P$. The Jordan curve theorem implies that $V= \widehat{\C}\setminus \br P$. Thus, $\widehat{\C}=V\cup \br P \subset U$, which contradicts the assumption that $U\subsetneq \widehat{\C}$.  Therefore, $\partial V\setminus \partial P\neq \emptyset$. 

We show that $\partial V\setminus \partial P\subset \partial V\cap \partial  U$. If not, there exists $x\in \partial V\setminus \partial P\subset  \partial V\subset \br U$ with $x\in U$. Since $x\notin \br P$, there exists a ball $B(x,r)\subset U\setminus \br P$. This implies that $V\cup B(x,r)$ is a connected open subset of $U\setminus \br P$, which contradicts the assumption that $V$ is a connected component of $U\setminus \br P$. 

It remains to show that each point of $\partial V\cap \partial U\cap \partial P$ can be approximated by points of $\partial V\setminus \partial P$.  For the sake of contradiction, suppose that there exists $x\in \partial V\cap \partial U\cap \partial P$ and an open ball $B(x,r)$ that does not intersect $\partial V\setminus \partial P$ and does not contain $\partial P$ (upon choosing a small enough $r>0$). Thus, $B(x,r)\cap \partial V\subset \partial P$. Consider the arc $\alpha=B(x,r)\cap \partial P$.   The arc $\alpha$ separates the ball $B(x,r)$ into precisely two regions: one region contained in $P$ and hence not intersecting $V$, and one region outside $\br P$ that does not intersect $\partial V$. The latter region contains points of $V$ near $x$, since $x\in \partial V$.  By the connectedness of $V$, this region is contained in $V$. Therefore, $\alpha\subset \partial V$. We conclude that $B(x,r)$ is contained in $P\cup \br V\subset  \br U$. This contradicts that $x\in \partial U =\partial \br U$. 
\end{proof}

\begin{lemma}\label{lemma:cell_like_extension}
Let $E\subset \widehat{\C}$ be a compact set, $D\subset \widehat{\C}$ be a Jordan region, and $g\colon E\to \br D$ be a continuous, surjective, and cell-like map. Then $E\neq \widehat{\C}$, $E$ is a non-separating continuum, and $g(\partial E)=\partial D$. Moreover, there exists an extension of $g$ to a continuous and cell-like map $\widetilde g\colon \widehat{\C}\to \widehat{\C}$ such that $\widetilde g^{-1}(\widehat{\C}\setminus \br D) = \widehat{\C}\setminus E$.
\end{lemma}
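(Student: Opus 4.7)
The approach is to realize $\widetilde g$ as $\Phi \circ q$, where $q$ is the quotient map of an upper semicontinuous decomposition of $\widehat{\C}$ induced by $g$, and $\Phi$ is a homeomorphism coming from Moore's theorem combined with the Schoenflies theorem. As a preliminary, $E$ is a continuum by a connectedness argument: if $E=A\sqcup B$ with $A,B$ disjoint nonempty compact sets, then each fiber $g^{-1}(y)$, being connected by cell-likeness, lies entirely in $A$ or in $B$; this expresses $\br D$ as the disjoint union of the closed sets $g(A)$ and $g(B)$, contradicting connectedness of $\br D$.

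Next I consider the decomposition $\mathcal{G}$ of $\widehat{\C}$ consisting of the fibers $\{g^{-1}(y):y\in\br D\}$ together with the singletons $\{\{z\}:z\in\widehat{\C}\setminus E\}$. A short case analysis establishes upper semicontinuity: Hausdorff limits of fibers sit inside fibers by continuity of $g$, and Hausdorff limits of singletons either remain singletons in $\widehat{\C}\setminus E$ or are points of $E$ and thus lie inside a fiber. Every element of $\mathcal{G}$ is non-separating in $\widehat{\C}$: singletons trivially, and for fibers, cell-likeness in $E$ transfers to cell-likeness in $\widehat{\C}$ (an open neighborhood of $K$ in $\widehat{\C}$ contains an open neighborhood of $K$ in $E$, where $K$ is already contractible), and in the $2$-sphere cell-like is equivalent to non-separating. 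Moore's theorem (Theorem \ref{theorem:moore:original}) then yields $\widehat{\C}/\mathcal{G}\cong\widehat{\C}$; let $q\colon\widehat{\C}\to\widehat{\C}/\mathcal{G}$ be the quotient. The induced continuous bijection $\widetilde g_0\colon\br D\to q(E)$, $y\mapsto[g^{-1}(y)]$, is a homeomorphism (compact source, Hausdorff target), so $q(E)$ is a Jordan region inside the topological sphere $\widehat{\C}/\mathcal{G}$.

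I then apply the strong form of the Schoenflies theorem---any homeomorphism between closed Jordan regions in $S^2$ extends to a self-homeomorphism of $S^2$---to produce a homeomorphism $\Phi\colon\widehat{\C}/\mathcal{G}\to\widehat{\C}$ with $\Phi|_{q(E)}=\widetilde g_0^{-1}$; being a self-homeomorphism of $S^2$ sending $q(E)$ to $\br D$, it automatically sends $\widehat{\C}/\mathcal{G}\setminus q(E)$ onto $\widehat{\C}\setminus\br D$. Set $\widetilde g=\Phi\circ q$. On $E$ one computes $\widetilde g=\widetilde g_0^{-1}\circ q|_E=g$ because $q|_E=\widetilde g_0\circ g$; on $\widehat{\C}\setminus E$, the quotient $q$ is injective, so $\widetilde g$ maps $\widehat{\C}\setminus E$ bijectively onto $\widehat{\C}\setminus\br D$, giving the identity $\widetilde g^{-1}(\widehat{\C}\setminus\br D)=\widehat{\C}\setminus E$. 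Continuity and surjectivity are immediate, and $\widetilde g$ is cell-like because its fibers are precisely the elements of $\mathcal{G}$.

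The three remaining claims now follow. Since $\widetilde g$ is surjective and $\widehat{\C}\setminus\br D\neq\emptyset$, the equality $\widetilde g^{-1}(\widehat{\C}\setminus\br D)=\widehat{\C}\setminus E$ forces $E\neq\widehat{\C}$. By Lemma \ref{lemma:cell_like} \ref{lemma:cell_like:open} applied to $\widetilde g$, the set $\widehat{\C}\setminus E=\widetilde g^{-1}(\widehat{\C}\setminus\br D)$ is homeomorphic to an open disk and hence connected, so $E$ is non-separating. For $g(\partial E)=\partial D$: any $z\in\partial E$ is approximated by $z_n\in\widehat{\C}\setminus E$, whose images $\widetilde g(z_n)\in\widehat{\C}\setminus\br D$ converge to $g(z)\in\br D$, forcing $g(z)\in\partial D$; conversely, given $y\in\partial D$, choose $y_n\in\widehat{\C}\setminus\br D$ with $y_n\to y$ and pick $z_n\in\widetilde g^{-1}(y_n)\subset\widehat{\C}\setminus E$, so any subsequential limit $z$ satisfies $\widetilde g(z)=y$ and $z\in E\cap\overline{\widehat{\C}\setminus E}=\partial E$. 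The main delicate points are the transfer of cell-likeness from $E$ to $\widehat{\C}$ (required to invoke Moore's theorem) and the use of the strong Schoenflies theorem to align the image $q(E)$ inside $\widehat{\C}/\mathcal{G}$ with $\br D$ inside $\widehat{\C}$ via the map $\widetilde g_0^{-1}$.
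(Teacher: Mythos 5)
Your proof is correct and follows essentially the same route as the paper: the same decomposition of $\widehat{\C}$ into the fibers of $g$ and the remaining singletons, Moore's theorem to obtain a quotient sphere in which the image of $E$ becomes a closed Jordan region homeomorphic to $\br D$, and an extension of the induced homeomorphism to a self-homeomorphism of the sphere. The paper phrases that last step simply as ``consider an arbitrary extension of $G$ to a homeomorphism of the sphere,'' which is exactly your strong Schoenflies application.
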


\begin{proof}
We consider a decomposition of $\widehat{\C}$ into the non-separating continua $g^{-1}(z)$, $z\in \br D$, and the remaining singleton points.  This decomposition is upper semicontinuous, as follows from the continuity of $g$.  By Moore's theorem (Theorem \ref{theorem:moore:original}), $\widehat{\C}/\sim$ is a topological $2$-sphere. Consider the projection $\pi\colon \widehat{\C}\to \widehat{\C}/\sim$. We define the map $G=g\circ \pi^{-1}$ on $\pi(E)$ by mapping each point $\pi(g^{-1}(z))$ to $z$, where $z\in \br D$. Trivially, this map is injective.  Using Lemma \ref{lemma:distance_convergence} and the continuity of $g$ one can show that $G$ is continuous from $\pi(E)$ onto $\br D$. Therefore, $G$ is a homeomorphism, which implies that $\pi( E)$ is a closed Jordan region, whose interior we denote by $W$; moreover, $G(\partial W)=\partial D$. The region $\widehat{\C}\setminus E$  projects under $\pi$ homeomorphically onto the complement of $\br W$. In particular, $\widehat{\C}\setminus E\neq \emptyset$ and $\pi(\partial E)=\pi(\partial (\widehat{\C}\setminus E))=\partial W$. This implies that 
$g(\partial E)=G(\pi(\partial E))=\partial D$. Finally, consider an arbitrary extension of $G$ to a homeomorphism of the sphere $\widehat{\C}/\sim$. Then $\widetilde g=G\circ \pi$ gives the desired extension of $g$. 
\end{proof}

In the next lemmas the standing assumptions are that $\Omega\subset \widehat{\C}$ is a simply connected region, $D\subset \widehat{\C}$ is a Jordan region, and $f\colon \Omega\to D$ is a continuous, proper, and cell-like map that has an extension to a continuous map from $\br \Omega$ onto $\br D$, which we also denote by $f$. 

\begin{lemma}\label{lemma:monotone}
There exists a unique component $U_0$ of $\widehat \C\setminus {\br \Omega}$ such that $f( \partial U_0)=\partial D$. Moreover, there exists an extension of $f$ to a  continuous and cell-like map $\widetilde f\colon \widehat{\C}\setminus U_0 \to \br D$ such that $\widetilde f|_{\br U}$ is constant for each component $U\neq U_0$ of $\widehat{\C} \setminus \br \Omega$.
\end{lemma}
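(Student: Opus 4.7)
The plan is in three parts: extract the consequences of the continuous extension hypothesis, identify $U_0$ via an exhaustion argument, and build $\widetilde f$ by collapsing the other components using Moore's theorem.

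First, I would record the structural consequences. By Lemma \ref{lemma:cluster}\ref{lemma:cluster_extension}, the extension $\br f\colon\br\Omega\to\br D$ satisfies $\br f(\partial\Omega)=\partial D$ and $\br f^{-1}(\partial D)=\partial\Omega$. Consequently, for each component $W$ of $\widehat\C\setminus\br\Omega$, the image $f(\partial W)$ is a compact connected subset of the Jordan curve $\partial D$, so either a (possibly degenerate) closed arc or all of $\partial D$. Next, using Lemma \ref{lemma:cell_like}\ref{lemma:cell_like:open}, I would exhaust $D$ by open Jordan regions $D_n$ with $\br D_n\subset D_{n+1}$, $\bigcup_n D_n=D$. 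Then $\Omega_n:=f^{-1}(\br D_n)$ is a closed Jordan region in $\Omega$ (homeomorphic to $\br D_n$) with $\partial\Omega_n=f^{-1}(\partial D_n)$ a Jordan curve, and $f$ restricts to a homeomorphism of $\partial\Omega_n$ onto $\partial D_n$.

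For each $n$, $V_n:=\widehat\C\setminus\Omega_n$ is a single open Jordan region containing every component $W$ of $\widehat\C\setminus\br\Omega$. To identify $U_0$, I would assign to each $W$ a \emph{shadow} $S_n(W)\subset\partial D_n$: the image under $f$ of those points of $\partial\Omega_n$ that can be joined to $W$ by a path in $V_n$ avoiding the other components. Because $V_n$ is connected and $\partial\Omega_n\simeq\partial D_n$ carries a natural cyclic order, each $S_n(W)$ is a sub-arc of $\partial D_n$ and the shadows together (plus contributions from points of $\partial\Omega\setminus\bigcup_W\partial W$) cover $\partial D_n$. Passing to the limit, $S_n(W)$ converges to $f(\partial W)$. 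Existence of $U_0$: if every $f(\partial W)$ were a proper arc, a pigeonhole/planarity argument using the exhaustion $\Omega_n\uparrow\Omega$ would leave some point of $\partial D$ unreachable, contradicting $f(\partial\Omega)=\partial D$. Uniqueness: if two distinct components $W_1,W_2$ each had shadow equal to $\partial D$, one could build disjoint Jordan arcs in $V_n$ approaching $W_1$ and $W_2$ whose $f$-images would need to simultaneously cover overlapping arcs of $\partial D_n$, contradicting the injectivity of $f$ on $\partial\Omega_n$.

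With $U_0$ in hand, a parallel planar argument shows that for $W\neq U_0$ the shadow cannot be a non-degenerate arc either: otherwise $W$ and $U_0$ would both ``occupy'' a common sub-arc of $\partial D_n$ for large $n$, again contradicting $f|_{\partial\Omega_n}$ being a homeomorphism. Thus $f(\partial W)=\{w_W\}$ is a single point for each $W\neq U_0$. Now define the decomposition $G$ of $\widehat\C\setminus U_0$ whose nontrivial elements are the non-separating continua $\{\br W:W\neq U_0\}$ (non-separating because $\widehat\C\setminus U_0$ is cell-like and each $\br W$ bounds a Jordan-region pocket). Upper semicontinuity of $G$ follows because $\diam(W)\to 0$ for $W\neq U_0$: any Hausdorff accumulation of the $\br W$'s would contradict the single-point image property combined with continuity of $\br f$ on $\partial\Omega$. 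Moore's theorem (Theorem \ref{theorem:moore:original}) then gives a quotient homeomorphic to $\widehat\C\setminus U_0$. The prescription $z\mapsto\br f(z)$ on $\br\Omega$ and $\br W\mapsto w_W$ on each collapsed $\br W$ is consistent on the overlap $\partial W\subset\partial\Omega$ and descends to a continuous surjection $\widetilde f\colon\widehat\C\setminus U_0\to\br D$. Its fiber over $w\in D$ equals $f^{-1}(w)$, and its fiber over $w\in\partial D$ equals $\br f^{-1}(w)\cup\bigcup\{\br W:w_W=w\}$, a non-separating continuum since each $\br W$ meets $\br f^{-1}(w)\subset\partial\Omega$. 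Hence $\widetilde f$ is cell-like. Uniqueness of the extension follows from density of $\Omega$ in $\widehat\C\setminus U_0$ and continuity.

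The main obstacle will be the planar argument in the second step establishing that exactly one component has full shadow and all others have a single-point shadow; this rests on a careful analysis of how the ``pockets'' $W\subset V_n$ carve up the Jordan curve $\partial\Omega_n$ under the homeomorphism $f|_{\partial\Omega_n}\colon\partial\Omega_n\to\partial D_n$.
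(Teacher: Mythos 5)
Your proposal has a genuine gap, and it sits exactly where you flag ``the main obstacle'': the shadow argument. More precisely, the step it rests on is false. You assert that $\Omega_n=f^{-1}(\br D_n)$ is a closed Jordan region with $\partial\Omega_n=f^{-1}(\partial D_n)$ a Jordan curve on which $f$ restricts to a \emph{homeomorphism} onto $\partial D_n$. But $f$ is only cell-like: its fibers over points of $D$ are in general non-degenerate non-separating continua, so $f^{-1}(\partial D_n)$ need not be a Jordan curve and $f$ is certainly not injective on it (Lemma \ref{lemma:cell_like} \ref{lemma:cell_like:open} gives a homeomorphism of $f^{-1}(D_n)$ with $D_n$, not that $f$ itself is injective there). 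Both your existence and uniqueness arguments for $U_0$, and the claim that every other component has a single-point shadow, are derived from ``contradicting the injectivity of $f$ on $\partial\Omega_n$,'' so the whole second step collapses; and even granting injectivity, the pigeonhole/planarity argument is only declared, not carried out. Two further unproved assertions would also need repair: (i) $\diam(W)\to 0$ for the components $W\neq U_0$ is not true in general (a simply connected $\Omega$ can have infinitely many complementary pockets of $\br\Omega$ with diameters bounded below), and distinct pockets may share boundary points, so your collection $\{\br W\}$ need be neither a null family nor pairwise disjoint — the decomposition $G$ you feed into Theorem \ref{theorem:moore:original} may fail to be an upper semicontinuous partition; (ii) the fiber $\br f^{-1}(w)\cup\bigcup\{\br W: w_W=w\}$ is connected, but a connected union of non-separating continua can separate the sphere (two arcs meeting at both endpoints), so cell-likeness of $\widetilde f$ does not follow from what you wrote.

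The paper avoids all of this by working fiberwise rather than pocketwise. For $z\in\partial D$ it first shows $f^{-1}(z)$ is a \emph{connected} subset of $\partial\Omega$ (as $\bigcap_{r>0}\br{f^{-1}(B(z,r)\cap D)}$, using Lemma \ref{lemma:cell_like} \ref{lemma:cell_like:open}), then defines $A_z$ as the complement of the component of $\widehat\C\setminus f^{-1}(z)$ containing $\Omega$ — a non-separating continuum by construction, which automatically absorbs every pocket whose boundary maps to $z$ and resolves your separation and disjointness issues in one stroke. The sets $A_z$ are pairwise disjoint, $\mathcal A=\bigcup_z A_z$ is closed with $\partial\mathcal A\subset\partial\Omega$, the assignment $x\mapsto z$ on $A_z$ is checked directly to be a continuous cell-like extension, and $U_0:=\widehat\C\setminus\mathcal A$ is identified and shown non-empty via Lemma \ref{lemma:cell_like_extension}; every component $U\neq U_0$ lies in some $A_z$, which gives both the constancy of $\widetilde f|_{\br U}$ and the uniqueness of $U_0$. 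If you want to salvage your route, you would essentially have to first collapse the fibers of $f$ (Moore's theorem) to make the map injective before any cyclic-order argument on $\partial D_n$ — at which point you are reconstructing the paper's argument.
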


See Figure \ref{figure:monotone} for an illustration of the conclusions of the lemma. 

\begin{figure}
	\begin{tikzpicture}[scale=.8]
		\draw[fill=black!10] (0,0) circle (2cm);
		\node at (0,0) {$\Omega$};
		\node at (1.5, -1.9) {$\partial U_0$};		
		
		\draw[fill=white] (0,1.5) circle (0.5cm);
		\draw[pattern={Lines[distance=1mm,angle=135,line width=0.1mm], pattern color=red}] (0,1.5) circle (0.5cm);
		\node at(0,1.5) {$U$};
		
		\draw[fill=white] (0.5,1) circle (0.2cm);
		\draw[ pattern={Lines[distance=1mm,angle=135,line width=0.1mm]}] (0.5,1) circle (0.2cm);		
				
		\draw[->] (2.3,0) to [out=30, in=150] (3.7,0);
		\node at (3,0.5) {$f$};
		
		\draw[fill=black!10] (6,0) circle (2cm);
		\node at (6,0) {$D$};
		
		\draw[fill=black] (6,2) circle (1pt);
		\node[anchor=north] at (6,2) {$z$};
	\end{tikzpicture}
	\caption{The region $U_0$ is the unbounded component of $\widehat{\C}\setminus \br \Omega$. The map $\widetilde f$ is constant on each component $U\neq U_0$ of $\widehat{\C}\setminus \br \Omega$. Here the set $A_z$ is the closure of the shaded regions.}\label{figure:monotone}
\end{figure}
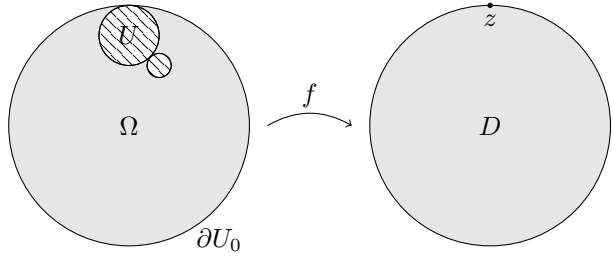

\begin{proof} 
By postcomposing $f$ with a homeomorphism of $\widehat{\C}$ via the Schoenflies theorem, we assume that $D$ is the unit disk. For each $z\in \partial D$ the set $f^{-1}(z)$ is a subset of $\partial \Omega$  by Lemma \ref{lemma:cluster} \ref{lemma:cluster_extension}. Moreover, we have 
\begin{align*}
f^{-1}(z)= \bigcap_{r>0} \br{f^{-1}(B(z,r)\cap D)}
\end{align*}
by the continuity of $f$. Since $f$ is cell-like  and the set $B(z,r)\cap D$ is connected, by Lemma \ref{lemma:cell_like} \ref{lemma:cell_like:open} the same is true for $f^{-1}(B(z,r)\cap D)$, and thus for $\br{f^{-1}(B(z,r)\cap D)}$. We conclude that $f^{-1}(z)$ is a connected subset of $\partial \Omega$.   

For each $z\in D$, the set $A_z=f^{-1}(z)$ is a non-separating continuum in $\Omega$, since $f$ is cell-like and $f^{-1}(\partial D)=\partial \Omega$. For $z\in \partial D$ the set $f^{-1}(z)$ is a connected subset of $\partial \Omega$. If $f^{-1}(z)$ does not separate the sphere, then we set  $A_z=f^{-1}(z)$.  If $f^{-1}(z)$ separates the sphere, then $\widehat{\C}\setminus f^{-1}(z)$ contains at least two components. Precisely one of the components of $\widehat{\C}\setminus f^{-1}(z)$ contains the connected set $\Omega$. We define $A_z$ to be the complement of that component; see Figure \ref{figure:monotone}. Then $A_z$ is non-separating by its definition and has the property that
\begin{align}\label{lemma:monotone:az1}
\emptyset\neq  \partial A_z\subset f^{-1}(z)\subset \partial \Omega.
\end{align}
The set $\inter(A_z)$ is disjoint from $\br \Omega$, so
\begin{align}\label{lemma:monotone:az2}
A_z\cap \partial \Omega = \partial A_z\cap \partial \Omega \subset f^{-1}(z).
\end{align}
We claim that $A_z\cap A_w=\emptyset$ for each $w\neq z$, $z,w\in \br D$. If $z\in D$, then $A_z=f^{-1}(z)\subset \Omega$ and the claim is trivial. Suppose that $z,w\in \partial D$.  By \eqref{lemma:monotone:az2}, we have 
$$A_z\cap f^{-1}(w)= A_z\cap f^{-1}(w)\cap \partial \Omega \subset f^{-1}(z)\cap f^{-1}(w)=\emptyset.$$
Thus, the connected set $A_z$ is contained in a component of $\widehat{\C}\setminus f^{-1}(w)$. The definition of $A_w$ implies that $A_z\subset A_w$ or $A_z\cap A_w=\emptyset$. If $A_z\subset A_w$, then then by reversing the roles of $z$ and $w$ one obtains $A_w\subset A_z$, so $A_z=A_w$. This contradicts \eqref{lemma:monotone:az1}.

Consider the set $\mathcal A=\bigcup_{z\in \br D} A_z$, which contains $\br \Omega$. We observe that this set is closed in $\widehat{\C}$. Indeed, if $x_n$ is a sequence in  $A_{z_n}$, where $z_n\in \br D$ are distinct points, then there exists $x_n'\in \partial A_{z_n} \subset \br \Omega$ such that $\sigma(x_n,x_n')\to 0$. Thus, $x_n'$, and hence $x_n$ as well, must accumulate at points of $\br \Omega \subset \mathcal A$. Another observation is that 
\begin{align}\label{lemma:monotone:a}
\partial \mathcal A\subset \partial \Omega.
\end{align}
Indeed, if $x\in \partial\mathcal A\subset\mathcal A$, then $x\in \partial A_z$ for some $z\in \br D$. If $z\in D$, then $A_z\subset \Omega\subset \inter(\mathcal A)$, so we must have $z\in \partial D$. In this case, $\partial A_z\subset \partial \Omega$ by \eqref{lemma:monotone:az1}, as desired. 

We extend $f$ to $\mathcal A=\bigcup_{z\in \br D} A_z$ by defining $\widetilde f(x)=z$ for $x\in A_z$. Observe that if $x\in A_z$, then for each point $x'\in \partial A_z\subset f^{-1}(z)$ we have $\widetilde f(x)=z=f(x')$.  We claim that the extension is continuous. If not, there exists $\varepsilon>0$ and a sequence $x_n\to x$, such that $\sigma(\widetilde f(x_n),\widetilde f(x))\geq \varepsilon$ for each $n\in \N$. Note that the point $x$ cannot lie in the interior of $A_z$ for any $z\in \br D$, since $\widetilde f$ is continuous there. Thus, we necessarily have $x\in \partial A_z$ for some $z\in \br D$. By \eqref{lemma:monotone:az1}, $x\in \br \Omega$, so $\widetilde f(x)=f(x)$.   Assume that $x_n\in A_{z_n}$, $n\in \N$. For each $n\in \N$ we connect $x_n$ to $x$ with a small path in $\widehat{\C}$ and we see that there exists a point $x_n'\in \partial A_{z_n}\subset f^{-1}(z_n)$ such that $\widetilde f(x_n)=f(x_n')$ and $x_n'\to x$. This contradicts the continuity of $f$ on $\br \Omega$. Therefore, $\widetilde f$ is continuous. By construction, $\widetilde f^{-1}(z)=A_z$ is a non-separating continuum for each $z\in \br D$; thus $\widetilde f$ is also cell-like.

By Lemma \ref{lemma:cell_like_extension}, $\widehat{\C}\setminus \mathcal A\neq \emptyset$, $\mathcal A$ is a non-separating continuum, and $\widetilde f(\partial \mathcal A)=\partial D$. We define $U_0=\widehat{\C}\setminus \mathcal A$, so $\widetilde f(\partial U_0)=\partial D$. The connected set $U_0$ is contained in a component of $\widehat{\C}\setminus \br \Omega$. By \eqref{lemma:monotone:a} we have $\partial U_0=\partial \mathcal A\subset \partial \Omega$, which implies that $U_0$ is a component of $\widehat{\C}\setminus \br\Omega$.  Let $U$ be a  component of $\widehat{\C}\setminus \br \Omega$ that is different from $U_0$. Then $U\subset \mathcal A$, so $U\cap A_z\neq \emptyset$ for some $z\in \br D$; in fact, $z\in \partial D$, since $U\cap \Omega=\emptyset$. Note that $U$ is a connected set that is disjoint from $f^{-1}(z)$. By the definition of $A_z$ we have $U\subset A_z$. In particular, $\widetilde f$ is constant on $\br U$. 
\end{proof}

\begin{lemma}\label{lemma:countably_many_values}
Let $Y\subset \widehat{\C}$ be a Jordan region such that $\Omega\subset Y$.  Assume that for each component $V$ of $Y\setminus \br \Omega$ the restriction $f|_{\partial V\setminus \partial Y}$ is constant. Then there exists a unique extension of $f$ to a continuous and cell-like map $\widetilde f\colon \br Y \to \br D$. The map $\widetilde f|_{\br Y\setminus \br \Omega}$ attains countably many values and $\widetilde f^{-1}(D)=\Omega$.  
\end{lemma}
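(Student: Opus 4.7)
The plan is to construct the extension $\widetilde f$ explicitly and then verify its properties and uniqueness.

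For each component $V$ of $Y\setminus\br\Omega$, I first establish that $\partial V\subset\partial Y\cup\partial\Omega$ and that $\partial V\cap Y\subset\partial\Omega$: any boundary point of $V$ lying in $Y$ but not in $\br\Omega$ would have to lie in another component of $Y\setminus\br\Omega$, contradicting the definition of $V$ as a component. The hypothesis $\Omega\subset Y$ with $Y$ connected forces $\partial V\setminus\partial Y\neq\emptyset$: otherwise $V$ would be clopen in the connected set $Y$ and would equal $Y$, while $Y\cap\br\Omega\supset\Omega\neq\emptyset$. Let $c_V\in\partial D$ denote the constant value of $f$ on $\partial V\setminus\partial Y\subset\partial\Omega$. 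Moreover, every $x\in\partial Y\setminus\br\Omega$ lies in the closure of a unique component $V_x$ of $Y\setminus\br\Omega$, by the local Jordan-curve structure of $\partial Y$. I then define $\widetilde f=f$ on $\br\Omega$, $\widetilde f\equiv c_V$ on $V$, and $\widetilde f(x)=c_{V_x}$ on $\partial Y\setminus\br\Omega$.

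The delicate consistency point is at $x\in\partial V\cap\partial Y\cap\br\Omega$, where the two definitions $f(x)$ and $c_V$ must agree. I will argue that $x$ is the limit of points in $\partial V\setminus\partial Y$: otherwise one could choose a small ball $B(x,r)$ with $\partial V\cap B(x,r)\subset\partial Y$, making $V\cap B(x,r)$ a nonempty clopen subset of the connected set $B(x,r)\cap Y$, hence $B(x,r)\cap Y\subset V$; since $\Omega\cap B(x,r)\neq\emptyset$ and $\Omega\subset Y$, this contradicts $V\cap\br\Omega=\emptyset$. Continuity of $f$ on $\br\Omega$ then yields $f(x)=c_V$, and continuity of $\widetilde f$ at all other points of $\br Y$ follows from local constancy combined with continuity of $f$. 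For cell-likeness, the fiber over $z\in D$ is $f^{-1}(z)\subset\Omega$, a non-separating continuum since $f$ is cell-like, while the fiber over $z\in\partial D$ is $f^{-1}(z)\cup\bigcup_{V:c_V=z}(\br V\setminus\br\Omega)$, which is connected because each $\br V$ meets $f^{-1}(z)$ through $\partial V\setminus\partial Y$; non-separation in $\widehat\C$ follows by extending the decomposition by singletons outside $\br Y$ and invoking Moore's theorem.

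For uniqueness, let $\widetilde f'$ be another continuous cell-like extension. Lemma \ref{lemma:cell_like_extension} applied to $\widetilde f'\colon\br Y\to\br D$ yields $\widetilde f'(\partial Y)=\partial D$. A connectedness-of-fibers argument then gives $(\widetilde f')^{-1}(D)=\Omega$: for $z\in D$, the fiber $(\widetilde f')^{-1}(z)$ is a connected continuum containing $f^{-1}(z)\subset\Omega$, and if it also met some $V$ it would have to cross $\partial\Omega$, where $\widetilde f'=f$ takes values only in $\partial D\neq\{z\}$. Hence $\widetilde f'(V)\subset\partial D$. To upgrade this to $\widetilde f'(V)=\{c_V\}$, I plan to compare the Moore decompositions of $\widehat\C$ induced by $\widetilde f'$ and $\widetilde f$: both agree on $\br\Omega$, both quotient $\br Y$ to $\br D$, and using Theorem \ref{theorem:approximation} together with the upper-semicontinuity of fiber decompositions, any non-trivial arc image $\widetilde f'(V)$ would produce a non-cell-like element in the decomposition, violating approximability by homeomorphisms. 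The remaining conclusions are immediate: countably many values on $\br Y\setminus\br\Omega$ follows from second countability of $\widehat\C$ (hence countably many components), and $\widetilde f^{-1}(D)=\Omega$ follows from $\widetilde f(\br Y\setminus\Omega)\subset\partial D$. I anticipate the main obstacle to be this last step of uniqueness, namely ruling out non-trivial arc images on components, which requires careful decomposition-theoretic reasoning.
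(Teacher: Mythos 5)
Your construction of $\widetilde f$ (the constant $c_V$ on each component $V$ of $Y\setminus \br\Omega$ and on the adjacent part of $\partial Y$) is essentially the extension the paper builds, but the verification has a genuine gap at the hardest point: continuity of $\widetilde f$ at a point $x\in\partial\Omega$ that is accumulated by infinitely many \emph{distinct} components $V_n$. This case is not covered by ``local constancy combined with continuity of $f$'': you must show $c_{V_n}\to f(x)$, which requires producing points $z_n\in\partial V_n\setminus\partial Y$ (the only places where you know $f\equiv c_{V_n}$) with $z_n\to x$. A point $y_n\in\partial V_n$ close to $x$ may lie on $\partial Y$, and to replace it by a nearby point of $\partial V_n\setminus\partial Y$ one must know that the open arcs of $\partial Y$ bounding the $V_n$ have diameters tending to $0$. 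The paper obtains this from the cross-cut structure of the components (each relevant $\partial V_n$ contains a single open subarc $\alpha_n$ of $\partial Y$, by Pommerenke's Prop.\ 2.13), the pairwise disjointness of the $\alpha_n$, and the local connectivity of the Jordan curve $\partial Y$; nothing in your write-up substitutes for this, and without it the convergence $c_{V_n}\to f(x)$ is unproven. A smaller but real issue nearby: your density argument at $x\in\partial V\cap\partial Y\cap\br\Omega$ uses that $B(x,r)\cap Y$ is connected, which fails for a round ball and a general Jordan region; it is repaired by first normalizing $\partial Y$ to a circle via Schoenflies, exactly as in Lemma \ref{lemma:dense}.

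Two further points. First, Moore's theorem cannot be used to \emph{prove} that the fibers over $\partial D$ are non-separating — non-separation is a hypothesis of that theorem, not a conclusion — and the connectedness of $f^{-1}(z)$ for $z\in\partial D$, which your fiber description takes for granted, itself requires the argument $f^{-1}(z)=\bigcap_{r>0}\br{f^{-1}(B(z,r)\cap D)}$ from the proof of Lemma \ref{lemma:monotone}; the paper then proves non-separation directly by writing $\widehat{\C}\setminus\widetilde f^{-1}(z)$ as a union of connected sets all meeting $\br\Omega\setminus\widetilde f^{-1}(z)$. Second, the uniqueness step is, as you yourself anticipate, not done: for another cell-like extension $\widetilde f'$ the fibers over $\partial D$ are continua \emph{by hypothesis}, so ``a non-trivial arc image produces a non-cell-like element'' is not a contradiction one can extract from approximability by homeomorphisms. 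The paper's route is to show that any fiber $E=(\widetilde f')^{-1}(w)$ meeting $\br V$ must also meet $\partial V\cap\partial U_0\subset\partial\Omega$ (via a clopen argument on $E\cap\br V$ inside $E$, using that $E\subset\br Y$, that $\partial Y$ is two-sided, and that $\widetilde f'$ restricted to $\partial U_0$ is surjective onto $\partial D$), where $\widetilde f'=f\equiv c_V$, forcing $w=c_V$. These steps need to be supplied before the argument is complete.
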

\begin{proof}
Consider the region $U_0$ and the extension $\widetilde f\colon \widehat \C\setminus U_0\to \br D$ as provided by Lemma \ref{lemma:monotone}. Let $Y\supset \Omega$ as in the statement. The connected region $\widehat{\C}\setminus \br Y$ is contained in a component of $\widehat{\C}\setminus \br \Omega$. If $\widehat{\C}\setminus \br Y$ is contained in a component $U\neq U_0$ of $\widehat{\C}\setminus \br \Omega$, then $\widetilde f|_{\br U}$ is constant; moreover, $U_0\subset Y$ and $U_0$ is a component of $Y\setminus \br \Omega$.  By assumption, $f|_{\partial U_0\setminus \partial Y}$ is constant. Since $\partial U_0\cap \partial Y\subset \partial U_0\cap \br U$, $f$ is also constant on $\partial U_0\cap \partial Y$. This contradicts the surjectivity of $f|_{\partial U_0}$ onto $\partial D$ from Lemma \ref{lemma:monotone}. Therefore $\widehat{\C}\setminus \br Y\subset U_0$ and $\partial Y\subset \br {U_0}$.

Next, we extend $\widetilde f$ to $\br Y\setminus \br \Omega$ as follows. If $V$ is a component of $Y\setminus \br \Omega$, then either $V$ is a component of $\widehat{\C}\setminus \br \Omega$ that is disjoint from $U_0$, or $V$ is  a component of $Y\cap U_0$.  If $V$ is disjoint from $U_0$, then $\widetilde f$ has already been defined in $\br V$ and is constant with value equal to $f(\partial V)\subset f(\partial \Omega)\subset \partial D$. Suppose that $V$ is a component of  $Y\cap U_0$.  By Lemma \ref{lemma:dense} (applied to ${U_0}$ and $P=\widehat{\C}\setminus \br Y$), the set $\partial V\setminus \partial Y$ is a non-empty dense subset of $\partial V\cap \partial U_0$. By assumption, $f|_{\partial V\setminus \partial Y}$ is constant, so by continuity $f|_{\partial V\cap \partial U_0}$ is constant.  We define 
$$\widetilde f|_{\br V} = f|_{\partial V\cap \partial U_0}.$$
Since $f(\partial U_0)\subset \partial D$, we see that $\widetilde f(\br V)$ is a point of $\partial D$. 

First, we ensure that this gives a well-defined map. Let $V_1,V_2$ be distinct components of $Y\cap U_0$ such that $\br{V_1}\cap \br{V_2}\neq \emptyset$. If $\br{V_1}\cap \br{V_2}$ contains a point $x\in U_0$, then that point would have to lie on $\partial Y$; however, $\partial Y$ has exactly two ``sides" near $x$, one contained in the region  $Y$, and one contained in precisely one of $V_1,V_2$. This is a contradiction. Therefore, $\br{V_1}\cap \br{V_2}\subset \partial U_0$. Since $f$ is constant on $\partial V_i\cap \partial U_0$, $i=1,2$, we conclude that $\widetilde f|_{\br{V_1}}=\widetilde f|_{\br{V_2}}$, as desired. 

Next, we note that 
$$\br Y= \br \Omega \cup \bigcup_{V\subset Y\setminus \br \Omega}\br V,$$
where the union is over all components $V$ of $Y\setminus \br \Omega$. Hence, the above definition provides an extension of $\widetilde f$ to all of $\br Y$. Indeed, if $x\in Y\setminus \br \Omega$, then $x$ lies in a component $V$ of $Y\setminus \br \Omega$. If $x\in \partial Y \setminus \br \Omega$, then  $x\in \partial Y\cap U_0$, so $x$ lies in the boundary of a component $V$ of $Y\setminus \br \Omega$, because $\partial Y$ is ``two-sided" near $x$. 

Since there are countably many components $V$ of $Y\setminus \br \Omega$, we observe that $\widetilde f$ attains countably many values in $\br Y\setminus \br \Omega$, as required in the conclusion of Lemma \ref{lemma:countably_many_values}. Moreover, by the definition of the extension, we have $\widetilde f^{-1}(D)=\Omega$. 

\begin{figure}
\input{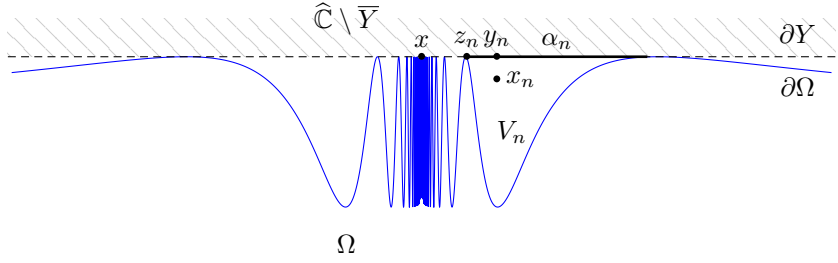}
	\caption{A portion of $\partial \Omega$ and $\partial Y$. }\label{figure:graph}
\end{figure}

Now, we show that $\widetilde f$ is continuous. Since $\widetilde f$ is already continuous in the complement of $U_0$ by Lemma \ref{lemma:monotone}, it suffices to show that if $x_n$ is a sequence in $\br Y\cap U_0$ converging to some point $x$, then $\widetilde f(x_n)$ converges to $\widetilde f(x)$.  For the sake of contradiction, suppose that there exists $\varepsilon>0$ such that $\sigma(\widetilde f(x_n),f(x))\geq \varepsilon$ for all $n\in \N$.  If $x\in U_0$, then $x,x_n\in \br V$ for all sufficiently large $n\in \N$, for some component $V$ of $Y\cap U_0$; thus, $\widetilde f(x)=\widetilde f(x_n)$ for all large $n\in \N$ and we obtain a contradiction. Suppose $x\in \partial U_0$.  We have $x_n\in \br {V_n}$ for a sequence of components  $V_n$ of $Y\cap U_0$. If $\br {V_n}$ is a specific component $\br V$ for infinitely many $n\in \N$, then $x\in \br V$, so $\widetilde f(x_n)=\widetilde f(x)$ for infinitely many $n\in \N$, a contradiction. Thus, we may assume that the components $V_n$, $n\in \N$, are distinct. One may find points $y_n\in \partial V_n$ with $\sigma(y_n,x_n)\to 0$ and $\widetilde f(y_n)=\widetilde f(x_n)$. By \cite[Prop.\ 2.13]{Pommerenke:conformal} for each $V_n$ there exists an open arc $\alpha_n=\partial V_n\cap  U_0$ that is a component of $\partial Y\cap U_0$, so that $\partial V_n$ is the union of $\alpha_n$ and $\partial V_n\cap \partial U_0$; see Figure \ref{figure:graph}. Moreover, for distinct sets $V_n$ the arcs $\alpha_n$ are disjoint, since $\partial Y$ is ``two-sided". The local connectivity of $\partial Y$ implies that  $\diam(\alpha_n)\to 0$ as $n\to\infty$. If $y_n\in \partial V_n\cap \partial U_0$, we set $z_n=y_n$ and if $y_n\in \alpha_n$, we set $z_n$ to be an endpoint of $\alpha_n$ (Figure \ref{figure:graph}). Therefore, one may find points $z_n\in \partial V_n\cap \partial U_0$  such that $\widetilde f(y_n)=\widetilde f(z_n)=f(z_n)$ and  $\sigma(y_n,z_n)\to 0$. Since $z_n\to x$, the continuity of $f$ on $\partial U_0\subset \br \Omega$ implies that $f(z_n)\to f(x)$.  This contradiction completes the proof of the continuity of $\widetilde f$.

Next, we show that for each $z\in \br D$, the set $\widetilde f^{-1}(z)$ is a non-separating continuum. If $z\in D$, then $\widetilde f^{-1}(z)=f^{-1}(z)\subset \Omega$, so $f^{-1}(z)$ is a non-separating continuum by the cell-likeness of $f$. Next, suppose that $z\in \partial D$.  By Lemma \ref{lemma:monotone}, $\widetilde f|_{\widehat{\C}\setminus U_0}$ is cell-like, so the set $E=\widetilde f^{-1}(z)\cap (\widehat{\C}\setminus U_0)$ is a non-separating continuum. By the definition of $\widetilde f$ on $\br Y$, $\widetilde f^{-1}(z)$ is the union of $E$ with the closures of components $V$ of $Y\cap U_0$ such that $\widetilde f|_{\br V}\equiv z$. By the previous, for each such component we have $\partial V\cap \partial U_0\neq \emptyset$, so $\br V\cap E\neq \emptyset $.  This shows that $\widetilde f^{-1}(z)$ is connected. 

In order to show that $\widetilde f^{-1}(z)$ is non-separating, we observe that  $\widehat{\C}\setminus \widetilde f^{-1}(z)$ is the union of $\br \Omega\setminus \widetilde f^{-1}(z)$ with $\widehat \C\setminus \br{Y}$ and  with the sets $\br V$ where $V$ is a component of $Y\setminus \br \Omega$  with $\br V\cap \widetilde f^{-1}(z)=\emptyset$.  Let $W$ be the component of $\widehat{\C}\setminus \widetilde f^{-1}(z)$ that contains the connected set $\Omega$ and note that   $\br \Omega\setminus \widetilde f^{-1}(z)$ is connected, so  it is contained in $W$.  The sets $V$ as above are either components of $\widehat{\C}\setminus \br \Omega$ or components of $Y\cap U_0$. In the first case we have $\partial V\subset \partial \Omega$ so $\br V\cap (\br \Omega \setminus \widetilde f^{-1}(z))\neq \emptyset$, while in the second case,  $\partial V\cap \partial U_0\neq \emptyset$,  so  $\br V \cap ( \br \Omega \setminus \widetilde f^{-1}(z))\neq \emptyset$. Therefore, in both cases, $\br V\subset W$.  Since $\widetilde f|_{\partial U_0}$ is surjective onto $\partial D$, there exists a point $x\in \partial U_0\setminus \widetilde f^{-1}(z)\subset \br \Omega\setminus \widetilde f^{-1}(z)$. If $x\in \partial Y$, then this shows that $\widehat{\C}\setminus \br{Y}$ lies in $W$. If $x\in \partial V$ for some $V$ as above, then $\br V\subset W$ by the previous and $\partial V$ contains an arc of $\partial Y$. Thus, $\widehat{\C}\setminus \br Y\subset W$. This completes the proof that $W=\widehat{\C}\setminus \widetilde f^{-1}(z)$, and thus $\widetilde f^{-1}(z)$ is non-separating.

Now, we show the uniqueness statement. Suppose that $\widetilde f_1$ is another extension of $f$ to a continuous and cell-like map from $\br Y$ onto $\br D$. We first observe that $\widetilde f_1^{-1}(D)=\Omega$; indeed, if $\widetilde f_1^{-1}(z)$ intersects the complement of $\Omega$ for some $z\in D$, then $\widetilde f_1^{-1}(z)$ is a continuum that intersects both $\Omega$ and its complement, so it intersects $\partial \Omega$. However, $\widetilde f_1(\partial \Omega)=f(\partial \Omega)=\partial D$, a contradiction. 

Let $V$ be a component of $Y\setminus \br \Omega$, such that $\widetilde f|_{\br V}\equiv z$ for some $z\in \partial D$. Our goal is to show that $\widetilde f_1\equiv z$ on $\br V$. Suppose that $\widetilde f_1^{-1}(w)\cap \br V\neq \emptyset$ for some $w\in \br D$; then we necessarily have $w\in \partial D$ by the previous paragraph. The continuum $E=\widetilde f_1^{-1}(w)$ intersects $\partial U_0$, since $\widetilde f_1|_{\partial U_0}=f|_{\partial U_0}\colon \partial U_0\to \partial D$ is surjective. This implies that $\partial V\cap E\neq \emptyset$.  

If $V\cap U_0=\emptyset$, then $V$ is a component of $\widehat{\C}\setminus \br \Omega$, so $\partial V\subset \partial \Omega$. Hence, $w=\widetilde f_1|_{\partial V\cap E}=f|_{\partial V\cap E}= \widetilde f|_{\br V}= z$. Now, suppose that $V\subset U_0$, so $V$ is a component of $Y\cap U_0$.  We claim that $E\cap \partial V\cap \partial U_0\neq \emptyset$. Suppose that $E\cap \partial V\cap \partial U_0=\emptyset$. Note that $E\cap \br V$ is a closed subset of $E$. We will show that $E\cap \br V$ is open in the relative topology of $E$. It suffices to show that each $x\in E\cap \partial V$ has an open neighborhood in the topology of $E$ that is contained in $E\cap \br V$. By assumption, we have $$x\in E\cap \partial V= E\cap \partial V \cap (\partial Y\cup \partial U_0)= E\cap \partial V \cap \partial Y.$$
Since $Y$ is ``two-sided" and $E\subset \br Y$, all points of $E$ near $x$ are contained in $\br V$, as desired. Since $E\cap \br V$ is clopen in $E$, the connectedness of $E$ implies now that $E\cap \br V=E$. Thus, 
$$E \cap \partial  V\cap \partial U_0 = E\cap \br V \cap \partial U_0= E\cap \partial U_0\neq \emptyset,$$
a contradiction. Therefore, 
$E\cap \partial V\cap \partial U_0\neq \emptyset$, which implies that $w=\widetilde f_1|_{E\cap \partial V\cap \partial U_0}=f|_{\partial V\cap \partial U_0}= \widetilde f|_{\br V}=z$. Hence, we conclude that $w=z$ in all cases, as desired. 
\end{proof}

\begin{proof}[Proof of Theorem \ref{theorem:cell_like_extension}]
Consider the unique extension of $f$ to a continuous and cell-like  map $\widetilde f\colon \br Y \to \br D$ as provided by Lemma \ref{lemma:countably_many_values}. By Lemma \ref{lemma:cell_like_extension} the map $\widetilde f$ can be extended to a continuous and cell-like map of the sphere such that $\widetilde f^{-1}(\widehat{\C}\setminus \br D)=\widehat{\C} \setminus \br Y$, as required in the theorem. Moreover, $\widetilde f(\partial Y)=\partial D$. By Lemma \ref{lemma:countably_many_values},  $\widetilde f$ takes countably many values in $\br Y\setminus \br \Omega$. If $\partial Y\cap \partial \Omega=\emptyset$, then $\partial Y\subset  \br Y\setminus \br \Omega$, so $\widetilde f$ is constant on the connected set $\partial Y$. This contradicts the fact that $\widetilde f(\partial Y)=\partial D$. Therefore, $\partial Y\cap \partial \Omega\neq \emptyset$. 

Let $E$ be a continuum in $Y\setminus \Omega$ as in the statement of the theorem. Consider the component $A$ of $\widehat \C\setminus E$ that contains the connected set $\widehat \C \setminus  Y$. Since $\partial Y\cap \partial \Omega\neq \emptyset$ and $A$ is open, we see that $\Omega$ is also contained in $A$. Let $E'$ be the complement of $A$. Then $E'\supset E$ and $E'$ is a non-separating continuum disjoint from $\Omega$ and $\widehat{\C}\setminus Y$. We claim that $\widetilde f$ is constant on $E'$.   We argue by contradiction, assuming that $\widetilde f(E')$ is a non-degenerate continuum. By Lemma \ref{lemma:countably_many_values}, $\widetilde f^{-1}(D)=\Omega$, so $f(E')$ is an  arc of $\partial D$. Then there exist distinct points $z_1,z_2\in \partial D$ such that $\widetilde f^{-1}(z_i)\cap E'\neq \emptyset$, $i\in \{1,2\}$. Since $\widetilde f(\partial Y)=\partial D$, we see that $\widetilde f^{-1}(z_i)\cap \partial Y \neq \emptyset$ for $i\in \{1,2\}$.

We collapse topologically the non-separating continua $K=\widehat{\C}\setminus Y$ and $E'$ to points (e.g.\ via Moore's theorem) and consider a continuous projection map $\lambda\colon \widehat{\C}\to \widehat{\C}$  such that $\lambda(K)=w_1$, $\lambda(E')=w_2$ for some points $w_1\neq w_2$, and $\lambda$ is a homeomorphism from $\widehat\C\setminus (K\cup E')$ onto $\widehat\C\setminus \{w_1,w_2\}$. The continua $G_i=\lambda(\widetilde f^{-1}(z_i))$, $i\in \{1,2\}$, meet at the points $w_1$ and $w_2$, but otherwise they are disjoint. For $i\in \{1,2\}$ there exists a continuum $G_i'\subset G_i$ that is \textit{irreducible} between the points $w_1$ and $w_2$; that is, there is no proper subcontinuum of $G_i'$ that contains both $w_1$ and $w_2$. See \cite[Theorem 28.4]{Willard:topology} for the existence. 

A result of Moore \cite[Theorems 1 and 2]{Moore:separation} implies that $\widehat \C \setminus (G_1'\cup G_2')$ is the union of precisely two domains and the boundary of each is $G_1'\cup G_2'$. Let $Z$ be the component of $\widehat{\C}\setminus (G_1'\cup G_2')$ that is disjoint from $\lambda(\Omega)$ and thus from $\lambda (\br \Omega)$.  Since $\widetilde f$ attains countably many values in $\br Y\setminus\br \Omega$, we conclude that $\widetilde f\circ \lambda^{-1}$ is constant in $Z$.  However, the boundary values of $\widetilde f\circ \lambda^{-1}|_Z$  are non-constant, a contradiction.
\end{proof}

\section{Continuous extension}

In this section we provide sufficient conditions so that a packing-quasiconformal map between the topological spheres associated to two Sierpi\'nski packings as in Definition \ref{definition:packing_quasiconformal} can be lifted to a map between the actual packings. 

\begin{theorem}\label{theorem:continuous_extension}
Let $X=\widehat{\C}\setminus \bigcup_{i\in \N}p_i$ and $Y=\widehat{\C}\setminus \bigcup_{i\in \N} q_i$ be Sierpi\'nski packings such that the peripheral continua of $X$ are uniformly fat closed Jordan regions or points and the peripheral continua of $Y$ are closed Jordan regions or points with diameters lying in $\ell^2(\N)$. Let $ h\colon \mathcal E(X)\to \mathcal E(Y)$ be a packing-$K$-quasiconformal map for some $K\geq 1$.  Then there exists a continuous, surjective, and monotone map $H\colon \widehat \C \to \widehat \C$ such that $\pi_Y\circ H=h\circ \pi_X$ and $H^{-1}(\inter(q_i))=\inter(p_i)$ for each $i\in \N$.  Moreover, there exists a non-negative Borel function $\rho_H\in L^2(\widehat \C)$ with the following properties.
	\begin{itemize}
		\item  There exists a curve family $\Gamma_0$ in $\widehat{\C}$ with $\md_2\Gamma_0=0$ such that for all curves $\gamma\colon[a,b]\to \widehat\C$ outside $\Gamma_0$ we have
	\begin{align*}
\sigma( H(\gamma(a)), H(\gamma(b)) )\leq \int_{\gamma}\rho_H\, ds + \sum_{i:p_i\cap |\gamma|\neq \emptyset} \diam(q_i)
\end{align*}
		\item For each Borel set $E\subset  \widehat \C$ we have
	$$\int_{H^{-1}(E)} \rho_H^2\, d\Sigma \leq K \Sigma(E\cap Y).$$ 
	\end{itemize}
\end{theorem}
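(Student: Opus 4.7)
My plan is to lift $h$ through the projections $\pi_X$ and $\pi_Y$. For each $i$, set $S_i := \pi_X^{-1}(h^{-1}(\widehat q_i))$. By Lemma~\ref{lemma:cell_like} applied to the cell-like maps $\pi_X$ and $h$, each $S_i$ is a non-separating continuum in $\widehat{\C}$ containing $p_i$, and the $S_i$ are pairwise disjoint since the points $\widehat q_i$ are distinct. On the open set $W := \widehat{\C}\setminus\bigcup_i S_i = \pi_X^{-1}(h^{-1}(\widehat Y))$, the formula $H(x) := \pi_Y^{-1}(h(\pi_X(x)))$ yields a single point of $Y$ and a continuous map $H\colon W\to Y$. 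The task is to extend $H$ consistently across each $S_i$ into $q_i$ so that $H$ is globally continuous and monotone with $H^{-1}(\inter(q_i))=\inter(p_i)$.

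Fix $i$ with $p_i$ non-degenerate. The key analytic step is a cluster-set collapse: for each continuum $E\subset S_i\setminus\inter(p_i)$, the set of limits of $H(x_n)$ with $x_n\in W$ and $\dist(x_n,E)\to 0$ is a single point of $\partial q_i$. This relies crucially on the uniform fatness of the source peripheral continua via a no-collision modulus argument in the spirit of Lemma~\ref{lemma:nocollisions} and Lemma~\ref{lemma:bonkcollisions}, which rules out multi-valued boundary behavior. With cluster collapse in hand, $H$ extends continuously to $\widehat{\C}\setminus\bigcup_i\inter(p_i)$, is constant on each branch continuum of $S_i\setminus\inter(p_i)$, and takes values in $\partial q_i$ there, inducing a boundary correspondence on $\partial p_i$. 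Then I apply Theorem~\ref{theorem:cell_like_extension} with $\Omega=\inter(p_i)$, $D=\inter(q_i)$, and a Jordan region $Y_i$ containing $S_i$ chosen so that the components of $Y_i\setminus p_i$ match the branches of $S_i$ and satisfy the constancy hypothesis, together with a cell-like $f_i\colon\inter(p_i)\to\inter(q_i)$ extending the prescribed boundary correspondence. The theorem yields a continuous extension $\widetilde f_i\colon\widehat{\C}\to\widehat{\C}$ constant on continua of $Y_i\setminus\inter(p_i)$ with $\widetilde f_i^{-1}(\inter(q_i))=\inter(p_i)$. Patching these extensions over disjoint $S_i$ gives $H\colon\widehat{\C}\to\widehat{\C}$, continuous, surjective, monotone, satisfying $\pi_Y\circ H=h\circ\pi_X$ and $H^{-1}(\inter(q_i))=\inter(p_i)$.

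For the regularity conclusions, take $\rho_H := \rho_h \in L^2(\widehat{\C})$. The conformality inequality transfers immediately because $H^{-1}(E)\subset \pi_X^{-1}(h^{-1}(\pi_Y(E)))$ for every Borel $E\subset \widehat{\C}$ (as $H(x)\in E$ forces $h(\pi_X(x))=\pi_Y(H(x))\in\pi_Y(E)$), giving
\[
\int_{H^{-1}(E)}\rho_H^2\, d\Sigma \leq \int_{\pi_X^{-1}(h^{-1}(\pi_Y(E)))}\rho_h^2\, d\Sigma \leq K\Sigma(E\cap Y).
\]
The transboundary upper gradient inequality for $H$ follows from that for $h$ via the triangle inequality
\[
\sigma(H(\gamma(a)),H(\gamma(b))) \leq \dist\bigl(\pi_Y^{-1}(h(\pi_X(\gamma(a)))),\pi_Y^{-1}(h(\pi_X(\gamma(b))))\bigr) + D_a + D_b,
\]
where $D_a,D_b$ are the diameters of the respective target fibers, each either zero or $\diam(q_j)$ for a unique $j$ with the corresponding endpoint lying in $S_j$. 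After enlarging $\Gamma_0$ to exclude curves whose endpoints lie on the branches $S_j\setminus p_j$ (a countable union of thin sets handled via the co-area inequality, as in Lemma~\ref{lemma:lipschitz}), any nonzero $D_a$ or $D_b$ forces $p_j\cap|\gamma|\neq\emptyset$ and is absorbed into the right-hand side sum.

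The main obstacle is the cluster-set collapse step, where the uniform fatness hypothesis on the peripheral continua of $X$ is essential. Without it, the cluster sets on branches of $S_i$ could be multi-valued and no continuous lift $H$ would exist; the setup of Theorem~\ref{theorem:cell_like_extension} would then fail as the constancy condition on components of $Y_i\setminus p_i$ could not be arranged.
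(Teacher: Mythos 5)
Your setup (the fibers $S_i=\pi_X^{-1}(h^{-1}(\widehat q_i))$, the map $H=\pi_Y^{-1}\circ h\circ\pi_X$ on $W$, and the transfer of the conformality inequality via $H^{-1}(E)\subset\pi_X^{-1}(h^{-1}(\pi_Y(E)))$) matches the paper, but two of your key steps do not go through as described. First, the ``cluster-set collapse'' on an arbitrary continuum $E\subset S_i\setminus\inter(p_i)$ is not a consequence of a no-collision modulus argument in the spirit of Lemmas \ref{lemma:nocollisions} and \ref{lemma:bonkcollisions}: those lemmas live in the conformal-approximation setting of Theorem \ref{theorem:uniformization_full} (they use the maps $f_n$ and Koebe approximants), which are not available here. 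What the transboundary upper gradient inequality plus cofatness actually yield (via circles and the co-area estimate of Lemma \ref{lemma:fubini}) is single-valuedness of the cluster set at each point of $\partial S_i$ and constancy of $H$ on $\partial V\setminus\partial p_i$ for each component $V$ of $\inter(S_i\setminus p_i)$, where $\rho_h$ vanishes. Constancy on a ``branch'' of $S_i$ that is an arc contained in $X$ does not follow from modulus: the transboundary inequality only controls distances between the target \emph{fibers}, which coincide for two points of the same branch, so it gives no information about the actual $H$-values there. The paper closes this with the purely topological Theorem \ref{theorem:cell_like_extension} (Moore's separation theorem plus the fact that the extension takes countably many values off $\br\Omega$), applied with $\Omega=\widehat\C\setminus S_i$ and $Y=\widehat\C\setminus p_i$, so that $Y\setminus\Omega=S_i\setminus p_i$ is exactly the set of branches. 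Your configuration $\Omega=\inter(p_i)$ with a Jordan region $Y_i\supset S_i$ cannot be arranged: $Y_i$ is open, so $Y_i\setminus p_i$ necessarily contains open subsets of $W$, and the conclusion of Theorem \ref{theorem:cell_like_extension} would then force $H$ to be constant on continua in $W$, which is false. Moreover, no map $\inter(p_i)\to\inter(q_i)$ is given by the hypotheses, so there is nothing to extend from the inside out.

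Second, the derivation of the transboundary upper gradient inequality for $H$ fails at the exceptional-family step. The family of curves with an endpoint on a nondegenerate branch $S_j\setminus p_j$ does \emph{not} have conformal modulus zero (compare radial segments landing on an arc), so it cannot be absorbed into $\Gamma_0$; Lemma \ref{lemma:lipschitz} and the co-area inequality only discard curves \emph{contained in} thin sets, not curves ending on them. In addition, even for endpoints in $p_j$ and $p_k$, your triangle inequality adds $D_a+D_b=\diam(q_j)+\diam(q_k)$ on top of a sum that already contains these terms, so the resulting bound has the wrong constant. The paper avoids both problems by decomposing $\gamma$ into subpaths: on initial and terminal portions lying in a single $S_j$ the map $H$ is constant (this is exactly where the constancy on branches is used), and the middle subpath has endpoints approximable by points of $H^{-1}(Y)$, where the fibers are singletons and no extra diameter terms arise.
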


Moreover, we show that any map $H$ as in the theorem has a further topological property.

\begin{prop}\label{proposition:constant}
Let $H$ be a map satisfying the conclusions of Theorem \ref{theorem:continuous_extension}. Then for each $i\in \N$ and for each continuum $E\subset H^{-1}(q_i)\setminus p_i$ the map $H|_{E}$ is constant.   
\end{prop}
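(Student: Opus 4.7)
The plan is to apply Theorem \ref{theorem:cell_like_extension} directly to $H$ with the choices $\Omega = \widehat\C \setminus H^{-1}(q_i)$, $D = \widehat\C \setminus q_i$, and $Y = \widehat\C \setminus p_i$. First I would dispose of the trivial case where $q_i$ is a point (so $H(E) \subset q_i$ is automatically a single point), and assume $q_i$ and $p_i$ are non-degenerate closed Jordan regions. Since $H$ is a continuous, surjective, and monotone self-map of the sphere it is cell-like, and Lemma \ref{lemma:cell_like}~\ref{lemma:cell_like:open} gives that $\Omega = H^{-1}(D)$ is homeomorphic to the open disk $D$ and is therefore a simply connected region contained in the open Jordan region $Y$. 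The restriction $f := H|_\Omega \colon \Omega \to D$ is continuous, proper, and cell-like, and $H$ extends $f$ continuously to $\br\Omega$; surjectivity of the extension onto $\br D = \widehat\C \setminus \inter q_i$ follows since every $y \in \partial q_i$ is the limit of a sequence $y_n \in D = H(\Omega)$ and by properness of $H$ there is a corresponding convergent sequence of preimages in $\br\Omega$.

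The main analytic step is to verify the hypothesis of Theorem \ref{theorem:cell_like_extension} by proving the stronger statement that $H$ is constant on each component $V$ of $Y \setminus \br\Omega = \inter H^{-1}(q_i) \setminus p_i$. Since $H(p_j) = q_j$ and $q_i \cap q_j = \emptyset$ for $j \neq i$, such a component $V$ is disjoint from every peripheral continuum $p_j$. Applying the quasiconformality condition to $E = q_i$ gives
\[
\int_{H^{-1}(q_i)} \rho_H^2 \, d\Sigma \leq K\, \Sigma(q_i \cap Y) = 0,
\]
so $\rho_H = 0$ almost everywhere on $V$, and by Remark \ref{remark:packing_conformal} I may arrange $\rho_H \equiv 0$ on all of $H^{-1}(q_i)$ at the cost of enlarging $\Gamma_0$. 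Fix $a \in V$ and $r > 0$ with $B(a,r) \subset V$. For each $\delta \in (0,r)$ the angular coordinate $z \mapsto \arg(z - a)$ admits a Lipschitz extension $\phi_\delta \colon \widehat\C \to \R$, so by Lemma \ref{lemma:lipschitz} for almost every $\theta$ the radial segment $\{a + t e^{i\theta} : t \in [\delta, r]\}$, being a simple curve in $\phi_\delta^{-1}(\theta)$, lies outside $\Gamma_0$. The transboundary upper gradient inequality applied to this segment gives $\sigma(H(a + \delta e^{i\theta}), H(a + r e^{i\theta})) \leq 0$, since $\rho_H \equiv 0$ along it and the segment meets no peripheral continuum. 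Letting $\delta \to 0$ and using continuity of $H$ yields $H(a + r e^{i\theta}) = H(a)$ for almost every $\theta$, hence for every $\theta$ by continuity of $\theta \mapsto H(a + r e^{i\theta})$. Varying $r$ shows $H$ is locally constant on $V$, hence constant by connectedness; by continuity $H$ is also constant on $\br V \supset \partial V$, so in particular on $\partial V \setminus \partial Y$.

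With the hypothesis verified, Theorem \ref{theorem:cell_like_extension} produces a unique continuous and cell-like extension $\widetilde f \colon \br Y \to \br D$. I would identify $\widetilde f$ with $H|_{\br Y}$: the latter is continuous, and for each $y \in \br D$ the preimage $H^{-1}(y)$ is a non-separating continuum contained in $\br Y = \widehat\C \setminus \inter p_i$, because $H^{-1}(y) \cap \inter p_i \subset H^{-1}(\br D) \cap H^{-1}(\inter q_i) = \emptyset$. By uniqueness $\widetilde f = H|_{\br Y}$, and the final clause of Theorem \ref{theorem:cell_like_extension} asserts that $\widetilde f$ is constant on every continuum $E \subset Y \setminus \Omega = H^{-1}(q_i) \setminus p_i$, which is exactly the assertion of Proposition \ref{proposition:constant}.

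The principal obstacle is the constancy of $H|_V$ in the second paragraph: one cannot conclude constancy directly from $\rho_H = 0$ almost everywhere, because the transboundary upper gradient inequality holds only outside an a priori unknown modulus-zero family of curves. The remedy combines Remark \ref{remark:packing_conformal}, which upgrades the vanishing of $\rho_H$ on $H^{-1}(q_i)$ from almost everywhere to everywhere, with Lemma \ref{lemma:lipschitz} applied to an angular Lipschitz coordinate, producing enough admissible radial segments to span $V$ in the limit as the inner radius shrinks.
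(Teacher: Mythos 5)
Your argument is correct, and it reaches the conclusion by a genuinely different (though closely related) route from the paper's. The paper does not re-apply Theorem \ref{theorem:cell_like_extension} to $H$ at this stage; instead it observes that $\pi_{Y;q_i}\circ H$, restricted to $\widehat\C\setminus\inter(p_i)$, is a continuous and cell-like extension of the map $g_i$ of Lemma \ref{lemma:h_definition}, and then invokes the \emph{uniqueness} clause of Corollary \ref{corollary:h_extension} to identify it with the extension already constructed there, which was shown to be constant on continua in $\widetilde p_i\setminus p_i$. You instead run Theorem \ref{theorem:cell_like_extension} afresh, with target $D=\widehat\C\setminus q_i$ rather than the quotient $\mathcal E(Y;q_i)\setminus\pi_{Y;q_i}(q_i)$; this works precisely because $H$ is already a globally defined continuous sphere map, so $H|_{\Omega}\colon\Omega\to D$ is proper and cell-like without collapsing the other peripheral continua (in the paper's construction the corresponding map is defined only on $\widetilde X$, which forces the passage to $\mathcal E(Y;q_i)$). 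Likewise, your verification of the boundary-constancy hypothesis --- radial segments inside a component $V$, using that $V$ meets no $p_j$ and that $\rho_H$ vanishes a.e.\ on $H^{-1}(q_i)$ by the conformality inequality with $E=q_i$ --- is a simpler substitute for Lemma \ref{lemma:h_constant}, again enabled by the global continuity of $H$; the paper had to compare boundary values via nearby points of $\widetilde X$ and circles controlled by Lemma \ref{lemma:fubini}. What your approach buys is a self-contained proof modulo Theorem \ref{theorem:cell_like_extension}; what the paper's buys is brevity, since the work was already done in proving Corollary \ref{corollary:h_extension}. One cosmetic repair: $\arg(z-a)$ is not a single-valued Lipschitz function on $\widehat\C$, so Lemma \ref{lemma:lipschitz} does not apply verbatim; either use two branches on overlapping sectors, or argue directly that for a Borel function $\rho_0\in L^2(\widehat\C)$ with $\int_\gamma\rho_0\,ds=\infty$ for every $\gamma\in\Gamma_0$, Fubini in polar coordinates gives $\int_0^r\rho_0(a+te^{i\theta})\,dt<\infty$ for a.e.\ $\theta$, so that a.e.\ radial segment avoids $\Gamma_0$.
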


\begin{remark}
The assumptions of Theorem \ref{theorem:continuous_extension} can be relaxed, allowing the possibility that finitely many peripheral continua of $X$ are not fat. See Remark \ref{remark:cofatness_unnecessary} below for more details. 
\end{remark}

The proofs of both statements are given in Section \ref{section:patching}. We first establish several preliminary statements.

\subsection{Preliminaries}
\begin{lemma}\label{lemma:fubini}
Let $\tau>0$ and $X=\widehat{\C} \setminus \bigcup_{i\in \N} p_i$ be a $\tau$-cofat Sierpi\'nski packing. Let $\rho\colon \widehat{\C}\to [0,\infty]$ be a Borel function in $L^2(\widehat{\C})$ and $\{\lambda_i\}_{i\in \N}$ be a non-negative sequence in $\ell^2(\N)$. For each $x,y\in \widehat \C$ and $0<r_0<\diam(\widehat{\C})$ we have
\begin{align*}
\essinf_{r\in (r_0/2,r_0)} &\left(\int_{S(x,r)} \rho \, ds+\int_{S(y,r)} \rho \, ds+ \sum_{i:p_i\cap S(x,r)\neq \emptyset} \lambda_i +\sum_{i:p_i\cap S(y,r)\neq \emptyset} \lambda_i \right)\\
&\leq c(\tau) \left( \int_{B(x,r_0)\cup B(y,r_0)} \rho^2\, d\Sigma+\sum_{i:p_i\cap B(x,r_0)\neq \emptyset }\lambda_i^2+\sum_{i:p_i\cap B(y,r_0)\neq \emptyset }\lambda_i^2\right )^{1/2}.
\end{align*}
\end{lemma}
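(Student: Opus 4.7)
The inequality will follow from a standard averaging argument: since $\essinf_{r\in(r_0/2,r_0)}F(r)\leq \frac{2}{r_0}\int_{r_0/2}^{r_0}F(r)\,dr$ for any non-negative measurable $F$, it suffices to bound each of the four averages
\begin{align*}
\frac{2}{r_0}\int_{r_0/2}^{r_0}\int_{S(x,r)}\rho\,ds\,dr,\quad \frac{2}{r_0}\int_{r_0/2}^{r_0}\sum_{i:p_i\cap S(x,r)\neq\emptyset}\lambda_i\,dr,
\end{align*}
and their $y$-counterparts, by $c(\tau)$ times the right-hand side of the claimed inequality. Adding the four bounds and using the elementary inequality $a_1^{1/2}+\dots+a_4^{1/2}\leq 2(a_1+\cdots +a_4)^{1/2}$ completes the proof.

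For the first type of term I will apply the co-area inequality (Proposition \ref{proposition:coarea}) to the $1$-Lipschitz function $\psi(z)=\sigma(x,z)$ and the function $\rho\x_{B(x,r_0)}$. Since $S(x,r)\subset B(x,r_0)$ for $r<r_0$, this gives
\begin{align*}
\int_{r_0/2}^{r_0}\int_{S(x,r)}\rho\,ds\,dr\leq \frac{4}{\pi}\int_{B(x,r_0)}\rho\,d\Sigma\leq \frac{4}{\pi}\Sigma(B(x,r_0))^{1/2}\Bigl(\int_{B(x,r_0)}\rho^2\,d\Sigma\Bigr)^{1/2}
\end{align*}
by Cauchy--Schwarz, and since $\Sigma(B(x,r_0))\lesssim r_0^2$, dividing by $r_0/2$ yields the required bound, with an absolute constant.

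For the sum terms I will use Lemma \ref{lemma:radial}, which, combined with the $\tau$-fatness of each $p_i$, gives
\begin{align*}
\mathcal H^1\bigl(\{r\in(r_0/2,r_0):p_i\cap S(x,r)\neq\emptyset\}\bigr)\leq c(\tau)^{1/2}\Sigma(p_i\cap B(x,r_0))^{1/2}.
\end{align*}
Interchanging the order of integration and summation (Fubini, which explains the name of the lemma), and noting that if some $r\in(r_0/2,r_0)$ witnesses $p_i\cap S(x,r)\neq\emptyset$ then necessarily $p_i\cap B(x,r_0)\neq\emptyset$, I obtain
\begin{align*}
\int_{r_0/2}^{r_0}\sum_{i:p_i\cap S(x,r)\neq\emptyset}\lambda_i\,dr\leq c(\tau)^{1/2}\!\!\!\sum_{i:p_i\cap B(x,r_0)\neq\emptyset}\!\!\!\lambda_i\,\Sigma(p_i\cap B(x,r_0))^{1/2}.
\end{align*}
Another application of Cauchy--Schwarz, together with the pairwise disjointness of the $p_i$ (so that $\sum_i\Sigma(p_i\cap B(x,r_0))\leq \Sigma(B(x,r_0))\lesssim r_0^2$), produces the bound $\lesssim_\tau r_0\bigl(\sum_{i:p_i\cap B(x,r_0)\neq \emptyset}\lambda_i^2\bigr)^{1/2}$. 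Division by $r_0/2$ then gives the averaged estimate. The $y$-terms are handled identically, and no step presents a genuine obstacle; the fatness assumption is used only in the sum estimate, through Lemma \ref{lemma:radial}.
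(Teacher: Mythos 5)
Your proof is correct, and it follows exactly the route the paper sketches (the paper omits the argument, citing \cite[Lemma 2.4.7 (a)]{Ntalampekos:CarpetsThesis}, but describes precisely this plan: average over $r\in(r_0/2,r_0)$, use the co-area inequality for the line-integral terms, and use cofatness — here packaged as Lemma \ref{lemma:radial} — together with Cauchy--Schwarz and disjointness for the summation terms). All steps check out, including the reduction of the $\essinf$ to an average and the final combination of the four bounds.
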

It is straightforward to obtain the conclusion of the lemma if $\lambda_i\equiv 0$ by integrating over $r\in (r_0/2,r_0)$ and using the co-area inequality of Proposition \ref{proposition:coarea}. For the general statement one  uses the cofatness of $X$ as well, in order to treat the summation terms.  The proof of the lemma follows from the argument used in \cite[Lemma 2.4.7 (a)]{Ntalampekos:CarpetsThesis} and we omit it.

\begin{lemma}\label{lemma:avoid}
Let $\Gamma_0$ be a path family in $\widehat \C$ with $\md_2\Gamma_0=0$ and let $x,y\in \widehat{\C}$. Define $\Gamma_1$ to be the family of paths $\gamma$ such that for a set of $r>0$ of positive Lebesgue measure there exists a simple path $\gamma_r\in \Gamma_0$ whose trace is contained in $|\gamma|\cup S(x,r)\cup S(y,r)$. Then $\md_2\Gamma_1=0$.
\end{lemma}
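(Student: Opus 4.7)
The plan is to stratify the family $\Gamma_1$ by the Lebesgue measure of the associated set of good radii and show each stratum has $\md_2$-modulus zero; countable subadditivity of modulus will then deliver the conclusion. For each $\gamma\in \Gamma_1$, let
\begin{align*}
E_\gamma = \{ r>0 : \text{there exists a simple } \gamma_r \in \Gamma_0 \text{ with } |\gamma_r|\subset |\gamma|\cup S(x,r)\cup S(y,r)\},
\end{align*}
so $|E_\gamma|>0$ by hypothesis. I would partition $\Gamma_1 = \bigcup_{n\in\N}\Gamma_1^{(n)}$ with $\Gamma_1^{(n)} = \{\gamma\in \Gamma_1 : |E_\gamma|\geq 1/n\}$, and aim to show $\md_2\Gamma_1^{(n)}=0$ for every $n$.

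Fix $n\in\N$ and $\delta>0$. Since $\md_2\Gamma_0=0$, I would choose $\rho_0\geq 0$ admissible for $\Gamma_0$ with $\|\rho_0\|_{L^2(\widehat{\C})}<\delta/3$ and, in addition, small enough that the \emph{bad radius set}
\begin{align*}
B = \left\{ r>0 : \int_{S(x,r)} \rho_0\, d\mathcal H^1 + \int_{S(y,r)} \rho_0\, d\mathcal H^1 > \tfrac{2}{3}\right\}
\end{align*}
satisfies $|B|<1/n$. The only quantitative estimate needed for this is $|B|\lesssim \|\rho_0\|_{L^2}$: apply the co-area inequality (Proposition \ref{proposition:coarea}) to the $1$-Lipschitz spherical distance functions $z\mapsto \sigma(x,z)$ and $z\mapsto \sigma(y,z)$, followed by Cauchy--Schwarz (using $\Sigma(\widehat{\C})<\infty$) and Chebyshev.

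Now, for any locally rectifiable $\gamma\in \Gamma_1^{(n)}$, since $|E_\gamma|\geq 1/n > |B|$, the set $E_\gamma\setminus B$ has positive measure. Pick any $r$ in it together with the corresponding simple $\gamma_r\in \Gamma_0$. The admissibility of $\rho_0$ on $\Gamma_0$, simplicity of $\gamma_r$, the containment $|\gamma_r|\subset |\gamma|\cup S(x,r)\cup S(y,r)$, and $r\notin B$ combine to give
\begin{align*}
1\leq \int_{\gamma_r}\rho_0\, ds = \int_{|\gamma_r|}\rho_0\,d\mathcal H^1 \leq \int_{|\gamma|}\rho_0\,d\mathcal H^1 + \tfrac{2}{3} \leq \int_\gamma \rho_0\, ds + \tfrac{2}{3},
\end{align*}
where the first equality uses simplicity and the last inequality uses that path integrals dominate trace integrals for locally rectifiable curves. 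Hence $3\rho_0$ is admissible for $\Gamma_1^{(n)}$ with $L^2$-norm at most $\delta$. As $\delta>0$ was arbitrary, $\md_2\Gamma_1^{(n)}=0$, and countable subadditivity gives $\md_2\Gamma_1=0$.

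The main obstacle is simply that the set $B$ of bad radii depends on the choice of $\rho_0$, so no single admissible function can be expected to handle every $\gamma\in\Gamma_1$ simultaneously. Stratifying by the lower bound $|E_\gamma|\geq 1/n$ exactly neutralizes this: a sufficiently small $\|\rho_0\|_{L^2}$ forces $|B|<1/n\leq |E_\gamma|$ uniformly across the stratum, which is all that the argument requires.
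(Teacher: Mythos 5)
Your proof is correct, but it takes a genuinely different route from the paper's. The paper invokes the standard characterization of null families (HKST, Lemma 5.2.8): since $\md_2\Gamma_0=0$, there is a \emph{single} nonnegative $\rho\in L^2(\widehat{\C})$ with $\int_{\gamma_0}\rho\,ds=\infty$ for every $\gamma_0\in\Gamma_0$. The co-area inequality then shows $\int_{S(x,r)}\rho\,d\mathcal H^1$ and $\int_{S(y,r)}\rho\,d\mathcal H^1$ are finite for a.e.\ $r$, so for any $\gamma\in\Gamma_1$ one picks $r$ in the positive-measure good set where both circle integrals are finite and reads off $\int_{\gamma}\rho\,ds=\infty$ from $\infty=\int_{\gamma_r}\rho\,ds\leq\int_{\gamma}\rho\,ds+\int_{S(x,r)}\rho+\int_{S(y,r)}\rho$; the same characterization then yields $\md_2\Gamma_1=0$ in one stroke, with no stratification and no quantitative estimate on the bad radius set. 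You instead work directly with admissible functions, which forces you to confront exactly the dependence you identify at the end: the bad set $B$ depends on $\rho_0$, so you must stratify by $|E_\gamma|\geq 1/n$ and prove the Chebyshev-type bound $|B|\lesssim\|\rho_0\|_{L^2}$ to beat $1/n$ uniformly on each stratum. Your approach is more elementary (it avoids the ``exceptional function'' lemma entirely and uses only the definition of modulus, co-area, Cauchy--Schwarz, and Chebyshev) at the cost of being longer; the paper's is shorter and produces a single function witnessing $\md_2\Gamma_1=0$ rather than an admissible function for each stratum and each $\delta$. One small point worth being aware of in your write-up (which the paper's argument shares at the same level of rigor): the identity $\int_{\gamma_r}\rho_0\,ds=\int_{|\gamma_r|}\rho_0\,d\mathcal H^1$ and the appeal to admissibility require $\gamma_r$ to be rectifiable; this is automatic when $\gamma$ has finite length, since then $|\gamma_r|\subset|\gamma|\cup S(x,r)\cup S(y,r)$ has finite $\mathcal H^1$-measure and a simple arc with trace of finite $\mathcal H^1$-measure is rectifiable.
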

Alternatively, one can say that for $\md_2$-a.e.\ path $\gamma$ and for a.e.\ $r>0$, every simple path $\gamma_r$ whose trace is contained in $|\gamma|\cup S(x,r)\cup S(y,r)$ lies outside $\Gamma_0$. 
\begin{proof}
Since $\md_2\Gamma_0=0$, there exists a non-negative Borel function  $\rho\in L^2(\widehat{\C})$  such that $\int_{\gamma}\rho\, ds=\infty$ for each $\gamma\in \Gamma_0$ (see \cite[Lemma 5.2.8, p.~129]{HeinonenKoskelaShanmugalingamTyson:Sobolev}). Let $\gamma\in \Gamma_1$. Then for a set of $r>0$ of positive Lebesgue measure there exists a simple path $\gamma_r\in \Gamma_0$ whose trace is contained in $|\gamma|\cup S(x,r)\cup S(y,r)$. Thus,
\begin{align*}
\infty= \int_{\gamma_r} \rho\,ds \leq \int_{\gamma}\rho\, ds + \int_{S(x,r)}\rho\, ds+ \int_{S(y,r)}\rho\, ds.
\end{align*}
By the co-area inequality of Proposition \ref{proposition:coarea}, the latter two line integrals are finite for a.e.\ $r>0$. Thus, we conclude that $\int_{\gamma}\rho\, ds=\infty$. This implies that $\md_2\Gamma_1=0$. 
\end{proof}

\begin{remark}\label{remark:modulus}
We will use the following standard fact about modulus. If $\md_2\Gamma_0=0$, then the family of paths that have a subpath in $\Gamma_0$ also has modulus zero \cite[Theorem 6.4, p.~17]{Vaisala:quasiconformal}. Thus, we may apply the transboundary upper gradient inequality of packing-quasiconformal maps (see Definition \ref{definition:packing_quasiconformal}) not only for paths outside a family of modulus zero, but also for all subpaths of such paths, upon enlarging the exceptional curve family $\Gamma_0$.
\end{remark}

\subsection{Continuous extension to each peripheral continuum}\label{section:extension_individual}

Let $X,Y$ be as in Theorem \ref{theorem:continuous_extension} and $h\colon \mathcal E(X)\to \mathcal E(Y)$ be a packing-quasiconformal map as in Definition \ref{definition:packing_quasiconformal}. The proof of Theorem \ref{theorem:continuous_extension} will be completed in several steps. Consider the map $g=h\circ \pi_X\colon \widehat{\C}\to \mathcal E(Y)$, which is continuous and cell-like; indeed, the preimage of each point under $h$ is a non-separating continuum and the preimage of each non-separating continuum under $\pi_X$ is a non-separating continuum by Lemma \ref{lemma:cell_like} \ref{lemma:cell_like:set}. Recall that
$$\mathcal E(Y) = \pi_Y(\widehat{\C}) = \pi_Y(Y)\cup \bigcup_{i\in \N}\pi_Y(q_i).$$
Let $\widetilde X= g^{-1}(\pi_Y(Y))$ and note that $\widetilde X\subset X$ since $g(p_i)=h( \pi_X(p_i))= \pi_Y(q_i)$ for each $i\in \N$,  by the definition of a packing-quasiconformal map. If we set $\widetilde p_i= g^{-1}(\pi_Y(q_i))$, we see that $\widetilde X= \widehat{\C}\setminus \bigcup_{i\in \N} \widetilde p_i$. Note that $\widetilde p_i$ is non-separating, since $g$ is cell-like. 
 
\begin{lemma}\label{lemma:H_definition}
The map $H=\pi_Y^{-1}\circ g\colon \widetilde X\to Y$ is continuous, surjective, and cell-like.
\end{lemma}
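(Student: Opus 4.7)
The proof proceeds in three direct steps, corresponding to the three claimed properties; the only subtlety is continuity, where I would need to be careful since $\widetilde X$ and $Y$ are not compact and $\pi_Y^{-1}$ is defined only on the non-closed subset $\pi_Y(Y)\subset \mathcal E(Y)$.

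\textbf{Well-definedness and continuity.} Since the quotient $\pi_Y$ collapses only the peripheral continua $q_i$, its restriction to $Y$ is a bijection onto $\pi_Y(Y)$. Because $\widetilde X = g^{-1}(\pi_Y(Y))$, the value $g(x)$ lies in $\pi_Y(Y)$ for every $x\in \widetilde X$, so $H(x)=\pi_Y^{-1}(g(x))$ is an unambiguous element of $Y$. For continuity I would argue directly by sequential compactness of $\widehat\C$: given $x_n\to x$ in $\widetilde X$, the continuity of $g$ yields $g(x_n)\to g(x)=\pi_Y(H(x))$ in $\mathcal E(Y)$. Any subsequential limit $z\in\widehat \C$ of $\{H(x_n)\}$ satisfies $\pi_Y(z)=\pi_Y(H(x))$ by continuity of $\pi_Y$; since $H(x)\in Y$ and the fiber $\pi_Y^{-1}(\pi_Y(H(x)))$ is the singleton $\{H(x)\}$, we conclude $z=H(x)$. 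As every subsequence of $\{H(x_n)\}$ has a sub-subsequence converging to $H(x)$, the full sequence converges to $H(x)$, and $H$ is continuous.

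\textbf{Surjectivity.} The map $g=h\circ \pi_X$ is a composition of surjective maps, hence surjective onto $\mathcal E(Y)$. Given $y\in Y$, the class $\pi_Y(y)$ lies in $\pi_Y(Y)$, so there is $x\in \widehat\C$ with $g(x)=\pi_Y(y)$; by the definition of $\widetilde X$ this $x$ lies in $\widetilde X$, and $H(x)=\pi_Y^{-1}(\pi_Y(y))=y$. Thus $H(\widetilde X)=Y$.

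\textbf{Cell-likeness.} For $y\in Y$ one has $H^{-1}(y)=\{x\in \widetilde X:\pi_Y(H(x))=\pi_Y(y)\}=g^{-1}(\pi_Y(y))$. The map $g=h\circ\pi_X$ is cell-like as a map $\widehat\C\to\mathcal E(Y)$ (this was already observed just before the lemma, using Lemma \ref{lemma:cell_like}\ref{lemma:cell_like:set} together with monotonicity of $h$), so $g^{-1}(\pi_Y(y))$ is a non-separating continuum in $\widehat\C$. Since $\pi_Y(y)\neq\pi_Y(q_i)$ for all $i$, this continuum is disjoint from every $\widetilde p_i$, so it sits inside $\widetilde X$, where it retains its cell-like character (a non-separating continuum in the 2-sphere remains non-separating in any open 2-manifold containing it, and the paper's characterization of cell-like identifies this with cell-likeness).

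The only real technical hurdle is continuity, and the compactness trick above dispatches it cleanly; surjectivity and cell-likeness are immediate diagram-chase consequences of the corresponding properties of $g$.
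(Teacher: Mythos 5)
Your proof is correct and follows essentially the same route as the paper's (very terse) argument: well-definedness and continuity from the injectivity of $\pi_Y$ on $Y$ plus continuity of $g$ and $\pi_Y$, surjectivity by a diagram chase, and cell-likeness inherited from $g$; your subsequential-compactness argument for continuity is a valid expansion of what the paper leaves implicit.
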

\begin{proof}
Note that $g(\widetilde X)=\pi_Y(Y)$ and that each point of $\pi_Y(Y)$ has a unique preimage under $\pi_Y$, so $H=\pi_Y^{-1}\circ g$ gives a well-defined map from $\widetilde X$ onto $Y$. The continuity of the maps $\pi_Y,g$, together with the injectivity of $\pi_Y$ on $Y$, imply that $H$ is continuous on $\widetilde X$. Finally, the fact that $g$ is cell-like implies immediately that $H$ is cell-like. 
\end{proof}

We fix $i\in \N$ and consider the space $\mathcal E(Y;q_i)$. Recall that for a set $E\subset \widehat{\C}$ the space $\mathcal E(Y;E)$ is defined by collapsing to points all peripheral continua of $Y$ that do not intersect $E$. The space $\mathcal E(Y;q_i)$ is a topological $2$-sphere by Moore's theorem (Theorem \ref{theorem:moore_original}). The projection $\pi_{Y;q_i}\colon \widehat{\C}\to \mathcal E(Y;q_i)$ maps each $q_j$, $j\neq i$, to a point and is injective otherwise. 

\begin{lemma}\label{lemma:h_definition}
For each $i\in \N$ the open set  $\Omega_i=\widehat{\C}\setminus \widetilde p_i$ is simply connected and $D_i=\mathcal E(Y;q_i)\setminus \pi_{Y;q_i}(q_i)$ is a Jordan region or the complement of a point. Moreover, the map
$$ g_i=\pi_{Y;q_{i}}\circ \pi_Y^{-1}\circ g\colon \Omega_i\to D_i, $$
defined alternatively by $g_i =\pi_{Y;q_{i}}\circ H$ on $\widetilde X$
and $g_i(\widetilde p_j)= \pi_{Y;q_i}(q_j)$ for $j\neq i$, is continuous, proper, and cell-like. 
\end{lemma}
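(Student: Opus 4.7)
My plan is to unravel the two quotient structures and reduce each claim to a statement about the sphere, using Moore's theorem and standard properties of non-separating continua.

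Part (1) reduces to showing that $\widetilde{p}_i = g^{-1}(\pi_Y(q_i))$ is a non-separating continuum in $\widehat{\mathbb{C}}$. Since $g = h\circ\pi_X$, this fiber equals $\pi_X^{-1}(h^{-1}(\pi_Y(q_i)))$; the set $h^{-1}(\pi_Y(q_i))$ is a cell-like continuum in the topological sphere $\mathcal{E}(X)$ because $h$ is monotone, and Lemma \ref{lemma:cell_like}\ref{lemma:cell_like:set} applied to the cell-like map $\pi_X$ then gives that its preimage is cell-like, i.e., non-separating, in $\widehat{\mathbb{C}}$. Hence $\Omega_i$ is simply connected. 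Part (2) is immediate: $\pi_{Y;q_i}$ only collapses the $q_j$ with $j\neq i$ and hence is injective on $q_i$, so $\pi_{Y;q_i}(q_i)$ is homeomorphic to $q_i$, which by hypothesis is a closed Jordan region or a point; its complement in the topological sphere $\mathcal{E}(Y;q_i)$ is correspondingly a Jordan region or the complement of a point.

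For (3) the central observation is that there is a canonical homeomorphism
\[
\Phi\colon \mathcal{E}(Y)\setminus\{\pi_Y(q_i)\} \longrightarrow D_i
\]
because both spaces arise as the quotient of the open saturated subset $\widehat{\mathbb{C}}\setminus q_i$ by the identical equivalence relation (collapsing each $q_j$, $j\neq i$, to a point), and restrictions of quotient maps to open saturated subsets are themselves quotient maps. Under $\Phi$, the map $g_i$ becomes simply $\Phi\circ g|_{\Omega_i}$, so continuity and well-definedness are immediate. For properness, if $K\subset D_i$ is compact, then $\Phi^{-1}(K)$ is compact in $\mathcal{E}(Y)\setminus\{\pi_Y(q_i)\}$ and hence closed in the Hausdorff space $\mathcal{E}(Y)$, so $g^{-1}(\Phi^{-1}(K))$ is closed in $\widehat{\mathbb{C}}$ and disjoint from $\widetilde{p}_i = g^{-1}(\pi_Y(q_i))$, thus compact in $\Omega_i$.

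For cell-likeness, each fiber of $g_i$ equals either $g^{-1}(\pi_Y(y)) = \pi_X^{-1}(h^{-1}(\pi_Y(y)))$ for some $y\in Y$, or $\widetilde{p}_j$ for some $j\neq i$; both are non-separating continua in $\widehat{\mathbb{C}}$ by the argument used in part (1). The one subtle step is showing that such a fiber $E$ remains non-separating inside $\Omega_i$, not merely in $\widehat{\mathbb{C}}$. Here I would invoke Moore's theorem directly to the upper semicontinuous decomposition of $\widehat{\mathbb{C}}$ whose non-trivial elements are the two disjoint non-separating continua $E$ and $\widetilde{p}_i$: the resulting quotient is again a topological sphere with $E$ and $\widetilde{p}_i$ collapsed to two distinct points, so $\Omega_i\setminus E$ is homeomorphic to the complement of two points in a sphere, hence connected. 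Since $\Omega_i$ itself is simply connected, any non-separating continuum in it is cell-like in the 2-manifold sense. This final Moore-theorem step is the only part I expect to require any care; everything else is bookkeeping between $\mathcal{E}(Y)$ and $\mathcal{E}(Y;q_i)$.
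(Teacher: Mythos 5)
Your proof is correct, and it takes a cleaner technical route than the paper on the analytic points while agreeing on the topological ones. The paper verifies continuity of $g_i$ by hand: it takes a sequence $x_n\to x$ in $\Omega_i$ and uses Lemma \ref{lemma:distance_convergence} \ref{lemma:distance:i} to control the set $\pi_Y^{-1}(g(x_n))$, then applies $\pi_{Y;q_i}$; properness is likewise a sequence argument ($x_n\to\partial\Omega_i$ forces $g(x_n)\to\pi_Y(q_i)$, hence $g_i(x_n)\to\pi_{Y;q_i}(q_i)\supset\partial D_i$). Your canonical homeomorphism $\Phi\colon\mathcal E(Y)\setminus\{\pi_Y(q_i)\}\to D_i$ — justified correctly by the fact that restrictions of quotient maps to saturated open sets are quotient maps — replaces both arguments with the identity $g_i=\Phi\circ g|_{\Omega_i}$, making continuity tautological and properness a one-line closedness/compactness observation; this is a genuine simplification. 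For cell-likeness the paper is terse ("directly from its definition and the cell-likeness of $H$"), whereas you identify the fibers explicitly as $\pi_X^{-1}(h^{-1}(\pi_Y(y)))$ or $\widetilde p_j$ and check they are non-separating, exactly as the paper does in the paragraph preceding the lemma. Your final Moore-theorem step showing the fibers do not separate $\Omega_i$ is correct but strictly unnecessary: cell-likeness (contractibility in all small neighborhoods) is a property of the compactum itself and does not depend on whether the ambient $2$-manifold is $\widehat{\C}$ or the open subset $\Omega_i$, so being a non-separating continuum in $\widehat{\C}$ already suffices.
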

\begin{proof}
For simplicity, we set $p=p_i$, $\widetilde p=\widetilde p_i$, $q=q_i$, and $\pi_i=\pi_{Y;q_i}$. Since $\widetilde p$ is a non-separating continuum, the domain  $\Omega= \widehat{\C}\setminus \widetilde p$ is simply connected. Next, since $\pi_i$ is injective on $q$, we see that $\pi_i(q)$ is a closed Jordan region or a point in $\mathcal E(Y;q)$. Thus, $D=\mathcal E(Y;q)\setminus \pi_i(q)$ is a Jordan region or the complement of a point.

Observe that $\pi_{Y;q_i}(\pi_Y^{-1}(g(x)))$ is a point of $D$ for each $x\in \Omega$.  We show that the map $g_i=\pi_{i}\circ \pi_Y^{-1}\circ g$ is continuous on $\Omega$. Let $x\in \Omega$ and let $x_n\in \Omega$ be a sequence converging to $x$. Then $g(x_n)$ converges to  $g(x)$. By Lemma \ref{lemma:distance_convergence} \ref{lemma:distance:i}, $\pi_Y^{-1}(g(x_n))$ is contained in arbitrarily small neighborhoods of $\pi_Y^{-1}(g(x))$ as $n\to\infty$. By the continuity of $\pi_i$, $\pi_i(\pi_Y^{-1}(g(x_n)))$ converges to the point $\pi_i(\pi_Y^{-1}(g(x)))$, as desired. 

If a sequence $x_n\in \Omega$ accumulates at $\partial \Omega\subset \widetilde p$, then $g(x_n)$ accumulates at the point $\pi_Y(q_i)$ by continuity. By Lemma \ref{lemma:distance_convergence} \ref{lemma:distance:i}, $\pi_Y^{-1}(g(x_n))$ accumulates at $q_i$, so $g_i(x_n)$ accumulates at $\pi_i(q_i)\supset \partial D$.  Thus, $g_i$ is proper. Finally,  $g_i$ is cell-like, directly from its definition and from the cell-likeness of $H$ from Lemma \ref{lemma:H_definition}.
\end{proof}

In the proofs of the remaining statements of Section \ref{section:extension_individual}, we adopt the same notational conventions as in the first paragraph of the proof of Lemma \ref{lemma:h_definition}. 

\begin{lemma}\label{lemma:h_continuous}
For each $i\in \N$ the map $g_i\colon \Omega_i\to D_i$ has an extension to a continuous map from $\br {\Omega_i}$ onto $\br{D_i}$. 
\end{lemma}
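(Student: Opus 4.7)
The plan is to verify the cluster-set criterion of Lemma \ref{lemma:cluster} \ref{lemma:cluster_extension} for the continuous, proper, cell-like map $g_i\colon \Omega_i\to D_i$, noting that $\partial D_i$ is either a Jordan curve or a point and hence a Peano continuum. Thus, fixing a metric $d_i$ on $\mathcal E(Y;q_i)$, for each $z_0\in \partial\Omega_i = \widetilde p_i$ and $z_\infty\in\widehat{\C}\setminus\{z_0\}$ I must exhibit non-null-homotopic curves $\gamma_n\to z_0$ in $\widehat{\C}\setminus\{z_0,z_\infty\}$ with $\diam_{d_i}(g_i(|\gamma_n|\cap\Omega_i))\to 0$. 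The curves will be spherical circles $\gamma_n = S(z_0,r_n)$ for radii $r_n\to 0$ chosen so that the transboundary machinery applies.

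Let $\omega\colon [0,\infty)\to[0,\infty)$, with $\omega(t)\to 0$ as $t\to 0$, be a uniform modulus of continuity of $\pi_{Y;q_i}\colon(\widehat{\C},\sigma)\to(\mathcal E(Y;q_i),d_i)$, which exists by compactness. Since $\pi_{Y;q_i}\circ \pi_Y^{-1}\circ g=g_i$ on $\Omega_i$ by Lemma \ref{lemma:h_definition}, the transboundary upper gradient inequality from Definition \ref{definition:packing_quasiconformal} yields, for every simple curve $\gamma$ from $a\in \Omega_i$ to $b\in \Omega_i$ lying outside the exceptional family $\Gamma_0$ of $h$,
$$d_i(g_i(a),g_i(b))\leq \omega\!\left(\int_\gamma \rho_h\,ds + \sum_{j:\,p_j\cap|\gamma|\neq\emptyset}\diam(q_j)\right).$$
For each $r_0<\sigma(z_0,z_\infty)/4$, applying Lemma \ref{lemma:fubini} with weights $\lambda_j=\diam(q_j)$ for $j\neq i$ and $\lambda_i=0$, together with Lemma \ref{lemma:avoid} and Remark \ref{remark:modulus} applied to $\Gamma_0$, produces a radius $r=r(r_0)\in (r_0/2,r_0)$ such that every simple subpath of $S(z_0,r)$ lies outside $\Gamma_0$ and
$$\int_{S(z_0,r)}\rho_h\,ds + \sum_{j\neq i:\,p_j\cap S(z_0,r)\neq \emptyset}\diam(q_j)\leq c(\tau)M(r_0),$$
where $M(r_0)\to 0$ as $r_0\to 0$ by $\rho_h\in L^2(\widehat{\C})$ and the square-summability of $\{\diam(q_j)\}_{j\neq i}$.

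When $z_0\notin p_i$, for $r_0<\dist(z_0,p_i)$ the entire circle $S(z_0,r)$ is disjoint from $p_i$, so $p_i\cap|\gamma|=\emptyset$ for every subpath $\gamma\subset S(z_0,r)$; applying the bound above to an arbitrary simple subpath joining two points of $S(z_0,r)\cap\Omega_i$ gives $\diam_{d_i}(g_i(S(z_0,r)\cap\Omega_i))\leq \omega(2c(\tau)M(r_0))\to 0$, as required.

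The main obstacle is the case $z_0\in p_i$, which forces $z_0\in\partial p_i$ (since $\inter(p_i)\subset\inter(\widetilde p_i)$, interior points of $p_i$ cannot lie on $\partial\Omega_i$). For such $z_0$, $p_i\cap S(z_0,r)\neq\emptyset$ for every small $r$, and the subpath of $S(z_0,r)$ connecting two distinct arcs of $S(z_0,r)\cap\Omega_i$ may cross $p_i$, incurring the non-vanishing term $\diam(q_i)$. To handle this, the plan is to exploit Remark \ref{remark:packing_conformal}, which gives $\rho_h\equiv 0$ on $\widetilde p_i$, together with the observation that $p_i$ is the unique peripheral continuum of $X$ contained in $\widetilde p_i$: indeed, if $p_j\subset \widetilde p_i$ then $h(\widehat p_j)=h(\widehat p_i)=\widehat q_i$, whence $j=i$. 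Consequently, any path routed through $T_i:=\widetilde p_i\setminus p_i\subset X$ contributes nothing to either the integral or the sum in the transboundary inequality. Using the Jordan region structure and uniform fatness of $p_i$ together with Lemma \ref{lemma:radial} to control how $T_i$ meets small circles about $z_0$, I will replace subarcs of $S(z_0,r)$ that enter $p_i$ by detours through $T_i$, obtaining a modified path $\widetilde\gamma\subset (S(z_0,r)\cap\Omega_i)\cup T_i$ joining arbitrary $a,b\in S(z_0,r)\cap\Omega_i$ while avoiding $p_i$, whose transboundary bound still tends to $0$. The delicate geometric analysis of $\widetilde p_i$ near $z_0\in\partial p_i$---in particular, showing that the branches of $T_i$ always permit such detours---will be the main technical step of the proof.
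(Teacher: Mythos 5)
Your setup is correct and matches the paper for the easy case: verifying the criterion of Lemma \ref{lemma:cluster} \ref{lemma:cluster_extension} with circles $S(z_0,r)$, selecting good radii via Lemma \ref{lemma:fubini} (with $\lambda_i=0$, $\lambda_j=\diam(q_j)$ for $j\neq i$) and Lemma \ref{lemma:lipschitz}, and letting $r_0\to 0$. The gap is exactly the step you defer as ``the main technical step'': when $z_0\in\partial p_i$ and $p_i$ is a nondegenerate Jordan region, your plan is to bound $\diam(g_i(S(z_0,r)\cap\Omega_i))$ by detouring the subarcs of $S(z_0,r)$ that cross $p_i$ through $T_i=\widetilde p_i\setminus p_i$. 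Such detours need not exist. The set $T_i$ can be empty (e.g.\ whenever $h^{-1}(\widehat q_i)=\widehat p_i$, which is the generic ``nice'' situation), and even when nonempty it is typically a disjoint union of pieces attached to $\partial p_i$ (the shaded regions and branches of Figure \ref{figure:branches}) that do not form a connected collar around $p_i$. If $p_i$ meets $S(z_0,r)$ in such a way that $S(z_0,r)\setminus p_i$ has two arcs separated by $p_i$ along the circle, there is in general no path in $(S(z_0,r)\cap\Omega_i)\cup T_i$ joining them that avoids $p_i$, so the $\diam(q_i)$ term cannot be removed this way and your estimate does not close.

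The resolution in the paper is to give up on controlling the image of the \emph{whole} circle. Since $\partial p_i$ is a Jordan curve, among the arcs of $S(z_0,r)$ with endpoints on $\partial p_i$ there is a unique one, $\alpha$, that separates $z_0$ from all points outside $B(z_0,r_0)$ within $\widehat\C\setminus\inter(p_i)$. One closes $\alpha$ into a Jordan curve $C_r$ by adjoining an arc running through $\inter(p_i)$. Because $\inter(p_i)\subset\widetilde p_i$, that closing arc is disjoint from $\Omega_i$, so $C_r\cap\Omega_i\subset|\alpha|$; and subarcs of $\alpha$ joining points of $\widetilde X$ avoid $p_i$, so the transboundary upper gradient inequality applies with the $i$-th term absent, and Lemma \ref{lemma:fubini} finishes as in the easy case. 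The curves $C_r$ still surround $z_0$, which is all that Lemma \ref{lemma:cluster} \ref{lemma:cluster_extension} requires. So the fix is a topological choice of the test curves rather than an analytic detour; as written, your argument for the boundary case would fail.
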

\begin{proof}
We will show that for each $a\in \partial \Omega=\partial \widetilde{p}$ there exists a sequence of closed curves $C_n$ surrounding $a$ and shrinking to $a$ so that $\diam( H( C_n\cap \widetilde X))\to 0$. Suppose that this is the case. The continuity of $\pi_i=\pi_{Y;q_i}$ then implies that $\diam(\pi_i( H(C_n\cap \widetilde X) ))\to 0$ so the relation $g_i=\pi_i\circ H$ from Lemma \ref{lemma:h_definition} yields
$$\diam( g_i( C_n\cap \widetilde X))\to 0.$$
The set $g_i( C_n\cap \widetilde X)$ is dense in $g_i (C_n \cap \Omega)$, so
$$\diam(g_i(C_n \cap \Omega))\to 0.$$
Lemma \ref{lemma:cluster} \ref{lemma:cluster_extension} now implies that $g_i$ extends to a continuous map from $\br \Omega$ onto $\br D$, as desired. 

Now we show the original claim. Suppose first that $a\in \partial \widetilde p\setminus  p$ or that $p$ is a point and $a=p$. Fix $r_0>0$ so that $(B(a,r_0)\setminus \{a\})\cap p=\emptyset$. Let $C_r=S(a,r)$, $r>0$. For $r\in (0,r_0)$ the circle $C_r$ does not intersect $p=p_i$. We now apply the transboundary upper gradient inequality of $h$, which holds along the circle $C_r$ for a.e.\ $r\in (r_0/2,r_0)$; this is a consequence of Lemma \ref{lemma:lipschitz}. By the relation $H=\pi_Y^{-1}\circ h\circ \pi_X$ on $\widetilde X$, we have
\begin{align*}
\diam(H(C_r\cap \widetilde X)) \leq \int_{C_r} \rho_h \, ds+ \sum_{\substack{j:p_j\cap C_r\neq \emptyset \\ j\neq i}} \diam(q_j).
\end{align*}
We apply Lemma \ref{lemma:fubini} with $x=y=a$, $\lambda_j=\diam(q_j)$ for $j\neq i$ and $\lambda_{i}=0$, so we have
\begin{align*}
\essinf_{r\in (r_0/2,r_0)}\diam(H(C_r\cap \widetilde X))  \lesssim_{\tau}\left( \int_{B(a,r_0)} \rho_h^2\, d\Sigma+\sum_{\substack{j:p_j\cap B(a,r_0)\neq \emptyset \\ j\neq i}}\diam(q_j)^2\right )^{1/2}.
\end{align*}
Note that as $r_0\to 0$ the right-hand side tends to $0$, since $a\notin \bigcup_{j\neq i}p_j$. Thus, we can find a sequence $r_n\to 0$ such that 
$\diam(H(C_{r_n}\cap \widetilde X))\to 0$
as claimed. 

Next, suppose that $p$ is a closed Jordan region and that $a\in \partial \widetilde p\cap p$, so $a\in \partial p$. Let $r_0>0$ such that $B(a,r_0)$ does not contain $p$. Consider a circle centered at $a$ of radius $r<r_0$. Since $\partial p$ is a Jordan curve, there exists a unique arc $\alpha$ of the circle whose endpoints lie in $\partial p$ that separates (in $\widehat \C\setminus \inter(p)$) the point $a$ from any point outside $B(a,r_0)$. We let $C_r$ be the Jordan curve formed by the circular arc $\alpha$, together with a simple arc inside the interior of $p$ that connects the endpoints of $\alpha$. Then $C_r$ is a Jordan curve that surrounds the point $a$. If the non-circular part of $C_r$ is chosen appropriately, then we may have that $C_r$ shrinks to the point $a$ as $r\to 0$.  Now, the transboundary upper gradient inequality, applied to the circular part of $C_r$, gives
\begin{align*}
\diam(H(C_r\cap \widetilde X)) \leq \int_{C_r} \rho_h \, ds+ \sum_{\substack{j:p_j\cap C_r\neq \emptyset \\j\neq i}} \diam(q_j)
\end{align*}
for a.e.\ $r\in(r_0/2,r_0)$. We apply Lemma \ref{lemma:fubini} as above with $\lambda_j=\diam(q_j)$ for $j\neq i$ and $\lambda_{i}=0$ and the conclusion follows.  
\end{proof}

\begin{remark}\label{remark:cofatness_unnecessary}
The cofatness of $X$ is needed only for the application of Lemma \ref{lemma:fubini}. In fact, it suffices to apply the lemma to the cofat Sierpi\'nski packing generated by continua $p_j$, $j\neq i$, lying in a neighborhood of $p_i$. In particular, the fatness of $p_i$ is not needed. 
\end{remark}

\begin{lemma}\label{lemma:h_constant}
For each $i\in \N$ and for each component $V$ of $\inter(\widetilde p_i\setminus p_i)$ the map $g_i$ is constant on the set $\partial V\setminus \partial p_i$. 
\end{lemma}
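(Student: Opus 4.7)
My plan is to reduce to showing that $g_i(a) = g_i(b)$ for any two points $a, b \in \partial V \setminus \partial p_i$, and to prove this via a transboundary upper gradient argument along curves that cross through $V$.

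First, I would verify that $a, b \in X$. Since $h(\widehat{p}_j) = \widehat{q}_j$ and the points $\widehat{q}_j$ are pairwise distinct, $p_j \cap \widetilde{p}_i = \emptyset$ for $j \neq i$; combined with $\bar{V} \cap \inter(p_i) = \emptyset$ (which follows from $V$ being open and disjoint from $p_i$), we obtain $\bar{V} \cap p_i \subset \partial p_i$, and therefore $a, b \notin \bigcup_j p_j$. Next, repeating the circle argument from the proof of Lemma \ref{lemma:h_continuous} with center $a$ (respectively $b$), I obtain a sequence of radii $r_n \to 0$ along which $\diam(H(S(a, r_n) \cap \widetilde{X})) \to 0$ and analogously at $b$. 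Combined with the continuity of $g_i = \pi_{Y; q_i} \circ H$ on $\widetilde{X}$ and the injectivity of $\pi_{Y; q_i}$ on $q_i$, this yields well-defined limits $H^*(a), H^*(b) \in q_i$ satisfying $g_i(a) = \pi_{Y; q_i}(H^*(a))$ and analogously for $b$ (any sequence $x_n \in S(a, r_n) \cap \Omega_i \cap \widetilde{X}$ satisfies $H(x_n) \to H^*(a)$). It therefore suffices to show $H^*(a) = H^*(b)$.

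For the main step, fix $r_0 < \sigma(a,b)/4$ small enough that $B(a, r_0) \cup B(b, r_0)$ is disjoint from $p_i$. For $r < \sigma(a,b)/2$, the connectedness of $V$ together with $a, b \in \bar{V}$ forces both $S(a,r) \cap V$ and $S(b,r) \cap V$ to be nonempty; otherwise, say, $V \cap B(a,r)$ would be a proper clopen subset of $V$. For generic $r_n \in (r_0/2, r_0)$, I then pick $x_n \in S(a, r_n) \cap \Omega_i \cap \widetilde{X}$, $x_n^* \in S(a, r_n) \cap V$, and analogously $y_n, y_n^*$, and define $\gamma_n$ as the concatenation of an arc of $S(a, r_n)$ from $x_n$ to $x_n^*$, a path $\beta_n$ in $V$ from $x_n^*$ to $y_n^*$, and an arc of $S(b, r_n)$ from $y_n^*$ to $y_n$. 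The transboundary upper gradient inequality for $h$, applied to $\gamma_n$ (whose endpoints $x_n, y_n$ lie in $\widetilde{X}$), gives
\begin{align*}
\sigma(H(x_n), H(y_n)) \leq \int_{\gamma_n} \rho_h\, ds + \sum_{j:\, p_j \cap |\gamma_n| \neq \emptyset} \diam(q_j).
\end{align*}
By Remark \ref{remark:packing_conformal} I may take $\rho_h \equiv 0$ on $\widetilde{p}_i$, and since $\beta_n \subset V \subset \widetilde{p}_i$ avoids every $p_j$ (using $p_j \cap \widetilde{p}_i = \emptyset$ for $j \neq i$ and $V \cap p_i = \emptyset$), only the two circular arcs contribute to the right side. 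Applying Lemma \ref{lemma:fubini} with $\lambda_j = \diam(q_j)$ for $j \neq i$ and $\lambda_i = 0$ bounds that right side, for generic $r_n$, by
\begin{align*}
c(\tau)\Bigl( \int_{B(a, r_0)\cup B(b, r_0)} \rho_h^2\, d\Sigma + \sum_{j:\, p_j \cap (B(a, r_0)\cup B(b, r_0)) \neq \emptyset} \diam(q_j)^2 \Bigr)^{1/2},
\end{align*}
which tends to $0$ as $r_0 \to 0$ because $\rho_h \in L^2(\widehat{\C})$, $\{\diam(q_j)\} \in \ell^2(\N)$, and $a, b \notin \bigcup_j p_j$. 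Letting $r_0 \to 0$ with $r_n \to 0$ accordingly, and using $H(x_n) \to H^*(a)$, $H(y_n) \to H^*(b)$, I conclude $\sigma(H^*(a), H^*(b)) = 0$, hence $g_i(a) = g_i(b)$.

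The main obstacle will be ensuring that $\gamma_n$ lies outside the exceptional family $\Gamma_0$ of the transboundary upper gradient of $h$, so the inequality above actually applies. I would handle this using Lemma \ref{lemma:avoid} with $x = a$, $y = b$, after enlarging $\Gamma_0$ to a curve family closed under subcurves (which still has $\md_2 = 0$ by Remark \ref{remark:modulus}); this produces a modulus-zero family $\Gamma_1$ of ``middle paths'' to avoid. Choosing $\beta_n$ outside $\Gamma_1$ (which is possible since the curve family in $V$ connecting $x_n^*$ and $y_n^*$ has positive modulus) and restricting to radii $r_n$ in the intersection of the full-measure sets given by Lemmas \ref{lemma:lipschitz}, \ref{lemma:fubini}, and \ref{lemma:avoid} then makes $\gamma_n$ admissible for the transboundary upper gradient inequality.
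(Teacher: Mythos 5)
Your argument reproduces the structure of the paper's proof: circles around $a$ and $b$, a connecting path through $V$ on which $\rho_h$ vanishes, Lemma \ref{lemma:fubini} to control the circular contributions, and Lemma \ref{lemma:avoid} to dodge the exceptional family. The one genuine flaw is your justification for choosing the middle path $\beta_n$ outside the modulus-zero family $\Gamma_1$: you assert that the family of curves in $V$ connecting the two \emph{specific} points $x_n^*$ and $y_n^*$ has positive modulus. It does not. Every curve of that family passes through the fixed point $x_n^*$ and travels a definite distance from it, hence contains a subcurve joining $S(x_n^*,\varepsilon)$ to a fixed larger circle; since the modulus of that annular family tends to $0$ as $\varepsilon\to 0$, the family of curves joining two prescribed points always has $2$-modulus zero. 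A priori it could therefore be entirely contained in $\Gamma_1$, and the step fails as justified. (There is also a mild circularity in the middle paragraph, where $\beta_n$ is chosen after $r_n$ even though Lemma \ref{lemma:avoid} requires the middle path to be fixed first; your final paragraph reverses the order, but then $\beta_n$ cannot have its endpoints on $S(a,r_n)$, which brings you back to the endpoint problem.)

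The repair is exactly what the paper does: do not prescribe the endpoints of the middle path. For a given $r_0$, choose once and for all a curve $\gamma\subset V$ joining the circles $S(a,r_0/2)$ and $S(b,r_0/2)$ with $\gamma\notin\Gamma_0\cup\Gamma_1$; such a curve is produced not by a positive-modulus count but, e.g., by taking it inside a level set of a Lipschitz function and invoking Lemma \ref{lemma:lipschitz}. Then for a.e.\ $r\in(r_0/2,r_0)$ (in the good set of Lemmas \ref{lemma:fubini} and \ref{lemma:avoid}) one extracts a \emph{simple} path with trace in $|\gamma|\cup S(a,r)\cup S(b,r)$ joining some point $x'\in S(a,r)\cap\widetilde X$ to some point $y'\in S(b,r)\cap\widetilde X$; these endpoints are whatever the construction yields rather than chosen in advance, and your $H^*$ limits (equivalently, the continuity of the extension of $g_i$ at $a$ and $b$ from Lemma \ref{lemma:h_continuous}) absorb this. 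With that modification the rest of your estimate goes through and coincides with the paper's argument.
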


See the shaded regions in Figure \ref{figure:branches}  for a depiction of the components of $\inter(\widetilde p_i\setminus p_i)$. 

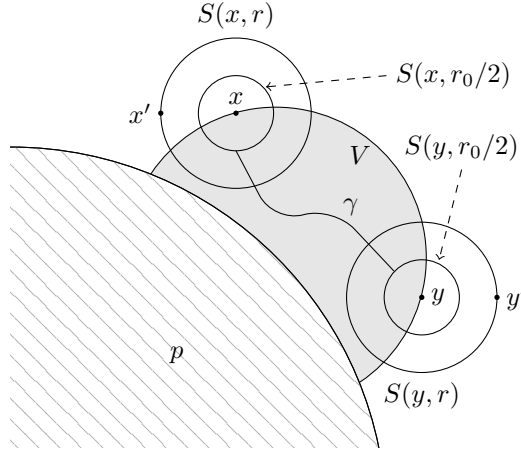
\begin{figure}
\begin{tikzpicture}
	\clip (0,1) rectangle (7,7);
	
	\draw[fill=black!10] (3.53,3.53) circle (2cm);	
	\node at (4.65,4.9) {$V$};	
		
	\draw[fill=white] (0,0) circle (5cm);	
	\draw[pattern={Lines[distance=2mm,angle=135,line width=0.1mm]}, pattern color=black!30] (0,0) circle (5cm) ;
	\node[anchor=south west] at (2,2) {$p$};

	\draw (3,5.45) circle (0.5cm);
	\draw (3,5.45) circle (1cm);
	\fill (3,5.45) circle (1pt) node[anchor=south] {$x$};
	\draw[->, dashed] (5, 5.95)  node[anchor=west] {$S(x,r_0/2)$} -- (3.4,5.8);
	\node[anchor=south] at (3,6.45) {$S(x,r)$};
	\fill (2,5.45) circle (1pt) node[anchor=east]{$x'$};

	\draw (5.47, 3) circle (0.5cm);
	\draw (5.47, 3) circle (1cm);
	\fill (5.47, 3) circle (1pt) node[anchor=west] {$y$};
	\draw[->, dashed] (6, 4.7)  node[anchor=south] {$S(y,r_0/2)$} -- (5.7,3.5);
	\node[anchor=north] at (5.47, 2) {$S(y,r)$};
	\fill (6.47,3) circle (1pt) node[anchor=west]{$y'$};

	\draw[rounded corners=4mm] (3,4.95)-- ( 3.5, 4) -- (4.3, 4.2)node[anchor=west] {$\gamma$} -- (5.1, 3.35);
	
\end{tikzpicture}
\caption{Illustration of the proof of Lemma \ref{lemma:h_constant}.}\label{figure:component}
\end{figure}

\begin{proof}
Note that if $x\in \partial \widetilde p$, then the circle $S(x,r)$ intersects $\widehat{\C}\setminus \widetilde p$, and thus $\widetilde X$, for all sufficiently small $r>0$. Indeed, since $\widetilde p$ is non-separating and $x\in \partial \widetilde p$, if $S(x,r)\subset \widetilde p$, then the complementary component of $S(x,r)$ that does not contain $x$ has to be contained in $\widetilde p$. However, if this occurs for a sequence of $r\to 0$, then we see that $\widetilde p=\widehat{\C}$, a contradiction. 

Let $V$ be a component of $\inter(\widetilde p\setminus p)$ and observe that $\partial V \setminus \partial p\subset  \partial \widetilde p$. We fix $x,y\in \partial V \setminus  \partial p$ and $\varepsilon>0$. Let $r_0>0$ such that $S(x,r)$ and $S(y,r)$ are disjoint from each other, intersect both $V$ and $\widetilde X$, and are disjoint from $p$ for all $r<r_0$. In Lemma \ref{lemma:fubini} we set $\rho=\rho_h$, $\lambda_j=\diam(q_j)$ for $j\neq i$, and $\lambda_i=0$.  If $r_0$ is sufficiently small, then there exists a set $S\subset (r_0/2,r_0)$ that has positive Lebesgue measure such that for all $r\in S$ we have
\begin{align}\label{lemma:h_constant_fubini}
\int_{S(x,r)\cup S(y,r)} \rho_h \, ds+ \sum_{\substack{j:p_j\cap S(x,r)\neq \emptyset\\ j\neq i}} \diam(q_j)+\sum_{\substack{j:p_j\cap S(y,r)\neq \emptyset\\ j\neq i}} \diam(q_j) <\varepsilon.
\end{align}
This uses the fact that $x,y\in \partial \widetilde p$, so $x,y\notin \bigcup_{j\neq i}p_j$ (see also the proof of Lemma \ref{lemma:h_continuous}). 

Consider the curve family $\Gamma_0$ that has $2$-modulus zero as in the transboundary upper gradient inequality of $h$. Also, consider a curve family $\Gamma_1$ with $\md_2\Gamma_1=0$ as in  Lemma \ref{lemma:avoid}. Then we may find (e.g.\ by invoking Lemma \ref{lemma:lipschitz}) a curve $\gamma\notin \Gamma_0\cup \Gamma_1$ contained in the connected open set $V$ and joining the circles $S(x,r_0/2)$ and $S(y,r_0/2)$, so that for a.e.\ $r\in (r_0/2,r_0)$, every simple path $\gamma_r$ whose trace is contained in $|\gamma|\cup S(x,r)\cup S(y,r)$ lies outside $\Gamma_0$. In particular, we may find such a path $\gamma_r$ with $r\in S$ such that $\gamma_r$ connects a point $x'\in S(x,r)\cap \widetilde X$ to a point $y'\in S(y,r)\cap \widetilde X$; see Figure \ref{figure:component}.

By the transboundary upper gradient inequality, we have 
\begin{align*}
\sigma(H(x'),H(y')) \leq \int_{\gamma_r} \rho_h\, ds  + \sum_{\substack{j: p_j\cap |\gamma_r|\neq \emptyset}} \diam(q_j).
\end{align*}
Since $\rho_h=0$ in $V$ (see Remark \ref{remark:packing_conformal}), by \eqref{lemma:h_constant_fubini} we have $\sigma(H(x'),H(y'))<\varepsilon$. If we let $r_0\to 0$ and then $\varepsilon\to 0$, we see that there exist sequences $x_n,y_n\in \widetilde X$ converging to $x,y$, respectively, such that $\sigma(H(x_n),H(y_n))\to 0$.  Therefore 
\begin{align*}
d(g_i(x),g_i(y))&= \lim_{n\to\infty} d( g_i(x_n),g_i(y_n) )\\&=\lim_{n\to\infty} d(\pi_i(H(x_n)) , \pi_i(H(x_n)))=0.
\end{align*}
This shows that $g_i(x)=g_i(y)$ as desired. 
\end{proof}

\begin{corollary}\label{corollary:h_extension}
For each $i\in \N$ the map $g_i\colon \Omega_i\to D_i$ has a unique extension to a continuous and cell-like map from $\widehat \C \setminus \inter(p_i)$ onto $\mathcal E(Y;q_i)\setminus \pi_{Y;q_i}(\inter(q_i))$. The extension is constant on each continuum $E\subset \widetilde p_i \setminus p_i$ and can be further extended to a continuous and cell-like map $g_i\colon \widehat{\C}\to \mathcal E(Y;q_i)$ such that $g_i^{-1}(\pi_{Y;q_i}(\inter(q_i)) )=\inter(p_i)$.
\end{corollary}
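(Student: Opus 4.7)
The plan is to invoke Theorem~\ref{theorem:cell_like_extension} with $\Omega = \Omega_i$, $D = D_i$, $f = g_i$, and $Y = \widehat{\C}\setminus p_i$, after identifying the target sphere $\mathcal E(Y;q_i)$ with $\widehat{\C}$ via Theorem~\ref{theorem:moore_original}. The three hypotheses of that theorem are already supplied by the preceding three lemmas: Lemma~\ref{lemma:h_definition} gives that $g_i$ is a continuous, proper, and cell-like map from a simply connected region onto a Jordan region; Lemma~\ref{lemma:h_continuous} gives the continuous extension $\br{\Omega_i}\to \br{D_i}$; and Lemma~\ref{lemma:h_constant} gives the constancy of $g_i$ on $\partial V\setminus \partial p_i$ for each component $V$ of $\inter(\widetilde p_i\setminus p_i)$, which is precisely the constancy required on $\partial V\setminus \partial Y$ when $Y=\widehat{\C}\setminus p_i$.

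Having matched the hypotheses, I would then check the relevant set identities to translate the conclusion of Theorem~\ref{theorem:cell_like_extension} into the statement of the corollary. The inclusion $\Omega_i\subset Y$ reduces to $p_i\subset \widetilde p_i$, which holds because $g(p_i) = h(\widehat p_i) = \widehat q_i = \pi_Y(q_i)$, whence $p_i\subset g^{-1}(\pi_Y(q_i)) = \widetilde p_i$. Since $p_i$ is closed, $Y\setminus \br{\Omega_i} = \inter(\widetilde p_i)\setminus p_i = \inter(\widetilde p_i\setminus p_i)$, so the components match those in Lemma~\ref{lemma:h_constant}; moreover $\partial Y=\partial p_i$, $\br{Y}=\widehat{\C}\setminus \inter(p_i)$, and $Y\setminus \Omega_i=\widetilde p_i\setminus p_i$. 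On the target side, $\br{D_i} = \mathcal E(Y;q_i)\setminus \pi_{Y;q_i}(\inter(q_i))$ and $\mathcal E(Y;q_i)\setminus \br{D_i} = \pi_{Y;q_i}(\inter(q_i))$, since $\pi_{Y;q_i}$ is a homeomorphism on $q_i$. Theorem~\ref{theorem:cell_like_extension} then yields the unique continuous cell-like extension $g_i\colon \widehat{\C}\setminus \inter(p_i) \to \br{D_i}$, the claimed constancy on continua contained in $\widetilde p_i\setminus p_i$, and the further extension $g_i\colon \widehat{\C}\to \mathcal E(Y;q_i)$ with $g_i^{-1}(\pi_{Y;q_i}(\inter(q_i))) = \inter(p_i)$.

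The main obstacle is the degenerate case where $p_i$ is a point, since then $Y=\widehat{\C}\setminus p_i$ fails to be a Jordan region and Theorem~\ref{theorem:cell_like_extension} cannot be applied as stated. I would dispatch this case by hand: the requirement that the final extension be cell-like, hence surjective onto $\mathcal E(Y;q_i)$ by Lemma~\ref{lemma:cell_like}~\ref{lemma:cell_like:surjective}, together with the preimage condition $g_i^{-1}(\pi_{Y;q_i}(\inter(q_i)))=\emptyset$ forces $q_i$ to be a point as well, so $\br{D_i}=\mathcal E(Y;q_i)$. Lemma~\ref{lemma:h_continuous} then already extends $g_i$ continuously to $\br{\Omega_i}=\widehat{\C}\setminus \inter(\widetilde p_i)$, and I would extend further to $\widehat{\C}$ by declaring $g_i\equiv \pi_{Y;q_i}(q_i)$ on $\widetilde p_i$; this is consistent with the boundary values since $g$ maps $\widetilde p_i$ to the single point $\pi_Y(q_i)$, cell-like because $g_i^{-1}(\pi_{Y;q_i}(q_i))=\widetilde p_i$ is a non-separating continuum while the remaining point-preimages lie in $\Omega_i$ and inherit cell-likeness there, and unique by the monotonicity arguments underlying Lemma~\ref{lemma:cell_like_extension}.
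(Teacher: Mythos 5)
Your main argument is exactly the paper's: after Lemmas \ref{lemma:h_definition}, \ref{lemma:h_continuous}, and \ref{lemma:h_constant}, invoke Theorem \ref{theorem:cell_like_extension} with $Y=\widehat{\C}\setminus p_i$, and your verification of the set identities ($Y\setminus \br{\Omega_i}=\inter(\widetilde p_i\setminus p_i)$, $\br{D_i}=\mathcal E(Y;q_i)\setminus \pi_{Y;q_i}(\inter(q_i))$, and so on) just makes explicit what the paper leaves implicit. That part is fine.

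The genuine flaw is in your treatment of the degenerate case. You argue that cell-likeness of the final extension and the preimage condition $g_i^{-1}(\pi_{Y;q_i}(\inter(q_i)))=\inter(p_i)$ ``force'' $q_i$ to be a point whenever $p_i$ is; but these are properties of the map you are still trying to construct, so you cannot use them as hypotheses --- the inference is circular. The matching of degeneracy types is an external input: in the paper's applications it is supplied by Theorem \ref{theorem:uniformization_full}~\ref{item:nondegenerate} (equivalently Theorem \ref{theorem:main}~\ref{theorem:main:set}), and the paper's own proof instead splits on whether $q_i$ is a point, declaring that case trivial (then $g$ collapses all of $\widetilde p_i$ to the single point $\pi_Y(q_i)$ and one extends by that constant, essentially as you do) and tacitly taking $p_i$ non-degenerate whenever $q_i$ is. Note also that your case split leaves uncovered the configuration where $q_i$ is a point but $p_i$ is a closed Jordan region: there $D_i$ is the complement of a point rather than a Jordan region, so Theorem \ref{theorem:cell_like_extension} does not apply, yet your ``main'' branch silently assumes $D_i$ is a Jordan region. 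To repair this, either record the assumption (valid in all applications) that $p_i$ is degenerate if and only if $q_i$ is, and split on that single dichotomy, or treat the two mixed configurations separately without appealing to the conclusion of the corollary.
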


\begin{proof}
If $q$ is a point then the extension is trivial, so assume that $q$ is a closed Jordan region. By Lemma \ref{lemma:h_continuous}, $g_i$ extends to a continuous map from $\br\Omega$ onto $\br D$. By Lemma \ref{lemma:h_constant}, for each component $V$ of 
$$\inter(\widetilde p\setminus p)= \inter(\widetilde p)\setminus p= (\widehat \C  \setminus p) \setminus \br \Omega $$
the map $g_i|_{\partial V\setminus \partial p}$ is constant. We now apply  Theorem \ref{theorem:cell_like_extension} (with $Y= \widehat \C\setminus p$) to obtain the desired conclusions.
\end{proof}

\subsection{Patching continuous extensions}\label{section:patching}

\begin{corollary}\label{corollary:extension}
The map $H\colon \widetilde X\to Y$ extends to a continuous, surjective, and monotone map $H\colon \widehat \C\to  \widehat \C$ such that $\pi_Y\circ H=h\circ \pi_X$ and $H^{-1}(\inter(q_i))=\inter(p_i)$ for each $i\in \N$. The extension is constant on each continuum $E\subset \widetilde p_i\setminus p_i$, $i\in \N$.
\end{corollary}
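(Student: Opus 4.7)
The map $H$ is already defined on $\widetilde X = \widehat{\C} \setminus \bigcup_i \widetilde p_i$ by Lemma \ref{lemma:H_definition}. I would extend it to each $\widetilde p_i$ using the map $g_i \colon \widehat{\C} \to \mathcal E(Y;q_i)$ furnished by Corollary \ref{corollary:h_extension}. The key preliminary observation is that, via the natural projection $\lambda_i \colon \mathcal E(Y;q_i) \to \mathcal E(Y)$ collapsing $\pi_{Y;q_i}(q_i)$ to the point $\pi_Y(q_i)$, one has $\lambda_i \circ g_i = g := h \circ \pi_X$ globally (checking this on $\widetilde X$ is immediate from Lemma \ref{lemma:h_definition}; on each $\widetilde p_j$ it follows by continuity, using that $g_i$ equals the constant $\pi_{Y;q_i}(q_j)$ there). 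Consequently $g_i^{-1}(\pi_{Y;q_i}(q_i)) = g^{-1}(\pi_Y(q_i)) = \widetilde p_i$. Since $q_i$ is disjoint from the other $q_j$'s, the map $\pi_{Y;q_i}$ restricts to a continuous injection of the compact set $q_i$, hence to a homeomorphism $q_i \to \pi_{Y;q_i}(q_i)$. I may therefore define
\[
H(x) \,:=\, \pi_{Y;q_i}^{-1}\bigl(g_i(x)\bigr) \,\in\, q_i, \qquad x \in \widetilde p_i,
\]
producing a well-defined map $H \colon \widehat{\C} \to \widehat{\C}$.

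\textbf{Main identities and their consequences.} By direct verification, the two identities $\pi_Y \circ H = g$ and $\pi_{Y;q_i} \circ H = g_i$ hold on all of $\widehat{\C}$, for every $i$ (on $\widetilde X$ from the original definition of $H$; on $\widetilde p_i$ by the construction above; on $\widetilde p_j$ with $j \neq i$ because both sides equal the constant $\pi_{Y;q_i}(q_j)$). From these, all remaining properties come almost for free. Surjectivity follows from $H(\widetilde X) = Y$ combined with $H(\widetilde p_i) = q_i$, where the latter uses the surjectivity of $g_i$ onto $\mathcal E(Y;q_i)$ and the identity $g_i^{-1}(\pi_{Y;q_i}(q_i)) = \widetilde p_i$. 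Monotonicity reduces to expressing $H^{-1}(y)$ as either $\pi_X^{-1}(h^{-1}(\pi_Y(y)))$ for $y \in Y$, or $g_i^{-1}(\pi_{Y;q_i}(y))$ for $y \in q_i$; both are continua by Lemma \ref{lemma:cell_like}\ref{lemma:cell_like:continuum} and the cell-likeness of $h$ and $g_i$. The equality $H^{-1}(\inter(q_i)) = \inter(p_i)$ is inherited from the corresponding property of $g_i$ in Corollary \ref{corollary:h_extension}, and constancy of $H$ on any continuum $E \subset \widetilde p_i \setminus p_i$ follows directly from the analogous statement for $g_i$.

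\textbf{Continuity, the only nontrivial step.} Given $x_n \to x$ in $\widehat{\C}$, the identity $\pi_Y \circ H = g$ together with Lemma \ref{lemma:distance_convergence}\ref{lemma:distance:i} applied to $\pi_Y$ forces every subsequential limit of $H(x_n)$ to lie in $\pi_Y^{-1}(\pi_Y(H(x)))$, which is either $\{H(x)\}$ (if $H(x)\in Y$, yielding at once $H(x_n)\to H(x)$) or a full peripheral continuum $q_i$ (if $H(x) \in q_i$). In the latter case, any subsequential limit $w \in q_i$ satisfies
\[
\pi_{Y;q_i}(w) \,=\, \lim_{n}\pi_{Y;q_i}(H(x_n)) \,=\, \lim_n g_i(x_n) \,=\, g_i(x) \,=\, \pi_{Y;q_i}(H(x)),
\]
and injectivity of $\pi_{Y;q_i}$ on $q_i$ forces $w = H(x)$. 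I expect this continuity check at boundary points $x \in \partial \widetilde p_i$ to be the only genuine obstacle: there the approaching sequence may simultaneously visit $\widetilde X$ and various other $\widetilde p_j$, and one needs a mechanism that is blind to which of these regimes $x_n$ is in. Pulling back through the two global identities $\pi_Y \circ H = g$ and $\pi_{Y;q_i} \circ H = g_i$ provides exactly such a mechanism.
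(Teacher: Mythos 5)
Your proposal is correct and follows essentially the same route as the paper: define $H=\pi_{Y;q_i}^{-1}\circ g_i$ on each $\widetilde p_i$ using the extension from Corollary \ref{corollary:h_extension}, verify the global identities $\pi_Y\circ H=h\circ\pi_X$ and $\pi_{Y;q_i}\circ H=g_i$, and deduce continuity via Lemma \ref{lemma:distance_convergence} together with the injectivity of the relevant projection near $H(x)$. The auxiliary projection $\lambda_i\colon\mathcal E(Y;q_i)\to\mathcal E(Y)$ you introduce is a harmless bookkeeping device; all remaining properties are obtained exactly as in the paper.
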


\begin{proof}
We apply Corollary \ref{corollary:h_extension} so we obtain appropriate extensions of $g_i$ for each $i\in \N$. For each $i\in \N$, we extend $H$ to $\widetilde p_{i}$ by setting $H=\pi_{Y;q_{i}}^{-1} \circ g_i$; note that $\pi_{Y;q_i}^{-1}$ is injective on $g_i(\widetilde p_{i}) = \pi_{Y;q_i}(q_i)$. We observe that the relation $\pi_{Y;q_{i}}\circ H=  g_i$ (which is already valid on $\widetilde X$ by the definition of $g_i$) holds on all of $\widehat{\C}$. 

This extension is surjective onto $\widehat{\C}$. Moreover, the preimage of each point is a continuum; this follows from the facts that the maps $H|_{\widetilde X}\colon \widetilde X\to Y$ and $g_i|_{\widetilde p_i}\colon \widetilde p_i\to  {q_i}$ are cell-like. If $E\subset \widetilde p_i\setminus p_i$ is a continuum, then $g_i|_{E}$ is constant. Thus, $H$ is constant on $E$, as claimed. The relation $\pi_Y\circ H=h\circ \pi_X$, which holds on $\widetilde X$ extends automatically to $\widehat{\C}$, since $H( \widetilde p_i) = q_i$ for each $i\in \N$.

We finally argue that $H$ is continuous. Suppose first that $x\in \widetilde X$. If $x_n\in \widehat{\C}$ is a sequence converging to $x$, then $\pi_Y(H(x_n))= h(\pi_X(x_n))$ converges to $h(\pi_X(x))$ by continuity. By Lemma \ref{lemma:distance_convergence}, the point $H(x_n)\in \pi_Y^{-1} (\pi_Y(H(x_n)))$ is contained in arbitrarily small neighborhoods of $\pi_Y^{-1}(h(\pi_X(x)))=H(x)$ as $n\to\infty$.  Next, suppose that $x\in \widetilde p_{i}$ for some $i\in \N$ and consider a sequence $x_n\in \widehat{\C}$ converging to $x$. The sequence $\pi_{Y;q_{i}}( H(x_n))=  g_i(x_n)$ converges by continuity to $g_i(x)$. Using again Lemma \ref{lemma:distance_convergence}, we see that $H(x_n)$ is contained in arbitrarily small neighborhoods of $\pi_{Y;q_i}^{-1} (g_i(x) )= H(x)$ as $n\to\infty$.  
\end{proof}

\begin{proof}[Proof of Theorem \ref{theorem:continuous_extension}]
Consider the extension of $H$ as in Corollary \ref{corollary:extension}. It remains to show that $H$ satisfies the transboundary upper gradient inequality, as stated in Theorem \ref{theorem:continuous_extension}. Suppose that $h$, $\rho_h$, and $\Gamma_0$ are as in Definition \ref{definition:packing_quasiconformal} and let $\rho_H=\rho_h$. By enlarging the curve family $\Gamma_0$, we may assume that it still has conformal modulus zero and all subcurves of every curve $\gamma\notin \Gamma_0$ also satisfy the transboundary upper gradient inequality (see Remark \ref{remark:modulus}). We fix a curve $\gamma\colon [a,b] \to \widehat{\C}$ outside $\Gamma_0$.

If $\gamma(a)$ and $\gamma(b)$ lie in $p_i$ for some $i\in \N$, then $H(\gamma(a)),H(\gamma(b))\in q_i$, so $\sigma( H(\gamma(a)), H(\gamma(b)) )\leq \diam(q_i)$. In this case there is nothing to show. 

Suppose that  $\gamma(a)$ and $\gamma(b)$ do not lie on the same  peripheral continuum. Then there exists an open subpath $\gamma_1=\gamma|_{(a_1,b_1)}\colon (a_1,b_1)\to \widehat{\C}$ of $\gamma$ that does not intersect the peripheral continua that possibly contain $\gamma(a)$ or $\gamma(b)$, and the points $\gamma(a_1),\gamma(b_1)$ lie on the boundaries of the  peripheral continua that possibly contain $\gamma(a),\gamma(b)$, respectively. It suffices to show the desired inequality for the open path $\gamma_1$, in view of the previous case where the endpoints lie in the same peripheral continuum. 

There exists a further subpath $\gamma_2= \gamma|_{(a_2,b_2)}$ of $\gamma_1$ such that $\gamma((a_1,a_2])$ (resp.\ $\gamma([b_2,b_1))$) is contained in some set $\widetilde p_i$, $i\in \N$, and $\gamma(a_2)$ (resp.\ $\gamma(b_2)$) can be approximated by points in $|\gamma_2|\cap H^{-1}(Y)$. Note that these conditions could be vacuously true and we could have $\gamma_2=\gamma_1$. By Corollary \ref{corollary:extension}, $H$ is constant on $\gamma((a_1,a_2])$ and on $\gamma([b_2,b_1))$. Therefore, it suffices to show the desired inequality for the open path $\gamma_2$, which has the property that its endpoints can be approximated by points of $|\gamma_2|\cap H^{-1}(Y)$. 

We may find parameters $a_3>a_2$ and $b_3<b_2$ so that $\gamma(a_3),\gamma(b_3)\in H^{-1}(Y)$ and $\gamma(a_3),\gamma(b_3)$ are arbitrarily close to $\gamma(a_2),\gamma(b_2)$, respectively.  Since $\pi_Y$ is injective on $Y$, we have $H=\pi_Y^{-1}\circ h\circ \pi_X$ on $H^{-1}(Y)$. The transboundary upper gradient inequality of $h$ now gives 
\begin{align*}
\sigma( H(\gamma(a_3)), H(\gamma(b_3)) )\leq \int_{\gamma_2}\rho_H\, ds + \sum_{i:p_i\cap |\gamma_2|\neq \emptyset} \diam(q_i).
\end{align*}
By letting $\gamma(a_3)\to \gamma(a_2)$ and $\gamma(b_3)\to \gamma(b_2)$, using the continuity of $H$ we obtain the same inequality for $\sigma( H(\gamma(a_2)), H(\gamma(b_2)) )$. 
\end{proof}

\begin{proof}[Proof of Proposition \ref{proposition:constant}]
We fix $i\in \N$ and we use the same notational conventions as in the proof of Lemma \ref{lemma:h_definition}. By Lemma \ref{lemma:h_definition}, there exists a continuous, proper, and cell-like map $g_i$ from $\Omega$ onto $D$ such that 
$$g_i=\pi_i\circ \pi_Y^{-1}\circ g=\pi_i\circ \pi_Y^{-1}\circ h\circ \pi_X .$$
The relation $\pi_Y\circ H=h\circ \pi_X$ implies that on the set $\widetilde X$ we have $H=\pi_Y^{-1}\circ h\circ \pi_X$ and that $H(\widetilde p_j)=q_j$ for each $j\in \N$. Thus, $g_i= \pi_i\circ H$ on all of $\Omega$. 

Note that the continuous, surjective, and monotone map $H\colon \widehat{\C}\to \widehat{\C}$ is also cell-like (see the comments in Section \ref{section:topological}). This fact and Lemma \ref{lemma:cell_like} \ref{lemma:cell_like:set} imply that the map $\pi_i\circ H$ is continuous and cell-like from $\widehat{\C}$ onto $\mathcal E(Y;q)$.  This shows that $g_i$ extends to a continuous and cell-like map  from $\widehat{\C}$ onto $\mathcal E(Y;q)$.  Since $H^{-1}(\inter(q))=\inter(p)$, we have $g_i^{-1}( \pi_i(\inter(q)))=\inter(p)$. Hence $g_i$ is continuous and cell-like from $\widehat \C\setminus \inter(p)$ onto $\mathcal E(Y;q)\setminus \pi_i(\inter(q))$. By Corollary \ref{corollary:h_extension}, this extension is unique and has the property that it  is  constant on each continuum $E\subset \widetilde p \setminus p$. The desired property of $H$ follows. 
\end{proof}

\section{Homeomorphic extension}

\begin{theorem}\label{theorem:homeomorphic:packing}
Let $X,Y$ be Sierpi\'nski packings and $h\colon \mathcal E(X)\to \mathcal E(Y)$ be a packing-$K$-quasiconformal map for some $K\geq 1$ as in the statement of Theorem \ref{theorem:continuous_extension}. If $Y$ is cofat, then the map $H$ of Theorem \ref{theorem:continuous_extension} is a homeomorphism from $\br X$ onto $\br Y$. In particular, it can be extended to a homeomorphism of $\widehat{\C}$. 
\end{theorem}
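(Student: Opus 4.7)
The plan is to show injectivity of $H$ on $\br X$; combined with the continuity and surjectivity provided by Theorem \ref{theorem:continuous_extension}, this upgrades $H|_{\br X}\colon \br X\to \br Y$ to a homeomorphism between compact Hausdorff spaces. The extension to a homeomorphism of $\widehat\C$ then follows by Schoenflies: since $H|_{\partial p_i}\colon \partial p_i\to\partial q_i$ is a homeomorphism of Jordan curves, it extends to a homeomorphism $\br p_i\to \br q_i$, and replacing $H|_{\inter p_i}$ by such an extension for each $i$ produces a sphere homeomorphism agreeing with $H$ on $\br X$.

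I would decompose injectivity of $H|_{\br X}$ into three sub-claims: (a) no branches, $\widetilde p_i := H^{-1}(q_i)=p_i$ for every $i$; (b) no collapse over $Y$, $H^{-1}(y)$ is a singleton for every $y\in Y$; and (c) boundary injectivity, $H|_{\partial p_i}\colon \partial p_i\to \partial q_i$ is injective. Claims (a) and (b) both reduce to showing that $h\colon \mathcal E(X)\to\mathcal E(Y)$ collapses no non-degenerate continuum in $\mathcal E(X)$ to a point: a branch makes $\pi_X(\widetilde p_i)$ a non-degenerate continuum sent by $h$ to $\{\widehat q_i\}$ (its extra points live in $\widehat X$ because $\pi_X$ is injective on $X$), and a non-degenerate fiber $H^{-1}(y)\subset \widetilde X$ for $y\in Y$ projects under $\pi_X$ to a non-degenerate continuum in $\widehat X$ sent to $\{\widehat y\}$.

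To establish this non-collapse property, I would run the argument of Lemma \ref{lemma:nocollisions} in reverse, interchanging the roles of $X$ and $Y$. Given two disjoint non-degenerate subcontinua $\widehat F_1,\widehat F_2$ of a hypothetical collapsed continuum, the approximating conformal maps $g_n=f_n^{-1}\colon X_n\to Y_n$ from the construction underlying Corollary \ref{corollary:uniformization_disk} send $F_1,F_2$ to continua in $\widehat\C$ that collide at a common point $y$ on the $Y$-side. By Lemma \ref{lemma:noclustering} applied to the cofat packing $\{q_i\}$, there exists $i_0$ with $q_{i_0}$ non-degenerate and $y\notin q_{i_0}$; then a connector $G_n$ joining $g_n^*(F_1)$ and $g_n^*(F_2)$ that shrinks to $y$ has $\Delta(q_{i_0}, G_n\setminus\bigcup_{j\in J_n}q_j)\to\infty$, so cofatness of $Y_n$ (inherited from $Y$) and Bonk's collision lemma (Lemma \ref{lemma:bonkcollisions}) applied in $Y_n$ force the transboundary modulus in $\mathcal E(Y_n)$ of the corresponding path family to zero. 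On the other hand, pulling back by $f_n$ and applying the non-degeneracy lemma (Lemma \ref{lemma:l2collision_final}) to the cofat packing $X$ (whose peripheral diameters are $\ell^2$-summable, since $\Sigma(p_i)\gtrsim_\tau \diam(p_i)^2$ and the $p_i$ are disjoint) yields a uniform positive lower bound on the pulled-back transboundary modulus in $\mathcal E(X_n)$, and the conformal invariance of transboundary modulus (Lemma \ref{lemma:transboundary_invariance}) under $f_n$ produces the contradiction. Claim (c) is recovered from (a), (b), and the extension machinery of Corollary \ref{corollary:h_extension}: under (a), $g_i$ is a continuous cell-like map $\widehat\C\to \mathcal E(Y;q_i)$; under (b), it is injective on $X$ and collapses each $p_j$ ($j\neq i$) to a single point $\widehat q_j$; the only possible further non-injectivity lies on $\partial p_i$, and this is ruled out by a direct analogue of the modulus argument above with a hypothetical collapsed arc of $\partial p_i$ (a non-degenerate continuum in $\br X$) playing the role of $F_1$ and a continuum in $X$ bridging its two sides playing the role of $F_2$.

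The main obstacle is executing the symmetric analogue of Lemma \ref{lemma:nocollisions}; the key observation that makes the reversal possible is that a cofat Sierpi\'nski packing automatically has $\ell^2$-summable peripheral diameters, so cofatness of $Y$ supplies precisely the ingredient the original argument extracted from cofatness of $X$. The delicate technical point is that Bonk's collision lemma requires a finitely connected cofat domain, so one must work with the approximations $Y_n$ rather than $Y$ itself and invoke the uniform cofatness that $Y_n$ inherits from $Y$; this is exactly why the homeomorphism conclusion requires cofatness of $Y$ while Theorem \ref{theorem:continuous_extension} alone does not.
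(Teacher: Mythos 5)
Your reduction of the theorem to injectivity of $H$ on $\br X$, and the Schoenflies extension once $H|_{\br X}$ is a homeomorphism, match the paper's opening step. But the core of your argument has a genuine gap: it proves (at best) a strictly weaker statement than the one asserted.

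Theorem \ref{theorem:homeomorphic:packing} is stated for an \emph{arbitrary} packing-$K$-quasiconformal map $h\colon\mathcal E(X)\to\mathcal E(Y)$ satisfying the hypotheses of Theorem \ref{theorem:continuous_extension}. The only analytic data available are the transboundary weak upper gradient $\rho_h$ and the quasiconformality (mass) inequality. Your argument, however, is built entirely on the approximating conformal maps $f_n\colon Y_n\to X_n$ and $g_n=f_n^{-1}$ from the construction underlying Theorem \ref{theorem:uniformization_full}: you invoke Lemma \ref{lemma:bonkcollisions} in $Y_n$, Lemma \ref{lemma:l2collision_final} pulled back by $f_n$, and conformal invariance of transboundary modulus under $f_n$. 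None of these objects exist in the hypotheses of the theorem; there is no assumption that $h$ arises as a limit of conformal maps, and no way to manufacture such approximations from the definition of packing-quasiconformality. So the ``reverse Lemma \ref{lemma:nocollisions}'' strategy cannot even be set up in the stated generality. (Even in the restricted setting where $h$ does come from the limit construction, you would face additional issues: the analogue of Case 1 of Lemma \ref{lemma:nocollisions} requires locally uniform convergence of the $g_n$, which are defined on the \emph{varying} domains $X_n$ rather than on a fixed domain; and for claim (a) the ``collision'' of $g_n^*(F_1)$ and $g_n^*(F_2)$ occurs along the whole continuum $q_i$ rather than at a point, so the relative-distance blow-up needs to be re-derived.)

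The paper's proof avoids all of this by arguing intrinsically with $H$ and $\rho_H$. Assuming some fiber $E=H^{-1}(w_0)$, $w_0\in\br Y$, is a non-degenerate continuum, it produces (Claim \ref{claim:homeomorphic}) for a range of radii $t\in[\delta/2,\delta]$ circular arcs $\beta_t\subset S(z_0,t)$ joining $E$ to $\partial H^{-1}(B(w_0,r))$ inside $H^{-1}(B(w_0,r))\setminus E$. Applying the transboundary upper gradient inequality along $\beta_t$, integrating over $t$ via the co-area inequality, and using Cauchy--Schwarz yields $\delta r\lesssim A(r)B(r)$ with $A(r)\lesssim r$ and $B(r)\to 0$. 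The cofatness of $Y$ enters exactly once and decisively: via Lemma \ref{lemma:radial} it gives $\lambda(q_i)^2\lesssim\Sigma(q_i\cap B(w_0,r))$, hence $\sum_{i:q_i\cap B(w_0,r)\neq\emptyset}\lambda(q_i)^2\lesssim r^2$, which is what makes $A(r)\lesssim r$. Your heuristic that ``cofatness of $Y$ supplies the $\ell^2$ ingredient'' points in roughly the right direction, but the mechanism in the proof is this quantitative radial estimate inside a single small ball, not a global $\ell^2$ bound fed into Lemma \ref{lemma:l2collision_final}. To repair your proposal you would need to replace the entire conformal-approximation machinery with a direct length--area argument of this kind.
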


\begin{proof}
Assume that $H$ maps $\br X$ homeomorphically onto $\br Y$.  From Theorem \ref{theorem:continuous_extension} we have $H^{-1}(\inter(q_i))=\inter(p_i)$, which implies that $H(\partial p_i)=\partial q_i$.  For each $i\in \N$ we replace $H|_{p_i}$ with a homeomorphism onto $q_i$ to obtain a global homeomorphism of $\widehat{\C}$; it is important here that $\diam(p_i)\to 0$ and $\diam(q_i)\to 0$ as $i\to\infty$.

We now show that for each point $w_0\in \br Y$ the set $H^{-1}(w_0)\subset \br X$ is a singleton. Suppose that there exists $w_0\in \br Y$ such that $E=H^{-1}(w_0)$ is a non-degenerate continuum. Based on the following claim, which we prove afterwards, we complete the proof of Theorem \ref{theorem:homeomorphic:packing}. See Figure \ref{figure:homeo} for an illustration. 

\begin{claim}\label{claim:homeomorphic}
There exists $\delta>0$, $r_0>0$, and a point $z_0\in E$ such that for all $t\in [\delta/2,\delta]$ and for all $r\in (0,r_0)$ the circle $S(z_0,t)$ has an open arc $\beta_t\colon (a,b)\to \widehat{\C}$ such that $\beta_t(a)\in E$, $\beta_t(b)\in \partial H^{-1}(B(w_0,r))$, and $\beta_t((a,b))\subset H^{-1}(B(w_0,r))\setminus E$. Moreover, if $w_0\in q_j$ for some $j\in \N$, then $\beta_t((a,b))\cap p_j=\emptyset$. 
\end{claim}

For fixed $r>0$, let $I(r)=\{ i\in \N: q_i\cap S(w_0,r)\neq \emptyset\}$.  If $i\in I(r)$, we define $\lambda(q_i)= \mathcal H^1(\{s\in (0,r): q_i\cap S(w_0,s)\neq \emptyset\})$ and otherwise we define $\lambda(q_i)=\diam(q_i)$. By the fatness of $q_i$, $i\in \N$, and Lemma \ref{lemma:radial}, we have
\begin{align}\label{theorem:homeomorphism:lambda}
\lambda(q_i)^2 \leq C \Sigma(q_i\cap B(w_0,r)) 
\end{align}
whenever $q_i\cap B(w_0,r)\neq \emptyset$. 

\begin{figure}
\begin{tikzpicture}

	\begin{scope}[scale=1.5]
		\draw (1,1) node[anchor=south] {$E$} to[out=-90, in=90] (0,0) to[out=-90,in=90] (-1,-1);
		\draw[rounded corners=10,dashed] (1.5,1.5)-- (1,0) -- (-0.5,-1.5) node[anchor=north] {$H^{-1}(B(w_0,r))$} -- (-1.5,-1)-- (0,1)--cycle;

		\fill[black] (0,0) circle (0.7pt) node[anchor=west] {$z_0$};
		\draw[line width=2pt, blue] (0,0) circle (1cm);
		\node [anchor=west] at (1,-0.3) {$S(z_0,t)$};		
		\draw [red,line width=1pt,domain=220:270] plot ({cos(\x)}, {sin(\x)});		
		
		\begin{scope}[xshift=-0.4cm, yshift=-1.4cm]
		\draw[decorate,decoration={brace,amplitude=5pt},rotate=164]
(-0.1,-0.4) -- (0.6,-0.5);
		\end{scope}
		
		\node at (-0.6,-1.15) {$\beta_t$};
		
	\end{scope}
	
	\begin{scope}[scale=1.5, xshift=5cm]
		\begin{scope}[xshift=1cm, yshift=-0.9cm]
		\draw[rotate=45, fill=black!7] (2,2) ellipse (1cm and 0.5cm);
		\end{scope}
		\draw[dashed] (0,0) circle (1.8cm);
		\draw[dashed] (0,0) circle (1.2cm);
		\node at (1.4,2.3) {$q_{i_0}$};		
		
		\node[anchor=north] at (0,-1.8) {$S(w_0,r)$};
		\node[anchor=south] at (0,-1.2) {$S(w_0,r_1)$};
		
		\draw[rounded corners=5,red, line width=2pt] (0,0)-- (0.5,0.2) -- (0.5,0.5)-- (0.3, 0.8)-- (0.4, 1.4)-- (0, 1.8);
		
		\begin{scope}
		\clip (0,0) rectangle (0.6,1.15);
		\draw[rounded corners=5,green, line width=1pt] (0,0)-- (0.5,0.2) -- (0.5,0.5)-- (0.3, 0.8) -- (0.4, 1.4)-- (0, 1.8);
		\end{scope}		
		
		\draw[fill] (0,0) circle (0.7pt) node[anchor=east] {$w_0$};
		\draw[fill] (0.36, 1.15) circle (0.7pt);
		\node[anchor=west] at (0.4,0.1) {$H\circ \gamma_t$};
	\end{scope}

\end{tikzpicture}
\caption{Left: the circle $S(z_0,t)$ and the arc $\beta_t$ (red). Right: the path $H\circ \beta_t$ (red) and the subpath $H\circ \gamma_t$ (green) that connects $w_0$ to a point on $q_{i_0}\cap S(w_0,r_1)$.}\label{figure:homeo}
\end{figure}
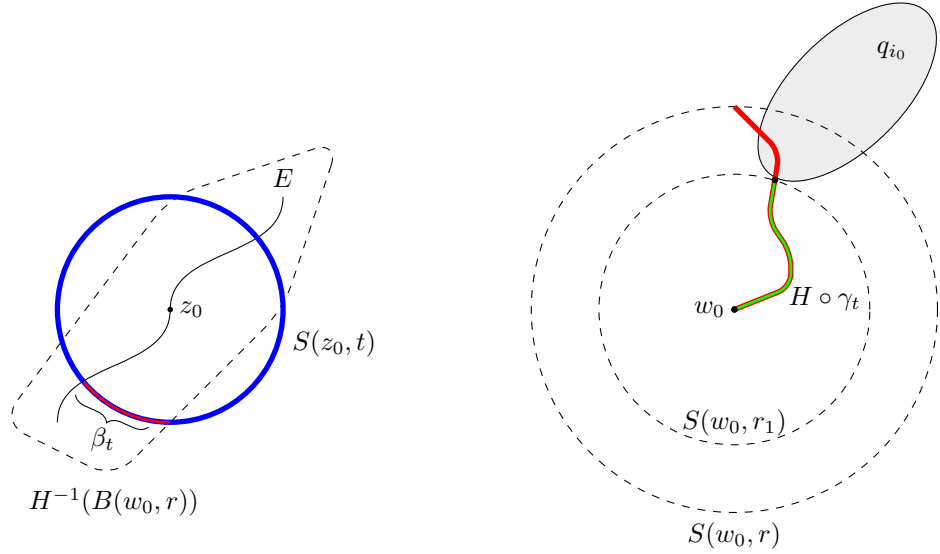

Suppose that $H\circ \beta_t$ does not intersect any $q_i$, $i\in I(r)$. That is, if $q_i\cap |H\circ \beta_t|\neq \emptyset$, then $q_i\subset B(w_0,r)$. By Lemma \ref{lemma:lipschitz}, the transboundary upper gradient inequality holds for a.e.\ $t\in [\delta/2,\delta]$, so we have
\begin{align*}
r\leq \int_{\beta_t}\rho \, ds + \sum_{i:p_i\cap |\beta_t|\neq \emptyset} \diam(q_i)= \int_{\beta_t}\rho \, ds+\sum_{i:p_i\cap |\beta_t|\neq \emptyset} \lambda(q_i) . 
\end{align*} 
If $H\circ \beta_t$ intersects some $q_i$, $i\in I(r)$, we consider an open subpath $\gamma_t$ of $\beta_t$ such that $H\circ \gamma_t$ does not intersect any $q_i$, $i\in I(r)$, and connects $w_0$ to a point of  $q_{i_0}$ for some $i_0\in I(r)$ that lies on a circle $S(w_0,r_1)$; see Figure \ref{figure:homeo}. By the transboundary upper gradient inequality, for a.e.\ $t\in [\delta/2,\delta]$ we have
\begin{align*}
r_1\leq \int_{\gamma_t}\rho \, ds + \sum_{i:p_i\cap |\gamma_t|\neq \emptyset} \diam(q_i)=\int_{\gamma_t}\rho \, ds+ \sum_{i:p_i\cap |\gamma_t|\neq \emptyset} \lambda(q_i) . 
\end{align*} 
Since the continuum $q_{i_0}$ intersects both circles $S(w_0,r_1)$ and $S(w_0,r)$, we have
$$r-r_1\leq  \mathcal H^1(\{s\in (0,r): q_{i_0}\cap  S(w_0,s)\neq \emptyset\}) =\lambda(q_i).$$
Altogether, we have 
\begin{align*}
r\leq  \int_{\beta_t}\rho \, ds+\sum_{i:p_i\cap |\beta_t|\neq \emptyset} \lambda(q_i) . 
\end{align*}

Since $\beta_t$ is contained in $S(z_0,t)\cap (H^{-1}(B(w_0,r))\setminus E)$, for a.e.\ $t\in [\delta/2,\delta]$ and for all $r<r_0$ we have
\begin{align}\label{theorem:homeomorphic:upper_gradient}
r\leq \int_{S(z_0,t)\cap (H^{-1}(B(w_0,r))\setminus E)}\rho \, d\mathcal H^1 + \sum_{\substack{i:p_i\cap S(z_0,t)\neq \emptyset\\ q_i\cap B(w_0,r)\neq \emptyset}} \lambda(q_i) . 
\end{align}
We integrate over $t\in [\delta/2,\delta]$ and then apply the co-area (Proposition \ref{proposition:coarea}) and  Cauchy--Schwarz inequalities to obtain
\begin{align*}
(\delta/2) r&\leq C\int_{H^{-1}(B(w_0,r))\setminus E} \rho \,d\Sigma + \sum_{i:q_i\cap B(w_0,r)} \lambda(q_i)\diam(p_i)\\
 &\leq C \left( \int_{H^{-1}(B(w_0,r))\setminus E} \rho^2\, d\Sigma + \sum_{i:q_i\cap B(w_0,r)\neq \emptyset} \lambda(q_i)^2\right)^{1/2}\\
 &\quad\quad \cdot \left( \Sigma(H^{-1}(B(w_0,r))\setminus E) + \sum_{i:q_i\cap B(w_0,r)\neq \emptyset} \diam(p_i)^2\right)^{1/2}\\
 &\eqqcolon C\cdot A(r)\cdot B(r).
\end{align*}

Suppose that $w_0\notin \bigcup_{i\in \N}q_i$. As $r\to 0$, the term $B(r)$ converges to $0$, since $H^{-1}(B(w_0,r))$ converges to $E$ and $\{i: q_i\cap B(w_0,r)\neq \emptyset\}$ shrinks to the empty set. If $w_0\in p_j$ for some $j\in \N$, then by the last part of Claim \ref{claim:homeomorphic}, $\beta_t$ does not intersect $p_j$, so we may exclude the index $j$ from the sum in \eqref{theorem:homeomorphic:upper_gradient}. In this case, if we perform the same calculations as above based on the modified version of \eqref{theorem:homeomorphic:upper_gradient}, we will obtain a term $B(r)$ such that the summation term does not include the index $j$. Therefore,  $\{i: i\neq j, \,\, q_i\cap B(w_0,r)\neq \emptyset\}$ shrinks again to the empty set. This shows that $B(r)\to 0$ as $r\to 0$ also in this case. 

We now discuss the first term, $A(r)$. By quasiconformality (see the last inequality in Theorem \ref{theorem:continuous_extension}) we have
\begin{align*}
\int_{H^{-1}(B(w_0,r))\setminus E} \rho^2\, d\Sigma \lesssim  \Sigma(B(w_0,r))\lesssim r^2.
\end{align*}
Moreover,  \eqref{theorem:homeomorphism:lambda} implies that
\begin{align*}
\sum_{i:q_i\cap B(w_0,r)\neq \emptyset} \lambda(q_i)^2\leq  C\sum_{i:q_i\cap B(w_0,r)\neq \emptyset} \Sigma(q_i\cap B(w_0,r)) \lesssim r^2.
\end{align*}
Therefore,  $A(r)\lesssim r$, so
\begin{align*}
\delta r \lesssim r B(r). 
\end{align*}
This is true for all $r<r_0$ so we obtain a contradiction as $r\to 0$. 
\end{proof}

\begin{proof}[Proof of Claim \ref{claim:homeomorphic}]
We consider two cases:
\begin{enumerate}[(a)]
	\item\label{case:a} $E$ is not contained in $p_i$ for any $i\in \N$.
	\item\label{case:b} $E\subset \partial p_{j}$ for some $j\in \N$.
\end{enumerate}
We will treat now the first case. Note that $E=H^{-1}(w_0)$ can intersect at most one set $p_i$, $i\in \N$, which occurs only if $w_0\in \partial q_i$. If $\partial E\subset p_i$, then $\partial E$ is an arc of $\partial p_i$, so $\widehat \C \setminus \partial E$ has one or two components. Each of them lies in the interior or exterior of $E$. Hence, either $E=\partial E\subset \partial p_i$, which is a contradiction to Case \ref{case:a}, or $E=\widehat{\C}\setminus \inter(p_i)$, which is again a contradiction as $E\cap p_j=\emptyset$ for $j\neq i$. Therefore, $\partial E$ is not contained in $p_i$ for any $i\in \N$, so $\partial E\cap X\neq \emptyset$.

Consider a point $z_0\in \partial E\cap X$ and note that the circle $S(z_0,t)$ intersects $E$ for all sufficiently small $t>0$. Recall that  $H$ is monotone, so it is also cell-like (see Section \ref{section:topological}). Therefore, $E=H^{-1}(w_0)$ is a non-separating continuum. This implies that the circle  $S(z_0,t)$ intersects $\widehat{\C}\setminus E$ for all sufficiently small $t>0$ (see the argument in the proof of Lemma \ref{lemma:h_constant}).  We fix $\delta>0$ such that for all $t\in [\delta/2,\delta]$ the circle $S(z_0,t)$ intersects both $E$ and $\widehat{\C}\setminus E$.  Moreover, if $w_0\in q_j$, the fact that $z_0\in X$ allows us to choose an even smaller $\delta>0$ so that $S(z_0,t)$ does not intersect $p_j$ for all $t\in [\delta/2,\delta]$. 

For $r>0$ consider the ball $B(w_0,r)$ and the preimage $H^{-1}(B(w_0,r))$, which contains the set $E$ in its interior.  If $r_0>0$ is sufficently small, then for $r<r_0$ the set $H^{-1}(B(w_0,r))$ does not contain any of the circles $S(z_0,t)$, $t\in [\delta/2,\delta]$; indeed, otherwise, by a limiting argument we see that there would exist $t\in [\delta/2,\delta]$ such that $S(z_0,t)\subset E$, a contradiction. We fix $t\in [\delta/2,\delta]$ and $r<r_0$. Then, the circle  $S(z_0,t)$ intersects both $E$ and $\partial H^{-1}(B(w_0,r))$.  We consider an open arc $\beta_t\colon (a,b)\to \widehat{\C}$ of $S(z_0,t)$ such that $\beta_t(a)\in E$, $\beta_t(b)\in \partial H^{-1}(B(w_0,r))$, and $\beta_t((a,b))\subset H^{-1}(B(w_0,r))\setminus E$. See Figure \ref{figure:homeo}. This completes the proof of Claim \ref{claim:homeomorphic} in Case \ref{case:a}.

Suppose that $E\subset \partial p_j$ as in Case \ref{case:b}. Let $z_0\in \partial p_j$ and fix $\delta_0>0$ such that $E$ is not contained in $B(z_0,\delta_0)$. Consider a Jordan arc $\eta \colon (0,1) \to \widehat \C$ such that  $\eta(0)=z_0$,  $\eta(1)\in E\setminus B(z_0,\delta_0)$, and $\eta((0,1))\cap p_j=\emptyset$. Let $\delta<\delta_0$. For each $t\in [\delta/2,\delta]$ there exists a unique open arc $\alpha_t\subset S(z_0,t)$ with endpoints on $\partial p_j$ but otherwise disjoint from $p_j$ such that $\eta(s_0)\in \alpha_t$ for some $s_0\in (0,1)$ and $\eta((0,s_0)) \cap  S(z_0,t)=\emptyset$; in other words, $\alpha_t$ is the ``first" arc of $S(z_0,t)\setminus p_j$ that $\eta$ meets. 

We claim that if $\delta$ is sufficiently small, then for $t\in [\delta/2,\delta]$ at least one endpoint of $\alpha_t$ lies in $E$. Consider the arc $A_t$ of $\partial p_j$ containing $z_0$ and having the same endpoints as $\alpha_t$. By the local connectivity of $\partial p_j$, as $t\to 0$  the diameter of $A_t$ tends to $0$. Hence, $E$ cannot be contained in $A_t$ for small $t>0$. The connectedness of $E$ implies that one of the endpoints of $A_t$, and thus of $\alpha_t$, is contained in $E$ for all sufficiently small $t>0$, as desired. 

Next, we show that if $r_0>0$ is sufficiently small, then $\alpha_t$ is not contained in $H^{-1}(B(w_0,r))$ for all $r<r_0$ and for all $t\in [\delta/2,\delta]$. To see this, note that there exists an arc $\eta([s_1,s_2])$, disjoint from $p_j$, intersected by $\alpha_t$ for each $t\in [\delta/2,\delta]$. Since $H^{-1}(B(w_0,r))$ converges to $H^{-1}(w_0)=E$ as $r\to 0$, there exists $r_0>0$ such that $H^{-1}(B(w_0,r_0))$ is disjoint from $\eta([s_1,s_2])$. Therefore, $\alpha_t$ intersects the complement of $H^{-1}(B(w_0,r))$, for all $r<r_0$, as claimed.

Finally, we consider an open subarc $\beta_t\colon (a,b)\to \widehat{\C}$ of $\alpha_t$ such that $\beta_t(a)\in E$, $\beta_t(b)\in \partial H^{-1}(B(w_0,r))$, and $\beta_t((a,b))\subset H^{-1}(B(w_0,r))\setminus E$. This completes the proof in Case \ref{case:b}.
\end{proof}

\bibliography{biblio}
\end{document}